\numberwithin{equation}{section}
\newtheorem{theorem}{Theorem}[section]
\newtheorem{lemma}[theorem]{Lemma}
\newtheorem{proposition}[theorem]{Proposition}
\newtheorem{problem}{Problem}[section]
\newcommand{\f}{\mathscr{F}}
\newcommand{\lr}{\mathcal{L}}
\newcommand{\sr}{\mathcal{S}}
\newcommand{\mr}{\mathcal{M}}
\newcommand{\kr}{\mathcal{K}}
\newcommand{\e}{\mathbb{E}}
\newcommand{\he}{\widehat{\mathbb{E}}}
\newcommand{\br}{\mathbb{R}}
\newcommand{\pr}{\mathcal{P}}
\newcommand{\dd}{\partial}
\newcommand{\brn}{{\mathbb{R}^n}}
\newcommand{\brnn}{{\mathbb{R}^{n\times n}}}
\newcommand{\brd}{{\mathbb{R}^d}}
\newcommand{\de}{\Delta}
\newcommand{\hv}{\widehat{v}}
\newcommand{\hys}{\widehat{Y_s}}
\newcommand{\hyT}{\widehat{Y_T}}
\newcommand{\hps}{\widehat{P_s}}
\newcommand{\hqsj}{\widehat{Q^j_s}}
\newcommand{\hrs}{\widehat{R_s}}
\newcommand{\hr}{\mathcal{H}}
\newcommand{\dr}{\mathcal{D}}
\newcommand{\argmin}{\mathop{\rm argmin}}
\title{Mean Field Type Control Problems Driven by Jump-diffusions}
\author[a]{Alain Bensoussan\footnote{E-mail: axb046100@utdallas.edu}}
\author[b]{Ziyu Huang\footnote{E-mail: zyhuang19@fudan.edu.cn}}
\author[c]{Shanjian Tang\footnote{E-mail: sjtang@fudan.edu.cn}}
\author[b]{Sheung Chi Phillip Yam\footnote{E-mail: scpyam@sta.cuhk.edu.hk}}
\affil[a]{\small \it International Center for Decision and Risk Analysis, Naveen Jindal School of Management, University of Texas at Dallas, Dallas, Texas, USA}
\affil[b]{\small \it Department of Statistics and Data Science, The Chinese University of Hong Kong, Shatin, N.T., Hong Kong SAR}
\affil[c]{\small \it Department of Finance and Control Sciences, School of Mathematical Sciences,  Fudan University,  and Key Laboratory of Mathematics for Nonlinear Sciences (Fudan University), Ministry of Education, Shanghai 200433, China}
\begin{document}

\maketitle

\begin{abstract}
In this article, we apply a probabilistic approach to study general mean field type control (MFTC) problems with jump-diffusions, and give the first global-in-time solution. 
We allow the drift coefficient $b$ and the diffusion coefficient $\sigma$ to nonlinearly depend on the state, distribution and control variables, and both can be unbounded and possibly degenerate; besides, the jump coefficient $\gamma$ is allowed to be non-constant. 
To tackle the non-linear and  control-dependent diffusion $\sigma$, we further formulate a joint cone property and estimates  for both  processes $P$ and $Q$ of the corresponding adjoint process (where $(P,Q,R)$ is the solution triple of the associated adjoint process as a backward stochastic differential equation with jump), in contrast to our previous single cone property of the only process $P$. 
We  study first the system of forward-backward stochastic differential equations (FBSDEs) with jumps arising from the maximum principle, and then  the related Jacobian flows, which altogether yield the classical regularity of the value function and thus allow us to show that the value function  is  the unique classical solution of the  HJB integro-partial differential equation. 
Most importantly, our proposed probabilistic approach can apparently handle the MFTC problem driven by  a fairly general  process far beyond Brownian motion, in a relatively easier manner than the existing analytic approach.\\ 

\noindent{\textbf{Keywords:}} Mean field type control; Jump-diffusion; Forward-backward stochastic differential equations with jumps; Non-linear drift; Non-linear and control-dependent diffusion; Cone property; HJB integro-partial differential equation; Classical solution

\noindent {\bf Mathematics Subject Classification (2020):} 60H30; 60H10; 93E20. 

\end{abstract}


\section{Introduction}\label{sec:intro}

Mean field type control (MFTC) problems (also called McKean-Vlasov control problems) and mean field games (MFGs) have received a lot of attentions in the last two decades. The common point of both kinds of problems is that the controlled dynamical system process  depends on a probability distribution flow of the state. The state process for a MFTC problem is affected by the state, the control and the law of the current state; while the MFG is a fixed point problem with the state depending on the equilibrium law. The literature in this area is huge now. For probabilistic approaches to MFTC problems, we refer to \cite{BR,CDLL,CR,book_mfg,CJF}; for the dynamic programming principle (DDP) and Hamilton–Jacobi–Bellman (HJB) equation of McKean-Vlasov control problem, we refer to \cite{DMF,PH}; for the Fokker-Planck (FP) equations for McKean-Vlasov SDEs, we refer to \cite{barbu2023uniqueness,ren2022linearization}; for the lifting method and Hilbert space approaches for MFTC problems, we refer to \cite{AB7,AB6,AB8,AB5,AB}; for MFGs and McKean-Vlasov type differential games, we refer to \cite{AB_book,CA,HM}; for solution of linear-quadratic (LQ) MFTC problems or games, we refer to \cite{MR4393348,li2020indefinite}; for study on Riccati partial differential 
equations, we refer to \cite{byrnes1998riccati,byrnes1992shock}.

The presence of the jump-diffusion likely much complicates the MFTC problem, especially for the analytical methods for the HJB-FP approaches, since the HJB equation includes an integral term derived from the jump driving noise; see \cite{guo2023ito} for instance. Therefore, for classical solution of such an equation,
the conventional analytical method meets with a fundamental challenge; indeed, even for the classical stochastic optimal control problem with jump-diffusion in the absence of the mean field term, classical solution of the HJB integro-partial differential equations is not easy; see \cite{li2009stochastic} for viscosity solution of that type of HJB equations. For the  FP equation for optimal control problems for jump-diffusion process, we refer the reader to the work \cite{Oksendal-Sulem}; in particular, for the FP equation for McKean-Vlasov SDE with jump diffusion, we refer to \cite{agram2023stochastic}. For the It\^o’s formula and DDP for stochastic control problems with jump-diffusion, we refer to \cite{guo2023ito,gyongy2021ito,Oksendal-Sulem}. In \cite{agram2023stochastic} and \cite{guo2023ito}, the verification theorems are given on  the HJB-FP systems for the MFTC problems with jump-diffusion, and also LQ cases are solved; and  in \cite{li2016controlled}, the unique viscosity solution of the HJB integro-partial differential equation is given for BSDEs coupled with their value function and the mean field SDEs with jump-diffusions. For mean field BSDE with jump-diffusion which is associated with a mean field SDE with jump-diffusion, see \cite{LI20183118} for the classical well-posedness. For solution of the mean field type LQ differential games with jump-diffusion systems, we refer to \cite{barreiro2019linear,MR4064662,MR4885026,MR4396401}. Ensuring classical solutions of HJB integro-partial differential equation even for the standard general optimal control problems with jump-diffusions, let alone the generic MFTC problem, is widely regarded to be highly challenging. Our current probabilistic approach turns out to be a resolution, which is conceptually even simple. 

More precisely, in this article, we apply a probabilistic approach and use stochastic control method to study the following MFTC problem with jump-diffusion for any initial time $t$ and initial condition $\xi$ with its law $\lr(\xi)=\mu$:
\begin{equation}\label{problem:intro}
    \left\{
    \begin{aligned}
        &J_{t,\xi}(v):=\e\left[\int_t^T f\left(s,X_s^{v},\lr\left(X_s^{v}\right),v_s\right) ds + g\left(X_T^{v},\lr\left(X_T^{v}\right)\right)\right],\quad v\in \mr_\f^2(t,T),\\
        &\text{such that } X_s^{v}=\xi+\int_t^s b\left(r,X_r^{v},\lr\left(X_r^{v}\right),v_r\right) ds +\int_t^s \sigma \left(r,X_r^{v},\lr\left(X_r^{v}\right),v_r\right)dB_s\\
        &\quad\qquad\qquad\qquad +\int_t^s \int_E \gamma\left(r,X_{r-}^{v},\lr\left(X_{r-}^{v}\right),v_r,e\right) \mathring{N}(de,ds),\quad s\in[t,T],
    \end{aligned}
    \right.
\end{equation}
and define the value function as 
\begin{align}\label{def:value:intro}
    V(t,\lr(\xi)):= \inf_{v\in\mr_\f^2(t,T)} J_{t,\xi}(v);
\end{align}
see Subsection~\ref{lem01_1} for the precise definition for coefficients $(b,\sigma,\gamma,f,g)$. By studying the well-posedness and differentiability (see Section~\ref{sec:FBSDE}) of forward-backward stochastic differential equations (FBSDEs) with jump arising from the maximum principle (see Section~\ref{sec:maxi}) associated with Problem \eqref{problem:intro}, our main result is that, when $\gamma$ is independent of the control variable, the value function $V$ defined in \eqref{def:value:intro} is the unique classical solution of the following HJB integro-partial differential equation for MFTC problem with jump: for $(t,\mu)\in [0,T]\times\pr_2(\brn)$,
\begin{equation}\label{HJB:intro}
\left\{
    \begin{aligned}
        &\frac{\dd V}{\dd t}(t,\mu)+ \int_\brn \inf_{v\in\brd} \hr \bigg(t,x,\mu,v,D_y\frac{dV}{d\nu}(t,\mu)(x),D_y^2\frac{dV}{d\nu}(t,\mu)(x),\frac{dV}{d\nu}(t,\mu)(\cdot)\bigg) \mu(dx)=0,\\
        &V(T,\mu)= \int_\brn g(x,\mu)\mu(dx).
    \end{aligned}
\right.
\end{equation}
where the functional $\hr:[0,T]\times\brn\times\pr_2(\brn)\times\brd\times\brnn\times L_\lambda^2(E)\to\br$ is defined as
\begin{align*}
    \hr(t,x,\mu,v,p,\Gamma,k(\cdot)):= \ & \frac{1}{2}\text{Tr}\left[\left(\sigma\sigma^\top\right) \left(t,x,\mu,v\right) \Gamma\right] +p^\top \left[b\left(t,x,\mu,v\right)-\int_E \gamma(t,x,\mu,e)\lambda(de)\right]\notag \\
    & +f\left(t,x,\mu,v\right) +\int_E \left[k\left(x+\gamma\left(t,x,\mu,e\right)\right)-k(x) \right] \lambda(de).
\end{align*}
Note that our problem \eqref{problem:intro} is an open-loop problem (though the optimal control is shown to be of a feedback form). In Problem \eqref{problem:intro}, we  allow the diffusion coefficient $\sigma$ to depend on the control, and allow $b$ and $\sigma$ to be nonlinear in the state, distribution and control variables. Equation \eqref{HJB:intro} is also solved in the  works \cite{agram2023stochastic,guo2023ito}, where the verification results are given and  interesting examples are solved. In \cite{li2016controlled}, the authors studied the integro-partial differential equation associated with BSDEs coupled with their value function and prove the unique existence of the viscosity solution. In the motivating work \cite{LI20183118}, an integro-partial differential equation---a simpler version of  \eqref{HJB:intro}---for mean field BSDE with jumps and an  associated  mean field SDE,  is shown to have a unique classical solution. To the best of our knowledge, our solution of Problem \eqref{problem:intro}  and existence and uniqueness result for the classical solution for the HJB integro-partial differential equation \eqref{HJB:intro} are completely new.


From the viewpoint of control theory, we solve the MFTC problem \eqref{problem:intro} via the global solution of the system of FBSDEs with jumps derived from the maximum principles for their optimal controls (also see \cite{Oksendal-Sulem,oksendal2018stochastic} for maximum principle for control problems with jumps); then, we elaborately study the Jacobian flows of this system of FBSDEs with jumps to identify the derivatives of the value functional $V$ defined in \eqref{def:value:intro} with respect to the distribution argument, which is crucial for the global-in-time well-posedness for the HJB integro-partial differential equation \eqref{HJB:intro}. Our stochastic control approach also takes advantage of imposing less regularity on the coefficients over the analytical approach; furthermore, to obtain a solution for the MFTC problem \eqref{problem:intro}, we show that one actually needs fewer regularity assumptions on the coefficients than in the situation of obtaining the classical solution to Equation \eqref{HJB:intro}. Specifically, when the coefficient functions are $C^1$, MFTC \eqref{problem:intro} can still be warranted with a unique solution; besides, when the coefficient functions are $C^2$, the value function will be shown to be the unique classical solution of the HJB integro-partial differential equation \eqref{HJB:intro}.
For the study on FBSDEs with jumps, we also refer to \cite{MR3346708,MR4199898,zhen1999forward}. In comparison with the existing literature, we study not only this system of FBSDEs, but also the related Jacobian flows, which are also systems of forward-backward equations with jump-diffusions. One  major difficulty in our problem consists in the nonlinearity of the coefficients $b$ and  $\sigma$ in $(x,m,v)$. To overcome the difficulty, the crucial step is to use the so-called ``cone property" for the adjoint process $P$, which was first used in \cite{AB10''} for the first order MFTC problem. This cone property is also used in our last work \cite{AB13} for the second order MFGs (without jump) to incorporate nonlinear  coefficients $b$ and control-independent $\sigma$.  In this work,  our cone properties refers to both  processes $P$ and  $Q$, where $(P,Q,R)$ is the solution triple of the associated adjoint process as a BSDE with jump (see \eqref{adjoint}). The cone property for $Q$ is required to prove the well-posedness of the FBSDEs, to tackle the nonlinear dependence of the coefficient $\sigma$ on the control variable $v$. It is naturally formulated to monitor the growth of various sensitivities of adjoint processes with respect to the initial data. And it is based on a mild assumption on the control variable $v$ for the coefficients $b$ and $\sigma$ (see Assumption (A3)), about the variance of the drift coefficient $b$ and the diffusion coefficients $\sigma^j$ between control argument components; and it indeed generalizes the commonly used control-independent assumption on the coefficient $\sigma$ (see \cite{SA,AB13} for instance). 
 
The rest of this article is organized as follows. In  Section~\ref{sec:notation}, we give the formal formulation of our problem. In section~\ref{sec:maxi}, we prove the necessary and sufficient maximum principle for our MFTC problem with jump-diffusion and derive the associated system of FBSDEs with jump-diffusions, and also introduce the cone property for the processes $P$ and $Q$. In Section~\ref{sec:FBSDE}, we prove the well-posedness of the global-in-time solution for the FBSDEs with jump-diffusions, and also give the G\^ateaux differentiability of the solution processes of the FBSDEs with jump-diffusion with respect to the initial condition. In Section~\ref{sec:value}, we give the regularity of the value function $V$. In Section~\ref{sec:HJB}, we eventually establish that the value function $V$ is the unique classical solution of the HJB integro-partial differential equation. Some statements in Sections~\ref{sec:maxi}, \ref{sec:FBSDE},  \ref{sec:value} and \ref{sec:HJB} are proven in Appendices \ref{sec:pf:maxi}, \ref{pf:thm:Gateaux}, \ref{sec:pf:value} and \ref{pf:thm:value_regu_mu'}, respectively.

\section{Preliminaries and problem formulation}\label{sec:notation}

\subsection{Probability spaces and notations}

Let $(\Omega,\f,\{\f_t,0\le t\le T\},\mathbb{P})$ be a completed filtered probability space (with the filtration being  augmented by all the $\mathbb{P}$-null sets) on which the following two independent stochastic processes are defined and are $\f_t$-adapted (with the future increments after time $t$ of both processes being independent of $\f_t$ for every $t \in[0,T)$):
\begin{enumerate}[(i)]
    \item An $n$-dimensional Brownian motion $\{B(t),\ 0\le t\le T\}$;
    \item A right continuous Poisson random process $N$ taking values in $E$ with the deterministic intensity measure $\hat{N}(de,dt)=\lambda(de)dt$ satisfying $\lambda(E)<\infty$, where $E:=\br^l \setminus\{0\}$ is equipped with its Borel $\sigma$-field $\mathcal{B}(E)$ (also see \cite{li2009stochastic,Oksendal-Sulem}); here, $\lambda$ is a $\sigma$-finite measure on $(E, \mathcal{B}(E))$ with $\int_E (1+|e|^2)\lambda(de)<\infty$. The process $\left\{\mathring{N}(A,(t,s]):=\left(N-\hat{N}\right)(A,(t,s]) \right\}$ is an associated compensated $\f$-martingale random (Poisson) measure of $N$ for any $A\in\mathcal{B}(E)$ satisfying $\lambda(A)<\infty$ and $0\le t\le s\le T$. 
\end{enumerate}
For any $\xi\in L^2(\Omega,\f,\mathbb{P};\brn)$, we denote by $\lr(\xi)$ its law and by $\|\xi\|_2$ its $L^2$-norm. For every $t\in[0,T]$, we denote by $L^2_{\f_t}$ the set of all $\f_t$-measurable square-integrable $\brn$-valued random vectors. We also introduce the following spaces:
\begin{itemize}
    \item $\mr^2_{\f}(t,s)$ is the family of all $\f$-predictable $\brn$-valued processes $\alpha_\cdot=\{\alpha_t,\ t\le r\le s\}$, i.e. for all $r$, $\alpha_{r-}=\alpha_r$, such that $\e\left[\int_t^s |\alpha_r|^2dt\right]<\infty$;
    \item $\sr^2_{\f}(t,s)$ is the family of all $\f$-adapted $\brn$-valued c\`adl\`ag processes $\alpha_\cdot=\{\alpha_r,\ t\le r\le s\}$ such that $\e\left[\sup_{t\le r\le s} |\alpha_r|^2\right]<\infty$;
    \item $L^2_\lambda(E)$ is the family of all measurable functions $f:E\to \brn$ such that $\|f\|^2_{L^2_\lambda}:=\int_E |f(e)|^2\lambda(de)<\infty$;
    \item $\kr^2_{\f,\lambda}(t,s)$ is the family of all jointly measurable functions $k:[t,s]\times E\times\Omega\to \brn$ such that $k(r,\cdot)$ is $\mathcal{P}\otimes\mathcal{B}(E)$-measurable predictable with 
    \begin{align*}
        \e\left[\int_t^s \|k(r,\cdot)\|_{L^2_\lambda}^2 dr\right]:=\e\left[\int_t^s \int_E |k(r,e)|^2 \lambda(de) dt\right]<\infty,
    \end{align*}
    where $\mathcal{P}$ denotes the $\sigma$-algebra of $\f$-predictable subsets of $[t,s]\times\Omega$. We also denote by $\|k(r)\|_{L^2(\Omega;L_\lambda^2)}^2:=\e\left[ \int_E |k(r,e)|^2 \lambda(de)\right]$.
\end{itemize}

For the sake of convenience, in this article, we write $f|_{a}^b:=f(b)-f(a)$ for the difference of a  functional $f$ between two points $b$ and $a$. 

\subsection{Wasserstein space and derivatives of functionals}

We denote by $\mathcal{P}_{2}(\brn)$ the space of all probability measures of finite second order moments on $\brn$, equipped with the 2-Wasserstein metric: 
\begin{align*}
    W_2\left(m,m'\right):=\inf_{\pi\in\Pi\left(m,m'\right)}\sqrt{\int_{\brn\times\brn}\left|x-x'\right|^2\pi\left(dx,dx'\right)},
\end{align*}
where $\Pi\left(m,m'\right)$ is the set of joint probability measures with respective marginals $m$ and $m'$. We denote by $|m|_1:=\int_\brn |x|m(dx)$, and denote by $|m|_2:=\sqrt{\int_\brn |x|^2 m(dx)}$. We denote by $\delta_0$ the point-mass distribution of the random variable $\xi$ such that $\mathbb{P}(\xi=\mathbf{0})=1$. More results about Wasserstein metric space can be found in \cite{AL}. The linear functional-derivative of a functional $k(\cdot):\mathcal{P}_{2}(\brn)\to\br$ at $m\in \mathcal{P}_{2}(\brn)$ is another functional $\mathcal{P}_{2}(\brn)\times \brn\ni(m,y)\mapsto\dfrac{dk}{d\nu}(m)(y)$,  being jointly continuous and satisfying 
$$\int_{\brn}\Big|\dfrac{dk}{d\nu}(m)(y)\Big|^{2} m(dy)\leq c(m),
$$
for some positive constant $c(m)>0$, which is bounded on any bounded subsets of $\pr_2(\brn)$ such that 
\begin{equation*}
    \lim_{\epsilon\to0}\dfrac{k((1-\epsilon)m+\epsilon m')-k(m)}{\epsilon}=\int_\brn\dfrac{dk}{d\nu}(m)(y)\left(m'(dy)-m(dy)\right), \quad \forall m'\in\mathcal{P}_{2}(\brn);
\end{equation*}
we refer the reader to \cite{AB,book_mfg} for more details about the notion of linear functional-derivatives. In particular, the linear functional-derivatives in $\pr_2(\brn)$ are connected with the G\^ateaux derivatives in $L^2(\Omega,\f,\mathbb{P};\brn)$ in the following manner. For a linearly functional-differentiable functional $k:\pr_2(\brn)\to\br$ such that the derivative $D_y\frac{d k}{d\nu}(m)(y)$ is jointly continuous in $(m,y)$ and $D_y\frac{d k}{d\nu}(m)(y)\le c(m)(1+|y|)$ for $(m,y)\in\pr_2(\brn)\times\brn$,  the functional $K(\xi):=k(\lr(\xi)), \  \xi\in L^2(\Omega,\f,\mathbb{P};\brn)$ has the following G\^ateaux derivative:
\begin{align}\label{lem01_1}
	D_\xi K(\xi)(\omega)=D_y\frac{d k}{d\nu}(\lr(\xi))(\xi(\omega)),
\end{align}
which is also known as the Wasserstein gradient. From here onward, for any random variable $\xi$, we write $\widehat{\xi}$ for its independent copy, and $\widehat{\e}\left[\widehat{\xi}\right]$ for the corresponding expectation taken; and we also use $\widetilde{\xi}$ for its another independent copy, and $\widetilde{\e}\left[\widetilde{\xi}\right]$ for this corresponding expectation taken.

\subsection{Problem formulation}

Let coefficients for the state process
\begin{align*}
    &b:[0,T]\times\brn\times\pr_2(\brn)\times\brd\to\brn,\qquad \sigma:[0,T]\times\brn\times\pr_2(\brn)\times\brd\to\brnn,\\
    &\gamma:[0,T]\times\brn\times\pr_2(\brn)\times\brd\times E\to\brn,
\end{align*}
and the coefficients for the cost functional
\begin{align*}
    f:[0,T]\times\brn\times\pr_2(\brn)\times\brd\to\br,\qquad g:\brn\times\pr_2(\brn)\to\br;
\end{align*}
and the regularity assumptions on $b,\sigma,\gamma,f$ and $g$ will be given in Section~\ref{sec:maxi}. We first state our mean field type control problem at the initial time 0. For any control $v\in \mr_\f^2(0,T)$, the controlled state process is
\begin{equation}\label{SDE^v}
\begin{aligned}
    X_t^v=\xi&+\int_0^t b\left(s,X_s^v,\lr\left(X_s^v\right),v_s\right) ds +\int_0^t \sigma \left(s,X_s^v,\lr\left(X_s^v\right),v_s\right)dB_s\\
    &+\int_0^t \int_E \gamma\left(s,X_{s-}^v,\lr\left(X_{s-}^v\right),v_s,e\right) \mathring{N}(de,ds),\quad t\in[0,T];
\end{aligned}
\end{equation}
and the cost functional is defined as
\begin{equation}\label{cost}
    \begin{aligned}
        J(v):=\e\left[\int_0^T f\left(t,X_t^v,\lr\left(X_t^v\right),v_t\right) dt + g\left(X_T^v,\lr\left(X_T^v\right)\right)\right].
    \end{aligned}
\end{equation}

\begin{problem}
    We denote by the MFTC problem 
    \begin{equation}\label{problem}
        \left(\mathbf{P}^{0,\xi}\right): \qquad\qquad \inf_{v\in\mr_\f^2(0,T)} J(v), \qquad\qquad
    \end{equation}
    with an initial condition $X^v_0=\xi$, state process \eqref{SDE^v} and cost functional  \eqref{cost}. For any initial time $t\in[0,T]$ and initial condition $\xi$, we denote by $\left(\mathbf{P}^{t,\xi}\right)$ the corresponding MFTC problem. 
\end{problem}

We define the Lagrangian $L:[0,T]\times\brn\times\pr_2(\brn)\times\brd\times\brn\times\brnn\times L_\lambda^2(E)\to\br$ as 
\begin{align}
    L(t,x,m,v,p,q,r):= \ & f(t,x,m,v)+b(t,x,m,v)^\top p + \sum_{j=1}^n\sigma^j(t,x,m,v)^\top q^j \notag \\
    & + \int_E \gamma(t,x,m,v,e)^\top r(e)\lambda(de), \label{Lagrangian}
\end{align}
and define the Hamiltonian $H:[0,T]\times\brn\times\pr_2(\brn)\times\brn\times\brnn\times L_\lambda^2(E)\to\br$ as
\begin{align*}
    H(t,x,m,v,p,q,r):=\inf_{v\in\brd} L(t,x,m,v,p,q,r).
\end{align*}
The well-definedness of $H$ and its properties will be given in Subsection~\ref{subsec:sufficient}.

\section{Maximum principle}\label{sec:maxi}

In this section, we first give the necessary condition in Subsection~\ref{subsec:necessary} for the optimal control of our mean field type control problem with jump diffusion, which derives the associated system of FBSDEs with jump-diffusion (see \eqref{FBSDE:MFTC} below) for the control problem; then we also prove the sufficient maximum principle (also see \cite{framstad2004sufficient}) in Subsection~\ref{subsec:sufficient}, which shows that the solution of this forward-backward system \eqref{FBSDE:MFTC} can really give the optimal control for the control problem. Our results are applicable to $\left(\mathbf{P}^{t,\xi}\right)$ for any initial time $t\in[0,T]$, but for notational convenience, we only give the statements and proofs for $\left(\mathbf{P}^{0,\xi}\right)$ here. We also refer to \cite{tang1994necessary} for necessity conditions for stochastic control systems with random jumps.

\subsection{Necessity condition}\label{subsec:necessary}

We begin by giving the necessity condition for Problem $\left(\mathbf{P}^{0,\xi}\right)$ under the following assumptions on coefficients $(b,\sigma,\gamma,f,g)$.

{\bf (A1)} The coefficients $b$, $\sigma$ and $\gamma$ are continuous and satisfy the following growth condition:
\begin{align*}
    |b(t,x,m,v)|,\ \left|\sigma^j(t,x,m,v)\right|\le \ &L(1+|x|+|m|_2+|v|),\\
    \|\gamma(t,x,m,v,\cdot)\|_{L_\lambda^2}\le\ & L(1+|x|+|m|_2+|v|);
\end{align*}
they are differentiable in $(x,v)\in\brn\times\brd$ and also functional-differentiable in $m\in\pr_2(\brn)$, with all derivatives $b_x,b_v,D_y\frac{db}{d\nu},\sigma_x,\sigma_v,D_y\frac{d\sigma}{d\nu},\gamma_x,\gamma_v,D_y\frac{d\gamma}{d\nu}$ being continuous and being bounded in norms by $L$. 

\textbf{(A2)} The functions $f$ and $g$ are continuous and have a quadratic growth in $(x,m,v)$; they are differentiable in $(x,v)\in\brn\times\brd$ and also functional-differentiable in $m\in\pr_2(\brn)$, with the derivatives satisfying: for any $(t,x,m,v)\in[0,T]\times\brn\times\pr_2(\brn)\times\brd$ and $y\in\brn$,
\begin{align*}
    &|(f_x,f_v)(t,x,m,v)|\le L(1+|x|+|m|_2+|v|),\quad |g_x(x,m)|\le L(1+|x|+|m|_2),\\
    & \left|D_y\frac{df}{d\nu}(t,x,m,v)(y)\right|\le L(1+|x|+|m|_2+|v|+|y|),\\
    & \left|D_y\frac{dg}{d\nu}(x,m)(y)\right|\le L(1+|x|+|m|_2+|y|),
\end{align*}
and the derivatives are $L$-Lipschitz continuous in $(x,m,v,y)$. 

For any control $v\in\mr_\f^2(0,T)$ for $\left(\mathbf{P}^{0,\xi}\right)$, the following result shows that the controlled state process $X^v$ belongs to the space $\in\sr_\f^2(0,T)$, whose proof is given in Appendix~\ref{pf:lem:control-state}.

\begin{lemma}\label{lem:control-state}
    Under Assumption (A1), for any control $v\in\mr_\f^2(0,T)$, the controlled state process $X^v\in\sr_\f^2(0,T)$ and satisfies 
    \begin{equation}\label{lem:control-state-1}
        \e\left[\sup_{0\le t\le T}\left|X^v_t\right|^2\right]\le C(L,T)\e\left[1+|\xi|^2+\int_0^T  |v_t|^2dt\right].
    \end{equation}
\end{lemma}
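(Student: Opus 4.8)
The statement is a standard $L^2$-moment estimate for a McKean--Vlasov SDE with jumps, and the plan is to combine the Cauchy--Schwarz inequality on the drift term, the Burkholder--Davis--Gundy (BDG) inequality on the two martingale integrals, the linear-growth bound of Assumption~(A1), and Gr\"onwall's lemma; the only genuinely McKean--Vlasov feature, namely the dependence of the coefficients on $m=\lr(X^v_s)$ through $|m|_2$, will be absorbed via the identity $\big|\lr(X^v_s)\big|_2^2=\e\big[|X^v_s|^2\big]\le \e\big[\sup_{r\le s}|X^v_r|^2\big]$.

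Concretely, I would fix $t\in[0,T]$; applying $|x_1+x_2+x_3+x_4|^2\le 4\sum_{i}|x_i|^2$ to the representation \eqref{SDE^v}, I would take the supremum over $[0,t]$ and then expectations. For the drift term, Cauchy--Schwarz gives a bound by $4T\,\e\int_0^t |b(s,X^v_s,\lr(X^v_s),v_s)|^2\,ds$. For the Brownian integral, the BDG inequality applied to the continuous martingale $\int_0^\cdot \sigma\,dB$ gives a bound by $C\,\e\int_0^t \sum_{j=1}^n |\sigma^j(s,X^v_s,\lr(X^v_s),v_s)|^2\,ds$. For the jump integral, one uses that $\int_0^\cdot\int_E\gamma\,\mathring{N}(de,ds)$ is a square-integrable c\`adl\`ag martingale with predictable quadratic variation $\int_0^\cdot\int_E |\gamma(s,X^v_{s-},\ldots,e)|^2\lambda(de)\,ds$, so BDG (or Doob's inequality with the It\^o isometry for compensated Poisson integrals) yields a bound by $C\,\e\int_0^t \|\gamma(s,X^v_s,\lr(X^v_s),v_s,\cdot)\|_{L^2_\lambda}^2\,ds$. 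I would then insert the growth bounds from (A1): each of $|b(s,\cdot)|^2$, $|\sigma^j(s,\cdot)|^2$ and $\|\gamma(s,\cdot)\|_{L^2_\lambda}^2$ is $\le CL^2\big(1+|X^v_s|^2+|\lr(X^v_s)|_2^2+|v_s|^2\big)$, and since $|\lr(X^v_s)|_2^2=\e|X^v_s|^2\le \e\big[\sup_{r\le s}|X^v_r|^2\big]=:\phi(s)$, all three terms are controlled by $CL^2\,\e\int_0^t\big(1+\phi(s)+|v_s|^2\big)\,ds$. Collecting everything gives
\begin{equation*}
\phi(t)\ \le\ C(L,T)\,\e\Big[1+|\xi|^2+\int_0^T|v_s|^2\,ds\Big]+C(L,T)\int_0^t\phi(s)\,ds,
\end{equation*}
and Gr\"onwall's inequality yields \eqref{lem:control-state-1}; in particular $X^v\in\sr_\f^2(0,T)$.

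The point that needs care --- and the only real obstacle --- is the a priori finiteness of $\phi$, together with the self-referential appearance of $|m|_2=\|X^v_s\|_2$ inside the growth bound: Gr\"onwall's lemma may be applied only once $\phi(t)<\infty$ is known. I would resolve this in one of the usual ways: either (i)~invoke the well-posedness of McKean--Vlasov SDEs with jumps under the Lipschitz condition (A1) to know beforehand that $X^v\in\sr_\f^2(0,T)$, so that $\phi$ is finite and the above computation is rigorous; or (ii)~run the same estimate along the Picard iteration $X^{(0)}\equiv\xi$, with $X^{(k+1)}$ solving \eqref{SDE^v} with $\lr(X^{(k)}_s)$ frozen in place of $\lr(X^v_s)$ (so each step is a genuine SDE estimate with a finite moment), obtaining $\phi_{k+1}(t)\le C(L,T)\e[1+|\xi|^2+\int_0^T|v_s|^2ds]+C(L,T)\int_0^t\phi_k(s)\,ds$ and hence a bound uniform in $k$ by induction, then passing to the limit by Fatou's lemma; or (iii)~localize with $\tau_N=\inf\{t:|X^v_t|\ge N\}$, carry out the computation on $[0,t\wedge\tau_N]$, and let $N\to\infty$ by monotone convergence. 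The hypotheses $\lambda(E)<\infty$ and $\int_E(1+|e|^2)\lambda(de)<\infty$, together with the $L^2_\lambda$-growth bound on $\gamma$ in (A1), guarantee that all the jump integrals above are well-defined square-integrable martingales, which makes the BDG step legitimate.
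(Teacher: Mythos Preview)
Your proposal is correct and follows essentially the same route as the paper: a linear-growth estimate combined with Gr\"onwall. The only cosmetic difference is that the paper first applies It\^o's formula to $|X^v_t|^2$ (so the drift, diffusion and jump contributions appear already in quadratic form) and then passes to $\e[\sup_t|X^v_t|^2]$, whereas you square the integral representation of $X^v_t$ directly and invoke BDG on each stochastic integral; both lead to the same Gr\"onwall inequality. Your treatment of the a~priori finiteness of $\phi$ is more explicit than the paper's, which simply cites \cite[Theorem 1.19]{Oksendal-Sulem} for well-posedness and proceeds.
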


Suppose that $u\in\mr_\f^2(0,T)$ is an optimal condition for Problem $\left(\mathbf{P}^{0,\xi}\right)$, and we denote by $Y$ its associated state process; we also denote by $\theta_t:=(Y_t,\lr(Y_t),u_t)$ for $t\in[0,T]$. For any control $v\in \mr_\f^2(0,T)$, we denote by $\de v:=v-u\in\mr_\f^2(0,T)$, and define $v^\epsilon:=u+\epsilon\de v$ for $\epsilon>0$. We denote by $X^\epsilon$ the controlled state corresponding to $v^\varepsilon$, and denote by $\de X^\epsilon:=X^\epsilon-Y$. We first give the following estimate of $\de X^\epsilon$, which is proven in Appendix~\ref{pf:lem:rhoX}.

\begin{lemma}\label{lem:rhoX}
    Under Assumption (A1), we have 
    \begin{equation}\label{Delta_delta_distance}
        \lim_{\epsilon\to 0}\ \e\left[\sup_{0\le t\le T}\left|\frac{\de X^\epsilon_t}{\epsilon}-\delta X_t\right|^2\right]=0,
    \end{equation}
    where $\delta X\in\sr_\f^2(0,T)$ is the solution of the following SDE
    \begin{equation}\label{SDE:deltaX}
    \begin{aligned}
        \delta X_t=\ & \int_0^t \bigg\{b_x\left(s,\theta_s\right) \delta X_s+b_v\left(s,\theta_s\right) \de v_s +\he\bigg[\left(D_y\frac{db}{d\nu}\left(s,\theta_s\right)\left(\hys\right)\right) \widehat{\delta X_s}\bigg] \bigg\}ds \\
        &+\int_0^t \bigg\{\sigma_x\left(s,\theta_s\right) \delta X_s+\sigma_v\left(s,\theta_s\right) \de v_s +\he\bigg[\left(D_y\frac{d\sigma}{d\nu}\left(s,\theta_s\right)\left(\hys\right)\right) \widehat{\delta X_s}\bigg] \bigg\}dB_s \\
        &+\int_0^t \int_E \bigg\{\gamma_x\left(s,Y_{s-},\lr\left(Y_{s-}\right),u_s,e\right) \delta X_{s-}+\gamma_v\left(s,Y_{s-},\lr\left(Y_{s-}\right),u_s,e\right) \de v_s\\
        &\quad\qquad\qquad +\he\bigg[\left(D_y\frac{d\gamma}{d\nu}\left(s,Y_{s-},\lr\left(Y_{s-}\right),u_s,e\right)\left(\widehat{Y_{s-}}\right)\right) \widehat{\delta X_{s-}}\bigg] \bigg\}\mathring{N}(de,ds),\quad t\in[0,T],
    \end{aligned}
    \end{equation}
    and satisfies
    \begin{equation}\label{deltaX:boundedness}
        \begin{aligned}
            &\e\left[\sup_{0\le t\le T}|\delta X_t|^2\right]\le C(L,T)\e\left[\int_0^T |\de v_t|^2 dt \right].
        \end{aligned}
    \end{equation}
\end{lemma}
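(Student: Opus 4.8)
\textbf{Proof proposal for Lemma~\ref{lem:rhoX}.}

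The plan is to proceed by the standard spike/convex variation argument for controlled SDEs with jumps, adapted to the McKean--Vlasov setting. First I would establish well-posedness of the linear equation \eqref{SDE:deltaX} for $\delta X$: it is a linear SDE with jump with coefficients that are affine in $(\delta X, \de v)$ and in the independent-copy term $\widehat{\delta X}$. Since all the derivatives $b_x, b_v, D_y\frac{db}{d\nu}$ and their $\sigma$- and $\gamma$-analogues are bounded by $L$ under (A1), a fixed-point argument in $\sr_\f^2(0,T)$ (treating the law-dependent term via $\he[\cdots]$ and the inequality $\e|\widehat{\delta X_s}|^2 = \e|\delta X_s|^2$) gives existence and uniqueness; the Burkholder--Davis--Gundy inequality applied to both the Brownian and compensated-Poisson stochastic integrals, together with Gr\"onwall, yields \eqref{deltaX:boundedness}. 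This part is routine but must be done with care because the jump integral contributes $\e\int_0^t\int_E |\cdots|^2\lambda(de)\,ds$ terms, bounded using $\lambda(E)<\infty$ and the $L$-bound on $\gamma_x,\gamma_v, D_y\frac{d\gamma}{d\nu}$.

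The core of the argument is the convergence \eqref{Delta_delta_distance}. Writing $\eta^\epsilon_t := \frac{\de X^\epsilon_t}{\epsilon} - \delta X_t$, I would subtract the SDE for $\delta X$ from the (rescaled) SDE satisfied by $\frac{\de X^\epsilon}{\epsilon}$, the latter obtained by taking the difference of the state equations \eqref{SDE^v} for $v^\epsilon$ and $u$ and dividing by $\epsilon$. Using the fundamental theorem of calculus, each coefficient difference such as $b(s,X^\epsilon_s,\lr(X^\epsilon_s),v^\epsilon_s) - b(s,Y_s,\lr(Y_s),u_s)$ is rewritten as an integral $\int_0^1 \frac{d}{d\tau} b(s, \tau$-interpolated arguments$)\,d\tau$, producing terms of the form $(\text{derivative at interpolated point})\cdot\frac{\de X^\epsilon_s}{\epsilon}$ plus $(\text{derivative})\cdot\de v_s$ plus the corresponding measure-derivative term evaluated along an interpolated law. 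Then $\eta^\epsilon$ solves a linear SDE with the same leading structure as \eqref{SDE:deltaX} but driven by a remainder $\rho^\epsilon$ consisting of the differences between derivatives-at-interpolated-points and derivatives-at-$\theta_s$, multiplied by $\frac{\de X^\epsilon_s}{\epsilon}$ and $\de v_s$. Applying BDG and Gr\"onwall to the $\eta^\epsilon$-equation reduces the claim to showing $\e\int_0^T|\rho^\epsilon_s|^2\,ds \to 0$.

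The main obstacle is precisely this convergence of the remainder $\rho^\epsilon \to 0$ in $L^2(0,T;\br^n)$. The two ingredients are: (i) $\frac{\de X^\epsilon}{\epsilon}$ is bounded in $\sr_\f^2(0,T)$ uniformly in $\epsilon$ — this follows from Lemma~\ref{lem:control-state}-type a priori estimates applied to the equation for $\de X^\epsilon = X^\epsilon - Y$, using $\e\int_0^T|\de v_t|^2dt<\infty$ and linear growth; and more strongly $\e\sup_t|\de X^\epsilon_t|^2 \le C\epsilon^2\,\e\int_0^T|\de v_t|^2\,dt$, so $\de X^\epsilon_t \to 0$ in $\sr_\f^2$; (ii) the continuity (indeed uniform continuity on bounded sets, or $L$-Lipschitz continuity from (A1)) of the derivatives $b_x, D_y\frac{db}{d\nu}$, etc., in all arguments, combined with $X^\epsilon_s \to Y_s$, $v^\epsilon_s \to u_s$ and $W_2(\lr(X^\epsilon_s),\lr(Y_s))\to 0$. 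One passes to the limit via dominated convergence: the integrand of $\e\int_0^T|\rho^\epsilon_s|^2ds$ tends to $0$ a.e. $(s,\omega)$ along a subsequence by continuity, and is dominated by an integrable bound of the form $C(1 + |X^\epsilon_s|^2 + |Y_s|^2 + |v_s|^2 + |u_s|^2)\big(|\tfrac{\de X^\epsilon_s}{\epsilon}|^2 + |\de v_s|^2\big)$; some care is needed because the dominating function itself depends on $\epsilon$, so one first extracts a subsequence along which $X^\epsilon \to Y$ a.s.\ uniformly in $t$ and $\frac{\de X^\epsilon}{\epsilon}$ converges, and uses the boundedness of the derivatives ($\le L$) to get a genuinely $\epsilon$-independent dominating function $C L^2(1+\sup_s|\tfrac{\de X^\epsilon_s}{\epsilon}|^2 + |\de v_s|^2)$, which is uniformly integrable by (i). The law-derivative terms are handled identically, replacing the pathwise evaluation by $\he[\cdot]$ and using that $W_2(\lr(X^\epsilon_s),\lr(Y_s))\le \|\de X^\epsilon_s\|_2 \to 0$ together with the joint continuity and linear growth of $D_y\frac{db}{d\nu}$, etc. This closes the argument.
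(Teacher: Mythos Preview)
Your proposal is correct and follows essentially the same route as the paper: write the SDE for $\frac{\de X^\epsilon}{\epsilon}$ via the mean-value representation at interpolated points $\theta^{\epsilon,h}_s$, establish the uniform-in-$\epsilon$ bound $\e\big[\sup_t|\tfrac{\de X^\epsilon_t}{\epsilon}|^2\big]\le C(L,T)\,\e\int_0^T|\de v_t|^2\,dt$, subtract to obtain a linear SDE for $\chi^\epsilon_t:=\tfrac{\de X^\epsilon_t}{\epsilon}-\delta X_t$ with remainder terms $\alpha^{b,\epsilon},\alpha^{\sigma,\epsilon},\alpha^{\gamma,\epsilon}$, apply Gr\"onwall to reduce to $\e\int_0^T|\alpha^{\cdot,\epsilon}_s|^2\,ds\to 0$, and conclude by dominated convergence using the $L$-boundedness of the derivatives in (A1) (which gives the $\epsilon$-independent majorant directly, so your subsequence extraction is unnecessary).
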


Now we define the adjoint process as follows:
\begin{equation}\label{adjoint}
    \begin{aligned}
        P_t=\ & g_x(Y_T,\lr(Y_T))+\he\left[D_y\frac{dg}{d\nu}\left(\hyT,\lr(Y_T)\right)(Y_T)\right]\\
        &+\int_t^T \bigg\{ b_x\left(s,\theta_s\right)^\top P_s+\sum_{j=1}^n \sigma^j_x \left(s,\theta_s\right)^\top Q^j_s+\int_E \gamma_x\left(s,\theta_s,e\right)^\top R_s(e)\lambda(de)+f_x\left(s,\theta_s\right)\\
        &\qquad\qquad +\he\bigg[\left(D_y\frac{db}{d\nu}\left(s,\widehat{\theta_s}\right)\left(Y_s\right)\right)^\top \hps+\sum_{j=1}^n \left(D_y\frac{d\sigma^j}{d\nu}\left(s,\widehat{\theta_s}\right)\left(Y_s\right)\right)^\top \hqsj \\
        &\qquad\qquad\qquad +\int_E \left(D_y\frac{d\gamma}{d\nu}\left(s,\widehat{\theta_s},e\right)\left(Y_s\right)\right)^\top \hrs(e) \lambda(de)+ D_y\frac{df}{d\nu}\left(s,\widehat{\theta_s}\right)\left(Y_s\right) \bigg]\bigg\}ds\\
        &-\int_t^T Q_s dB_s-\int_t^T\int_E R_s(e) \mathring{N}(de,ds), \quad t\in[0,T].
    \end{aligned}
\end{equation}
Equation \eqref{adjoint} is a BSDE for $(P,Q,R)$ driven by the Brownian motion $B$ and the jump process $\mathring{N}$. We have the following well-posedness result, whose proof is given in Appendix~\ref{pf:lem:adjoint}.

\begin{lemma}\label{lem:adjoint}
    Under Assumptions (A1) and (A2), given $u$ and $Y$, there exists a unique solution $(P,Q,R)\in\sr_\f^2(0,T)\times\mr_\f^2(0,T)\times\kr_{\f,\lambda}^2(0,T)$ of BSDE \eqref{adjoint}, and the solution satisfies 
    \begin{equation}\label{adjoint:boundedness}
        \e\bigg[\sup_{0\le t\le T}|P_t|^2+\int_0^T \bigg(|Q_t|^2 +\int_E |R_t(e)|^2 \lambda(de) \bigg) dt\bigg]\le C(L,T)\e\left[1+|\xi|^2+\int_0^T  |u_t|^2dt\right].
    \end{equation}
\end{lemma}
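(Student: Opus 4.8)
The plan is to read \eqref{adjoint} as a linear mean-field backward SDE with jumps and solve it by the standard contraction/fixed-point machinery for BSDEs, the only non-routine ingredient being the McKean--Vlasov coupling through the $\he[\cdot]$ terms. First I would put \eqref{adjoint} in the generic form
\[
P_t=\Xi+\int_t^T F(s,P_s,Q_s,R_s)\,ds-\int_t^T Q_s\,dB_s-\int_t^T\int_E R_s(e)\,\mathring N(de,ds),
\]
with terminal value $\Xi:=g_x(Y_T,\lr(Y_T))+\he[D_y\frac{dg}{d\nu}(\hyT,\lr(Y_T))(Y_T)]$, and with the driver $F$ decomposed into: a piece affine in $(P,Q,R)$ with coefficients $b_x(s,\theta_s),\sigma^j_x(s,\theta_s),\gamma_x(s,\theta_s,\cdot)$; a mean-field piece collecting the remaining $\he[\cdots]$ terms, again affine in the independent copies $\hps,\hqsj,\hrs$ with coefficients $D_y\frac{db}{d\nu}(s,\widehat{\theta_s})(Y_s)$ etc.; and a ``free'' inhomogeneous term $f_x(s,\theta_s)+\he[D_y\frac{df}{d\nu}(s,\widehat{\theta_s})(Y_s)]$. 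Using Assumption (A2) (linear growth of $g_x$ and of $D_y\frac{dg}{d\nu}$) together with $Y\in\sr_\f^2(0,T)$ and the estimate \eqref{lem:control-state-1} of Lemma~\ref{lem:control-state}, one verifies $\Xi\in L^2_{\f_T}$; likewise the free term lies in $\mr_\f^2(0,T)$, since $Y\in\sr_\f^2(0,T)$ and $u\in\mr_\f^2(0,T)$, with $L^2$-norm already dominated by $C(L,T)\,\e[1+|\xi|^2+\int_0^T|u_t|^2\,dt]$.

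For existence and uniqueness I would use that, by Assumption (A1), the coefficients $b_x,\sigma^j_x,\gamma_x$ and the mean-field coefficients $D_y\frac{db}{d\nu},D_y\frac{d\sigma^j}{d\nu},D_y\frac{d\gamma}{d\nu}$ are all bounded by $L$, so that $F$ is Lipschitz in $(P,Q,R)$ pointwise in $\omega$ and its mean-field terms, e.g.\ $P'\mapsto\he[(D_y\frac{db}{d\nu}(s,\widehat{\theta_s})(Y_s))^\top\widehat{P'}]$, are Lipschitz in the $L^2(\Omega)$-norm with constant $\le L$. Freezing the $\he[\cdot]$ terms at a current iterate turns \eqref{adjoint} into a standard linear BSDE with jumps, whose well-posedness in $\sr_\f^2(0,T)\times\mr_\f^2(0,T)\times\kr_{\f,\lambda}^2(0,T)$ is classical (cf.\ \cite{tang1994necessary,Oksendal-Sulem}); I would then check that the resulting solution map is a contraction on that product space for the weighted norm $\e[\int_0^T e^{\beta t}(|P_t|^2+|Q_t|^2+\|R_t(\cdot)\|_{L_\lambda^2}^2)\,dt]$ with $\beta$ large enough, via the usual estimate obtained from It\^o's formula applied to $|P|^2$; uniqueness follows from the same computation. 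Alternatively one may cite an off-the-shelf well-posedness result for mean-field BSDEs with jumps, in the spirit of \cite{LI20183118}.

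Lastly, the a priori bound \eqref{adjoint:boundedness} would come from applying It\^o's formula to $|P_t|^2$ on $[t,T]$ and taking expectations: the local-martingale terms drop out (and the Burkholder--Davis--Gundy inequality upgrades the estimate to the $\sup_t|P_t|^2$ bound), the linear and mean-field cross-terms are handled by Young's inequality so as to absorb $\e[\int|Q_s|^2ds]$ and $\e[\int\|R_s(\cdot)\|_{L_\lambda^2}^2ds]$ on the left-hand side, and Gronwall's inequality closes the argument, leaving a right-hand side of the form $C(L,T)\big(\e|\Xi|^2+\e\int_0^T|\text{(free term)}_s|^2ds\big)$, which by the first paragraph and \eqref{lem:control-state-1} is $\le C(L,T)\,\e[1+|\xi|^2+\int_0^T|u_t|^2dt]$. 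I expect the main obstacle to be purely bookkeeping: carrying the mean-field $\he[\cdot]$ terms through the contraction and the Gronwall estimates with $L^2(\Omega)$-Lipschitz constants (so that the self-referential dependence on $(P,Q,R)$ genuinely closes), while simultaneously keeping track of the $L^2_\lambda(E)$-norm attached to the jump integrand $R$ --- conceptually standard, but the step where care is most needed.
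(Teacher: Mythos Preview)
Your proposal is correct and follows essentially the same route as the paper: existence and uniqueness are delegated to standard well-posedness results for (mean-field) BSDEs with jumps, and the a priori bound \eqref{adjoint:boundedness} is obtained by applying It\^o's formula to $|P_t|^2$, using Young's inequality to absorb the $Q$- and $R$-cross terms, the Burkholder--Davis--Gundy inequality for the $\sup_t|P_t|^2$ part, Gr\"onwall's inequality, and finally the state estimate \eqref{lem:control-state-1}. The only cosmetic difference is that the paper first writes $P_t$ as a conditional expectation before invoking BDG, whereas you work directly from It\^o's formula; both arrive at the same inequalities.
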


We now compute $\frac{d}{d\epsilon}J(u+\epsilon\de v)|_{\epsilon=0}$ by using BSDE \eqref{adjoint}. The proof of the following result is given in Appendix~\ref{pf:lem:dJ}. 

\begin{lemma}\label{lem:dJ}
    Under Assumptions (A1) and (A2), for the optimal control $u$, the corresponding state process $Y$ and the adjoint process defined in \eqref{adjoint}, we have
    \begin{align}
        &\frac{d}{d\epsilon}J(u+\epsilon\de v)\bigg|_{\epsilon=0}\notag \\
        =\ &\e\bigg\{\int_0^T \bigg[b_v\left(s,\theta_s\right)^\top P_s +\sum_{j=1}^n \sigma^j_v\left(s,\theta_s\right)^\top Q_s^j +\int_E \gamma_v\left(s,\theta_s,e\right)^\top R_s(e) \lambda(de)+f_v(s,\theta_s) \bigg]^\top \de v_s ds \bigg\}. \label{lem:dJ_0}
    \end{align}
\end{lemma}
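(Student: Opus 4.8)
The plan is to compute the Gâteaux derivative $\frac{d}{d\epsilon}J(u+\epsilon\de v)|_{\epsilon=0}$ directly from the definition \eqref{cost} of $J$, first expressing it in terms of the variation process $\delta X$ from Lemma~\ref{lem:rhoX}, and then eliminating $\delta X$ by a duality (integration-by-parts) argument against the adjoint triple $(P,Q,R)$ solving \eqref{adjoint}. First I would differentiate $J(u+\epsilon\de v)=\e[\int_0^T f(s,X^\epsilon_s,\lr(X^\epsilon_s),v^\epsilon_s)\,ds+g(X^\epsilon_T,\lr(X^\epsilon_T))]$ under the expectation and integral. Using Lemma~\ref{lem:rhoX} (convergence of $\de X^\epsilon/\epsilon$ to $\delta X$ in $\sr_\f^2$) together with the growth and Lipschitz bounds on the derivatives of $f,g$ in Assumption (A2), and the $L^2$ bounds on $X^v$ (Lemma~\ref{lem:control-state}) and on $\delta X$ \eqref{deltaX:boundedness}, a dominated-convergence argument gives
\begin{align*}
    \frac{d}{d\epsilon}J(u+\epsilon\de v)\bigg|_{\epsilon=0}
    =\ &\e\bigg[\int_0^T \Big(f_x(s,\theta_s)^\top \delta X_s + f_v(s,\theta_s)^\top \de v_s\Big)ds + g_x(Y_T,\lr(Y_T))^\top \delta X_T\bigg]\\
    &+\e\bigg[\int_0^T \he\Big[\big(D_y\tfrac{df}{d\nu}(s,\widehat{\theta_s})(Y_s)\big)^\top \widehat{\delta X_s}\Big]ds + \he\Big[\big(D_y\tfrac{dg}{d\nu}(\widehat{Y_T},\lr(Y_T))(Y_T)\big)^\top \widehat{\delta X_T}\Big]\bigg].
\end{align*}
Here the terms involving the measure-derivatives arise from differentiating the dependence on $\lr(X^\epsilon_s)$; by the joint-continuity/linear-functional-derivative formalism of Subsection~\ref{sec:notation} and Fubini, the law-derivative contribution can be rewritten using independent copies, and crucially, exchanging the roles of $Y_s$ and its copy $\widehat{Y_s}$ inside $\widehat{\e}$ lets one absorb these into the same structural form as the $D_y\frac{d\cdot}{d\nu}$ terms appearing in the driver of \eqref{adjoint}.

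The core step is then to apply It\^o's formula for jump processes to $P_s^\top \delta X_s$ on $[0,T]$. Since $\delta X_0=0$ and $P_T = g_x(Y_T,\lr(Y_T))+\he[D_y\frac{dg}{d\nu}(\widehat{Y_T},\lr(Y_T))(Y_T)]$, the boundary terms reproduce exactly the $g$-terms above. The $ds$-drift produced by It\^o's product rule pairs the drift of $\delta X$ in \eqref{SDE:deltaX} with $P_s$, the drift of $P$ in \eqref{adjoint} with $\delta X_s$, the quadratic-covariation (Brownian) term $\sum_j (\sigma^j_x \delta X_s + \sigma^j_v \de v_s + \he[\cdots])^\top Q^j_s$, and the jump-covariation term $\int_E(\gamma_x \delta X_{s-}+\gamma_v\de v_s+\he[\cdots])^\top R_s(e)\lambda(de)$; the martingale parts ($\int Q\,dB$, $\int\int R\,\mathring N$, and the $\delta X$-martingales) have zero expectation by the integrability from Lemmas~\ref{lem:control-state}, \ref{lem:rhoX}, \ref{lem:adjoint}. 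Taking expectations and using the Fubini/copy-swap identity of the form $\e[\he[(D_y\frac{db}{d\nu}(s,\theta_s)(\widehat{Y_s}))\widehat{\delta X_s}]^\top P_s] = \e[\he[(D_y\frac{db}{d\nu}(s,\widehat{\theta_s})(Y_s))^\top \widehat{P_s}]^\top \delta X_s]$ (and likewise for $\sigma^j,\gamma,f,g$), all terms containing $\delta X_s$ cancel against the driver of $P$, leaving precisely the terms $f_x^\top\delta X_s$ and $g_x^\top\delta X_T$ on one side expressed via $b_v,\sigma^j_v,\gamma_v,f_v$ paired with $\de v_s$ on the other. Substituting back yields \eqref{lem:dJ_0}.

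The main obstacle is the bookkeeping of the mean-field (law-derivative) terms: one must carefully justify the exchange of expectation over independent copies (Fubini together with the symmetry lemma that swaps $(\omega,\widehat\omega)$), and check that the growth bounds in (A2) — which involve the extra $|y|$ in $D_y\frac{df}{d\nu}$ and $D_y\frac{dg}{d\nu}$ — combined with $\e[\sup_t|Y_t|^2]<\infty$ and $\e[\sup_t|\delta X_t|^2]<\infty$ make every such cross-term integrable so that Fubini and dominated convergence genuinely apply. The purely "classical" (non-mean-field) part of the computation is the standard duality between a linear forward SDE with jumps and its adjoint BSDE with jumps, which is routine once the jump It\^o product formula is invoked; the regularity and integrability needed for differentiating $J$ under the integral has effectively been prepared by Lemmas~\ref{lem:control-state} and \ref{lem:rhoX}.
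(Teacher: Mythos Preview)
Your proposal is correct and follows essentially the same route as the paper: differentiate $J$ to obtain the expression in terms of $\delta X$ (the paper's \eqref{lem:dJ_1}), apply It\^o's product formula to $P_s^\top\delta X_s$ (the paper's \eqref{lem:dJ_2}), and use the Fubini/copy-swap identities (the paper's \eqref{fact:Fubini}) together with the terminal condition of $P$ to cancel all $\delta X$-terms, leaving only the $\de v$-terms. One small notational slip: in your displayed formula for the derivative, the hats on the law-derivative terms are misplaced (e.g.\ the direct differentiation gives $\he[(D_y\tfrac{dg}{d\nu}(Y_T,\lr(Y_T))(\widehat{Y_T}))^\top\widehat{\delta X_T}]$, not with $\widehat{Y_T}$ in the first slot), but you correctly identify that the copy-swap fixes this to match the adjoint driver, so the argument goes through.
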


As a direct consequence of Lemma~\ref{lem:dJ} and the very nature that $u$ is predictable, we now have the following necessary maximum principle for Problem $\left(\mathbf{P}^{0,\xi}\right)$. 

\begin{theorem}
    Under Assumptions (A1) and (A2), suppose that $u\in\mr_\f^2(0,T)$ is an optimal control for Problem $\left(\mathbf{P}^{0,\xi}\right)$, $Y\in\sr_\f^2(0,T)$ is the associated state process and $(P,Q,R)\in \sr_\f^2(0,T)\times\mr_\f^2(0,T)\times\kr_{\f,\lambda}^2(0,T)$ is the corresponding adjoint process. Then, the following condition holds:
    \begin{equation}\label{optimal_condition}
        L_v\left(t,Y_{t-},\lr(Y_{t-}),u_t,P_{t-},Q_t,R_t\right)=0,\quad  t\in[0,T],\ a.e., \quad \mathbb{P}-a.s..
    \end{equation}
\end{theorem}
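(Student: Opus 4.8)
The plan is to deduce \eqref{optimal_condition} from Lemma~\ref{lem:dJ} by the standard fundamental lemma of the calculus of variations, exploiting that the admissible set $\mr_\f^2(0,T)$ is a linear space carrying no pointwise constraint on the control.

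First I would fix an arbitrary perturbation $\de v\in\mr_\f^2(0,T)$. Since $u+\epsilon\de v\in\mr_\f^2(0,T)$ for every $\epsilon>0$ and $u$ is optimal for $\left(\mathbf{P}^{0,\xi}\right)$, we have $J(u+\epsilon\de v)\ge J(u)$; dividing by $\epsilon$ and letting $\epsilon\downarrow0$, Lemma~\ref{lem:dJ} shows the right-hand side of \eqref{lem:dJ_0} is nonnegative. Applying this to $-\de v$ in place of $\de v$ gives the reverse inequality, so in fact
\begin{equation*}
\e\bigg\{\int_0^T \bigg[b_v\left(s,\theta_s\right)^\top P_s +\sum_{j=1}^n \sigma^j_v\left(s,\theta_s\right)^\top Q_s^j +\int_E \gamma_v\left(s,\theta_s,e\right)^\top R_s(e) \lambda(de)+f_v(s,\theta_s) \bigg]^\top \de v_s \, ds \bigg\}=0
\end{equation*}
for every $\de v\in\mr_\f^2(0,T)$.

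Next I would introduce the process $\Lambda_t := L_v\left(t,Y_{t-},\lr(Y_{t-}),u_t,P_{t-},Q_t,R_t\right)$ and observe, by differentiating \eqref{Lagrangian} in $v$, that $\Lambda_t$ equals $b_v(t,Y_{t-},\lr(Y_{t-}),u_t)^\top P_{t-}+\sum_{j=1}^n \sigma^j_v(t,Y_{t-},\lr(Y_{t-}),u_t)^\top Q^j_t+\int_E \gamma_v(t,Y_{t-},\lr(Y_{t-}),u_t,e)^\top R_t(e)\lambda(de)+f_v(t,Y_{t-},\lr(Y_{t-}),u_t)$. Because $Y$ and $P$ are c\`adl\`ag, $Y_{s-}=Y_s$ and $P_{s-}=P_s$ off a countable set, so $\Lambda$ coincides $ds\otimes d\mathbb{P}$-a.e. with the bracketed integrand in \eqref{lem:dJ_0}; moreover $\Lambda$ is $\f$-predictable (being built from $u$, $Y_{\cdot-}$, $P_{\cdot-}$, $Q$, $R$), and by the bounds on $b_v,\sigma_v,\gamma_v,f_v$ in Assumptions (A1)--(A2) together with the estimates of Lemmas~\ref{lem:control-state} and \ref{lem:adjoint} it satisfies $\e\big[\int_0^T|\Lambda_t|^2dt\big]<\infty$, i.e. $\Lambda\in\mr_\f^2(0,T)$. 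Hence $\Lambda$ is itself an admissible perturbation, and taking $\de v=\Lambda$ in the displayed identity yields $\e\big[\int_0^T |\Lambda_t|^2\,dt\big]=0$, which is precisely \eqref{optimal_condition}.

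The argument is essentially routine; the only points requiring a little care are (a) the passage from one-sided optimality to the two-sided vanishing, which is immediate here thanks to the linear, unconstrained structure of $\mr_\f^2(0,T)$, and (b) verifying that the candidate process $\Lambda$ is predictable and square-integrable so that it is a legitimate test perturbation. I do not expect any genuine obstacle beyond these bookkeeping checks, since all the analytic work (differentiability of $J$, well-posedness and estimates for the adjoint BSDE) has already been done in the preceding lemmas.
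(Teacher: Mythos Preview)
Your proposal is correct and follows exactly the approach the paper indicates: the theorem is stated there as ``a direct consequence of Lemma~\ref{lem:dJ} and the very nature that $u$ is predictable,'' with no further details given. Your write-up simply makes explicit the standard fundamental-lemma-of-variations argument (two-sided vanishing of the G\^ateaux derivative, then testing with $\de v=\Lambda$) and the bookkeeping checks on predictability and square-integrability of $\Lambda$, all of which are routine under Assumptions (A1)--(A2).
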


The controlled SDE \eqref{SDE^v} for $Y$ associated with the optimal control $u$, the corresponding BSDE \eqref{adjoint} for $(P,Q,R)$, and the optimality condition \eqref{optimal_condition} altogether give the following system of forward-backward equations with jump diffusion (together with the optimality condition) for $(Y,P,Q,R,u)\in \left(\sr_\f^2\times\sr_\f^2\times\mr_\f^2\times\kr_{\f,\lambda}^2\times\mr_\f^2\right)(0,T)$:
\begin{equation}\label{FBSDE:MFTC}
    \left\{
        \begin{aligned}
            &Y_t=\xi+\int_0^t b\left(s,\theta_s\right) ds +\int_0^t \sigma \left(s,\theta_s\right)dB_s+\int_0^t \int_E \gamma\left(s,\theta_{s-},e\right) \mathring{N}(de,ds),\\
            &P_t= -\int_t^T Q_s dB_s-\int_t^T\int_E R_s(e) \mathring{N}(de,ds)+g_x(Y_T,\lr(Y_T))+\he\left[D_y\frac{dg}{d\nu}\left(\hyT,\lr(Y_T)\right)(Y_T)\right]\\
            &\qquad +\int_t^T \bigg\{ b_x\left(s,\theta_s\right)^\top P_s+\sum_{j=1}^n \sigma^j_x \left(s,\theta_s\right)^\top Q^j_s+\int_E \gamma_x\left(s,\theta_s,e\right)^\top R_s(e)\lambda(de)+f_x\left(s,\theta_s\right)\\
            &\qquad\qquad\qquad +\he\bigg[\left(D_y\frac{db}{d\nu}\left(s,\widehat{\theta_s}\right)\left(Y_s\right)\right)^\top \hps+\sum_{j=1}^n \left(D_y\frac{d\sigma^j}{d\nu}\left(s,\widehat{\theta_s}\right)\left(Y_s\right)\right)^\top \hqsj \\
            &\qquad\qquad\qquad\qquad +\int_E \left(D_y\frac{d\gamma}{d\nu}\left(s,\widehat{\theta_s},e\right)\left(Y_s\right)\right)^\top \hrs(e) \lambda(de)+ D_y\frac{df}{d\nu}\left(s,\widehat{\theta_s}\right)\left(Y_s\right) \bigg]\bigg\}ds, \\
            &b_v\left(t,\theta_{t-}\right)^\top P_{t-} +\sum_{j=1}^n \sigma^j_v\left(t,\theta_{t-}\right)^\top Q_t^j +\int_E \gamma_v\left(t,\theta_{t-},e\right)^\top R_t(e) \lambda(de)+f_v(t,\theta_{t-})=0, \quad t\in[0,T].
        \end{aligned}
    \right.
\end{equation}
Here and in the rest of this article, we always denote by $\theta_t:=(Y_t,\lr(Y_t),u_t)$ for $t\in[0,T]$; and since $u\in\mr_\f^2(0,T)$ is predictable, we also have $\theta_{t-}:=(Y_{t-},\lr(Y_{t-}),u_t)$. 

\subsection{Sufficient condition}\label{subsec:sufficient}

In this subsection, we give the sufficient condition for Problem $\left(\mathbf{P}^{0,\xi}\right)$ under the following two additional assumptions (A3) and (A4). Namely, (A3) is an assumption on the dependence of the drift coefficient $b$ and the diffusion coefficients $\sigma^j$ on different control argument components 
; and it is indeed an extension of the commonly used control-independent assumption on the diffusion coefficient $\sigma$ (see \cite{SA,AB13} for instance) or the usual condition of linear-dependence on control condition for $\sigma$ (see \cite{barreiro2019linear,AB11,CR,li2020indefinite} for instance). (A4) is a convexity assumption of the cost coefficients in $x$, $m$ and $v$. 

{\bf (A3)} Let $0<d_0\le d$ and $0\le d_1,\dots,d_n\le d$ with $\sum_{j=0}^n d_j=d$. For any $v\in\brd$, we write $v=\left(v^0,v^1,\dots,v^n\right)\in \br^{d_0}\times\br^{d_1}\times\dots\times\br^{d_n}$, with $v^j$ being a sub-vector of $v$ for any $j$ with $d_j>0$.
(i) There exists a map
\begin{align}\label{def:B:v^0}
    B:[0,T]\times\brn\times\pr_2(\brn)\times\br^{d_0}\to\brn,
\end{align}
such that 
\begin{align*}
    b(t,x,m,v)=B\left(t,x,m,v^0\right),\quad v\in\brd.
\end{align*}
with the function $B$ satisfying Assumption (A1), and for any $t\in[0,T]$, $x,x'\in\brn$, $m,m'\in \pr_2(\brn)$, $v^0,{v'}^0\in\br^{d_0}$ and $y,y'\in\brn$,  
\begin{align}
    &\left|B_x\left(t,x',m',{v'}^0\right)- B_x\left(t,x,m,v^0\right)\right| \le \frac{L_0|x'-x|+L_0 W_2(m,m')+L_1\left|{v'}^0-v^0\right|}{1+|x|\vee\left|x'\right|+ |m|_1\vee\left|m'\right|_1+\left|v^0\right|\vee\left|{v'}^0\right|}, \notag \\
    &\left|D_y \frac{dB}{d\nu}\left(t,x',m',{v'}^0\right)(y)-D_y \frac{dB}{d\nu}\left(t,x,m,v^0\right)(y)\right| \notag\\
    &\ \qquad\qquad\qquad\qquad\qquad\qquad\qquad\qquad \le \frac{L_0|x'-x|+L_0 W_2(m,m')+L_0|y'-y| +L_1\left|{v'}^0-v^0\right|}{1+|x|\vee\left|x'\right|+ |m|_1\vee\left|m'\right|_1+\left|v^0\right|\vee\left|{v'}^0\right|},\notag\\
    &\left|B_{v^0}\left(t,x',m',{v'}^0\right)- B_{v^0}\left(t,x,m,v^0\right)\right| \le \frac{L_1|x'-x|+L_1 W_2(m,m')+L_2\left|{v'}^0-v^0\right|}{1+|x|\vee\left|x'\right|+ |m|_1\vee\left|m'\right|_1+\left|v^0\right|\vee\left|{v'}^0\right|},\label{generic:condition:b}
\end{align}
for some $0\le L_0,L_1,L_2\le L$; and there exists $\lambda_0>0$, such that for any $(t,x,m)\in [0,T]$,
\begin{align}\label{lambda:condition:B}
    &\left(B_{v^0}\right)\left(B_{v^0}\right)^\top\left(t,x,m,v^0\right) \geq \lambda_0 I_n,\quad \forall v^0\in\br^{d_0}.
\end{align}
(ii) For $1\le j\le n$ with $d_j>0$, there exists a map
\begin{align*}
    &\qquad A^j:[0,T]\times\brn\times\pr_2(\brn)\times\br^{d_j}\to\brn,
\end{align*}
such that 
\begin{align*}
    \sigma^j(t,x,m,v)=A^j\left(t,x,m,v^j\right), \quad v\in\brd,
\end{align*}
with the functions $A^j$ satisfying Assumption (A1), and for any $t\in[0,T]$, $x,x'\in\brn$, $m,m'\in \pr_2(\brn)$, $v^j,{v'}^j\in\br^{d_j}$ and $y,y'\in\brn$, 
\begin{align}
    &\left|A^j_x\left(t,x',m',{v'}^j\right)- A^j_x\left(t,x,m,v^j\right)\right|\le \frac{L_0|x'-x|+L_0 W_2(m,m')+L_1\left|{v'}^j-v^j\right|}{1+|x|\vee\left|x'\right|+ |m|_1\vee\left|m'\right|_1+\left|v^j\right|\vee\left|{v'}^j\right|}  ,\notag\\
    &\left|D_y \frac{dA^j}{d\nu}\left(t,x',m',{v'}^j\right)(y')-D_y \frac{dA^j}{d\nu}\left(t,x,m,v^j\right)(y)\right| \notag \\
    &\ \qquad\qquad\qquad\qquad\qquad\qquad\qquad\qquad \le \frac{L_0|x'-x|+L_0W_2(m,m')+L_0|y'-y|+L_1\left|{v'}^j-v^j\right|}{1+|x|\vee\left|x'\right|+ |m|_1\vee\left|m'\right|_1+\left|v^j\right|\vee\left|{v'}^j\right|},\notag\\
    &\left|A^j_{v^j}\left(t,x',m',{v'}^j\right)- A^j_{v^j}\left(t,x,m,v^j\right)\right| \le \frac{L_1|x'-x|+L_1 W_2(m,m')+L_2\left|{v'}^j-v^j\right|}{1+|x|\vee\left|x'\right|+ |m|_1\vee\left|m'\right|_1+\left|v^j\right|\vee\left|{v'}^j\right|};\label{generic:condition:A}
\end{align}
and for any $(t,x,m)\in [0,T]$,
\begin{align}\label{lambda:condition:A}
    &\left(A_{v^j}^j\right)\left(A_{v^j}^j\right)^\top\left(t,x,m,v^j\right) \geq \lambda_0 I_n,\quad \forall v^j\in\br^{d_j},\quad d_j>0.
\end{align}
For $1\le j\le n$ with $d_j=0$, the map $\sigma^j$ is linear in $(x,m)$, such that
\begin{align*}
    \sigma^j(t,x,m)=\sigma^j_0(t)+\sigma^j_1(t)x+\sigma^j_2(t)\int_\brn y m(dy),
\end{align*}
with the norms of the deterministic matrices $\sigma^j_0(t)$, $\sigma^j_1(t)$ and $\sigma^j_2(t)$ being bounded by $L$. \\
(iii) The map $\gamma$ is linear in $(x,m)$ and is independent on $v$, such that
\begin{align*}
    \gamma(t,x,m,e)=\gamma_0(t,e)+\gamma_1(t,e)x+\gamma_2(t,e)\int_\brn y m(dy),
\end{align*}
with the $L_\lambda^2$-norms of $\gamma_0(t,\cdot)$, $\gamma_1(t,\cdot)$ and $\gamma_2(t,\cdot)$ being bounded by $L$. \\
(iv) There exists functions $f^j:[0,T]\times\brn\times\pr_2(\brn)\times\br^{d_j}$ for $0\le j\le n$, such that for all $\brd\ni v=\left(v^0,v^1,\dots,v^n\right)\in \br^{d_0}\times\br^{d_1}\times\dots\times\br^{d_n}$,
\begin{align*}
    f(t,x,m,v)=\sum_{j=0}^n f^j\left(t,x,m,v^j\right),
\end{align*}
with the functions $f^j$ satisfying 
\begin{align*}
    \left|\left(f^j_x,f^j_v\right)(t,x,m,v^j)\right|\le \ & L\left(1+|x|+|m|_1+|v^j|\right),\\
    \left|D_y\frac{df^j}{d\nu}(t,x,m,v^j)(y)\right|\le\ & L(1+|x|+|m|_1+|v^j|+|y|),
\end{align*}
and the derivatives being $L$-Lipschitz continuous in $\left(x,m,v^j,y\right)$.\\ 

We now illustrate more on our assumptions in (A3). 
    \begin{enumerate}
        \item We denote by $l$ the number of $j\in\{1,2,\dots,n\}$ such that $d_j>0$. The separable condition in the control argument $v=(v^0,\dots,v^n)$ in Assumption (A3) is actually an extension of the control-independence assumption on the diffusion coefficient $\sigma$ which is commonly used in the literature, such as \cite{SA,AB13}; indeed, (A3) include the control-independence case as a particular example. To see this, we simply let $l=0$, i.e., $d_0=d$ and $d_1=\dots=d_n=0$, then $b=B$ and $f=f^0$, and $\sigma$ is independent of $v$.
        \item The regularity condition \eqref{generic:condition:b} on the function $B$ (resp. Condition \eqref{generic:condition:A} on $A^j$ for $d_j>0$) is an extension of the standard linear assumption on $b$ (resp. $\sigma^j$) in the literature, such as \cite{barreiro2019linear,AB11,CR,li2020indefinite}. Indeed, when the drift function $B$ is linear in $x$, $m$ and $v^0$, then the derivatives $B_x$, $B_{v_0}$ and $D_y\frac{dB}{d\nu}$ are constants, and therefore, $L_0=L_1=L_2=0$. Together with Assumption (A2) for the cost coeffieients and the convexity assumption (A4), we see that our settings can include the LQ cases as particular examples. We here also provide a non-linear example for the drift coefficient in which the function $B$ is defined as follows; for simplicity, we just consider case when the dimensions $n=d_0=1$:
        \begin{align}\label{example}
            &B\left(t,x,m,v^0\right):=x+v^0+\int_\br y \; m(dy)+ \epsilon x \exp\bigg(-x^2-\left|v^0\right|^2-\left|\int_\br \phi(y)\;m(dy)\right|^2 \bigg),
        \end{align}
        where 
        \begin{align*}
            \phi(y):=\left\{
                \begin{aligned}
                    & |y|,\quad \text{for}\ |y|\geq 1;\\
                    & -\frac{1}{8} y^4 + \frac{3}{4} y^2 +\frac{3}{8}, \quad \text{for}\ |y|< 1.
                \end{aligned}
            \right.
        \end{align*}  
        Similar regularity assumptions as Condition \eqref{generic:condition:b} is also used in the literature \cite{AB13,AB10''}; the first one is for the first order MFTC with a generic drift, and the second one is for the second order mean field games with a generic drift and linear diffusion, both without jump diffusion; and the example \eqref{example} is discussed in details in \cite[Section 11]{AB10''}. 
        \item The positive definiteness condition \eqref{generic:condition:b'} (resp. Condition \eqref{lambda:condition:A}) and its \textit{Schur complement} can give a exact formulation \eqref{chara:P} (resp. \eqref{chara:Q}) for the process $P$ (reps. $Q^j$), which together with Condition \eqref{generic:condition:b} (resp. Condition \eqref{generic:condition:A}) imply the cone property \eqref{cone:PQ:without_v} for $P$ (reps. $Q^j$) (also see \cite{AB13,AB10''}). 
    \end{enumerate}

Under Assumption (A3)-(i,ii,iv), the optimality condition in System \eqref{FBSDE:MFTC} also writes
\begin{equation}\label{optimal_condition_split}
    \begin{aligned}
        &B_{v^0}\left(t,Y_{t-},\lr(Y_{t-}),u^0_t\right)^\top P_{t-} +f^0_{v^0}\left(t,Y_{t-},\lr(Y_{t-}),u^0_t\right)=0,\\
        &A^j_{v^j}\left(t,Y_{t-},\lr(Y_{t-}),u^j_t\right)^\top Q_t^j + f^j_{v^j}\left(t,Y_{t-},\lr(Y_{t-}),u^j_t\right)=0,\quad 1\le j\le n\quad\text{with}\quad d_j>0,
    \end{aligned}
\end{equation}
where we write $u_t=\left(u^0_t,u^1_t,\dots u^n_t\right)$ corresponding to the decomposition for $v$ in Assumption (A3). The relations in  \eqref{optimal_condition_split} show that the sub-vector $u^0_t$ of $u_t$ depends on $P_{t-}$, but not on $Q_t$; while for $1\le j\le n$ with $d_j>0$, the sub-vector $u^j_t$ depends on $Q^j_t$, on neither $P_{t-}$ nor $Q^{j'}_t$ for $j'\neq j$. On the other hand, under Condition \eqref{lambda:condition:B} (resp. \eqref{lambda:condition:A}), the process $P_{t-}$ (resp. $Q^j_t$) can be viewed as a map of $\left(t,Y_{t-},\lr(Y_{t-}),u^0_t\right)$ (resp. $\left(t,Y_{t-},\lr(Y_{t-}),u^j_t\right)$), from which we can give some crucial estimates of cone property (see Proposition~\ref{prop:cone} below) for the adjoint processes $P$ and $Q^j$ defined in \eqref{FBSDE:MFTC}. To do so,  we also need the following convexity assumption for $f$ and $g$.

{\bf (A4)} There exists $\lambda_v>0$ and $\lambda_x+\lambda_m> 0$, such that for any $t\in[0,T]$, $x,x'\in\brn$, $v,v'\in\brd$ and $\xi,\xi'\in L^2(\Omega,\f,\mathbb{P};\brn)$,
\begin{align*}
    f(t,x',\lr(\xi'),v')-f(t,x,\lr(\xi),v)\geq\ & f_x(t,x,\lr(\xi),v)^\top (x'-x)+f_v(t,x,\lr(\xi),v)^\top (v'-v)\\
    &+\e\left[\left(D_y\frac{df}{d\nu}(t,x,\lr(\xi),v)(\xi)\right)^\top(\xi'-\xi)\right]\\
    &+\lambda_v|v'-v|^2+\lambda_x|x'-x|^2+\lambda_m\|\xi'-\xi\|_2^2,\\
    g(x',\lr(\xi'))-g(x,\lr(\xi))\geq\ & g_x(x,\lr(\xi))^\top (x'-x)+\e\left[\left(D_y\frac{dg}{d\nu}(x,\lr(\xi))(\xi)\right)^\top(\xi'-\xi)\right];
\end{align*}
and for any $m\in\pr_2(\brn)$ and $y,y'\in\brn$,
\begin{align*}
    \frac{df}{d\nu}(t,x,m,v)(y')-\frac{df}{d\nu}(t,x,m,v)(y)\geq\ & \left(D_y\frac{df}{d\nu}(t,x,m,v)(y)\right)^\top (y'-y),\\
    \frac{dg}{d\nu}(x,m)(y')-\frac{df}{d\nu}(x,m)(y)\geq\ & \left(D_y\frac{dg}{d\nu}(x,m)(y)\right)^\top (y'-y).
\end{align*}

The expectation terms in the convexity condition of Assumption (A4) are due to the chain rule for the functionals defined on $\pr_2(\brn)$ (also see Lemma~\ref{lem:Ito}). We now give the following cone property for any processes satisfying relations \eqref{optimal_condition_split}. 

\begin{proposition}[Cone Property]\label{prop:cone}
    Under Assumptions (A3) and (A4), if the processes $(Y,u,P,Q)$ satisfy the condition \eqref{optimal_condition_split}, then, we have
    \begin{equation}\label{cone:PQ:without_v}
    \begin{aligned}
        \left|P_{t-}\right|\le\ & \frac{L^2}{\lambda_0} \left[1+\left|Y_{t-}\right|+ \left|\lr\left(Y_{t-}\right) \right|_1 +\left|u^0_t\right| \right],\\
        \left|Q_t^j\right|\le\ & \frac{L^2}{\lambda_0} \left[1+\left|Y_{t-}\right|+\left|\lr\left(Y_{t-}\right)\right|_1 +\left|u^j_t\right| \right];
    \end{aligned}
    \end{equation}
    and also 
    \begin{equation}\label{cone:u}
    \begin{aligned}
        \left|u^{0}_t\right| \le\ & \frac{L}{2\lambda_v} \left[1+\left|P_{t-}\right|+\left|Y_{t-}\right|+\left|\lr\left(Y_{t-}\right) \right|_1 \right],\\
        \left|u^{j}_t\right| \le\ & \frac{L}{2\lambda_v} \left[1+\left|Q_t^j\right|+\left|Y_{t-}\right|+ \left|\lr\left(Y_{t-}\right)\right|_1 \right].
    \end{aligned}
    \end{equation}
\end{proposition}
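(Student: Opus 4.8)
The plan is to obtain both pairs of estimates pointwise in $(t,\omega)$, directly from the optimality relations \eqref{optimal_condition_split}, with no stochastic analysis involved. The positive-definiteness conditions \eqref{lambda:condition:B}--\eqref{lambda:condition:A} will be used to invert \eqref{optimal_condition_split} so as to express $P_{t-}$ and $Q^j_t$ in terms of the cost gradients, which yields \eqref{cone:PQ:without_v}; and the uniform-convexity constant $\lambda_v$ from Assumption (A4) will be used to invert \eqref{optimal_condition_split} in the opposite direction, which yields \eqref{cone:u}. Throughout I write $x:=Y_{t-}$ and $m:=\lr(Y_{t-})$.

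\emph{Step 1: bounds on $P$ and $Q^j$.} I would start from the first line of \eqref{optimal_condition_split}, namely $B_{v^0}(t,x,m,u^0_t)^\top P_{t-}=-f^0_{v^0}(t,x,m,u^0_t)$, and left-multiply by the $n\times d_0$ matrix $B_{v^0}(t,x,m,u^0_t)$. By \eqref{lambda:condition:B} one has $B_{v^0}B_{v^0}^\top\succeq\lambda_0 I_n$, so this $n\times n$ matrix is invertible with $\|(B_{v^0}B_{v^0}^\top)^{-1}\|\le\lambda_0^{-1}$ and
\begin{align*}
P_{t-}=-\left(B_{v^0}B_{v^0}^\top\right)^{-1}(t,x,m,u^0_t)\,B_{v^0}(t,x,m,u^0_t)\,f^0_{v^0}(t,x,m,u^0_t).
\end{align*}
Taking norms and using $\|B_{v^0}\|\le L$ (by (A3)-(i), $B$ satisfies Assumption (A1)) together with the growth bound $|f^0_{v^0}(t,x,m,v^0)|\le L(1+|x|+|m|_1+|v^0|)$ from (A3)-(iv), I obtain $|P_{t-}|\le\lambda_0^{-1}L\cdot L(1+|x|+|m|_1+|u^0_t|)$, which is the first line of \eqref{cone:PQ:without_v}. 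The estimate for $Q^j_t$ is obtained in exactly the same way from the second line of \eqref{optimal_condition_split}, with $A^j$, $Q^j_t$, $u^j_t$ in place of $B$, $P_{t-}$, $u^0_t$ and \eqref{lambda:condition:A} in place of \eqref{lambda:condition:B}.

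\emph{Step 2: bounds on $u^0$ and $u^j$.} I would first observe that each $f^j(t,x,m,\cdot)$ is $\lambda_v$-uniformly convex on $\br^{d_j}$: restricting the convexity inequality for $f$ in (A4) to variations of the $j$-th control block only (and to fixed $x,m$), the additive structure $f=\sum_{j=0}^n f^j$ from (A3)-(iv) isolates $f^j$ on the left-hand side and the term $\lambda_v|{v'}^j-v^j|^2$ on the right-hand side. Consequently $v^j\mapsto f^j_{v^j}(t,x,m,v^j)$ is $2\lambda_v$-strongly monotone, and taking its arguments to be $u^0_t$ and $0$,
\begin{align*}
2\lambda_v\left|u^0_t\right|^2\le\left\langle f^0_{v^0}(t,x,m,u^0_t)-f^0_{v^0}(t,x,m,0),\,u^0_t\right\rangle.
\end{align*}
Substituting $f^0_{v^0}(t,x,m,u^0_t)=-B_{v^0}(t,x,m,u^0_t)^\top P_{t-}$ from \eqref{optimal_condition_split} and applying the Cauchy--Schwarz inequality, with $\|B_{v^0}\|\le L$ and $|f^0_{v^0}(t,x,m,0)|\le L(1+|x|+|m|_1)$, gives
\begin{align*}
2\lambda_v\left|u^0_t\right|^2\le\left(\left\|B_{v^0}\right\|\left|P_{t-}\right|+\left|f^0_{v^0}(t,x,m,0)\right|\right)\left|u^0_t\right|\le L\left(1+\left|P_{t-}\right|+|x|+|m|_1\right)\left|u^0_t\right|,
\end{align*}
and dividing by $\lambda_v\left|u^0_t\right|$ (the case $u^0_t=0$ being trivial) produces the first line of \eqref{cone:u}. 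The bound on $u^j_t$ follows identically from the second line of \eqref{optimal_condition_split} with $A^j$ and $Q^j_t$ replacing $B$ and $P_{t-}$.

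\emph{Main difficulty.} The proof is essentially pointwise linear algebra once the two ``inversion'' mechanisms are isolated, so there is no analytic obstacle. The points that require care are: (i) transferring the uniform convexity of $f$ in the full control $v$ down to each block $f^j$, which is precisely where the separable form imposed in (A3)-(iv) is used; (ii) not attempting to exploit convexity of the reduced Lagrangian $v^0\mapsto f^0(t,x,m,v^0)+B(t,x,m,v^0)^\top P_{t-}$, since $B$ need not be convex in $v^0$ --- one must retain only the monotonicity of $f^0_{v^0}$ and treat the term $B_{v^0}^\top P_{t-}$ as a Cauchy--Schwarz-controlled perturbation; and (iii) noting that only $\lambda_v$ and $\lambda_0$ enter the constants, while $\lambda_x$ and $\lambda_m$ play no role here.
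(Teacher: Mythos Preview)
Your proposal is correct and follows essentially the same approach as the paper: the paper also inverts \eqref{optimal_condition_split} via the formula $P_{t-}=-(B_{v^0}B_{v^0}^\top)^{-1}B_{v^0}f^0_{v^0}$ (and analogously for $Q^j_t$) to obtain \eqref{cone:PQ:without_v}, and then pairs the optimality condition against $u^j_t$ and invokes the convexity of $f$ in $v$ to obtain \eqref{cone:u}. Your write-up is in fact slightly more explicit about isolating the block-wise $\lambda_v$-convexity of $f^j$ from the separable structure in (A3)-(iv); note only the minor slip that in Step~2 you should divide by $2\lambda_v|u^0_t|$, not $\lambda_v|u^0_t|$, to reach the stated constant $\frac{L}{2\lambda_v}$.
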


\begin{proof}
From the relation \eqref{optimal_condition_split} and Condition \eqref{lambda:condition:A}, we know that for $1\le j\le n$ with $d_j>0$,
\begin{align}\label{chara:Q}
    Q_t^j =- \left(A^j_{v^j}\left(A^j_{v^j}\right)^\top \right)^{-1}\left(A^j_{v^j}\right)\left(f^j_{v^j}\right)\left(t,Y_{t-},\lr(Y_{t-}),u^j_t\right).
\end{align}
Then, from \eqref{lambda:condition:A}, the boundedness condition for $A^j_{v^j}$ and the growth condition for $f^j_{v^j}$, we obtain the estimate for $Q_t^j$ in \eqref{cone:PQ:without_v}. Again from \eqref{optimal_condition_split}, we know that 
\begin{align*}
    &\left[f^j_{v^j}\left(t,Y_{t-},\lr\left(Y_{t-}\right),u^{j}_t\right)-f_{v^j}\left(s,Y_{t-},\lr\left(Y_{t-}\right),0\right)\right]^\top u^{j}_s\\
    =\ & -\left(u^{j}_t\right)^\top A^j_{v^j}\left(t,Y_{t-},\lr\left(Y_{t-}\right),u^{j}_t\right)^\top Q^{j}_t - f_{v^j}\left(t,Y_{t-},\lr\left(Y_{t-}\right),0\right)^\top u^{j}_t,
\end{align*}
and therefore, from the convexity of $f$ in the argument $v$ in accordance with Assumption (A3), we obtain the estimate for $u^{j}_t$ in \eqref{cone:u}. Similar as \eqref{chara:Q}, we also have the following formulation of $P$:
\begin{align}\label{chara:P}
    P_{t-} =- \left(B_{v^0}\left(B_{v^0}\right)^\top \right)^{-1}\left(B_{v^0}\right)\left(f^0_{v^0}\right)\left(t,Y_{t-},\lr(Y_{t-}),u^0_t\right),
\end{align}
and by applying the similar approach on $P_{t-}$ and $u_t^0$, we obtain \eqref{cone:PQ:without_v} and \eqref{cone:u}. 
\end{proof}

Proposition~\ref{prop:cone} shows that the cone property is automatically satisfied for processes $P$ and $Q^j$ satisfying relations in \eqref{optimal_condition_split} under Assumptions (A3) and (A4). The concept of the ``cone property'' was first proposed in \cite{AB10''} for the process $P$ for the study of the first order generic MFTC problems, and it is then also used in our previous work \cite{AB12,AB13,AB10'}. 
In this article, based on the assumption on the control variable $v=(v^0,\dots,v^n)$ for the coefficients $b$ and $\sigma$ in Assumption (A3), we can further extend the previous results by giving the cone properties for both $P$ and $Q$. By using on the cone property, we now establish our sufficient maximum principle for Problem $\left(\mathbf{P}^{0,\xi}\right)$. The proof of the following theorem is given in Appendix~\ref{pf:thm:suff}.

\begin{theorem}\label{thm:suff}
    Under Assumptions (A1)-(A4), suppose that the following relations hold:
    \begin{equation}\label{thm:suff:condition}
    \begin{aligned}
        &(i)\quad 2\lambda_v> \frac{L^2 L_2}{\lambda_0}, \\
        &(ii)\quad \left(2\lambda_v- \frac{L^2L_2}{\lambda_0} \right)\left[2\lambda_x+2\lambda_m- \frac{5(l+1)L^2L_0}{\lambda_0} \right]> \frac{4(l+1)L^4L_1^2}{\lambda_0^2}. \qquad\qquad
    \end{aligned}
    \end{equation}
    If FBSDEs with jumps \eqref{FBSDE:MFTC} has a unique solution 
    \begin{align*}
        (Y,P,Q,R,u)\in \left(\sr_\f^2\times\sr_\f^2\times\mr_\f^2\times\kr_{\f,\lambda}^2\times\mr_\f^2\right)(0,T),
    \end{align*}
    then $u$ is the unique optimal control for Problem $\left(\mathbf{P}^{0,\xi}\right)$, and for any control $v\in\mr_\f^2(0,T)$, we have
    \begin{align}\label{thm:suff_0}
        J(v)-J(u) \geq \left( \lambda_v- \frac{L^2L_2}{2\lambda_0}-\frac{2(l+1)L^4L_1^2}{\lambda_0\left[2\lambda_x\lambda_0+2\lambda_m\lambda_0- 5(l+1)L^2L_0\right]}\right) \int_0^T \left\|v_s-u_s\right\|_2^2 dt.
    \end{align}
\end{theorem}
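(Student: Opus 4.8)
The plan is to prove the quantitative bound \eqref{thm:suff_0}; optimality and uniqueness of $u$ then follow at once, since \eqref{thm:suff_0} shows $J(v)\ge J(u)$ for every $v\in\mr_\f^2(0,T)$, and $J(v)=J(u)$ forces $\int_0^T\|v_s-u_s\|_2^2\,ds=0$. Fix an arbitrary $v\in\mr_\f^2(0,T)$ with state $X^v$, and set $\delta X:=X^v-Y$ (so $\delta X_0=0$), $\delta v:=v-u$, and decompose $\delta v=(\delta v^0,\dots,\delta v^n)$ as in (A3).

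First I would invoke the convexity Assumption (A4), together with the chain rule for functionals on $\pr_2(\brn)$ (Lemma~\ref{lem:Ito}), to bound $J(v)-J(u)$ from below by the first-order Taylor expansions of $f$ and $g$ along $(\delta X,\delta v)$ plus the strong-convexity terms $\lambda_v\|\delta v_s\|_2^2+(\lambda_x+\lambda_m)\|\delta X_s\|_2^2$. Next I would apply the It\^o product formula for jump-diffusions to $P_s^\top\delta X_s$ on $[0,T]$, using the forward equations for $X^v$ and $Y$ and the adjoint BSDE \eqref{adjoint}; after taking expectations the martingale parts vanish and the covariation of the jump integrals contributes $\int_E R_s(e)^\top\big(\gamma(s,X^v_{s-},\lr(X^v_{s-}),v_s,e)-\gamma(s,\theta_{s-},e)\big)\lambda(de)$. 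Substituting the resulting identity for $\e[P_T^\top\delta X_T]$ into the convexity bound (and using the exchangeability of a random variable with its independent copy to match the mean-field derivative terms), every term linear in $(\delta X,\delta v)$ cancels against the adjoint drift and the optimality relation \eqref{optimal_condition} (equivalently \eqref{optimal_condition_split}), leaving
\begin{align*}
J(v)-J(u)\ \ge\ \e\int_0^T\Big\{\lambda_v|\delta v_s|^2+\lambda_x|\delta X_s|^2+\lambda_m\|\delta X_s\|_2^2+P_s^\top\rho^b_s+\sum_{j=1}^n (Q^j_s)^\top\rho^{\sigma^j}_s+\int_E R_s(e)^\top\rho^\gamma_s(e)\,\lambda(de)\Big\}ds,
\end{align*}
where $\rho^b_s,\rho^{\sigma^j}_s,\rho^\gamma_s$ denote the second-order Taylor remainders of $b$, $\sigma^j$, $\gamma$ along $(\delta X,\delta v)$.

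The core of the proof is the control of these remainders. By (A3)(iii), $\gamma$ is affine in $(x,m)$ and control-free, so $\rho^\gamma\equiv 0$, and likewise $\rho^{\sigma^j}\equiv 0$ whenever $d_j=0$. For $b=B(\cdot,v^0)$ and $\sigma^j=A^j(\cdot,v^j)$ with $d_j>0$, I would write each remainder in integrated mean-value form and estimate it via the Lipschitz bounds \eqref{generic:condition:b} and \eqref{generic:condition:A}: the key is that the denominator $1+|x|\vee|x'|+|m|_1\vee|m'|_1+|v^0|\vee|{v'}^0|$ appearing there dominates $\tfrac{L^2}{\lambda_0}\big(1+|Y_{s-}|+|\lr(Y_{s-})|_1+|u^0_s|\big)$, which by the cone property \eqref{cone:PQ:without_v} bounds $|P_{s-}|$ (and analogously $|Q^j_s|$), so that $P_{s-},Q^j_s$ get absorbed and one is left with quadratic expressions in $|\delta X_s|$, $\|\delta X_s\|_2$, $|\delta v^j_s|$. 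Taking expectations and using $W_2(\lr(X^v_s),\lr(Y_s))\le\|\delta X_s\|_2$, Cauchy--Schwarz and $\int_0^1\alpha\,d\alpha=\tfrac12$, the bounds organize into a ``pure $\delta v$'' part $\tfrac{L^2L_2}{2\lambda_0}\|\delta v_s\|_2^2$, a ``pure $\delta X$'' part $\tfrac{5(l+1)L^2L_0}{2\lambda_0}\|\delta X_s\|_2^2$ (five $L_0$-terms per coordinate, $l+1$ coordinates), and a cross part bounded, after a further Cauchy--Schwarz over the $l+1$ coordinates, by $\tfrac{2(l+1)^{1/2}L^2L_1}{\lambda_0}\|\delta v_s\|_2\|\delta X_s\|_2$.

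Combining the last two displays, the integrand is bounded below, pointwise in $s$, by $\tilde a\|\delta v_s\|_2^2+\tilde b\|\delta X_s\|_2^2-2\tilde c\,\|\delta v_s\|_2\|\delta X_s\|_2$ with $\tilde a=\lambda_v-\tfrac{L^2L_2}{2\lambda_0}$, $\tilde b=\lambda_x+\lambda_m-\tfrac{5(l+1)L^2L_0}{2\lambda_0}$ and $\tilde c=\tfrac{(l+1)^{1/2}L^2L_1}{\lambda_0}$. Condition \eqref{thm:suff:condition}(i) yields $\tilde a>0$, and (i) together with (ii) force $\tilde b>0$ (otherwise the product $(2\tilde a)(2\tilde b)$ would be $\le 0$, contradicting (ii)); completing the square, $\tilde b t^2-2\tilde c st\ge-\tfrac{\tilde c^2}{\tilde b}s^2$, leaves $(\tilde a-\tfrac{\tilde c^2}{\tilde b})\|\delta v_s\|_2^2$, and a direct computation identifies $\tilde a-\tfrac{\tilde c^2}{\tilde b}$ with the constant displayed in \eqref{thm:suff_0}, which is positive precisely by \eqref{thm:suff:condition}(ii). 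Integrating over $[0,T]$ completes the estimate. The main obstacle is the bookkeeping in the remainder estimates: one must carefully match the cone-property denominator with the Lipschitz-modulus denominator of (A3), treat the left-limits $P_{s-},\theta_{s-},\delta X_{s-}$ (harmless, since they agree with the right-continuous versions for Lebesgue-a.e.\ $s$), and keep exact track of the combinatorial constants ($5$ and $l+1$) so that the resulting coefficient is exactly the one claimed.
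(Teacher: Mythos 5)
Your proposal is correct and follows essentially the same route as the paper's proof: Itô's product formula on $P_s^\top\delta X_s$ combined with the convexity in (A4), cancellation of the linear terms via the adjoint equation, the optimality condition \eqref{optimal_condition_split} and Fubini, vanishing of the $\gamma$ and $d_j=0$ remainders by linearity, and absorption of $P,Q^j$ into the second-order remainders of $B,A^j$ through the cone property \eqref{cone:PQ:without_v} against the denominators in \eqref{generic:condition:b}--\eqref{generic:condition:A}, ending with the same constants ($\tfrac{5(l+1)L^2L_0}{2\lambda_0}$, $\tfrac{L^2L_2}{2\lambda_0}$, $\tfrac{2\sqrt{l+1}L^2L_1}{\lambda_0}$) and the same completion of the square yielding the coefficient in \eqref{thm:suff_0}. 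The only cosmetic difference is that you cite Lemma~\ref{lem:Ito} for the convexity expansion, which is not actually needed there, but this does not affect the argument.
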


In Condition \eqref{thm:suff:condition}, (i) is the optimal while (ii) is not, due to the  straightforward usage of Young's inequality. Condition \ref{thm:suff:condition} actually states that the convexity of the cost functions is required to be larger than the constants $(L_0,L_1,L_2)$ in Conditions \eqref{generic:condition:b} and \eqref{generic:condition:A} of Assumption (A3)-(i,ii). As a particular case, when $b$ and $\sigma$ are linear, then $L_0=L_1=L_2=0$, and Condition \ref{thm:suff:condition} is simply $\lambda_v>0$, $\lambda_x+\lambda_m>0$. The next proposition shows that the Lagrangian function $L$ (defined in \eqref{Lagrangian}) admits a unique minimizer in the control argument.

\begin{proposition}\label{prop:L}
    Let Assumptions (A1)-(A4) and  Condition \eqref{thm:suff:condition}-(i) be satisfied. Then, for any $(s,x,m,r(\cdot))\in [0,T]\times\brn\times\pr_2(\brn)\times L_\lambda^2(E)$ and $p,q^j\in\brn$ ($d_j>0$), the maps
    \begin{align*}
        &\br^{d_0}\ni v^0\mapsto L\left(s,x,m,v,p,q,r\right),\\
        &\br^{d_j}\ni v^j\mapsto L\left(s,x,m,v,p,q,r\right),\quad 1\le j\le n, \quad d_j>0,
    \end{align*}
    have respective unique minimizers $\varphi^0(s,x,m,p)$ and $\varphi^j\left(s,x,m,q^j\right)$, and we have
    \begin{equation}\label{prop:L:H}
        \begin{aligned}
            &H_p(s,x,m,p,q,r)= B\left(s,x,m,\varphi^0(s,x,m,p)\right);\\
            &H_{q^j}(s,x,m,p,q,r)=  A^j\left(s,x,m,\varphi^j\left(s,x,m,q^j\right)\right); \\
            &H_x(s,x,m,p,q,r)\\
            &\qquad =  L_x \Big(s,x,m,\left(\varphi^0(s,x,m,p),\varphi^1\left(s,x,m,q^1\right),\dots,\varphi^n\left(s,x,m,q^n\right)\right),p,q,r\Big);\\
            &D_y \frac{dH}{d\nu}(s,x,m,p,q,r)(\xi)\\
            &\qquad = D_y \frac{dL}{d\nu}\Big(s,x,m,\left(\varphi^0(s,x,m,p),\varphi^1\left(s,x,m,q^1\right),\dots,\varphi^n\left(s,x,m,q^n\right)\right),p,q,r\Big)(\xi).
        \end{aligned}
    \end{equation}
\end{proposition}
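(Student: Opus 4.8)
The plan is to exploit the separable structure imposed by Assumption (A3) to reduce the minimization of $L$ over $v=(v^0,v^1,\dots,v^n)$ to $l+1$ decoupled finite-dimensional problems, solve each one by combining the strong convexity of Assumption (A4) with the cone estimates of Proposition~\ref{prop:cone}, and then read off the identities \eqref{prop:L:H} from an envelope (Danskin-type) argument. First, using (A3)-(i,ii,iii,iv), I would write
\[
L(s,x,m,v,p,q,r)=\ell^0\big(v^0\big)+\sum_{1\le j\le n,\ d_j>0}\ell^j\big(v^j\big)+c(s,x,m,q,r),
\]
where $\ell^0(v^0):=f^0(s,x,m,v^0)+p^\top B(s,x,m,v^0)$, $\ell^j(v^j):=f^j(s,x,m,v^j)+(q^j)^\top A^j(s,x,m,v^j)$, and $c$ gathers all the $v$-independent contributions (the linear-in-$(x,m)$ diffusion blocks with $d_j=0$, the $v$-independent jump term, and the $f^j$ with $d_j=0$). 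Since these blocks involve disjoint coordinates of $v$, minimizing $L$ over $\brd$ is equivalent to minimizing each $\ell^j$ separately, and $L$ has a unique minimizer in $v$ iff each $\ell^j$ has one in its block.

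Next I would establish existence and uniqueness for each block. Existence: $\ell^0$ is continuous, and from the $\lambda_v$-strong convexity of $f^0$ in $v^0$ --- which follows from Assumption (A4) together with the additivity $f=\sum_j f^j$ by freezing the other blocks --- and the linear growth of $B$ from (A1), we get $\ell^0(v^0)\to+\infty$ as $|v^0|\to\infty$; hence a minimizer exists, and being a stationary point of the $C^1$ function $\ell^0$ on $\br^{d_0}$ it solves $f^0_{v^0}(s,x,m,v^0)+B_{v^0}(s,x,m,v^0)^\top p=0$, i.e.\ the first relation in \eqref{optimal_condition_split}. Uniqueness: given two solutions $v^0_1,v^0_2$ for the same $p$, I would subtract the two relations, pair with $v^0_1-v^0_2$, bound the left side from below by $2\lambda_v|v^0_1-v^0_2|^2$ via the strong monotonicity of $f^0_{v^0}$, and bound the right side from above by the Lipschitz estimate \eqref{generic:condition:b} for $B_{v^0}$ combined with the bound $|p|\le\frac{L^2}{\lambda_0}(1+|x|+|m|_1+|v^0_i|)$ holding for both $i=1,2$, which comes from \eqref{chara:P} (equivalently from the cone bound \eqref{cone:PQ:without_v} of Proposition~\ref{prop:cone}). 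This bound cancels the denominator $1+|x|+|m|_1+|v^0_1|\vee|v^0_2|$ in \eqref{generic:condition:b}, leaving exactly $\frac{L^2L_2}{\lambda_0}|v^0_1-v^0_2|^2$ on the right, so $\big(2\lambda_v-\frac{L^2L_2}{\lambda_0}\big)|v^0_1-v^0_2|^2\le0$ and Condition \eqref{thm:suff:condition}-(i) forces $v^0_1=v^0_2$. The same computation, with \eqref{generic:condition:A}, \eqref{chara:Q} and \eqref{lambda:condition:A} replacing their $B$-counterparts, handles each $\ell^j$ with $d_j>0$, yielding the unique minimizers $\varphi^0(s,x,m,p)$ and $\varphi^j(s,x,m,q^j)$.

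For the derivative formulas I would first record that $\varphi^0$ (resp.\ $\varphi^j$) is continuous in all its arguments: \eqref{cone:u} gives local boundedness in $(x,m,p)$, and along any converging sequence of arguments a subsequential limit of the minimizers is again a stationary point of the corresponding $\ell^0$ --- using the joint continuity of the coefficients and their derivatives from (A1)--(A3) and continuity of $m$ in $W_2$ --- hence equals the candidate limit by uniqueness. Then, writing $H=L(s,x,m,\varphi,p,q,r)$ with $\varphi=(\varphi^0,\varphi^1,\dots,\varphi^n)$, I would run the standard envelope squeeze: perturbing one argument (say $p$ in a direction, or $x$, or $q^j$, or $m$ along $(1-\epsilon)m+\epsilon m'$), the increment of $H$ lies between the two $L$-increments obtained by freezing $\varphi$ at the two endpoint minimizers; dividing by the perturbation size and passing to the limit, the continuity of $\varphi$ and the $C^1$- (resp.\ linear-functional-) differentiability of $L$ in that argument force both bounds to the common value $L_p=B(s,x,m,\varphi^0)$, $L_{q^j}=A^j(s,x,m,\varphi^j)$, $L_x$ evaluated at $(s,x,m,\varphi,p,q,r)$, and $\frac{dL}{d\nu}$ evaluated there (then differentiate once in $y$ for $D_y\frac{dH}{d\nu}$); here the stationarity $L_v(s,x,m,\varphi,p,q,r)=0$ is exactly what makes the $\varphi$-dependence drop out of the $x$- and $m$-derivatives. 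This gives \eqref{prop:L:H}.

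The delicate point is the uniqueness step: one has to see that the Lipschitz moduli carrying the $1+|x|+|m|_1+|v|$ denominators in Assumption (A3) combine with the cone-property growth bounds so as to hit precisely the threshold $2\lambda_v>L^2L_2/\lambda_0$ of Condition \eqref{thm:suff:condition}-(i), with nothing lost in the estimate. A secondary technical point is making the envelope argument for the measure-derivative fully rigorous --- the continuity of $\varphi$ in $W_2$ and the interchange of the $\epsilon\to0$ limit with the linear functional derivative, which is where the Lipschitz regularity of $D_y\frac{df^j}{d\nu}$ and $D_y\frac{dA^j}{d\nu}$ from (A3) is used.
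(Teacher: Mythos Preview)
Your proposal is correct and follows essentially the same route as the paper: decompose $L$ via the separable structure of (A3), obtain existence from strong convexity/coercivity, establish uniqueness by pairing the difference of first-order conditions with $v^j_1-v^j_2$ and combining the $2\lambda_v$ monotonicity with the cone bound \eqref{prop:L:pq} so that the $L_2$-Lipschitz denominator in \eqref{generic:condition:A} cancels exactly, hitting the threshold \eqref{thm:suff:condition}-(i). Your treatment of \eqref{prop:L:H} via a Danskin-type squeeze is more detailed than the paper's, which simply notes that the minimizers are well-defined locally and then invokes ``the first order conditions'' (i.e.\ $L_v=0$ at $\varphi$ kills the $\varphi$-dependence when differentiating $H=L(\cdot,\varphi,\cdot)$), but the content is the same.
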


\begin{proof}
Under Assumption (A3), the Lagrangian $L$ also writes for $(s,x,m,v,p,q,r)\in [0,T]\times\brn\times\pr_2(\brn)\times\brd\times\brn\times\brnn\times L_\lambda^2(E)$,
\begin{align*}
    &L(s,x,m,v,p,q,r)\\
    =\ & \sum_{j=0}^n f^j\left(s,x,m,v^j\right)+B\left(s,x,m,v^0\right)^\top p+\sum_{j=1}^n A^j\left(s,x,m,v^j\right)^\top q^j+\int_E \gamma(s,x,m,e)^\top r(e)\lambda(de),
\end{align*}
where $v=\left(v^0,v^1\dots,v^n\right)$, with $v^j\in\br^{d_j}$. Under Assumptions (A1) and (A2), we  have
\begin{align*}
    L_{v^0} (s,x,m,v,p,q,r)=\ & B_{v^0}\left(s,x,m,v^0\right)^\top p + f^0_{v^0} \left(s,x,m,v^0\right),\\
    L_{v^j} (s,x,m,v,p,q,r)=\ & A^j_{v^j}\left(s,x,m,v^j\right)^\top q^j + f^j_{v^j} \left(s,x,m,v^j\right),\quad 1\le j\le n, \quad d_j>0.
\end{align*}
The existence of the minimizer of $L$ in $v^0$ and $v^j$ is a direct result of the convexity of $f$ and Assumption (A4); also see \cite{AB10''} for similar fixed point arguments. We here only prove the uniqueness. For some fixed $\left(s,x,m,q^j\right)$, suppose that $v^j$ and $u^j$ are both minimizer of $L$ in $v^j$, then from the first order condition, we have
\begin{align}
    0=\ &\left(v^j-u^j\right)^\top \left[A^j_{v^j}\left(s,x,m,v^j\right)-A^j_{v^j}\left(s,x,m,u^j\right)\right]^\top q^j \notag \\
    &+ \left[f^j_{v^j}\left(s,x,m,v^j\right)-f^j_{v^j}\left(s,x,m,u^j\right)\right]^\top \left(v^j-u^j\right); \label{add-1}
\end{align}
and similar to Proposition~\ref{prop:cone}, we have the following cone property:
\begin{align}\label{prop:L:pq}
    \left|q^j\right|\le \frac{L^2}{\lambda_0} \left[1+|x|+|m|_1+\left|v^j\right|\right]. 
\end{align}
From the convexity of $f^j$ in $v^j$ in Assumption (A4), we have
\begin{align}\label{add-3}
    \left[f^j_{v^j}\left(s,x,m,v^j\right)-f^j_{v^j}\left(s,x,m,u^j\right)\right]^\top \left(v^j-u^j\right) \geq 2\lambda_v \left|v^j-u^j\right|^2;
\end{align}
and from Assumption (A3) and the cone property \eqref{prop:L:pq}, we have
\begin{align}\label{add-2}
    &\left|\left(v^j-u^j\right)^\top \left[A^j_{v^j}\left(s,x,m,v^j\right)-A^j_{v^j}\left(s,x,m,u^j\right)\right]^\top q^j \right| \notag\\
    \le\ & \left|v^j-u^j\right|\cdot \frac{L_2\left|v^j-u^j\right|}{1+|x|+|m|_1+\left|v^j\right|} \cdot \frac{L^2}{\lambda_0} \left[1+|x|+|m|_1+\left|v^j\right|\right]\notag \\
    =\ &  \frac{L^2L_2}{\lambda_0}\left|v^j-u^j\right|^2. 
\end{align}
Substituting \eqref{add-3} and \eqref{add-2} into \eqref{add-1}, we know that $\left(2\lambda_v-\frac{L^2L_2}{\lambda_0}\right) \left|v^j-u^j\right|^2\le 0$, so when Condition~\eqref{thm:suff:condition}-(i) holds, we have $v^j=u^j$. That is, the map $\varphi^j\left(s,x,m,q^j\right)$ is well-defined, and so as the map $\varphi^0\left(s,x,m,p\right)$. In a similar manner, it is also easy to check that the minimizing map $\varphi^0$ (resp. $\varphi^j$) is well-defined in the neighbourhood of $(x,m,p)$ (resp. $(x,m,q^j)$). Then, from the first order conditions, we obtain \eqref{prop:L:H}.
\end{proof}

In view of the last proposition and the cone property in Proposition~\ref{prop:cone} for the processes $P$ and $Q^j$ (with $d_j>0$), we know that FBSDEs with jumps \eqref{FBSDE:MFTC} for $\Theta_t$ also reads
\begin{equation}\label{FBSDEs:H}
    \left\{
        \begin{aligned}
            Y_t=\ & \xi+\int_0^t H_p\left(s,Y_{s-},\lr\left(Y_{s-}\right),P_{s-}\right) ds +\sum_{j,\ d_j>0}\int_0^t H_{q^j}\left(s,Y_{s-},\lr\left(Y_{s-}\right),Q^j_s\right)dB^j_s\\
            &+\sum_{j,\ d_j=0} \int_0^t \left(\sigma^j_0(s)+\sigma^j_1(s)Y_{s-}+\sigma^j_2(s)\e\left[Y_{s-}\right] \right) dB^j_s +\int_0^s \int_E \gamma\left(s,Y_{s-},\lr\left(Y_{s-}\right),e\right) \mathring{N}(de,ds),\\
            P_s=\ & g_x\left(Y_T,\lr\left(Y_T\right)\right)+\he\left[D_y\frac{dg}{d\nu}\left(\widehat{Y_T},\lr\left(Y_T\right)\right)\left(Y_T\right)\right]\\
            &+\int_t^T \bigg\{ H_x\left(s,Y_{s-},\lr\left(Y_{s-}\right),P_{s-},Q_s,R_s\right)\\
            &\quad\qquad +\he\bigg[ D_y\frac{dH}{d\nu}\left(s,\widehat{Y_{s-}},\lr\left(Y_{s-}\right),\widehat{P_{s-}},\widehat{Q_s},\widehat{R_s}\right)\left(Y_{s-}\right) \bigg]\bigg\}dr\\
            &-\int_t^T Q_s dB_s-\int_s^T\int_E R_s(e) \mathring{N}(de,ds), \quad t\in[0,T];
        \end{aligned}
    \right.
\end{equation}
and when FBSDEs with jumps \eqref{FBSDEs:H} has a solution, the optimal control for Problem $\left(\mathbf{P}^{0,\xi}\right)$ is of the following feedback form:
\begin{equation}\label{u_feedback}
    \begin{aligned}
        &u_t^0=\varphi^0\left(t,Y_{t-},\lr\left(Y_{t-}\right),P_{t-}\right);\quad u_t^j=\varphi^j\left(t,Y_{t-},\lr\left(Y_{t-}\right),Q^j_t \right),\quad d_j>0.
    \end{aligned}
\end{equation}

\section{FBSDEs with jumps}\label{sec:FBSDE}

We next give the well-posedness of the system of FBSDEs with jumps \eqref{FBSDE:MFTC} and also the regularity of the solutions with respect to the initial condition $\xi$. For notational convenience, from this section, we denote by $\Theta$ the process $(Y,P,Q,R,u)\in \left(\sr_\f^2\times\sr_\f^2\times\mr_\f^2\times\kr_{\f,\lambda}^2\times\mr_\f^2\right)(0,T)$, and denote by $\theta_t:=(Y_t,\lr(Y_t),u_t)$ for $t\in[0,T]$, and denote by $\mathbb{S}$ the space of processes $\Theta$ with the squared  norm
\begin{equation*}
	\begin{split}
		\|\Theta\|_{\mathbb{S}}^2:&=\e\bigg[\sup_{0\le t\le T}|Y_t|^2+\sup_{0\le t\le T}|P_t|^2+\int_0^T\left(|Q_t|^2+|u_t|^2+\int_E |R_t(e)|^2\lambda(de)\right)dt\bigg]<\infty.
	\end{split}
\end{equation*}

\subsection{Well-posedness, boundedness and continuity}\label{subsec:well-posedness}

We begin by giving the well-posedness result, and the $L^2$-boundedness and continuity of the solution $\Theta$ with respect to $\xi$ under Assumptions (A1)-(A4).

\begin{theorem}\label{main2_thm}
	Under Assumptions (A1)-(A4) and also the validity of Condition \eqref{thm:suff:condition}-(i,ii), FBSDEs with jumps \eqref{FBSDE:MFTC} has a unique solution $\Theta=(Y,P,Q,R,u)\in\mathbb{S}$. There exists a positive constant $C$ depending only on $(l,L,L_0,L_1,L_2,\lambda_0,\lambda_x,\lambda_m,\lambda_v,T)$, such that for any initial conditions $\xi^1,\xi^2\in \lr_{\f_0}^2$, the corresponding solutions $\Theta^1$ and $\Theta^2$ of FBSDEs with jumps \eqref{FBSDE:MFTC} satisfy
    \begin{align}
        \left\|\Theta^1\right\|_{\mathbb{S}}\le C \left( 1+\left\|\xi^1\right\|_2\right), \quad \left\|\Theta^2-\Theta^1 \right\|_{\mathbb{S}} \le C\left\|\xi^2-\xi^1\right\|_2.\label{lem:kappa_0}
    \end{align}   
    Moreover, the processes $P$ and $Q$ satisfy the cone property \eqref{cone:PQ:without_v}.
\end{theorem}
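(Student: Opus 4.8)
The plan is to establish well-posedness of \eqref{FBSDE:MFTC} by a continuation-in-time (method of continuity) argument combined with the \emph{a priori} estimates that the cone property \eqref{cone:PQ:without_v} provides. First I would reduce the problem to the equivalent form \eqref{FBSDEs:H}, where the control $u$ has been eliminated via the feedback maps $\varphi^0,\varphi^j$ of Proposition~\ref{prop:L} and the drift/diffusion are expressed through $H_p$, $H_{q^j}$; the Lipschitz regularity of these maps in $(x,m,p)$ (resp. $(x,m,q^j)$) follows from differentiating the first-order conditions \eqref{optimal_condition_split}, using the uniform lower bounds \eqref{lambda:condition:B}--\eqref{lambda:condition:A} and the structural Lipschitz conditions \eqref{generic:condition:b}--\eqref{generic:condition:A}. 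The key point here is that, thanks to the cone property, along any solution the control $u_t$ is controlled by $(Y_{t-},\lr(Y_{t-}),P_{t-},Q_t)$ with a \emph{linear} bound, so all the nonlinear/unbounded coefficients become effectively of linear growth on the solution manifold.

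Next I would set up the continuation scheme: introduce a family of FBSDEs indexed by $\rho\in[0,1]$ interpolating between \eqref{FBSDEs:H} (at $\rho=1$) and a trivially solvable decoupled linear system (at $\rho=0$), and show that the set of $\rho$ for which the system is well-posed in $\mathbb{S}$ (with a uniform estimate) is both open and closed. The closedness/uniform-estimate part is where the cone property and Assumption (A4) do the real work: testing the forward equation against $P$ and the backward equation against $Y$ and applying It\^o's formula for jump processes (Lemma~\ref{lem:Ito}), the cross terms are handled by the monotonicity coming from convexity of $f,g$ in $(x,m,v)$, while the error terms generated by the nonlinearity of $b,\sigma$ are absorbed using \eqref{generic:condition:b}--\eqref{generic:condition:A}, the cone bounds \eqref{cone:PQ:without_v}, and Young's inequality; Condition \eqref{thm:suff:condition}-(i,ii) is precisely what guarantees the resulting quadratic form is coercive, giving $\|\Theta\|_{\mathbb{S}}\le C(1+\|\xi\|_2)$ uniformly in $\rho$. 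Uniqueness and the stability estimate $\|\Theta^2-\Theta^1\|_{\mathbb{S}}\le C\|\xi^2-\xi^1\|_2$ follow from the same monotonicity computation applied to the difference of two solutions. The openness step is a standard contraction-mapping argument on a small time interval, using the \emph{a priori} estimate to patch intervals together.

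I would organize the details as: (1) a lemma on Lipschitz continuity and linear growth of $\varphi^0,\varphi^j$ and of $H_p,H_{q^j},H_x,D_y\frac{dH}{d\nu}$ on the cone; (2) the \emph{a priori} estimate lemma (the monotonicity computation, yielding \eqref{lem:kappa_0} for solutions that are assumed to exist); (3) the continuation argument concluding existence; (4) the cone property for the genuine solution, which is immediate from Proposition~\ref{prop:cone} once \eqref{optimal_condition_split} is known to hold along the solution. The jump terms require care but are benign: the compensated Poisson integrals contribute only through $\int_E(\cdot)\lambda(de)$-type terms in It\^o's formula, and since $\gamma$ is linear in $(x,m)$ and control-independent (Assumption (A3)-(iii)) these are linear and do not interact with the convexity/cone balance — they only affect the constants $C$.

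The main obstacle I expect is the \emph{a priori} estimate in the presence of the nonlinear, \emph{control-dependent} diffusion $\sigma$: the term $\sum_j \sigma_x^j(s,\theta_s)^\top Q_s^j$ in the backward equation, when paired in the It\^o expansion, produces a contribution that is \emph{not} sign-definite and must be dominated by the $\lambda_v$-convexity in $v$ through the cone bound on $Q^j$ — this is exactly why the new cone property for $Q$ (not just $P$) is needed, and why the somewhat delicate two-parameter coercivity Condition \eqref{thm:suff:condition}-(ii) appears. Getting the bookkeeping of the constants $(l,L,L_0,L_1,L_2,\lambda_0,\lambda_x,\lambda_m,\lambda_v)$ right so that the quadratic form closes is the technical heart of the argument.
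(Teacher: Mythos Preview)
Your proposal is correct and follows essentially the same strategy as the paper: Hu--Peng method of continuation in the coefficients, with the core \emph{a priori}/stability estimate obtained by applying It\^o's formula to $(\Delta p)^\top\Delta y$, exploiting the convexity of $f,g$ (Assumption (A4)) and the cone bounds \eqref{cone:PQ:without_v} on both $P$ and $Q^j$ to absorb the nonlinear terms from \eqref{generic:condition:b}--\eqref{generic:condition:A}, so that Condition \eqref{thm:suff:condition}-(i,ii) makes the resulting quadratic form in $(\|\Delta y_s\|_2,\|\Delta v_s\|_2)$ coercive.

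Two minor remarks. First, the paper carries out the stability computation directly in the Lagrangian formulation \eqref{FBSDE:MFTC} using the optimality conditions \eqref{lem:kappa_condition_B}--\eqref{lem:kappa_condition_A}, rather than first passing to the Hamiltonian form \eqref{FBSDEs:H} via the feedback maps $\varphi^0,\varphi^j$ as you suggest; the two routes are equivalent, but working with $u$ as a separate unknown avoids having to prove Lipschitz regularity of $\varphi^j$ as a standalone lemma. Second, your description of the ``openness'' step conflates two mechanisms: in the Hu--Peng continuation the openness is a perturbation in the parameter $\rho$ (not in time), solved by a fixed-point argument around the already-well-posed system at $\rho_0$. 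The time-partition/patching approach you allude to is a genuinely different existence scheme (the paper mentions it as the alternative from \cite{AB10''}), but either one works here since the crucial input in both is exactly the uniform stability estimate you identify.
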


\begin{proof}
The well-posedness of FBSDEs with jumps \eqref{FBSDE:MFTC} can be proven by the method of continuation in the coefficients which was first introduced by Hu and Peng \cite{YH2}, almost exactly same to the proofs of the well-posedness for FBSDEs driven purely by Brownian motions in our previous works \cite{AB11,AB12,AB10}, with the only difference being the appearance of the jump-diffusion term. But it is not a matter in view of the extension for the method of continuation including the jump-diffusion by Wu \cite{zhen1999forward}. Also see \cite{MR3346708,MR4199898} for the application of the method of continuation for mean field FBSDEs with jump-diffusion under different settings. We also refer to \cite{AB10''} for a different kind of proof for the well-posedness for mean field FBSDEs, which follows a the temporal partition approach. We can see that in both approaches, the crucial step is to establish a consistent Lipschitz-continuity of the solution with some parameters, similar as the second estimate in \eqref{lem:kappa_0}. Therefore, we here only give the proof of the second estimate in \eqref{lem:kappa_0}, and the proof for \eqref{lem:kappa_0} is similar and standard.

We denote by $\Theta^1_s=(y_s,p_s,q_s,r_s,v_s)$ and $\Theta^2_s=(Y_s,P_s,Q_s,R_s,u_s)$, and denote by $\de\xi:=\xi^2-\xi^1$, and
\begin{align*}
    &\de\Theta_s:=\Theta^2_s-\Theta^1_s=\left(Y_s-y_s,P_s-p_s,Q_s-q_s,R_s-r_s,u_s-v_s\right)=\left(\de y_s,\de p_s,\de q_s,\de r_s,\de v_s\right).
\end{align*}
From Assumption (A3)-(i,ii), we know that $\de y_t$ satisfies the following SDE with jump
\begin{align}
    \de y_t=\ & \de \xi +\int_0^t \bigg\{ \int_0^1 \bigg[B_x\left(s,Y^h_s,\lr\left(Y^h_s\right),u^{h,0}_s\right)\de y_s  \notag \\
    &\quad\qquad\qquad\qquad + \he\bigg[\left(D_y\frac{dB}{d\nu}\left(s,Y^h_s,\lr\left(Y^h_s\right),u^{h,0}_s\right)\left(\widehat{Y^h_s}\right)\right) \left(\widehat{\de y_s}\right) \bigg]  \notag \\
    &\quad\qquad\qquad\qquad +B_{v^0}\left(s,Y^h_s,\lr\left(Y^h_s\right),u^{h,0}_s\right)\de v^0_s\bigg]dh \bigg\} ds  \notag \\
    &+\sum_{j,\ d_j>0} \int_0^t \bigg\{ \int_0^1 \bigg[A^j_x\left(s,Y^h_s,\lr\left(Y^h_s\right),u^{h,j}_s\right)\de y_s  \notag \\
    &\qquad\qquad\qquad\qquad + \he\bigg[\left(D_y\frac{dA^j}{d\nu}\left(s,Y^h_s,\lr\left(Y^h_s\right),u^{h,j}_s\right)\left(\widehat{Y^h_s}\right)\right) \left(\widehat{\de y_s}\right) \bigg]  \notag \\
    &\qquad\qquad\qquad\qquad +A^j_{v^j}\left(s,Y^h_s,\lr\left(Y^h_s\right),u^{h,j}_s\right)\de v^j_s\bigg]dh \bigg\}dB^j_s \notag \\
    &+\sum_{j,\ d_j=0} \int_0^t\left\{\sigma^j_1(s)\de y_s+\sigma^j_2(s)\e[\de y_s] \right\} dB^j_s \notag \\
    &+\int_0^t \int_E \left\{\gamma_1(s,e)\de y_{s-}+\gamma_2(s,e)\e\left[\de y_{s-}\right] \right\}\mathring{N}(de,ds),\quad t\in[0,T]; \label{lem:kappa_sde}
\end{align}
and $(\de p_t,\de q_t,\de r_t)$ satisfy the following BSDE with jump
\begin{align}
    \de p_t=\ & -\int_t^T \de q_s dB_s-\int_t^T\int_E \de r_s(e) \mathring{N}(de,ds)+ \left[g_x(Y_T,\lr(Y_T))-g_x(y_T,\lr(y_T))\right] \notag \\
    &+\he\left[D_y\frac{dg}{d\nu}\left(\hyT,\lr(Y_T)\right)(Y_T)-D_y\frac{dg}{d\nu}\left(\widehat{y_T},\lr(y_T)\right)(y_T)\right] \notag \\
    &+\int_t^T \bigg\{B_x\left(s,Y_s,\lr(Y_s),u^0_s\right)^\top \de p_s+\left[B_x\left(s,Y_s,\lr(Y_s),u^0_s\right)-B_x\left(s,y_s,\lr(y_s),v^0_s\right)\right]^\top p_s \notag \\
    &\qquad +\he\left[\left(D_y\frac{dB}{d\nu}\left(s,\widehat{Y_s},\lr(Y_s),\widehat{u^0_s}\right)\left(Y_s\right)\right)^\top \widehat{\de p_s}\right] \notag \\
    &\qquad +\he\left[\left(D_y\frac{dB}{d\nu}\left(s,\widehat{Y_s},\lr(Y_s),\widehat{u^0_s}\right)\left(Y_s\right)-D_y\frac{dB}{d\nu}\left(s,\widehat{y_s},\lr(y_s),\widehat{v^0_s}\right)\left(y_s\right)\right)^\top \widehat{p_s}\right] \notag \\
    &\qquad +\sum_{j,d_j>0}^n \bigg\{ A^j_x \left(s,Y_s,\lr(Y_s),u^j_s\right)^\top \de q^j_s+\left[A^j_x \left(s,Y_s,\lr(Y_s),u^j_s\right)-A^j_x \left(s,y_s,\lr(y_s),v^j_s\right)\right]^\top q^j_s \notag \\
    &\quad\qquad\qquad +\he\bigg[\left(D_y\frac{dA^j}{d\nu}\left(s,\widehat{Y_s},\lr(Y_s),\widehat{u^j_s}\right)\left(Y_s\right)\right)^\top \widehat{\de q^j_s}\bigg] \notag \\
    &\quad\qquad\qquad +\he\bigg[\left(D_y\frac{dA^j}{d\nu}\left(s,\widehat{Y_s},\lr(Y_s),\widehat{u^j_s}\right)\left(Y_s\right)-D_y\frac{dA^j}{d\nu}\left(s,\widehat{y_s},\lr(y_s),\widehat{v^j_s}\right)\left(y_s\right)\right)^\top \widehat{q^j_s}\bigg]\bigg\} \notag \\
    &\qquad +\sum_{j,d_j=0}^n \bigg\{\sigma^j_1 (s)^\top \de q^j_s+\sigma^j_2 (s)^\top \e \left[\de q^j_s\right] \bigg\} \notag \\
    &\qquad +\int_E \bigg\{\gamma_1(s,e)^\top \de r_s(e)+\gamma_2\left(s,e\right)^\top\e[\de r_s(e)]\bigg\} \lambda(de) \notag \\
    &\qquad +f_x\left(s,Y_s,\lr(Y_s),u_s\right)-f_x\left(s,y_s,\lr(y_s),v_s\right) \notag \\
    &\qquad +\he\bigg[ D_y\frac{df}{d\nu}\left(s,\widehat{Y_s},\lr(Y_s),\widehat{u_s}\right)\left(Y_s\right) - D_y\frac{df}{d\nu}\left(s,\widehat{y_s},\lr(y_s),\widehat{v_s}\right)\left(y_s\right)\bigg] \bigg\}ds,\quad t\in[0,T], \label{lem:kappa_bsde}
\end{align}
where $Y^h_t=y_t+h(Y_t-y_t)$, $u^{h}_t:=v_t+h\left(u_t-v_t\right)$ and $u^{h,j}_t:=v^j_t+h\left(u^j_t-v^j_t\right)$ for $0\le j\le n$. Also, the following optimality conditions hold:
\begin{align}
    0=\ &B_{v^0}\left(t,Y_{t-},\lr(Y_{t-}),u^0_t\right)^\top P_{t-}- B_{v^0}\left(t,y_{t-},\lr(y_{t-}),v^0_t\right)^\top p_{t-} \notag \\
    &+\left[f^0_{v^0}\left(t,Y_{t-},\lr(Y_{t-}),u^0_t\right) - f^0_{v^0}\left(t,y_{t-},\lr(y_{t-}),v^0_t\right)\right]; \label{lem:kappa_condition_B} \\
    0=\ &A^j_{v^j}\left(t,Y_{t-},\lr(Y_{t-}),u^j_t\right)^\top Q_t^j -A^j_{v^j}\left(t,y_{t-},\lr(y_{t-}),v^j_t\right)^\top q_t^j \notag \\
    &+ \left[f^j_{v^j}\left(t,Y_{t-},\lr(Y_{t-}),u^j_t\right)-f^j_{v^j}\left(t,y_{t-},\lr(y_{t-}),v^j_t\right)\right],\quad 1\le j\le n\quad\text{with}\quad d_j>0. \label{lem:kappa_condition_A}
\end{align}
From Proposition \ref{prop:cone}, we know that the processes $P$, $p$ and $Q^j$, $q^j$ (with $d_j>0$) satisfy the following cone properties:
\begin{align}
    \left|P_{t-}\right|\le\ & \frac{L^2}{\lambda_0} \left[1+|Y_{t-}|+\left|\lr(Y_{t-})\right|_1 +\left|u_t^0\right|\right]; \notag \\
    \left|Q_t^j\right|\le\ & \frac{L^2}{\lambda_0} \left[1+|Y_{t-}|+\left|\lr(Y_{t-})\right|_1+\left|u_t^j\right|\right],\quad 1\le j\le n\quad\text{with}\quad d_j>0; \notag \\
    \left|p_{t-}\right|\le\ &  \frac{L^2}{\lambda_0}\left[1+|y_{t-}|+\left|\lr(y_{t-})\right|_1+\left|v_t^0\right|\right]; \notag \\
    \left|q_t^j\right|\le\ &  \frac{L^2}{\lambda_0}\left[1+|y_{t-}|+\left|\lr(y_{t-})\right|_1+\left|v_t^j\right|\right],\quad 1\le j\le n\quad\text{with}\quad d_j>0. \label{lem:kappa_cone_q}
\end{align}
From the optimality conditions \eqref{lem:kappa_condition_B} and \eqref{lem:kappa_condition_A},  and the cone property \eqref{lem:kappa_cone_q} and Assumption (A3)-(i), we can compute that 
\begin{align*}
    & |\de p_{t-}| \notag \\
    =\ &\Bigg| \left[\left(B_{v^0}\right)\left(\left(B_{v^0}\right)\left(B_{v^0}\right)^\top\right)^{-1}\left(t,Y_{t-},\lr(Y_{t-}),u^0_t\right)\right] \\
    &\quad \times \left[B_{v^0}\left(t,Y_{t-},\lr(Y_{t-}),u^0_t\right)-B_{v^0}\left(t,y_{t-},\lr(y_{t-}),v^0_t\right)\right]^\top p_{t-} \notag \\
    &+\left[\left(B_{v^0}\right)\left(\left(B_{v^0}\right)\left(B_{v^0}\right)^\top\right)^{-1}\left(t,Y_{t-},\lr(Y_{t-}),u^0_t\right)\right]\\
    &\qquad \times \left[f^0_{v^0}\left(t,Y_{t-},\lr(Y_{t-}),u^0_t\right) - f^0_{v^0}\left(t,y_{t-},\lr(y_{t-}),v^0_t\right)\right]\Bigg| \notag \\
    \le\ & \frac{L}{\lambda_0} \cdot \frac{L_1\left|Y_{t-}-y_{t-}\right|+L_1 \left\|Y_{t-}-y_{t-}\right\|_2 +L_2\left|v^0_t-u^0_t\right|}{1+|y_{t-}|+\left|\lr(y_{t-})\right|_1+\left|v_t^0\right|}\cdot  \frac{L^2}{\lambda_0} \left[1+|y_{t-}|+\left|\lr(y_{t-})\right|_1 +\left|v_t^0\right| \right]\\
    &+ \frac{L^2}{\lambda_0}\Big(\left|Y_{t-}-y_{t-}\right|+\left\|Y_{t-}-y_{t-}\right\|_2 + \left|v^0_t-u^0_t\right| \Big)\\
    =\ & \left(\frac{L^2}{\lambda_0}+ \frac{L^3L_1}{\lambda_0^2} \right) \left|Y_{t-}-y_{t-}\right|+\left(\frac{L^2}{\lambda_0}+ \frac{L^3L_1}{\lambda_0^2} \right) \left\|Y_{t-}-y_{t-}\right\|_2 +\left(\frac{L^2}{\lambda_0}+ \frac{L^3L_2}{\lambda_0^2} \right)\left|v^0_t-u^0_t\right|,
\end{align*}
and in a similar way, for $1\le j\le n$ with $d_j>0$, 
\begin{align*}
    \left|\de q^j_t\right| \le\ & \left(\frac{L^2}{\lambda_0}+ \frac{L^3L_1}{\lambda_0^2} \right) \left|Y_{t-}-y_{t-}\right|+\left(\frac{L^2}{\lambda_0}+ \frac{L^3L_1}{\lambda_0^2} \right) \left\|Y_{t-}-y_{t-}\right\|_2 + \left(\frac{L^2}{\lambda_0}+ \frac{L^3L_2}{\lambda_0^2} \right)\left|v^0_t-u^0_t\right|. 
\end{align*}
By applying It\^o's formula for $\de p_t^\top \de y_t$ and taking expectation, and using the Fubini's theorem (similar to \eqref{fact:Fubini}) and the fact that $Y,y\in\sr^2_\f(0,T)$ (similar to \eqref{fact:llrc}), and also using the optimality conditions \eqref{lem:kappa_condition_B} and \eqref{lem:kappa_condition_A}, we have
\begin{align}
    &\e\left[\de p_T^\top \de y_T-\de p_0^\top \de \xi\right] \notag \\
    =\ & \e\int_0^T \int_0^1 P_s^\top \bigg\{ \left[B_x\left(s,Y^h_s,\lr\left(Y^h_s\right),u^{h,0}_s\right)-B_x\left(s,Y_s,\lr(Y_s),u^0_s\right)\right]\de y_s \notag \\
    &\qquad\qquad + \he\bigg[\left(D_y\frac{dB}{d\nu}\left(s,Y^h_s,\lr\left(Y^h_s\right),u^{h,0}_s\right)\left(\widehat{Y^h_s}\right)-D_y\frac{dB}{d\nu}\left(s,Y_s,\lr(Y_s),u^0_s\right)\left(\hys\right)\right) \widehat{\de y_s} \bigg]  \notag \\
    &\qquad\qquad +\left[B_{v^0}\left(s,Y^h_s,\lr\left(Y^h_s\right),u^{h,0}_s\right)-B_{v^0}\left(t,Y_s,\lr(Y_s),u^0_s\right)\right]\de v^0_s\bigg\} dh ds \notag \\
    &- \e\int_0^T \int_0^1 p_s^\top \bigg\{ \left[B_x\left(s,Y^h_s,\lr\left(Y^h_s\right),u^{h,0}_s\right) - B_x\left(s,y_s,\lr(y_s),v^0_s\right)\right] \de y_s \notag \\
    &\qquad\qquad +\he\bigg[\left(D_y\frac{dB}{d\nu}\left(s,Y^h_s,\lr\left(Y^h_s\right),u^{h,0}_s\right)\left(\widehat{Y^h_s}\right)-D_y\frac{dB}{d\nu}\left(s,y_s,\lr(y_s),v^0_s\right)\left(\widehat{y_s}\right)\right) \widehat{\de y_s} \bigg]  \notag \\
    &\qquad\qquad +\left[B_{v^0}\left(s,Y^h_s,\lr\left(Y^h_s\right),u^{h,0}_s\right)-B_{v^0}\left(s,y_s,\lr(y_s),v^0_s\right)\right]\de v^0_s \bigg\}dh ds  \notag \\
    &+\sum_{j,\ d_j>0}\int_0^T \int_0^1 \left(Q^j_s\right)^\top \bigg\{ \left[A^j_x\left(s,Y^h_s,\lr\left(Y^h_s\right),u^{h,j}_s\right)-A^j_x\left(s,Y_s,\lr(Y_s),u^j_s\right)\right]\de y_s \notag \\
    &\qquad\qquad + \he\bigg[\left(D_y\frac{dA^j}{d\nu}\left(s,Y^h_s,\lr\left(Y^h_s\right),u^{h,j}_s\right)\left(\widehat{Y^h_s}\right)-D_y\frac{dA^j}{d\nu}\left(s,Y_s,\lr(Y_s),u^j_s\right)\left(\hys\right)\right) \widehat{\de y_s} \bigg]  \notag \\
    &\qquad\qquad +\left[A^j_{v^j}\left(s,Y^h_s,\lr\left(Y^h_s\right),u^{h,j}_s\right)-A^j_{v^j}\left(t,Y_s,\lr(Y_s),u^j_s\right)\right]\de v^0_s\bigg\} dh ds \notag \\  
    &-\sum_{j,\ d_j>0} \int_0^T \int_0^1 \left(q^j_s\right)^\top \bigg\{ \left[A^j_x\left(s,Y^h_s,\lr\left(Y^h_s\right),u^{h,j}_s\right) - A^j_x\left(s,y_s,\lr(y_s),v^j_s\right)\right] \de y_s \notag \\
    &\qquad\qquad +\he\bigg[\left(D_y\frac{dA^j}{d\nu}\left(s,Y^h_s,\lr\left(Y^h_s\right),u^{h,j}_s\right)\left(\widehat{Y^h_s}\right)-D_y\frac{dA^j}{d\nu}\left(s,y_s,\lr(y_s),v^j_s\right)\left(\widehat{y_s}\right)\right) \widehat{\de y_s} \bigg]  \notag \\
    &\qquad\qquad +\left[A^j_{v^j}\left(s,Y^h_s,\lr\left(Y^h_s\right),u^{h,j}_s\right)-A^j_{v^j}\left(s,y_s,\lr(y_s),v^j_s\right)\right]\de v^0_s \bigg\} dh ds \notag \\  
    &-\e\int_0^T \bigg\{ \left[f_x\left(s,Y_s,\lr(Y_s),u_s\right)-f_x\left(s,y_s,\lr(y_s),v_s\right)\right]^\top \de y_s  \notag \\
    &\qquad\qquad +\left[f_v\left(s,Y_s,\lr(Y_s),u_s\right)-f_v\left(s,y_s,\lr(y_s),v_s\right)\right]^\top \de v_s  \notag \\
    &\qquad\qquad +\he\bigg[ \left(D_y\frac{df}{d\nu}\left(s,Y_s,\lr(Y_s),u_s\right)\left(\hys\right) - D_y\frac{df}{d\nu}\left(s,y_s,\lr(y_s),v_s\right)\left(\widehat{y_s}\right)\right)^\top \widehat{\de y_s}\bigg] \bigg\} ds. \label{lem:kappa_1}
\end{align}
From the cone property \eqref{lem:kappa_cone_q} and Assumption (A3)-(i), we have the following estimate on the first and second terms of the right hand side of \eqref{lem:kappa_1}:
\begin{align*}
    &\Bigg|\e\int_0^T \int_0^1 P_s^\top \bigg\{ \left[B_x\left(s,Y^h_s,\lr\left(Y^h_s\right),u^{h,0}_s\right)-B_x\left(s,Y_s,\lr(Y_s),u^0_s\right)\right]\de y_s \notag \\
    &\qquad\qquad\qquad + \he\bigg[\left(D_y\frac{dB}{d\nu}\left(s,Y^h_s,\lr\left(Y^h_s\right),u^{h,0}_s\right)\left(\widehat{Y^h_s}\right)-D_y\frac{dB}{d\nu}\left(s,Y_s,\lr(Y_s),u^0_s\right)\left(\hys\right)\right) \widehat{\de y_s} \bigg]  \notag \\
    &\qquad\qquad\qquad +\left[B_{v^0}\left(s,Y^h_s,\lr\left(Y^h_s\right),u^{h,0}_s\right)-B_{v^0}\left(t,Y_s,\lr(Y_s),u^0_s\right)\right]\de v^0_s\bigg\} dh ds \Bigg|\\
    \le\ &  \frac{L^2}{\lambda_0} \e\int_0^T \bigg(  \frac{5}{2} L_0\|\de y_s\|_2^2+L_1\left|\de v^0_s\right|\cdot |\de y_s|+L_1\left|\de v^0_s\right|\cdot \|\de y_s\|_2+\frac{L_2}{2} \left|\de v^0_s\right|^2 \bigg) ds; \\
    &\Bigg|\e\int_0^T \int_0^1 p_s^\top \bigg\{ \left[B_x\left(s,Y^h_s,\lr\left(Y^h_s\right),u^{h,0}_s\right) - B_x\left(s,y_s,\lr(y_s),v^0_s\right)\right] \de y_s \notag \\
    &\qquad\qquad\qquad +\he\bigg[\left(D_y\frac{dB}{d\nu}\left(s,Y^h_s,\lr\left(Y^h_s\right),u^{h,0}_s\right)\left(\widehat{Y^h_s}\right)-D_y\frac{dB}{d\nu}\left(s,y_s,\lr(y_s),v^0_s\right)\left(\widehat{y_s}\right)\right) \widehat{\de y_s} \bigg]  \notag \\
    &\qquad\qquad\qquad +\left[B_{v^0}\left(s,Y^h_s,\lr\left(Y^h_s\right),u^{h,0}_s\right)-B_{v^0}\left(s,y_s,\lr(y_s),v^0_s\right)\right]\de v^0_s \bigg\}dh ds \Bigg|\\
    \le\ & \frac{L^2}{\lambda_0} \e\int_0^T \bigg(  \frac{5}{2} L_0\|\de y_s\|_2^2+L_1\left|\de v^0_s\right|\cdot |\de y_s|+L_1\left|\de v^0_s\right|\cdot \|\de y_s\|_2+\frac{L_2}{2} \left|\de v^0_s\right|^2 \bigg) ds ;
\end{align*}
and similarly, we have the following estimate on the third and fourth terms of the right hand side of \eqref{lem:kappa_1}: for $1\le j\le n$ with $d_j>0$, 
\begin{align*}
    &\Bigg|\e\int_0^T \int_0^1 \left(Q^j_s\right)^\top \bigg\{ \left[A^j_x\left(s,Y^h_s,\lr\left(Y^h_s\right),u^{h,j}_s\right)-A^j_x\left(s,Y_s,\lr(Y_s),u^j_s\right)\right]\de y_s \notag \\
    &\quad\qquad\qquad + \he\bigg[\left(D_y\frac{dA^j}{d\nu}\left(s,Y^h_s,\lr\left(Y^h_s\right),u^{h,j}_s\right)\left(\widehat{Y^h_s}\right)-D_y\frac{dA^j}{d\nu}\left(s,Y_s,\lr(Y_s),u^j_s\right)\left(\hys\right)\right) \widehat{\de y_s} \bigg]  \notag \\
    &\quad\qquad\qquad +\left[A^j_{v^j}\left(s,Y^h_s,\lr\left(Y^h_s\right),u^{h,j}_s\right)-A^j_{v^j}\left(t,Y_s,\lr(Y_s),u^j_s\right)\right]\de v^0_s\bigg\} dh ds \Bigg|\\
    \le\ & \frac{L^2}{\lambda_0} \e\int_0^T \bigg(  \frac{5}{2} L_0\|\de y_s\|_2^2+L_1\left|\de v^j_s\right|\cdot |\de y_s|+L_1\left|\de v^j_s\right|\cdot \|\de y_s\|_2+\frac{L_2}{2} \left|\de v^j_s\right|^2 \bigg) ds; \\
    &\e\Bigg|\int_0^T \int_0^1 \left(q^j_s\right)^\top \bigg\{ \left[A^j_x\left(s,Y^h_s,\lr\left(Y^h_s\right),u^{h,j}_s\right) - A^j_x\left(s,y_s,\lr(y_s),v^j_s\right)\right] \de y_s \notag \\
    &\quad\qquad\qquad +\he\bigg[\left(D_y\frac{dA^j}{d\nu}\left(s,Y^h_s,\lr\left(Y^h_s\right),u^{h,j}_s\right)\left(\widehat{Y^h_s}\right)-D_y\frac{dA^j}{d\nu}\left(s,y_s,\lr(y_s),v^j_s\right)\left(\widehat{y_s}\right)\right) \widehat{\de y_s} \bigg]  \notag \\
    &\quad\qquad\qquad +\left[A^j_{v^j}\left(s,Y^h_s,\lr\left(Y^h_s\right),u^{h,j}_s\right)-A^j_{v^j}\left(s,y_s,\lr(y_s),v^j_s\right)\right]\de v^0_s \bigg\} dh ds \Bigg| \notag \\
    \le\ &  \frac{L^2}{\lambda_0} \e\int_0^T \bigg(  \frac{5}{2} L_0\|\de y_s\|_2^2+L_1\left|\de v^j_s\right|\cdot |\de y_s|+L_1\left|\de v^j_s\right|\cdot \|\de y_s\|_2+\frac{L_2}{2} \left|\de v^j_s\right|^2 \bigg) ds.
\end{align*}
Substituting the last four inequalities back into \eqref{lem:kappa_1}, using the convexity of $f$ in Assumption (A4) and Condition \eqref{thm:suff:condition}-(i,ii), we have
\begin{align}
    &\e\left[\de p_T^\top \de y_T-\de p_0^\top \de \xi\right] \notag \\
    \le\ &-\int_0^T \bigg[\left(2\lambda_x+2\lambda_m- \frac{5(l+1)L^2L_0}{\lambda_0} \right)\|\de y_s\|_2^2 +\left(2\lambda_v- \frac{L^2L_2}{\lambda_0} \right) \|\de v_s\|_2^2 \notag \\
    &\quad\qquad - \frac{4\sqrt{l+1}L^2L_1}{\lambda_0} \left\|\de v_s\right\|_2\cdot \|\de y_s\|_2 \bigg] ds \notag \\
    =\ &-\int_0^T \Bigg[\Bigg(\sqrt{2\lambda_x+2\lambda_m- \frac{5(l+1)L^2L_0}{\lambda_0} }\|\de y_s\|_2 \notag \\
    &\quad\qquad\qquad -\frac{2\sqrt{l+1}L^2L_1}{\sqrt{\lambda_0\left[2\lambda_x\lambda_0+2\lambda_m\lambda_0- 5(l+1)L^2L_0\right]}}\|\de v_s\|_2\Bigg)^2 \notag \\
    &\quad\qquad +\left(2\lambda_v- \frac{L^2L_2}{\lambda_0}- \frac{4(l+1)L^4L_1^2}{\lambda_0\left[2\lambda_x\lambda_0+2\lambda_m\lambda_0-5(l+1)L^2L_0\right]} \right) \|\de v_s\|_2^2 \Bigg] ds \notag \\
    \le\ &- \left(2\lambda_v- \frac{L^2L_2}{\lambda_0}- \frac{4(l+1)L^4L_1^2}{\lambda_0\left[2\lambda_x\lambda_0+2\lambda_m\lambda_0 -5(l+1)L^2L_0\right]} \right) \int_0^T \|\de v_s\|_2^2 ds; \label{lem:kappa_2}
\end{align}
here, the coefficient of the last line of \eqref{lem:kappa_2} is strictly positive due to Condition \eqref{thm:suff:condition}-(ii). Similarly, from Fubini's theorem and Assumption (A4), we can also have
\begin{align*}
    \e\left[\de p_T^\top \de y_T\right]=\ &  \e\bigg\{\left[g_x(Y_T,\lr(Y_T))-g_x(y_T,\lr(y_T))\right]^\top \de y_T \notag \\
    &\quad +\he\bigg[\bigg(D_y\frac{dg}{d\nu}\left(\hyT,\lr(Y_T)\right)\left(Y_T\right)-D_y\frac{dg}{d\nu}\left(\widehat{y_T},\lr(y_T)\right)\left(y_T\right)\bigg)^\top \de y_T\bigg] \bigg\}\\
    =\ &  \e\bigg\{\left[g_x(Y_T,\lr(Y_T))-g_x(y_T,\lr(y_T))\right]^\top \de y_T \notag \\
    &\quad +\he\bigg[\left(D_y\frac{dg}{d\nu}\left(Y_T,\lr(Y_T)\right)\left(\hyT\right)-D_y\frac{dg}{d\nu}\left(y_T,\lr(y_T)\right)\left(\widehat{y_T}\right)\right)^\top \widehat{\de y_T}\bigg] \bigg\}\\
    \geq\ & 0.
\end{align*}
Substituting the last inequality into \eqref{lem:kappa_2}, we have
\begin{align*}
     \int_0^T \|\de v_s\|_2^2 ds \notag \le\ & C_1\e\bigg[\de p_0^\top \de \xi \bigg],
\end{align*}
where $C_1$ is a constant depending only on $(l,L,L_0,L_1,L_2,\lambda_0,\lambda_x,\lambda_m,\lambda_v)$. Then, we know that for any $\epsilon\in(0,1)$, we have
\begin{align}
    & \int_0^T \|\de v_s\|_2^2 ds \le \epsilon \e\left[|\de p_0|^2\right] +\frac{C_1}{4\epsilon} \|\de\xi\|_2^2. \label{lem:kappa_3}
\end{align}
Applying a similar approach as used in the proof of \eqref{lem:control-state-1} to SDE \eqref{lem:kappa_sde}, from Assumption (A1), we have
\begin{align}
    &\e\bigg[\sup_{0\le s\le T} |\de y_s|^2\bigg] \le C(L,T) \e\bigg[|\de\xi|^2+\int_0^T |\de v_s|^2 ds\bigg]. \label{lem:kappa_sde_estimate}
\end{align}
Then, by applying a similar approach as used in the proof of \eqref{adjoint:boundedness} to BSDE \eqref{lem:kappa_bsde}, and using the cone properties in \eqref{lem:kappa_cone_q} and Estimate \eqref{lem:kappa_sde_estimate}, we have 
\begin{align}
        &\e\bigg[\sup_{0\le t\le T} |\de p_s|^2+\int_0^T \left(|\de q_s|^2+\int_E |\de r_s(e)|^2 \lambda(de)\right) ds \bigg] \notag \\
        \le\ & C(L,T,\lambda_0) \e\bigg[|\de y_T|^2+\int_0^T \left(|\de y_s|^2+|\de v_s|^2 \right) ds\bigg] \notag \\
        \le\ & C(L,T,\lambda_0) \left(\|\de\xi\|_2^2+\e\int_0^T|\de v_s|^2 ds\right). \label{lem:kappa_bsde_estimate}
\end{align}
Substituting \eqref{lem:kappa_sde_estimate} and \eqref{lem:kappa_bsde_estimate} back into \eqref{lem:kappa_3}, we have
\begin{align*}
    & \int_0^T \|\de v_s\|_2^2 ds \le \epsilon C(L,T,\lambda_0) \int_0^T \|\de v_s\|_2^2 ds +\left(\epsilon C(L,T,\lambda_0)+\frac{C_1}{4\epsilon} \right)\|\de\xi\|_2^2.
\end{align*}
By choosing $\epsilon:=\frac{1}{2C(L,T,\lambda_0)}$, we have
\begin{align}
     \int_0^T \|\de v_s\|_2^2 ds \le C \|\de\xi\|_2^2,\label{lem:kappa_4}
\end{align}
where $C=2\epsilon C(L,T,\lambda_0)+\frac{C_1}{2\epsilon} $ is a constant depending only on $(l,L,L_0,L_1,L_2,\lambda_0,\lambda_x,\lambda_m,\lambda_v,T)$. By combining \eqref{lem:kappa_sde_estimate}, \eqref{lem:kappa_bsde_estimate} and \eqref{lem:kappa_4}, we finally obtain the second estimate in \eqref{lem:kappa_0}. And the first estimate in \eqref{lem:kappa_0} can be proven similarly.
\end{proof}

As a direct consequence of the well-posedness of FBSDEs \eqref{FBSDE:MFTC} in Theorem \ref{main2_thm} and the sufficient maximum principle in Theorem~\ref{thm:suff}, we can now solve Problem $\left(\mathbf{P}^{0,\xi}\right)$.

\begin{theorem}\label{thm:solvability}
    Under Assumptions (A1)-(A4) and also the validity of Condition \eqref{thm:suff:condition}-(i,ii), $u\in \mr_\f^2(0,T)$ (the solution of FBSDEs with jumps satisfying the optimality condition in \eqref{FBSDE:MFTC}) is the unique optimal control for Problem $\left(\mathbf{P}^{0,\xi}\right)$.
\end{theorem}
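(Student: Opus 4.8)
The plan is to obtain the statement as an immediate corollary of the two preceding results: the well-posedness of the forward-backward system (Theorem~\ref{main2_thm}) and the sufficient maximum principle (Theorem~\ref{thm:suff}), whose standing hypotheses are exactly Assumptions (A1)--(A4) together with Condition~\eqref{thm:suff:condition}-(i,ii). First I would invoke Theorem~\ref{main2_thm} to guarantee that, under these assumptions, the FBSDEs with jumps \eqref{FBSDE:MFTC} has a unique solution $\Theta=(Y,P,Q,R,u)\in\mathbb{S}$; in particular a candidate optimal control $u\in\mr_\f^2(0,T)$ exists. Then I would feed this existence-and-uniqueness of the solution of \eqref{FBSDE:MFTC} into Theorem~\ref{thm:suff}, which applies verbatim and yields that $u$ is the unique optimal control of $\left(\mathbf{P}^{0,\xi}\right)$, along with the quantitative gap estimate \eqref{thm:suff_0}.

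For existence of an optimal control there is nothing further to do. For uniqueness I would give the argument in whichever of two equivalent forms reads more cleanly. The first: the constant multiplying $\int_0^T\|v_s-u_s\|_2^2\,dt$ in \eqref{thm:suff_0} is strictly positive---Condition~\eqref{thm:suff:condition}-(i) forces $2\lambda_v-L^2L_2/\lambda_0>0$, and then \eqref{thm:suff:condition}-(ii) forces both the bracket $2\lambda_x+2\lambda_m-5(l+1)L^2L_0/\lambda_0$ and the constant $\lambda_v-\frac{L^2L_2}{2\lambda_0}-\frac{2(l+1)L^4L_1^2}{\lambda_0\left[2\lambda_x\lambda_0+2\lambda_m\lambda_0-5(l+1)L^2L_0\right]}$ to be positive (this constant is precisely the left-over term after completing the square in the proof of Theorem~\ref{thm:suff}), whence $J(v)>J(u)$ for every $v\neq u$ in $\mr_\f^2(0,T)$. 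The second: if $v$ is any optimal control, then by Lemma~\ref{lem:adjoint} its adjoint triple is well-defined, the necessary maximum principle shows that the corresponding tuple $\left(X^v,P^v,Q^v,R^v,v\right)$ solves \eqref{FBSDE:MFTC}, and the uniqueness part of Theorem~\ref{main2_thm} forces it to coincide with $\Theta$, so $v=u$ as elements of $\mr_\f^2(0,T)$.

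The real content has already been isolated in Theorems~\ref{main2_thm} and~\ref{thm:suff}, so I expect no genuine obstacle here; the proof is essentially bookkeeping. The only point I would be careful to record explicitly is the sign verification just mentioned---that Condition~\eqref{thm:suff:condition}-(ii) makes the constant in \eqref{thm:suff_0} strictly positive---since it is what upgrades ``$u$ is optimal'' to ``$u$ is the unique optimal control''.
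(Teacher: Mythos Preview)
Your proposal is correct and matches the paper's approach exactly: the paper presents Theorem~\ref{thm:solvability} as ``a direct consequence of the well-posedness of FBSDEs \eqref{FBSDE:MFTC} in Theorem~\ref{main2_thm} and the sufficient maximum principle in Theorem~\ref{thm:suff}'', with no further argument given. Your additional sign-verification and the alternative uniqueness route via the necessary condition are both valid elaborations that the paper leaves implicit.
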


Up to now, we can see that we only require the first-order continuous differentiability of the coefficients $(b,\sigma,f,g)$ to solve FBSDEs with jumps \eqref{FBSDE:MFTC} and the mean field type control problem $\left(\mathbf{P}^{0,\xi}\right)$. To further consider the classically solvability of the HJB integro-partial differential equation \eqref{HJB:intro}, we next study the Jacobian flow of FBSDEs with jumps \eqref{FBSDE:MFTC}, which may require the second-order continuous differentiability of the coefficients.

\subsection{Derivatives in initial $\xi
$}\label{sec:Jaco}

Now we study the differentiability with respect to the initial condition $\xi\in L^2(\Omega,\f,\mathbb{P};\brn)$ of the solution $\Theta=(Y,P,Q,R,u)$ of FBSDEs with jumps \eqref{FBSDE:MFTC} under Assumptions (A1)-(A4). We still use the notation $\theta_t:=(Y_t,\lr(Y_t),u_t)$. The following assumptions are the regularity-enhanced version of Assumptions (A1) and (A2). 

{\bf (A1')} The coefficients $b$ and $\sigma$ satisfy (A1). Moreover, the following derivatives exist, and they are continuous in all their arguments:
\begin{align*}
    &b_{xx},\ b_{xv},\ b_{vx},\ b_{vv},\ D_y \frac{db_x}{d\nu},\ D_y \frac{db_v}{d\nu},\ D_{y'}D_y \frac{d^2b}{d\nu^2},\ D_y^2 \frac{db}{d\nu},\\
    &\sigma_{xx},\ \sigma_{xv},\ \sigma_{vx},\ \sigma_{vv},\ D_y \frac{d\sigma_x}{d\nu},\ D_y \frac{d\sigma_v}{d\nu},\ D_{y'}D_y \frac{d^2\sigma}{d\nu^2},\ D_y^2 \frac{d\sigma}{d\nu}.
\end{align*}

\textbf{(A2')} The functionals $f$ and $g$ satisfy (A2). Moreover, the following derivatives exist, and they are continuous in all their arguments and are globally bounded by $L$ in norm:
\begin{align*}
    &f_{xx},\ f_{xv},\ f_{vx},\ f_{vv},\ D_y \frac{df_x}{d\nu},\ D_y \frac{df_v}{d\nu},\ D_{y'}D_y \frac{d^2f}{d\nu^2},\ D_y^2 \frac{df}{d\nu},\ g_{xx},\ D_y \frac{dg_x}{d\nu},\ D_{y'}D_y \frac{d^2g}{d\nu^2},\ D_y^2 \frac{dg}{d\nu}.
\end{align*}

The only difference between Assumptions (A1') and (A1) (resp. Assumptions (A2') and (A2)) is that the former requires one more differentiability of Coefficients $(b,\sigma)$ (resp. Coefficients $(f,g)$) in $(x,m,v)$, which is natural since we are studying the G\^ateaux derivatives of the processes $\Theta$ this section. In contrast, Assumptions (A1) and (A2) in our previous work \cite{AB13,ABmfg1} studying the second order mean field games do not require the second order derivatives of the coefficients in $m$. This difference is because the fact that the mean field game problem is a fixed point problem with the state process depending on the equilibrium law; while in the MFTC problem, the state process depends on the law of the current state, which makes it necessary to differentiate the coefficients with respect to the distribution variable when deriving the Jacobian flow of FBSDEs with jumps \eqref{FBSDE:MFTC}. For a more detailed discussion on the difference the mean field games and MFTC problems, we refer to our previous work \cite{AB12}.

For notational convenience, for any $(x,\xi,v),\ (\de x,\de \xi,\de v)\in\brn\times L^2(\Omega,\f,\mathbb{P};\brn)\times\brd$, we denote by 
\begin{align}
    &Db(t,x,\lr(\xi),v)(\de x,\de \xi,\de v) \notag \\
    :=\ & b_x (t,x,\lr(\xi),v)\de x+\he\left[D_y\frac{db}{d\nu}(t,x,\lr(\xi),v)\left(\widehat{\xi}\right) \widehat{\de \xi} \right]+b_v (t,x,\lr(\xi),v)\de v; \label{def:b_1}
\end{align}
and also,
\begin{align}
    &Db_x(t,x,\lr(\xi),v)(\de x,\de \xi,\de v) \notag \\
    :=\ & b_{xx} (t,x,\lr(\xi),v)\de x+\he\left[D_y\frac{db_x}{d\nu}(t,x,\lr(\xi),v)\left(\widehat{\xi}\right) \widehat{\de \xi} \right]+b_{xv} (t,x,\lr(\xi),v)\de v; \notag \\
    &Db_v(t,x,\lr(\xi),v)(\de x,\de \xi,\de v) \notag \\
    :=\ & b_{vx} (t,x,\lr(\xi),v)\de x+\he\left[D_y\frac{db_v}{d\nu}(t,x,\lr(\xi),v)\left(\widehat{\xi}\right) \widehat{\de \xi} \right]+b_{vv} (t,x,\lr(\xi),v)\de v; \notag \\
    &D\left(D_y\frac{db}{d\nu}\right)(t,x,\lr(\xi),v)(\xi)(\de x,\de \xi,\de v) \notag \\
    :=\ & \left(D_y\frac{db_x}{d\nu}\right)(t,x,\lr(\xi),v)(\xi)\de x+\left(D_y\frac{db_v}{d\nu}\right)(t,x,\lr(\xi),v)(\xi)\de v \notag \\
    & +\widetilde{\e}\left[D_{y'}D_y\frac{d^2b}{d\nu^2}(t,x,\lr(\xi),v)\left(\xi,\widetilde{\xi}\right) \widetilde{\de \xi} \right]+D_y^2\frac{db}{d\nu}(t,x,\lr(\xi),v)(\xi)\de \xi; \label{def:b_2}
\end{align}
and also use the similar notations
\begin{align*}
    D\sigma^j,\ D\sigma^j_x,\ D\sigma^j_v,\ D\bigg(D_y\frac{d\sigma^j}{d\nu}\bigg),\ Df_x,\ Df_v,\ D\left(D_y\frac{df}{d\nu}\right),\ Dg_x,\ D\left(D_y\frac{dg}{d\nu}\right).
\end{align*}
Under Assumptions (A1') and (A3)-(i), the conditions in \eqref{generic:condition:b} also writes (recall in \eqref{def:B:v^0} that $B$ is defined on $\br^{d_0}$ rather than $\brd$), for $t\in[0,T]$, $x\in\brn$, $m\in\pr_2(\brn)$, $v^0\in\br^{d_0}$ and $y\in\brn$,
\begin{align}
    (i)\ &\left|B_{xx}\left(t,x,m,v^0\right)\right|,\  \left|D_y\frac{dB_x}{d\nu}\left(t,x,m,v^0\right)(y) \right|,\ \  \left|D_y^2\frac{dB}{d\nu}\left(t,x,m,v^0\right)(y) \right|, \notag \\
    &\sqrt{\int_\brn \left|D_{y'}D_y\frac{d^2B}{d\nu^2}\left(t,x,m,v^0\right)\left(y,y'\right)\right|^2m(dy')} \le  \frac{L_0}{1+|x|+|m|_1+\left|v^0\right|};\notag\\
    (ii)\ &\left|B_{x v^0}\left(t,x,m,v^0\right)\right|,\ \left|B_{v^0 x}\left(t,x,m,v^0\right)\right|,\  \left|D_y\frac{dB_{v^0}}{d\nu}\left(t,x,m,v^0\right)(y) \right| \notag \\
    &\le \frac{L_1}{1+|x|+|m|_1+\left|v^0\right|};\notag\\
    (iii)\ &\left|B_{v^0 v^0}\left(t,x,m,v^0\right)\right|\le \frac{L_2}{1+|x|+|m|_1+\left|v^0\right|}. \label{B_vv:boundedness}
\end{align}
Therefore, we know that
\begin{align}
    &|Db_x(t,x,\lr(\xi),v)(\de x,\de \xi,\de v))|\le \frac{L_0\left(|\de x|+\|\de \xi\|_2\right)+L_1\left|\de v^0\right|}{1+|x|+|\lr(\xi)|_1+\left|v^0\right|}; \notag \\ 
    &\left| D\left(D_y\frac{db}{d\nu}\right)(t,x,\lr(\xi),v)(\xi)(\de x,\de \xi,\de v)\right| \le \frac{L_0\left(|\de x|+\|\de \xi\|_2+|\de \xi|\right)+L_1\left|\de v^0\right|}{1+|x|+|\lr(\xi)|_1+\left|v^0\right|}; \notag \\
    &|Db_v(t,x,\lr(\xi),v)(\de x,\de \xi,\de v))|\le \frac{L_1\left(|\de x|+\|\de \xi\|_2\right)+L_2\left|\de v^0\right|}{1+|x|+|\lr(\xi)|_1+\left|v^0\right|}, \label{generic:condition:b'}
\end{align}
where $v=(v^0,v^1,\dots,v^n)$, $\de v=(\de v^0,\de v^1,\dots,\de v^n)$, $v^j,\de v^j\in \br^{d_j}$. Similarly, the conditions \eqref{generic:condition:A} also writes, for $1\le j\le n$ with $d_j>0$ and  $v^j\in\br^{d_j}$,
\begin{align}
    (i)\ &\left|A^j_{xx}\left(t,x,m,v^j\right)\right|,\  \left|D_y\frac{dA^j_x}{d\nu}\left(t,x,m,v^j\right)(y) \right|,\ \ \left|D_y^2\frac{dA^j}{d\nu}\left(t,x,m,v^j\right)(y) \right|, \notag \\
    &\sqrt{\int_\brn \left|D_{y'}D_y\frac{d^2A^j}{d\nu^2}\left(t,x,m,v^j\right)\left(y,y'\right)\right|^2m(dy')} \le  \frac{L_0}{1+|x|+|m|_1+\left|v^j\right|};\notag\\
    (ii)\ &\left|A^j_{x v^0}\left(t,x,m,v^j\right)\right|,\ \left|A^j_{v^0 x}\left(t,x,m,v^j\right)\right|,\  \left|D_y\frac{dA^j_{v^0}}{d\nu}\left(t,x,m,v^j\right)(y) \right| \notag\\
    &\le \frac{L_1}{1+|x|+|m|_1+\left|v^j\right|};\notag\\
    (iii)\ &\left|A^j_{v^0 v^j}\left(t,x,m,v^0\right)\right|\le \frac{L_2}{1+|x|+|m|_1+\left|v^j\right|}. \label{A_vv:boundedness}
\end{align}
and therefore,
\begin{align}
    &|D\sigma^j_x(t,x,\lr(\xi),v)(\de x,\de \xi,\de v))|\le  \frac{L_0\left(|\de x|+\|\de \xi\|_2\right)+L_1\left|\de v^j\right|}{1+|x|+|\lr(\xi)|_1+\left|v^j\right|}; \notag \\
    &\left| D\left(D_y\frac{d\sigma^j}{d\nu}\right)(t,x,\lr(\xi),v)(\xi)(\de x,\de \xi,\de v)\right| \le  \frac{L_0\left(|\de x|+\|\de \xi\|_2+|\de \xi|\right)+L_1\left|\de v^j\right|}{1+|x|+|\lr(\xi)|_1+\left|v^j\right|}; \notag \\
    &|D\sigma^j_v(t,x,\lr(\xi),v)(\de x,\de \xi,\de v))|\le \ \frac{L_1\left(|\de x|+\|\de \xi\|_2\right)+L_2\left|\de v^j\right|}{1+|x|+|\lr(\xi)|_1+\left|v^j\right|}. \label{generic:condition:A'}
\end{align}
Under Assumptions (A2'), the first convexity condition of $f$ in (A4) also writes, for any $t\in[0,T]$ and $(x,\xi,v),\ (\de x,\de \xi,\de v)\in\brn\times L^2(\Omega,\f,\mathbb{P};\brn)\times\brd$,
\begin{align}
        &\de f(t,x,\lr(\xi),v)(\de x,\de \xi,\de v) \notag \\
        :=\ & \de x^\top Df_x(t,x,\lr(\xi),v)(\de x,\de \xi,\de v)+  \de v^\top Df_v(t,x,\lr(\xi),v)(\de x,\de \xi,\de v) \notag \\
        &+ \e \left[\de \xi^\top D\left(D_y\frac{df}{d\nu}(t,x,\lr(\xi),v\right)(\xi)(\de x,\de \xi,\de v)\right] \label{def:convex_f}\\
        =\ &\left[\left(
		\begin{array}{cc}
			f_{vv} & f_{xv}\\
			f_{vx} & f_{xx}
		\end{array}
		\right)(t,x,\lr(\xi),v) \right] \left(
		\begin{array}{cc}
			\de v\\
			\de x
		\end{array}
		\right)^{\otimes 2} \notag \\
        &+2\e\bigg[\de \xi^\top D_y\frac{df_x}{d\nu}(t,x,\lr(\xi),v)(\xi)\bigg] \de x+2\e\bigg[\de \xi^\top D_y\frac{df_v}{d\nu}(t,x,\lr(\xi),v)(\xi)\bigg] \de v \notag \\
        &+\e\widetilde{\e}\left[\de \xi^\top \left(D_{y'}D_y\frac{d^2f}{d\nu^2}(t,x,\lr(\xi),v)\left(\xi,\widetilde{\xi}\right)\right)\widetilde{\de \xi}\right]+\e\left[\de \xi^\top \left(D_y^2\frac{df}{d\nu}(t,x,\lr(\xi),v)\left(\xi\right)\right)\de \xi \right] \notag \\
        \geq\ &  2\lambda_v |\de v|^2+2\lambda_x |\de x|^2+2\lambda_m\|\de \xi\|_2^2, \label{convex_f}
\end{align}
where the second equality uses the \textit{Schur complement} and the last inequality uses the convexity of $f$; similarly, the convexity of $g$ in (A4) also gives
\begin{align}
    &\de g(x,\lr(\xi))(\de x,\de \xi) \notag \\
        :=\ & \de x^\top Dg_x(x,\lr(\xi))(\de x,\de \xi)+ \e \left[\de \xi^\top D\left(D_y\frac{dg}{d\nu}(x,\lr(\xi)\right)(\xi)(\de x,\de \xi)\right] \label{def:g_1} \\
        =\ & \de x^\top \left[g_{xx}(x,\lr(\xi))\right] \de x +2\e\bigg[\de \xi^\top D_y\frac{dg_x}{d\nu}(x,\lr(\xi))(\xi)\bigg] \de x\notag \\
        &+\e\widetilde{\e}\left[\de \xi^\top \left(D_{y'}D_y\frac{d^2g}{d\nu^2}(x,\lr(\xi))\left(\xi,\widetilde{\xi}\right)\right)\widetilde{\de \xi}\right]+\e\left[\de \xi^\top\left(D_y^2\frac{dg}{d\nu}(x,\lr(\xi))\left(\xi\right)\right)\de \xi\right] \notag \\
        \geq\ &  0. \label{convex_g}
\end{align}

With these notations, the G\^ateaux derivatives of the processes $\Theta=(Y,P,Q,R,u)$ in the initial condition $\xi\in\lr^2_{\f_0}$ along the direction $\eta\in\lr^2_{\f_0}$ can be characterized by the following system of FBSDEs with jumps:
\begin{align}
        &\dr_\eta Y_t=\eta+\int_0^t Db\left(s,\theta_s\right)\left(\dr_\eta Y_s,\dr_\eta Y_s,\dr_\eta u_s\right) ds +\int_0^t D\sigma\left(s,\theta_s\right)\left(\dr_\eta Y_s,\dr_\eta Y_s,\dr_\eta u_s\right) dB_s  \notag \\
        &\quad\qquad +\int_0^t \int_E \bigg\{\gamma_1\left(s,e\right)\dr_\eta Y_{s-} + \gamma_2(s,e)\e\left[\dr_\eta Y_{s-} \right] \bigg\}\mathring{N}(de,ds),  \notag \\
        &\dr_\eta P_t= Dg_{x}(Y_T,\lr(Y_T)) \left( \dr_\eta Y_T,\dr_\eta Y_T\right)+\he\bigg[D\left(D_y\frac{dg}{d\nu}\right)\left(\hyT,\lr(Y_T)\right)(Y_T) \left(\widehat{\dr_\eta Y_T},\dr_\eta Y_T\right)\bigg]  \notag  \\
        &\quad\qquad +\int_t^T \Bigg\{ b_x\left(s,\theta_s\right)^\top \dr_\eta P_s+\sum_{j=1}^n \sigma^j_x \left(s,\theta_s\right)^\top \dr_\eta Q^j_s +\int_E \gamma_1\left(s,e\right)^\top \dr_\eta R_s(e)\lambda(de)  \notag  \\
        &\qquad\qquad\qquad +\he\bigg[\left(D_y\frac{db}{d\nu}\left(s,\widehat{\theta_s}\right)\left(Y_s\right)\right)^\top \widehat{\dr_\eta P_s}+\sum_{j=1}^n \left(D_y\frac{d\sigma^j}{d\nu}\left(s,\widehat{\theta_s}\right)\left(Y_s\right)\right)^\top \widehat{\dr_\eta Q^j_s} \bigg]  \notag  \\
        &\qquad\qquad\qquad +\int_E \gamma_2\left(s,e\right)^\top \e\left[\dr_\eta R_s(e)\right] \lambda(de) +\left[Db_{x}\left(s,\theta_s\right)\left(\dr_\eta Y_s,\dr_\eta Y_s,\dr_\eta u_s\right)\right]^\top P_s  \notag  \\
        &\qquad\qquad\qquad  + \sum_{j=1}^n \bigg[D\sigma^j_{x} \left(s,\theta_s\right) \left(\dr_\eta Y_s,\dr_\eta Y_s,\dr_\eta u_s\right)\bigg]^\top Q^j_s  \notag  \\
        &\qquad\qquad\qquad +\he\bigg\{\bigg[D\left(D_y\frac{db_x}{d\nu}\right)\left(s,\widehat{\theta_s}\right)(Y_s)\left(\widehat{\dr_\eta Y_s},\dr_\eta Y_s,\widehat{\dr_\eta u_s}\right)\bigg]^\top \hps  \notag \\
        &\qquad\qquad\qquad\qquad +\sum_{j=1}^n \bigg[D\bigg(D_y\frac{d\sigma^j_x}{d\nu}\bigg)\left(s,\widehat{\theta_s}\right)\left(Y_s\right)\left(\widehat{\dr_\eta Y_s},\dr_\eta Y_s,\widehat{\dr_\eta u_s}\right)\bigg]^\top \hqsj \bigg\}  \notag  \\
        &\qquad\qquad\qquad +Df_{x}\left(s,\theta_s\right)\left(\dr_\eta Y_s,\dr_\eta Y_s,\dr_\eta u_s\right)  \notag  \\
        &\qquad\qquad\qquad + \he\bigg[D\left(D_y\frac{df}{d\nu}\right)\left(s,\widehat{\theta_s}\right)\left(Y_s\right)\left(\widehat{\dr_\eta Y_s},\dr_\eta Y_s,\widehat{\dr_\eta u_s}\right)\bigg]\Bigg\}ds  \notag  \\
        &\quad\qquad -\int_t^T \dr_\eta Q_s dB_s-\int_t^T\int_E \dr_\eta R_s(e) \mathring{N}(de,ds),  \notag  \\
        &b_v\left(t,\theta_{t-}\right)^\top \dr_\eta P_{t-} +\sum_{j=1}^n \sigma^j_v\left(t,\theta_{t-}\right)^\top \dr_\eta Q_t^j +\left[Db_v\left(t,\theta_{t-}\right)\left(\dr_\eta Y_{t-},\dr_\eta Y_{t-},\dr_\eta u_{t}\right)\right]^\top P_{t-}  \notag \\
        &\quad\qquad +\sum_{j=1}^n \left[D\sigma^j_v\left(t,\theta_{t-}\right)\left(\dr_\eta Y_{t-},\dr_\eta Y_{t-},\dr_\eta u_{t}\right)\right]^\top Q_t^j \notag \\
        &\quad\qquad +Df_v(t,\theta_{t-})\left(\dr_\eta Y_{t-},\dr_\eta Y_{t-},\dr_\eta u_{t}\right)=0, \quad t\in[0,T]. \label{FBSDE:Jaco}
\end{align}

We have the following result, whose proof is given in Appendix~\ref{pf:thm:Gateaux}.

\begin{theorem}\label{thm:Gateaux}
    Under Assumptions (A1'), (A2'), (A3) and (A4) and the validity of Condition \eqref{thm:suff:condition}-(i,ii), for any $\xi,\eta\in \lr_{\f_0}^2$, FBSDEs with jumps \eqref{FBSDE:Jaco} has a unique solution $\dr_\eta \Theta:=(\dr_\eta Y,\dr_\eta P,\dr_\eta Q,\dr_\eta R,\dr_\eta u)\in\mathbb{S}$, and it satisfies
    \begin{equation}\label{thm:Gateaux_boundedness}
        \|\dr_\eta \Theta\|_{\mathbb{S}}\le C \|\eta\|_2,
    \end{equation}
    where $C>0$ is a constant depending only on $(l,L,L_0,L_1,L_2,\lambda_0,\lambda_x,\lambda_m,\lambda_v,T)$. For $\epsilon\in(0,1)$, let $\Theta^\epsilon$ be the solution of FBSDEs with jumps \eqref{FBSDE:MFTC} corresponding to the initial condition $\xi^\epsilon:=\xi+\epsilon\eta$, then, 
    \begin{align}\label{thm:Gateaux_appro}
        \lim_{\epsilon\to 0} \left\|\frac{\Theta^\epsilon-\Theta}{\epsilon} -\dr_\eta \Theta\right\|_{\mathbb{S}}=0.
    \end{align}
    That is, $\dr_\eta\Theta$ is the G\^ateaux derivative of $\Theta$ with respect to $\xi$ along the direction $\eta$, so we write $\dr_\eta\Theta$ as the official $D_\eta\Theta$ in the rest of our article. Moreover, the G\^ateaux derivative is linear in $\eta$ and continuous in $\xi$. 
\end{theorem}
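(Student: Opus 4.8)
The plan is to mirror, one level up, the strategy already used for the base system \eqref{FBSDE:MFTC}: first solve the \emph{linear} FBSDE with jumps \eqref{FBSDE:Jaco} and establish the a priori bound \eqref{thm:Gateaux_boundedness}, then identify its solution with the Gâteaux derivative of $\Theta$ by a difference-quotient argument, and finally read off linearity in $\eta$ and continuity in $\xi$ from uniqueness and the same stability estimate.

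First I would treat \eqref{FBSDE:Jaco}. It is linear in $(\dr_\eta Y,\dr_\eta P,\dr_\eta Q,\dr_\eta R,\dr_\eta u)$, with random time-dependent coefficients assembled from $\Theta$ and the first- and second-order derivatives of $(b,\sigma,\gamma,f,g)$ evaluated along $\theta_\cdot$; the only inhomogeneity is the linear initial datum $\eta$. Under (A3)-(i,ii) the optimality relation in \eqref{FBSDE:Jaco} decouples, so that $\dr_\eta P_{t-}$ can be expressed through $(B_{v^0}(B_{v^0})^\top)^{-1}B_{v^0}$ applied to $[Db_v(t,\theta_{t-})(\dr_\eta Y_{t-},\dr_\eta Y_{t-},\dr_\eta u_t)]^\top P_{t-}$ and $Df_v(\cdots)$, and likewise $\dr_\eta Q^j_t$ through $(A^j_{v^j}(A^j_{v^j})^\top)^{-1}A^j_{v^j}$; combining the growth bounds \eqref{generic:condition:b'}, \eqref{generic:condition:A'} with the cone property \eqref{cone:PQ:without_v} of $P$ and $Q^j$ supplied by Theorem~\ref{main2_thm} then yields a homogeneous ``linearized cone property'' — $|\dr_\eta P_{t-}|\le C(|\dr_\eta Y_{t-}|+\|\dr_\eta Y_{t-}\|_2+|\dr_\eta u^0_t|)$ and similarly for $\dr_\eta Q^j$ — while the differentiated convexity \eqref{convex_f} gives the reverse bounds on $\dr_\eta u$. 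With these in hand, the core estimate comes from applying Itô's formula to the jump-diffusion $\dr_\eta P_t^\top \dr_\eta Y_t$ and taking expectations: the terminal term is nonnegative by the differentiated convexity \eqref{convex_g} of $g$; the running term produces the coercive contribution $(2\lambda_x+2\lambda_m)\|\dr_\eta Y_s\|_2^2+2\lambda_v\|\dr_\eta u_s\|_2^2$ from \eqref{convex_f}, minus cross terms pairing second derivatives of $b,\sigma$ (through $Db_x$, $D\sigma^j_x$, $D(D_y\frac{db}{d\nu})$, $D(D_y\frac{d\sigma^j}{d\nu})$, etc.) with $P$ and $Q^j$. The point is that the singular denominator $1+|x|+|m|_1+|v^j|$ in \eqref{generic:condition:b'}, \eqref{generic:condition:A'} is exactly cancelled by the cone bound on $|P_{t-}|$ or $|Q^j_t|$, so each cross term is dominated by $\frac{L^2}{\lambda_0}$ times $L_0,L_1,L_2$-weighted quadratics in $(\dr_\eta Y,\dr_\eta u)$, precisely as in \eqref{lem:kappa_1}--\eqref{lem:kappa_2}. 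Condition \eqref{thm:suff:condition}-(i,ii) is calibrated so that the resulting quadratic form in $(\|\dr_\eta Y_s\|_2,\|\dr_\eta u_s\|_2)$ is coercive; combining this with the forward estimate \eqref{lem:control-state-1} and the backward estimate \eqref{adjoint:boundedness} applied to \eqref{FBSDE:Jaco} delivers \eqref{thm:Gateaux_boundedness}, and together with the method of continuation in the coefficients exactly as in Theorem~\ref{main2_thm} gives existence and uniqueness of $\dr_\eta\Theta\in\mathbb{S}$.

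Next I would prove \eqref{thm:Gateaux_appro}. Let $\Theta^\epsilon$ solve \eqref{FBSDE:MFTC} with datum $\xi+\epsilon\eta$ and set $\delta^\epsilon\Theta:=(\Theta^\epsilon-\Theta)/\epsilon$. Subtracting the equations for $\Theta^\epsilon$ and $\Theta$ and dividing by $\epsilon$, the fundamental theorem of calculus shows that $\delta^\epsilon\Theta$ solves a linear FBSDE with jumps of exactly the shape \eqref{FBSDE:Jaco}, but with every derivative of $(b,\sigma,\gamma,f,g)$ replaced by an average over $\rho\in[0,1]$ of that derivative evaluated at interpolations between $\theta_s$ and $\theta^\epsilon_s:=(Y^\epsilon_s,\lr(Y^\epsilon_s),u^\epsilon_s)$ (the measure argument interpolated through $\lr(Y_s+\rho(Y^\epsilon_s-Y_s))$); call the resulting coefficient family $\mathcal{C}^\epsilon$ and write $\mathcal{C}^0$ for the coefficients of \eqref{FBSDE:Jaco}. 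Since $\|\Theta^\epsilon-\Theta\|_{\mathbb{S}}\le C\epsilon\|\eta\|_2$ by \eqref{lem:kappa_0}, we have $\theta^\epsilon_\cdot\to\theta_\cdot$, so by continuity of the second-order derivatives $\mathcal{C}^\epsilon\to\mathcal{C}^0$ pointwise $dt\otimes d\mathbb{P}$-a.e.; moreover $\mathcal{C}^\epsilon$ satisfies the same structural bounds \eqref{generic:condition:b'}, \eqref{generic:condition:A'}, \eqref{convex_f}, \eqref{convex_g} and the cone bounds (the latter by Proposition~\ref{prop:cone} applied to $\Theta^\epsilon$ and $\Theta$), so the monotonicity estimate of the previous step holds for $\mathcal{C}^\epsilon$ uniformly in $\epsilon$. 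Now $\zeta^\epsilon:=\delta^\epsilon\Theta-\dr_\eta\Theta$ solves the linear FBSDE with coefficients $\mathcal{C}^\epsilon$ whose only inhomogeneity is the source $(\mathcal{C}^\epsilon-\mathcal{C}^0)$ acting on the \emph{fixed} process $\dr_\eta\Theta$; the uniform stability estimate then gives $\|\zeta^\epsilon\|_{\mathbb{S}}\le C\big\|(\mathcal{C}^\epsilon-\mathcal{C}^0)\dr_\eta\Theta\big\|$, and since $\mathcal{C}^\epsilon-\mathcal{C}^0\to 0$ a.e. while remaining bounded by $2L$ (so the source is dominated by $2L|\dr_\eta\Theta|\in L^2$), dominated convergence yields $\|\zeta^\epsilon\|_{\mathbb{S}}\to 0$, which is \eqref{thm:Gateaux_appro}; hence $\dr_\eta\Theta=D_\eta\Theta$.

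Finally, linearity of $\eta\mapsto D_\eta\Theta$ is immediate because $\eta$ enters \eqref{FBSDE:Jaco} only through the linear initial condition and the system is linear: $aD_{\eta_1}\Theta+bD_{\eta_2}\Theta$ solves \eqref{FBSDE:Jaco} with datum $a\eta_1+b\eta_2$, so equals $D_{a\eta_1+b\eta_2}\Theta$ by uniqueness, and continuity in $\eta$ follows from \eqref{thm:Gateaux_boundedness}. Continuity in $\xi$ is proved by the same comparison as in the previous step: if $\xi_k\to\xi$ in $\lr_{\f_0}^2$ then $\Theta^{\xi_k}\to\Theta^\xi$ in $\mathbb{S}$ by \eqref{lem:kappa_0}, the coefficients of \eqref{FBSDE:Jaco} converge a.e. and boundedly, and the uniform stability estimate together with dominated convergence (the error again thrown onto the fixed limit $D_\eta\Theta^\xi$) forces $D_\eta\Theta^{\xi_k}\to D_\eta\Theta^\xi$ in $\mathbb{S}$. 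I expect the main obstacle to be the core estimate of the second step: one must check that the new terms in the linearized backward equation, namely products of the second-order derivatives of $b$ and $\sigma$ with the adjoint processes $P$ and $Q$, do not spoil the monotonicity — this is exactly where the \emph{joint} cone property for $(P,Q)$, rather than for $P$ alone, is indispensable, since it is the cone bound on $Q^j$ that cancels the singular denominator in the growth estimates for $D\sigma^j_x$ and $D(D_y\frac{d\sigma^j}{d\nu})$, and Condition \eqref{thm:suff:condition}-(ii) is then just strong enough to keep the remaining quadratic form coercive. A secondary point is to carry out the dominated-convergence step using only continuity (not Lipschitz continuity) of the second derivatives, which is the reason for isolating the coefficient error onto the fixed process $\dr_\eta\Theta$ instead of onto $\delta^\epsilon\Theta$.
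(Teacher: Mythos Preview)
Your proposal is correct and follows essentially the same route as the paper. The paper's own proof (Appendix~\ref{pf:thm:Gateaux}) carries out in detail only the bound \eqref{thm:Gateaux_boundedness}, via exactly the mechanism you describe: apply It\^o's formula to $(\dr_\eta P_t)^\top\dr_\eta Y_t$, use the differentiated convexity \eqref{convex_f}--\eqref{convex_g} for the coercive part, and absorb the second-derivative cross terms (involving $Db_x$, $D\sigma^j_x$, $D(D_y\frac{db}{d\nu})$, etc.) by pairing the singular denominators in \eqref{generic:condition:b'}--\eqref{generic:condition:A'} with the cone bounds \eqref{cone:PQ:without_v} on the \emph{original} $P,Q^j$; Condition~\eqref{thm:suff:condition}-(ii) then closes the quadratic form, and the forward/backward a priori estimates finish as in \eqref{lem:kappa_sde_estimate}--\eqref{lem:kappa_bsde_estimate}. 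Your treatment of \eqref{thm:Gateaux_appro} and of the continuity in $\xi$ via difference quotients, uniform stability, and dominated convergence is precisely what the paper means when it says these ``can be proven in a similar way'' without spelling it out. The one extra ingredient you introduce --- the homogeneous ``linearized cone property'' for $\dr_\eta P,\dr_\eta Q^j$ obtained from the differentiated optimality condition --- is correct but not actually needed in the paper's argument, which relies only on the cone bounds for the base processes.
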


\subsection{Derivatives in initial $y\in\brd$}\label{subsec:D_y}

In this section, we still assume that Assumptions (A1'), (A2'), (A3) and (A4) and Condition \eqref{thm:suff:condition} are satisfied. In Theorem~\ref{thm:Gateaux}, we have shown that the processes $D_\eta\Theta$ is the G\^ateaux derivative of $\Theta$ in the initial condition $\xi$. This G\^ateaux derivative can be seen as the derivatives on the ``Lifting" of the distribution variable, and $D_\eta\Theta$ in some sense represent the variation of dependence of $\Theta$ in the distribution $\lr(\xi)=\mu$. However, we should note the fact that the processes $\Theta$  depends on $\xi$ not only on its law, but also on its exact state. With this in mind, it is natural to study the following system of FBSDEs with jump-diffusion, which will be used in the study of the linear functional-derivative of the value function $V$ for the HJB equation in Section~\ref{sec:HJB} (so the reader may skip this subsection at the first read). 

For $\mu\in\pr_2(\brn)$, we choose any $\xi\in\lr_{\f_0}^2$ with $\lr(\xi)=\mu$, and denote by $\Theta^\xi:=\left(Y^\xi,P^\xi,Q^\xi,R^\xi,u^\xi\right)$ the corresponding solution to FBSDEs \eqref{FBSDE:MFTC} with the initial condition $\xi$. Then, we consider the following system of FBSDEs with jumps for $\Theta^{y,\mu}:=\left(Y^{y,\mu},P^{y,\mu},Q^{y,\mu},R^{y,\mu},u^{y,\mu}\right)$ with initial condition $y\in\brn$, which also depends on the measure of the processes $\Theta^\xi$: 
\begin{equation}\label{FBSDE:MFTC_y}
    \left\{
        \begin{aligned}
            &Y^{y,\mu}_t=y+\int_0^t b\left(s,\theta^{y,\mu}_s\right) ds +\int_0^t \sigma \left(s,\theta^{y,\mu}_s\right)dB_s\\
            &\quad\qquad +\int_0^t \int_E \gamma\left(s,Y^{y,\mu}_{s-},\lr\left(Y^{\xi}_{s-}\right),e\right) \mathring{N}(de,ds),\\
            &P^{y,\mu}_t= g_x\left(Y^{y,\mu}_T,\lr\left(Y^{\xi}_T\right)\right)+\he\left[D_y\frac{dg}{d\nu}\left(\widehat{Y^{\xi}_T},\lr\left(Y^{\xi}_T\right)\right)\left(Y^{y,\mu}_T\right)\right]\\
            &\quad\qquad +\int_t^T \bigg\{ b_x\left(s,\theta^{y,\mu}_s\right)^\top P^{y,\mu}_s+\sum_{j=1}^n \sigma^j_x \left(s,\theta^{y,\mu}_s\right)^\top Q^{y,\mu,j}_s\\
            &\qquad\qquad\qquad +\int_E \gamma_x\left(s,Y^{y,\mu}_s,\lr\left(Y^{\xi}_s\right),e\right)^\top R^{y,\mu}_s(e)\lambda(de)+f_x\left(s,\theta^{y,\mu}_s\right)\\
            &\qquad\qquad\qquad +\he\bigg[\left(D_y\frac{db}{d\nu}\left(s,\widehat{\theta^\xi_s}\right)\left(Y^{y,\mu}_s\right)\right)^\top \widehat{P^{\xi}_s} +\sum_{j=1}^n \left(D_y\frac{d\sigma^j}{d\nu}\left(s,\widehat{\theta^\xi_s}\right)\left(Y^{y,\mu}_s\right)\right)^\top \widehat{Q^{\xi,j}} \\
            &\qquad\qquad\qquad\qquad +\int_E \left(D_y\frac{d\gamma}{d\nu}\left(s,\widehat{Y^{\xi}_s},\lr\left(Y^{\xi}_s\right),e\right)\left(Y^{y,\mu}_s\right)\right)^\top \widehat{R^{\xi}_s}(e) \lambda(de)\\
            &\qquad\qquad\qquad\qquad + D_y\frac{df}{d\nu}\left(s,\widehat{\theta^\xi_s}\right)\left(Y^{y,\mu}_s\right) \bigg]\bigg\}ds\\
            &\quad\qquad -\int_t^T Q^{y,\mu}_s dB_s-\int_t^T\int_E R^{y,\mu}_s(e) \mathring{N}(de,ds), \\
            &b_v\left(t,\theta^{y,\mu}_{t-}\right)^\top P^{y,\mu}_{t-} +\sum_{j=1}^n \sigma^j_v\left(t,\theta^{y,\mu}_{t-}\right)^\top Q_t^{y,\mu,j} +f_v\left(t,\theta^{y,\mu}_{t-}\right)=0, \quad t\in[0,T].
        \end{aligned}
    \right.
\end{equation}
where $\theta^{y,\mu}_t:=\left(Y^{y,\mu}_t,\lr\left(Y^{\xi}_t\right),u^{y,\mu}_t\right)$ and $\theta^\xi_t:=\left(Y^\xi_t,\lr\left(Y^{\xi}_t\right),u^\xi_t\right)$ for $t\in[0,T]$. Here, note that the system of FBSDEs \eqref{FBSDE:MFTC_y} depends on $\xi$ only through its law $\lr(\xi)=\mu$, therefore, it is reasonable to use the superscript $\mu$ in $\Theta^{y,\mu}$. The existence, uniqueness of the processes $\Theta^{y,\mu}$ are similar to that of $\Theta^{\xi}$ in Theorem~\ref{thm:solvability}, which is omitted here; similar to the proof of \eqref{lem:kappa_0}, we can also have the following boundedness and continuity, and we omit the proof:
\begin{align}
    \left\|\Theta^{y,\mu}\right\|_{\mathbb{S}}\le C \left( 1+|y|+|\mu|_2\right), \quad \left\|\Theta^{y',\mu'}-\Theta^{y,\mu} \right\|_{\mathbb{S}} \le C\left(|y'-y|+W_2(\mu,\mu')\right),\label{add-4}
\end{align}  
where $C$ depends only on $(l,L,L_0,L_1,L_2,\lambda_0,\lambda_x,\lambda_m,\lambda_v,T)$. Under Assumption (A3), note that in FBSDEs with jumps \eqref{FBSDE:MFTC_y} the following relation holds:
\begin{equation*}
\begin{aligned}
    &B_{v^0} \left(t,Y^{y,\mu}_{t-},\lr\left(Y^{\xi}_{t-}\right),u^{y,\mu,0}_t\right)^\top P^{y,\mu}_{t-}+f^0_{v^0} \left(t,Y^{y,\mu}_{t-},\lr\left(Y^{\xi}_{t-}\right),u^{y,\mu,0}_t\right)=0,\\
    &A_{v^j} \left(s,Y^{y,\mu}_{t-},\lr\left(Y^{\xi}_{t-}\right),u^{y,\mu,j}_t\right)^\top Q^{y,\mu,j}_t+f^0_{v^j} \left(t,Y^{y,\mu}_{t-},\lr\left(Y^{\xi}_{t-}\right),u^{y,\mu,j}_t\right)=0,
\end{aligned}
\end{equation*}
where $u^{y,\mu}_t=\left(u^{y,\mu,0}_t,u^{y,\mu,1}_t,\dots,u^{y,\mu,n}_t \right)$. Similar as in Proposition~\ref{prop:cone}, we also have the following cone property for $P^{y,\mu}$ and $Q^{y,\mu,j}$ for $1\le j\le n$ with $d_j>0$:
\begin{equation}\label{cone:P^y_t'}
    \begin{aligned}
        \left|P_{t-}^{y,\mu}\right| \le\ & \frac{L^2}{\lambda_0} \left[1+\left|Y^{y,\mu}_{t-}\right|+\left|\lr\left(Y^{\xi}_{t-}\right)\right|_1+\left|u^{y,\mu,0}_t\right| \right],\\
        \left|Q_t^{y,\mu,j}\right| \le\ & \frac{L^2}{\lambda_0} \left[1+\left|Y^{y,\mu}_{t-}\right|+ \left|\lr\left(Y^{\xi}_{t-}\right)\right|_1 +\left|u^{y,\mu,j}_t\right| \right];
    \end{aligned}
\end{equation}
From the uniqueness result for FBSDEs with jumps \eqref{FBSDE:MFTC}, we know that 
\begin{align*}
    \Theta^{y,\mu}_t\big|_{y=\xi}=\Theta^{\xi}_t.
\end{align*}

We also give the derivatives of the processes $\Theta^{y,\mu}$ with respect to $y\in\brn$, which will be shown in the next section to be the linear functional-derivative of $V$. Consider the following system of FBSDEs with jumps for processes $D_y \Theta^{y,\mu}:=\left(D_y Y^{y,\mu},D_y P^{y,\mu},D_y Q^{y,\mu},D_y R^{y,\mu},D_y u^{y,\mu}\right)$:
\begin{align}
    &D_y Y^{y,\mu}_t=I+\int_0^t \left[b_x\left(s,\theta^{y,\mu}_s\right)D_yY^{y,\mu}_s+b_v\left(s,\theta^{y,\mu}_s\right)D_y u^{y,\mu}_s \right] ds \notag \\
    &\quad\qquad\qquad +\int_0^t \left[\sigma_x\left(s,\theta^{y,\mu}_s\right)D_yY^{y,\mu}_s+\sigma_v\left(s,\theta^{y,\mu}_s\right)D_y u^{y,\mu}_s \right]dB_s +\int_0^t \int_E \gamma_1\left(s,e\right)D_y Y^{y,\mu}_{r-} \mathring{N}(de,ds),\notag \\
    &D_y P^{y,\mu}_t= g_{xx}\left(Y_T^{y,\mu},\lr\left(Y_T^{\xi}\right)\right)D_yY^{y,\mu}_T +\he\left[D_y^2\frac{dg}{d\nu}\left(\widehat{Y_T^{\xi}},\lr\left(Y^{\xi}_T\right)\right)\left(Y^{y,\mu}_T\right)D_yY^{y,\mu}_T\right] \notag \\
    &\quad\qquad\qquad +\int_t^T \Bigg\{ b_{x}\left(s,\theta^{y,\mu}_s\right)^\top D_y P^{y,\mu}_s +\sum_{j=1}^n \sigma^j_x \left(s,\theta^{y,\mu}_s\right)^\top D_y Q^{y,\mu,j}_s +\int_E \gamma_1\left(s,e\right)^\top D_y R^{y,\mu}_s(e)\lambda(de) \notag \\
    &\qquad\qquad\qquad\qquad +\left[b_{xx}\left(s,\theta^{y,\mu}_s\right) D_y Y^{y,\mu}_s+b_{xv}\left(s,\theta^{y,\mu}_s\right) D_y u^{y,\mu}_s\right]^\top P^{y,\mu}_s \notag \\
    &\qquad\qquad\qquad\qquad +\sum_{j=1}^n \left[\sigma^j_{xx}\left(s,\theta^{y,\mu}_s\right) D_y Y^{y,\mu}_s+\sigma^j_{xv}\left(s,\theta^{y,\mu}_s\right) D_y u^{y,\mu}_s\right]^\top Q^{y,\mu,j}_s \notag \\      
    &\qquad\qquad\qquad\qquad +f_{xx}\left(s,\theta^{y,\mu}_s\right) D_y Y^{y,\mu}_s +f_{xv}\left(s,\theta^{y,\mu}_s\right) D_y u^{y,\mu}_s \notag  \\
    &\qquad\qquad\qquad\qquad +\he\bigg\{\bigg[D_y^2\frac{db}{d\nu}\left(s,\widehat{\theta^{\xi}_s}\right)\left(Y_s^{y,\mu}\right) D_y Y^{y,\mu}_s\bigg]^\top \widehat{P^{\xi}_s} \notag \\
    &\quad\qquad\qquad\qquad\qquad\qquad +\sum_{j=1}^n \bigg[D_y^2\frac{d\sigma^j}{d\nu}\left(s,\widehat{\theta^{\xi}_s}\right)\left(Y_s^{y,\mu}\right) D_y Y^{y,\mu}_s\bigg]^\top \widehat{Q^{\xi,j}_s}  \notag \\
    &\quad\qquad\qquad\qquad\qquad\qquad +D_y^2\frac{df}{d\nu}\left(s,\widehat{\theta^{\xi}_s}\right)\left(Y_s^{y,\mu}\right)D_y Y^{y,\mu}_t\bigg\}\Bigg\}ds \notag \\
    &\quad\qquad\qquad -\int_t^T D_y Q^{y,\mu}_s dB_s-\int_t^T\int_E D_y R^{y,\mu}_s(e) \mathring{N}(de,ds), \quad s\in[t,T], \label{FBSDE:D_yTheta}
\end{align}
with the following condition
\begin{align}
    0=\ & b_v\left(t,\theta^{y,\mu}_{t-}\right)^\top D_y P^{y,\mu}_{t-} +\sum_{j=1}^n \sigma^j_v\left(t,\theta^{y,\mu}_{t-}\right)^\top D_y Q_t^{y,\mu,j} \notag \\
    &+ \left[b_{vx}\left(t,\theta^{y,\mu}_{t-}\right)D_y Y^{y,\mu}_{t-}+b_{vv}\left(t,\theta^{y,\mu}_{t-}\right)D_y u^{y,\mu}_t\right]^\top P^{y,\mu}_{t-} \notag \\
    &+\sum_{j=1}^n \left[\sigma^j_{vx}\left(t,\theta^{y,\mu}_{t-}\right)D_y Y^{y,\mu}_{t-}+\sigma^j_{vv}\left(t,\theta^{y,\mu}_{t-}\right)D_y u^{y,\mu}_t\right]^\top Q^{y,\mu,j}_t \notag \\
    &+f_{vx}\left(t,\theta^{y,\mu}_{t-}\right) D_y Y^{y,\mu}_{t-}+f_{vv}\left(t,\theta^{y,\mu}_{t-}\right) D_y u^{y,\mu}_t, \label{thm:D^2dV_1}
\end{align}
where $\theta^{y,\mu}_t=\left(Y^{y,\mu}_t,\lr\left(Y^{\xi}_t\right),u^{y,\mu}_t\right)$ and $\widehat{\theta^{y,\mu}_t}=\left(\widehat{Y^{y,\mu}_t},\lr\left(Y^{\xi}_t\right),\widehat{u^{y,\mu}_t}\right)$. The next result shows that the system \eqref{FBSDE:D_yTheta}-\eqref{thm:D^2dV_1} has a unique solution $D_y \Theta^{y,\mu}$, and the solution gives the derivative of $\Theta^{y,\mu}$ with respect to $y\in\brn$.

\begin{theorem}\label{thm:D^2dV}
    Under Assumptions (A1'), (A2'), (A3) and (A4), and the validity of Condition \eqref{thm:suff:condition}-(i,ii), for any $(y,\mu)\in\brn\times\pr_2(\brn)$, FBSDEs with jumps \eqref{FBSDE:D_yTheta}-\eqref{thm:D^2dV_1} has a unique solution $D_y \Theta^{y,\mu}\in\mathbb{S}$, and $D_y\Theta^{y,\mu}$ is the G\^ateaux derivative of $\Theta^{y,\mu}$ with respect to $y\in\brn$. Moreover, the G\^ateaux derivative satisfies
    \begin{equation}\label{add-5}
        \left\|D_y \Theta^{y,\mu}\right\|_{\mathbb{S}}\le C,
    \end{equation}
    where $C>0$ is a constant depending only on $(l,L,L_0,L_1,L_2,\lambda_0,\lambda_x,\lambda_m,\lambda_v,T)$, and it is continuous in $y$ and $\mu$.
\end{theorem}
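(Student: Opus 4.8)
The plan is to mirror closely the arguments already used for Theorems~\ref{main2_thm} and~\ref{thm:Gateaux}, exploiting the crucial simplification that in \eqref{FBSDE:MFTC_y}, and hence in \eqref{FBSDE:D_yTheta}--\eqref{thm:D^2dV_1}, every distributional argument is \emph{frozen} at the law $\lr\left(Y^\xi_\cdot\right)$ of the already-constructed solution $\Theta^\xi$, and the ``hat'' processes $\widehat{P^\xi_\cdot},\widehat{Q^\xi_\cdot},\widehat{R^\xi_\cdot}$ are given data; thus \eqref{FBSDE:D_yTheta}--\eqref{thm:D^2dV_1} is a \emph{linear} FBSDE with jumps for the matrix-valued unknown $D_y\Theta^{y,\mu}$ (with $D_yY^{y,\mu}_0=I$) with random but given coefficients, rather than a system of McKean--Vlasov type, and its $n$ columns decouple, each solving an FBSDE of the same form with a fixed deterministic initial condition. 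First I would eliminate $D_yu^{y,\mu}$ via \eqref{thm:D^2dV_1}: by Assumption (A3)-(i,ii,iv) that relation decouples blockwise, the coefficient of $D_yu^{0}$ being the $d_0\times d_0$ matrix $\left(B_{v^0v^0}\cdot P^{y,\mu}_{t-}\right)+f^0_{v^0v^0}$, which by the cone property \eqref{cone:P^y_t'}, the bound \eqref{B_vv:boundedness}-(iii) and the convexity $f^0_{v^0v^0}\ge 2\lambda_v I_{d_0}$ from (A4) is bounded below by $\left(2\lambda_v-L^2L_2/\lambda_0\right)I_{d_0}>0$ under Condition \eqref{thm:suff:condition}-(i), and similarly for $D_yu^j$ with $d_j>0$. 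Hence $D_yu^{y,\mu}$ is a linear map of $\left(D_yY^{y,\mu},D_yP^{y,\mu},D_yQ^{y,\mu}\right)$ with uniformly bounded coefficients --- the differential of the feedback \eqref{u_feedback} --- and substituting it back gives a linear FBSDE with jumps in $\left(D_yY^{y,\mu},D_yP^{y,\mu},D_yQ^{y,\mu},D_yR^{y,\mu}\right)$ whose monotonicity is inherited from (A3)--(A4) exactly as for \eqref{FBSDE:MFTC}; well-posedness in $\mathbb{S}$ then follows from the method of continuation in the coefficients (\cite{YH2}, with the jump extension of \cite{zhen1999forward}), verbatim as in the proof of Theorem~\ref{thm:Gateaux} in Appendix~\ref{pf:thm:Gateaux}, with the computations simplified by the frozen laws.

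Next I would establish the uniform bound \eqref{add-5}. Applying It\^o's formula to $\text{Tr}\big[\left(D_yP^{y,\mu}_t\right)^\top D_yY^{y,\mu}_t\big]$, taking expectation, and invoking \eqref{thm:D^2dV_1}, the convexity (A4) and the gap conditions \eqref{thm:suff:condition}-(i,ii) as in the derivation of \eqref{lem:kappa_0}, the estimate closes \emph{provided} every ``(second derivative of a coefficient)$\times$(adjoint process)'' term in \eqref{FBSDE:D_yTheta} is uniformly bounded, and this is exactly where the cone property enters: e.g.\ $\big|b_{xx}\left(s,\theta^{y,\mu}_s\right)^\top P^{y,\mu}_s\big|\le L^2L_0/\lambda_0$ by combining \eqref{B_vv:boundedness}-(i) with \eqref{cone:P^y_t'}, and $\big|D_y^2\tfrac{db}{d\nu}\big(s,\widehat{\theta^\xi_s}\big)\big(Y^{y,\mu}_s\big)\widehat{P^\xi_s}\big|\le L^2L_0/\lambda_0$ by \eqref{B_vv:boundedness}-(i) with the cone property for $\Theta^\xi$, while the terminal term is bounded by $2L\,|D_yY^{y,\mu}_T|$ via (A2'). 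Since the forward equation starts from the fixed $D_yY^{y,\mu}_0=I$ and all source coefficients of the backward equation are thereby bounded by constants depending only on $(l,L,L_0,L_1,L_2,\lambda_0,\lambda_x,\lambda_m,\lambda_v)$, the resulting bound on $\|D_y\Theta^{y,\mu}\|_{\mathbb{S}}$ is independent of $(y,\mu)$, which is \eqref{add-5}.

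Finally I would identify $D_y\Theta^{y,\mu}$ as the G\^ateaux derivative and prove its continuity in $(y,\mu)$. For $e\in\brn$ and $\epsilon\in(0,1)$, putting $\Xi^\epsilon:=\epsilon^{-1}\big(\Theta^{y+\epsilon e,\mu}-\Theta^{y,\mu}\big)-\left(D_y\Theta^{y,\mu}\right)e$, the process $\Xi^\epsilon$ solves a linear FBSDE with jumps of the type above with an extra inhomogeneous source that is $o(1)$ in the relevant norms, by the continuity in all arguments of the second-order derivatives of $(b,\sigma,f,g)$ in (A1')--(A2'), the Lipschitz estimate \eqref{add-4} for $y\mapsto\Theta^{y,\mu}$, the bounds \eqref{add-5} and \eqref{cone:P^y_t'}, and dominated convergence; the a priori estimate of the previous step applied to $\Xi^\epsilon$ then gives $\|\Xi^\epsilon\|_{\mathbb{S}}\to0$, so $D_y\Theta^{y,\mu}$ is the G\^ateaux derivative, and linearity in $e$ is immediate. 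Continuity of $(y,\mu)\mapsto D_y\Theta^{y,\mu}$ follows in the same manner: $D_y\Theta^{y',\mu'}-D_y\Theta^{y,\mu}$ solves a linear FBSDE with jumps with coinciding deterministic initial condition $I$ and source terms controlled by differences of the now-continuous coefficients evaluated along $\left(Y^{y',\mu'},u^{y',\mu'},\lr\left(Y^{\xi'}\right),\dots\right)$ versus $\left(Y^{y,\mu},u^{y,\mu},\lr\left(Y^{\xi}\right),\dots\right)$, which tend to $0$ by \eqref{add-4} as $(y',\mu')\to(y,\mu)$.

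\emph{Main obstacle.} The only genuinely delicate point is obtaining \eqref{add-5} with a constant \emph{independent of the initial data} $(y,\mu)$; this rests entirely on the cone property \eqref{cone:P^y_t'} (and its counterpart for $\Theta^\xi$), which cancels the linear growth in the denominators of the regularity bounds \eqref{B_vv:boundedness}--\eqref{A_vv:boundedness} against the linear growth of the adjoint processes and thereby turns every source coefficient of the linearized backward equation into a uniformly bounded quantity --- without it the constant would depend on $|y|+|\mu|_2$ and the subsequent linear-functional-derivative analysis of $V$ in Section~\ref{sec:HJB} would fail. Everything else is a routine transcription of the arguments already carried out for Theorems~\ref{main2_thm} and~\ref{thm:Gateaux}, with the law arguments frozen.
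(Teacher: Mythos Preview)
Your proposal is correct and takes essentially the same approach as the paper, which proves this theorem in a single short paragraph by referring back to the well-posedness argument for \eqref{FBSDE:MFTC} and the G\^ateaux-derivative argument of Theorem~\ref{thm:Gateaux}. Your elaboration---noting that the laws are frozen so the linearized system is genuinely linear with given coefficients, isolating the invertibility of the $D_yu^{y,\mu}$-coefficient via \eqref{thm:suff:condition}-(i), and pinpointing the cone property \eqref{cone:P^y_t'} as the mechanism that makes the constant in \eqref{add-5} independent of $(y,\mu)$---is a faithful and more explicit unpacking of what the paper leaves implicit.
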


\begin{proof}
The proof of the well-posedness of the above system of FBSDEs with jumps is similar to that for the well-posedness of FBSDEs with jumps \eqref{FBSDE:MFTC}, which is omitted; and similar to Theorem~\ref{thm:Gateaux}, we can also prove the continuity of $D_y \Theta^{y,\mu}$ and the following estimate
\begin{align*}
    \lim_{\epsilon\to 0}\left\|\frac{\Theta^{y+\epsilon\Tilde{y},\mu}-\Theta^{y,\mu}}{\epsilon}-D_y \Theta^{y,\mu} \right\|_{\mathbb{S}}=0,
\end{align*}
which shows that the components of $D_y \Theta^{y,\mu}$ are the G\^ateaux derivatives of the processes $\Theta^{y,\mu}$ in $y\in\brn$. 
\end{proof}

Here, we emphasize that the processes $D_y\Theta^{y,\mu}$ in \eqref{FBSDE:D_yTheta} are different from the processes $D_\eta \Theta$ in \eqref{FBSDE:Jaco}; indeed, $D_\eta \Theta$ is the total derivative of $\Theta$ in the initial, while $D_y\Theta^{y,\mu}$ is the partial derivative with respect to the state-only of the initial, not including the derivative in law with respect to the initial. In the rest of this article, we use the subscript in $D$ to distinguish them.

\section{Regularity of the value function}\label{sec:value}

In this section, we consider the MFTC problem $\left(\mathbf{P}^{t,\xi}\right)$ with any initial time $t\in[0,T]$ and initial condition $\xi\in\lr_{\f_t}^2$ independent of the Brownian motion $B$ and the Poisson jump process $N$. The state process corresponding to a control $v\in\mr^2_\f(t,T)$ is denoted by $X^{t,\xi,v}\in\sr_\f^2(t,T)$ and satisfies the following SDE:
\begin{equation*}
\begin{aligned}
    X_s^{t,\xi,v}=\xi&+\int_t^s b\left(r,X_r^{t,\xi,v},\lr\left(X_r^{t,\xi,v}\right),v_r\right) dr +\int_t^s \sigma \left(r,X_r^{t,\xi,v},\lr\left(X_r^{t,\xi,v}\right),v_r\right)dB_r\\
    &+\int_t^s \int_E \gamma\left(r,X_{r-}^{t,\xi,v},\lr\left(X_{r-}^{t,\xi,v}\right),v_r,e\right) \mathring{N}(de,dr),\quad s\in[t,T],
\end{aligned}
\end{equation*}
and the corresponding cost, which is denoted by $J_{t,\xi}(v)$, satisfies 
\begin{equation*}
    \begin{aligned}
        J_{t,\xi}(v):=\e\left[\int_t^T f\left(s,X_s^{t,\xi,v},\lr\left(X_s^{t,\xi,v}\right),v_s\right) ds + g\left(X_T^{t,\xi,v},\lr\left(X_T^{t,\xi,v}\right)\right)\right].
    \end{aligned}
\end{equation*}
The value function is then defined as 
\begin{align}\label{def:value}
    V(t,\lr(\xi)):= \inf_{v\in\mr_\f^2(t,T)} J_{t,\xi}(v).
\end{align}
From the results in Sections~\ref{sec:maxi} and \ref{sec:FBSDE}, we know that under Assumptions (A1)-(A4) and also the validity of Condition \eqref{thm:suff:condition}-(i,ii),
\begin{align}\label{value_equation}
    V(t,\lr(\xi))=J_{t,\xi}\left(u^{t,\xi}\right)=\e\left[\int_t^T f\left(s,Y_s^{t,\xi},\lr\left(Y_s^{t,\xi}\right),u^{t,\xi}_s\right) ds + g\left(Y_T^{t,\xi},\lr\left(Y_T^{t,\xi}\right)\right)\right].
\end{align}
Here,  $\Theta^{t,\xi}:=\left(Y^{t,\xi},P^{t,\xi},Q^{t,\xi},R^{t,\xi},u^{t,\xi}\right)\in \left(\sr_\f^2\times\sr_\f^2\times\mr_\f^2\times\kr_{\f,\lambda}^2\times\mr_\f^2\right)(t,T)$ is the unique solution of the following system of FBSDEs with jumps:
\begin{equation}\label{FBSDE:t,xi}
    \left\{
        \begin{aligned}
            &Y^{t,\xi}_s= \xi+\int_t^s H_p\left(r,Y^{t,\xi}_{r-},\lr\left(Y^{t,\xi}_{r-}\right),P^{t,\xi}_{r-}\right) dr +\sum_{j=1}^n \int_t^s H_{q^j} \left(r,Y^{t,\xi}_{r-},\lr\left(Y^{t,\xi}_{r-}\right),Q^{t,\xi,j}_{r-}\right)dB^j_r\\
            &\quad\qquad +\int_t^s \int_E \gamma\left(r,Y^{t,\xi}_{r-},\lr\left(Y^{t,\xi}_{r-}\right),e\right) \mathring{N}(de,dr),\\
            &P^{t,\xi}_s= g_x\left(Y_T^{t,\xi},\lr\left(Y_T^{t,\xi}\right)\right)+\he\left[D_y\frac{dg}{d\nu}\left(\widehat{Y_T^{t,\xi}},\lr\left(Y^{t,\xi}_T\right)\right)\left(Y^{t,\xi}_T\right)\right]\\
            &\quad\qquad +\int_s^T \bigg\{ H_x\left(r,Y^{t,\xi}_{r-},\lr\left(Y^{t,\xi}_{r-}\right),P^{t,\xi}_{r-},Q^{t,\xi}_r,R^{t,\xi}_r\right)\\
            &\qquad\qquad\qquad +\he\bigg[ D_y\frac{dH}{d\nu}\left(r,\widehat{Y^{t,\xi}_{r-}},\lr\left(Y^{t,\xi}_{r-}\right),\widehat{P^{t,\xi}_{r-}},\widehat{Q^{t,\xi}_r},\widehat{R^{t,\xi}_r}\right)\left(Y^{t,\xi}_{r-}\right) \bigg]\bigg\}dr\\
            &\quad\qquad -\int_s^T Q^{t,\xi}_r dB_r-\int_s^T\int_E R^{t,\xi}_r(e) \mathring{N}(de,dr); \\
            &b_v\left(s,\theta^{t,\xi}_{s-}\right)^\top P^{t,\xi}_{s-} +\sum_{j=1}^n \sigma^j_v\left(s,\theta^{t,\xi}_{s-}\right)^\top Q_s^{t,\xi,j} +f_v\left(s,\theta^{t,\xi}_{s-}\right)=0, \quad s\in[t,T],
        \end{aligned}
    \right.
\end{equation}
with the process $u^{t,\xi}_s=\left(u^{t,\xi,0}_s,u^{t,\xi,1}_s,\dots,u^{t,\xi,n}_s\right)$ being defined by
\begin{align*}
    &B_{v^0} \left(s,Y^{t,\xi}_{s-},\lr\left(Y^{t,\xi}_{s-}\right),u^{t,\xi,0}_s\right)^\top P^{t,\xi}_{s-}+f^0_{v^0} \left(s,Y^{t,\xi}_{s-},\lr\left(Y^{t,\xi}_{s-}\right),u^{t,\xi,0}_s\right)=0,\\
    &A_{v^j} \left(s,Y^{t,\xi}_{s-},\lr\left(Y^{t,\xi}_{s-}\right),u^{t,\xi,j}_s\right)^\top Q^{t,\xi,j}_s+f^0_{v^j} \left(s,Y^{t,\xi}_{s-},\lr\left(Y^{t,\xi}_{s-}\right),u^{t,\xi,j}_s\right)=0,\quad 1\le j\le n,\quad d_j>0.
\end{align*}
The well-posedness of System \eqref{FBSDE:t,xi} and $u^{t,\xi}$ can be obtained by the same way as the results for initial time $0$ in Subsection~\ref{subsec:well-posedness}, under the same Assumptions (A1)-(A4) and also the validity of Condition \eqref{thm:suff:condition}-(i,ii). From \eqref{value_equation} we know that $V$ depends on $\xi$ only through its law $\lr(\xi)$, therefore, the value function $V:[0,T]\times\pr_2(\brn)\to\br$ in \eqref{def:value} is well-defined. From results in Subsection~\ref{sec:Jaco}, under additional assumptions (A1') and (A2'), we denote by $D_\eta \Theta^{t,\xi}:=\left(D_\eta Y^{t,\xi},D_\eta P^{t,\xi},D_\eta Q^{t,\xi},D_\eta R^{t,\xi},D_\eta u^{t,\xi}\right)\in \mathbb{S}$ the G\^ateaux derivative of $\Theta^{t,\xi}$ with respect to $\xi$ along the direction $\eta$, which is the unique solution of FBSDEs with jumps \eqref{FBSDE:Jaco} corresponding to the initial time $t$ and initial condition $\xi$, and the initial direction $\eta$. Then, we can give the regularity of the map $\xi\mapsto V(t,\lr(\xi))$.

\begin{theorem}\label{thm:value_regu_mu}
    Under Assumptions (A1'), (A2'), (A3) and (A4), and the validity of Condition \eqref{thm:suff:condition}, the map $\xi\mapsto V (t,\lr(\xi))$ is twice G\^ateaux differentiable with the derivatives
	\begin{align}
		D_\xi  V(t,\lr(\xi))=P^{t,\xi}_t,\quad  D_\xi ^2 V(t,\lr(\xi))(\eta)= D_{\eta}P^{t,\xi}_t,\label{DV_P}
	\end{align}
	and they satisfy the growth conditions	
	\begin{align}
		|V(t,\lr(\xi))|\le& C\left(1+\|\xi\|_2^2\right),\label{V:growth}\\
		\left\|D_\xi  V(t,\lr(\xi))\right\|_2\le& C(1+\|\xi\|_2),\label{DV:growth}\\
		\left\|D_\xi ^2 V(t,\lr(\xi))(\eta)\right\|_2\le& C\|\eta\|_2,\label{DDV:growth}
	\end{align}
    where $C>0$ is a constant depending only on $(l,L,L_0,L_1,L_2,\lambda_0,\lambda_x,\lambda_m,\lambda_v,T)$; and $D_\xi^2 V$ is linear in $\eta$ and continuous in $\xi$. Moreover, the map $\pr_2(\brn)\ni\mu\mapsto V(t,\mu)$ is twice linearly functional-differentiable, and the linear functional-derivative satisfies the relations
    \begin{equation}\label{relation:DV-dV}
    \begin{aligned}
        D_\xi  V(t,\lr(\xi))&=D_y \frac{dV}{d\nu}(t,\lr(\xi))(\xi),\\
	    D_\xi ^2 V(t,\lr(\xi))\left(\eta\right)&=\left(D_y^2 \frac{d V}{d\nu}(t,\lr(\xi))(\xi)\right)^\top \eta +\he\left[\left(D_{y'}D_y \frac{d^2 V}{d\nu^2}(t,\lr(\xi))\left(\xi,\widehat{\xi}\right)\right)^\top \widehat{\eta} \right],
    \end{aligned}
    \end{equation}   
    and the derivatives $D_y \frac{dV}{d\nu}(t,\lr(\xi))(\xi),\ D_y^2 \frac{dV}{d\nu}(t,\lr(\xi))(\xi)$ are continuous.
\end{theorem}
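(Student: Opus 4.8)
The plan is to derive everything from the well-posedness and G\^ateaux-differentiability results for the FBSDE systems \eqref{FBSDE:MFTC}, \eqref{FBSDE:Jaco} and \eqref{FBSDE:D_yTheta}--\eqref{thm:D^2dV_1}, together with the representation \eqref{value_equation} of $V$. First I would establish \eqref{V:growth}: starting from \eqref{value_equation}, apply the quadratic growth of $f$ and $g$ in Assumption (A2), then bound $\e[\sup_s|Y^{t,\xi}_s|^2]$ and $\e\int_t^T|u^{t,\xi}_s|^2ds$ by $C(1+\|\xi\|_2^2)$ via the first estimate in \eqref{lem:kappa_0} (applied with initial time $t$), giving $|V(t,\lr(\xi))|\le C(1+\|\xi\|_2^2)$.

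Next I would prove the first identity in \eqref{DV_P}, namely $D_\xi V(t,\lr(\xi))=P^{t,\xi}_t$. The argument is the classical ``duality via It\^o'' computation: along $\xi^\epsilon=\xi+\epsilon\eta$, write $\frac{d}{d\epsilon}J_{t,\xi^\epsilon}(u^{t,\xi^\epsilon})\big|_{\epsilon=0}$; by the envelope property (the first-order optimality condition \eqref{optimal_condition_split}, i.e.\ $L_v=0$ along the optimal control), the terms involving $D_\eta u^{t,\xi}$ cancel, so only the state-variation $D_\eta Y^{t,\xi}$ survives. Then apply It\^o's formula to $(P^{t,\xi}_s)^\top D_\eta Y^{t,\xi}_s$ on $[t,T]$ using the forward equation for $D_\eta Y^{t,\xi}$ in \eqref{FBSDE:Jaco} and the backward equation for $P^{t,\xi}$; the drift, diffusion and jump terms telescope against the generator of the adjoint BSDE (using the Fubini/independent-copy symmetrisation $\he[\cdots]$ exactly as in the proof of Lemma~\ref{lem:dJ}), leaving only the boundary contributions. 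One obtains $\frac{d}{d\epsilon}V(t,\lr(\xi^\epsilon))\big|_{\epsilon=0}=\e[(P^{t,\xi}_t)^\top\eta]$, and since this holds for all $\eta\in\lr^2_{\f_t}$ we get $D_\xi V(t,\lr(\xi))=P^{t,\xi}_t$; the bound \eqref{DV:growth} is then immediate from the cone property \eqref{cone:PQ:without_v} together with $\|Y^{t,\xi}\|$, $\|u^{t,\xi}\|\le C(1+\|\xi\|_2)$.

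For the second-order statement, differentiate the identity $D_\xi V(t,\lr(\xi))=P^{t,\xi}_t$ once more in $\xi$ along $\eta$: by Theorem~\ref{thm:Gateaux} the map $\xi\mapsto P^{t,\xi}_t$ is G\^ateaux differentiable with derivative $D_\eta P^{t,\xi}_t$ (the time-$t$ value of the solution of \eqref{FBSDE:Jaco}), giving $D^2_\xi V(t,\lr(\xi))(\eta)=D_\eta P^{t,\xi}_t$; linearity in $\eta$, continuity in $\xi$ and the bound \eqref{DDV:growth} follow from \eqref{thm:Gateaux_boundedness}. Passing to linear functional-derivatives on $\pr_2(\brn)$ is then a soft step: $V$ depends on $\xi$ only through $\lr(\xi)$, so by the standard correspondence \eqref{lem01_1} between Wasserstein gradients and $L^2$-G\^ateaux derivatives (and its second-order analogue), $D_\xi V(t,\lr(\xi))=D_y\frac{dV}{d\nu}(t,\lr(\xi))(\xi)$ and the decomposition in \eqref{relation:DV-dV} holds, with $D_yP^{t,\xi}_t$ identified as $D^2_y\frac{dV}{d\nu}(t,\lr(\xi))(\xi)$ via the system \eqref{FBSDE:D_yTheta} (this is precisely why that auxiliary system, built on $\Theta^{y,\mu}$, was introduced in Subsection~\ref{subsec:D_y}). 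Continuity of $D_y\frac{dV}{d\nu}$ and $D^2_y\frac{dV}{d\nu}$ comes from the continuity-in-$(y,\mu)$ assertions in Theorems~\ref{thm:Gateaux} and \ref{thm:D^2dV}.

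The main obstacle is the rigorous interchange of $\frac{d}{d\epsilon}$ and the expectation/integral defining $J_{t,\xi^\epsilon}$, i.e.\ justifying that $V$ is genuinely G\^ateaux differentiable rather than merely possessing a candidate derivative. This requires the $L^2$-convergence $\frac{\Theta^{t,\xi^\epsilon}-\Theta^{t,\xi}}{\epsilon}\to D_\eta\Theta^{t,\xi}$ from \eqref{thm:Gateaux_appro}, combined with uniform-in-$\epsilon$ bounds (from \eqref{lem:kappa_0}) and the Lipschitz/growth control on $f_x,f_v,D_y\frac{df}{d\nu},g_x,D_y\frac{dg}{d\nu}$ in (A2), so that a dominated-convergence argument applies to each term of $J$; the envelope cancellation must be handled carefully because $u^{t,\xi}$ is only in $\mr^2_\f$, so the optimality condition \eqref{optimal_condition_split} is used in its integrated ($\mathbb{P}\otimes dt$-a.e.) form. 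The details are routine but lengthy, so I would relegate the full computation to Appendix~\ref{sec:pf:value}.
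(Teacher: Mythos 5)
Your proposal is correct, but for the key identity $D_\xi V(t,\lr(\xi))=P^{t,\xi}_t$ it takes a genuinely different route from the paper. You differentiate the representation $V(t,\lr(\xi))=J_{t,\xi}(u^{t,\xi})$ directly in $\epsilon$ along $\xi^\epsilon=\xi+\epsilon\eta$, using the Jacobian flow of Theorem~\ref{thm:Gateaux} for the chain rule and the envelope mechanism (It\^o duality applied to $(P^{t,\xi}_s)^\top D_\eta Y^{t,\xi}_s$, with the optimality condition \eqref{optimal_condition_split} converting the $P^\top b_v D_\eta u+Q^\top\sigma_v D_\eta u$ terms into $-f_v^\top D_\eta u$, which cancels the control variation coming from $f$). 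The paper instead never differentiates through the optimal control: it sandwiches
$J_{t,\xi'}\left(u^{t,\xi'}\right)-J_{t,\xi}\left(u^{t,\xi'}\right)\le V\left(t,\lr(\xi')\right)-V(t,\lr(\xi))\le J_{t,\xi'}\left(u^{t,\xi}\right)-J_{t,\xi}\left(u^{t,\xi}\right)$
with the control frozen on each side, Taylor-expands each cost difference using (A2) and the It\^o duality with $(P^{t,\xi}_s)^\top\left(X^{t,\xi',u^{t,\xi}}_s-Y^{t,\xi}_s\right)$, controls the nonlinear remainders by the cone property \eqref{cone:PQ:without_v} and (A3), and closes the gap between the two bounds via the Lipschitz estimate on $\xi\mapsto P^{t,\xi}_t$ from \eqref{lem:kappa_0}. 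The trade-off: the paper's sandwich yields the stronger quadratic-remainder expansion $\left|V(t,\lr(\xi'))-V(t,\lr(\xi))-\e\left[\left(P^{t,\xi}_t\right)^\top(\xi'-\xi)\right]\right|\le C\|\xi'-\xi\|_2^2$ and needs only the first-order theory (no Jacobian flow, hence in principle only (A1)--(A4)) to identify $D_\xi V$; your envelope argument is shorter and reuses Lemma~\ref{lem:dJ} almost verbatim, but it invokes \eqref{thm:Gateaux_appro}, i.e.\ the full strength of (A1'), (A2'), already at first order and delivers only directional differentiability. Since the theorem is stated under (A1'), (A2') anyway, nothing is lost here. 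The remaining steps (second derivative via Theorem~\ref{thm:Gateaux}, the bounds \eqref{V:growth}--\eqref{DDV:growth}, and the passage to linear functional-derivatives in \eqref{relation:DV-dV} via the lifting correspondence \eqref{lem01_1} and its second-order analogue) coincide with the paper's treatment, which also defers that last identification to standard arguments from prior work. One small caution: state the cancellation in the right order --- the $D_\eta u$ terms do not vanish termwise in $\frac{d}{d\epsilon}J$; they are eliminated only after the It\^o computation is combined with \eqref{optimal_condition_split} --- but your closing remarks indicate you are aware of this.
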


The proof for Theorem~\ref{thm:value_regu_mu} is similar to that in \cite{AB10,AB13}, we provide here for complete, which is given in Appendix~\ref{pf:thm:value_regu_mu}. In the rest of this section, we aim to give the regularity of $V$ in $t$. We need the following additional assumption, which requires the H\"older continuity of the coefficients in the derivatives $b_v$, $\sigma^j_v$ and $f_v$.

{\bf (A3')} The coefficients $b$, $\sigma$ and $f$ satisfy (A3). The diffusion coefficient $\sigma$ is independent of the control argument (that is $l=0$, $b=B$, $f=f^0$). Moreover, for any $0\le t,\;t'\le T$ and $(x,m,v)\in\brn\times\pr_2(\brn)\times\brd$,
\begin{align*}
    \left|b_{v}\left(t',x,m,v^0\right)- b_{v}\left(t,x,m,v^0\right)\right| \le\ & \frac{L|t'-t|^{\frac{1}{2}}}{1+|x|+ \left|v\right|+|m|_1},\\
    \left|f_{v}\left(t',x,m,v\right)- f_{v}\left(t,x,m,v\right)\right|\le\ & L |t'-t|^{\frac{1}{2}}.
\end{align*}

For the sake of convenience, the control-independent assumption in (A3') is imposed here as it is much conveniently to establish the H\"older continuity of $u^{t,\xi}_s$ in time $s$ and henceforth the classically solvability of the HJB integro-particle differential equation; and (A3') is not necessary for the solvability of the MFTC problem and well-posedness of the FBSDEs with jumps. Before giving the regularity of $V$ in $t$, we need the following continuity of $\Theta^{t,\xi}_s$ in $s$, whose proof is given in Appendix~\ref{pf:lem:Theta_regu_s}.

\begin{lemma}\label{lem:Theta_regu_s}
    Under Assumptions (A1), (A2), (A3') and (A4), and the validity of Condition \eqref{thm:suff:condition}-(i,ii), we have for a.e. $s,s' \in[t,T]$, 
    \begin{align*}
        \left\|Y^{t,\xi}_{s'}-Y^{t,\xi}_s\right\|^2_2,\ \left\|P^{t,\xi}_{s'}-P^{t,\xi}_s\right\|^2_2,\ \left\|u^{t,\xi}_{s'}-u^{t,\xi}_s\right\|^2_2\le\ & C\left(1+\|\xi\|_2^2\right)|s'-s|,
    \end{align*}
    where $C>0$ is a constant depending only on $(l,L,L_0,L_1,L_2,\lambda_0,\lambda_x,\lambda_m,\lambda_v,T)$.
\end{lemma}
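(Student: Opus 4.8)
The plan is to bound the increments in the order $Y^{t,\xi}$, then $(P^{t,\xi},Q^{t,\xi},R^{t,\xi})$, then $u^{t,\xi}$, using throughout the a priori bound $\|\Theta^{t,\xi}\|_{\mathbb{S}}\le C(1+\|\xi\|_2)$ of Theorem~\ref{main2_thm} at initial time $t$, which gives $\sup_{t\le s\le T}(\|Y^{t,\xi}_s\|_2+\|P^{t,\xi}_s\|_2)\le C(1+\|\xi\|_2)$ and $\e\int_t^T(|Q^{t,\xi}_r|^2+\|R^{t,\xi}_r\|_{L^2_\lambda}^2+|u^{t,\xi}_r|^2)\,dr\le C(1+\|\xi\|_2^2)$. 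Take $t\le s<s'\le T$ without loss of generality; a fixed deterministic time carries no jump of $N$ a.s., so there is no distinction between $Y^{t,\xi}_s$ and $Y^{t,\xi}_{s-}$ at a fixed $s$, the ``a.e.'' in the statement being forced only by $u^{t,\xi}$ being defined for a.e.\ time. Writing $Y^{t,\xi}_{s'}-Y^{t,\xi}_s$ as the sum of a $dr$-integral of $b$, a $dB$-integral of $\sigma$ and a compensated-Poisson integral of $\gamma$ over $[s,s']$, taking $L^2$-norms and applying Burkholder--Davis--Gundy and the It\^o isometry, the linear growth of $b$ in (A1) yields a drift contribution $\le C|s'-s|\,\e\int_s^{s'}(1+|Y^{t,\xi}_r|^2+|\lr(Y^{t,\xi}_r)|_2^2+|u^{t,\xi}_r|^2)\,dr$, and --- since under (A3$'$) with $l=0$ both $\sigma$ and $\gamma$ are control-free and affine in $(x,m)$ with bounded deterministic coefficients --- the martingale contributions are $\le C\,\e\int_s^{s'}(1+|Y^{t,\xi}_r|^2+|\lr(Y^{t,\xi}_r)|_2^2)\,dr$. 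Since the $|Y|^2$- and $|\lr(Y)|_2^2$-integrands are uniformly $\le C(1+\|\xi\|_2^2)$ and $\e\int_s^{s'}|u^{t,\xi}_r|^2\,dr\le\e\int_t^T|u^{t,\xi}_r|^2\,dr$, both contributions are $\le C(1+\|\xi\|_2^2)|s'-s|$, giving $\|Y^{t,\xi}_{s'}-Y^{t,\xi}_s\|_2^2\le C(1+\|\xi\|_2^2)|s'-s|$.

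For the backward process, \eqref{FBSDE:t,xi} gives $P^{t,\xi}_{s'}-P^{t,\xi}_s=-\int_s^{s'}(\text{driver})_r\,dr+\int_s^{s'}Q^{t,\xi}_r\,dB_r+\int_s^{s'}\int_E R^{t,\xi}_r(e)\,\mathring N(de,dr)$, whence $\|P^{t,\xi}_{s'}-P^{t,\xi}_s\|_2^2\le C\big(\|\int_s^{s'}(\text{driver})_r\,dr\|_2^2+\e\int_s^{s'}|Q^{t,\xi}_r|^2\,dr+\e\int_s^{s'}\|R^{t,\xi}_r\|_{L^2_\lambda}^2\,dr\big)$. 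The obstacle is that $Q^{t,\xi}$ and $R^{t,\xi}$ carry only the integrated bound, with no a priori time-uniform $L^2$-control, so the naive estimate of the last two terms is merely $o(1)$. I would remove this by proving the pointwise-in-time bounds
\[
|Q^{t,\xi}_s|\le C\big(1+|Y^{t,\xi}_s|+|\lr(Y^{t,\xi}_s)|_2\big),\qquad \|R^{t,\xi}_s\|_{L^2_\lambda}\le C\big(1+|Y^{t,\xi}_s|+|\lr(Y^{t,\xi}_s)|_1\big)\quad\text{for a.e.\ }s,
\]
which follow from the decoupling structure of \eqref{FBSDE:t,xi}: by the flow property and the Lipschitz estimate \eqref{lem:kappa_0} (in the form \eqref{add-4} at initial time $s$), $P^{t,\xi}_s=\Psi(s,Y^{t,\xi}_s,\lr(Y^{t,\xi}_s))$ for a decoupling field $\Psi$ Lipschitz in its state and measure arguments uniformly in $s$, so the continuous-martingale part of $P^{t,\xi}$ is dominated by $\mathrm{Lip}(\Psi)$ times the affine (hence linearly growing) $\sigma$, and its jumps by $\mathrm{Lip}(\Psi)$ times the affine $\gamma$; alternatively, under the enhanced assumptions (A1$'$)--(A2$'$) one reads these bounds off the Jacobian-flow identities behind Theorem~\ref{thm:D^2dV}. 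Granting them, $\sup_{t\le s\le T}(\e|Q^{t,\xi}_s|^2+\e\|R^{t,\xi}_s\|_{L^2_\lambda}^2)\le C(1+\|\xi\|_2^2)$, so $\e\int_s^{s'}(|Q^{t,\xi}_r|^2+\|R^{t,\xi}_r\|_{L^2_\lambda}^2)\,dr\le C(1+\|\xi\|_2^2)|s'-s|$; and since, by Proposition~\ref{prop:L} and the cone property of Proposition~\ref{prop:cone}, the driver of \eqref{FBSDE:t,xi} has linear growth in $(Y,\lr(Y),P,Q,R)$, Cauchy--Schwarz gives $\|\int_s^{s'}(\text{driver})_r\,dr\|_2^2\le|s'-s|\,\e\int_s^{s'}\|(\text{driver})_r\|_2^2\,dr\le C(1+\|\xi\|_2^2)|s'-s|$. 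Combining, $\|P^{t,\xi}_{s'}-P^{t,\xi}_s\|_2^2\le C(1+\|\xi\|_2^2)|s'-s|$.

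Finally, with $l=0$ the optimality relation in \eqref{FBSDE:t,xi} reads $u^{t,\xi}_s=\varphi^0(s,Y^{t,\xi}_{s-},\lr(Y^{t,\xi}_{s-}),P^{t,\xi}_{s-})$ with $\varphi^0$ the minimizer of Proposition~\ref{prop:L}; the strong monotonicity behind that proposition (Condition~\eqref{thm:suff:condition}-(i), with the cone property handling the $|p|$-dependence) makes $(x,m,p)\mapsto\varphi^0(s,x,m,p)$ globally Lipschitz uniformly in $s$ and transfers the H\"older-$\tfrac12$-in-time continuity of $b_v=B_{v^0}$ and $f_v=f^0_{v^0}$ postulated in (A3$'$) into $|\varphi^0(s',x,m,p)-\varphi^0(s,x,m,p)|\le C(1+|x|+|m|_1+|p|)|s'-s|^{1/2}$; substituting $(x,m,p)=(Y^{t,\xi},\lr(Y^{t,\xi}),P^{t,\xi})$ and using the previous two estimates yields $\|u^{t,\xi}_{s'}-u^{t,\xi}_s\|_2^2\le C(1+\|\xi\|_2^2)|s'-s|$. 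I expect the backward step --- precisely the time-uniform $L^2$-control of $Q^{t,\xi}$ and $R^{t,\xi}$, which has to be extracted from the Lipschitz regularity of the decoupling field (or from the Jacobian flow) rather than from any plain BSDE estimate --- to be the main obstacle; the forward and control steps are routine.
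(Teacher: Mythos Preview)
Your $Y$-step and $u$-step match the paper's almost exactly (the paper is marginally cleaner in the $Y$-step because it uses the pointwise bound \eqref{cone:u} on $u$ in terms of $(Y,P)$ rather than the integrated bound, but this is cosmetic). The genuine difference is in the $P$-step. You write the backward increment directly as $-\int_s^{s'}\text{driver}\,dr+\int_s^{s'}Q\,dB+\int_s^{s'}\!\int R\,\mathring N$ and then face, as you correctly flag, the problem of obtaining time-uniform $L^2$-bounds on $Q,R$; your proposed cure (read them off the Lipschitz decoupling field $\Psi$) is sound, but under only (A1)--(A2) you have $\Psi$ merely Lipschitz, so identifying $Q_s=\Psi_y\sigma$ requires a further quadratic-variation/approximation argument (your fallback to (A1$'$)--(A2$'$) is outside the stated hypotheses). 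The paper sidesteps this entirely by a slicker decomposition: write $P_{s'}-P_s=\big(P_{s'}-\e[P_{s'}\mid\f_s]\big)-\e\!\big[\int_s^{s'}\text{driver}\,dr\,\big|\,\f_s\big]$. The driver piece is handled by Cauchy--Schwarz and the integrated $\mathbb S$-bound exactly as you do. For the martingale piece, rather than computing $Q,R$, the paper uses that conditional expectation is the $L^2$-projection: since $\Psi\big(s',Y^{t,\xi}_s,\lr(Y^{t,\xi}_{s'})\big)$ is $\f_s$-measurable, one has
\[
\big\|P^{t,\xi}_{s'}-\e[P^{t,\xi}_{s'}\mid\f_s]\big\|_2\le\big\|\Psi(s',Y^{t,\xi}_{s'},m_{s'})-\Psi(s',Y^{t,\xi}_s,m_{s'})\big\|_2\le C\,\|Y^{t,\xi}_{s'}-Y^{t,\xi}_s\|_2,
\]
where the last inequality is exactly the Lipschitz bound \eqref{lem:kappa_0}/\eqref{add-4}; combined with the already-proved $Y$-estimate this gives $\le C(1+\|\xi\|_2^2)|s'-s|$ immediately. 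So the key input---the Lipschitz decoupling field---is the same one you identified, but the paper uses it through a projection argument that avoids the pointwise $Q,R$ bounds altogether, making the backward step no harder than the forward one.
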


To give the differentiability of $V$ in $t$, we also need the following It\^o's formula for measure-dependent functionals and SDE with jump diffusion. We also refer to \cite{agram2023stochastic,guo2023ito} for similar results; and also refer to \cite{AB5} for It\^o's formula in mean field theory against Brownian motion only. Due to the page limit, we here only give a sketch of proof of this It\^o's formula (which is put in Appendix~\ref{pf:lem:Ito}); for detailed discussion, we refer to \cite[Corollary 3.6]{guo2023ito} for instance. 

\begin{lemma}\label{lem:Ito}
    Suppose that $X\in\sr_\f^2(0,T)$ is of the form
    \begin{align}\label{Ito:X}
        dX_s=b_s ds + \sigma_s dB_s + \int_E \gamma_s (e) \mathring{N}(de,ds),\quad s\in[0,T],
    \end{align}
    where $b,\sigma\in \mr_\f^2(0,T)$ and $\gamma\in\kr_{\f,\lambda}^2(0,T)$. Suppose that the functional $F:[0,T]\times\pr_2(\brn)\ni(t,\mu)\mapsto F(t,\mu)\in\br$ is $C^1$ in $t$ and linearly functional-differentiable in $\mu$ with the continuous derivative $D_y^2\frac{dF}{d\nu}$. Then,
    \begin{align}\label{Ito_0}
        &\frac{d}{ds}F(s,\lr(X_s)) \notag \\
        =\ & F_s(s,\lr(X_s))+ \e\Bigg\{\left(D_y\frac{dF}{d\nu}(s,\lr(X_s))(X_s)\right)^\top b_s +\frac{1}{2}\text{Tr}\left[\left(\sigma_s\sigma_s^\top\right) D_y^2\frac{dF}{d\nu}(s,\lr(X_s))(X_s)\right] \notag \\
        &+\int_E \bigg[\frac{dF}{d\nu}(s,\lr(X_{s}))(X_{s-}+\gamma_s(e))-\frac{dF}{d\nu}(s,\lr(X_{s}))(X_{s-}) \notag \\
        &\quad\qquad -\left(D_y\frac{dF}{d\nu}(s,\lr(X_s))(X_{s-})\right)^\top \gamma_s(e) \bigg] \lambda(de)\Bigg\}.
    \end{align}
\end{lemma}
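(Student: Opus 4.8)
The plan is to establish the It\^o formula \eqref{Ito_0} for the measure-dependent functional $F(s,\lr(X_s))$ by combining a finite-dimensional It\^o formula for jump-diffusions with an approximation of the linear functional-derivative structure in $\pr_2(\brn)$.

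\textbf{Step 1: Reduce to the empirical measure / lifting.} First I would work with an i.i.d. family $X^1,\dots,X^N$ of independent copies of the process $X$, each solving \eqref{Ito:X} driven by its own independent pair of a Brownian motion and a Poisson random measure, and consider the empirical measure $\mu^N_s:=\frac1N\sum_{i=1}^N\delta_{X^i_s}$. The functional $s\mapsto F(s,\mu^N_s)$ is a function of time and of the finite-dimensional vector $(X^1_s,\dots,X^N_s)\in(\brn)^N$, namely $F(s,\mu^N_s)=\Phi^N(s,X^1_s,\dots,X^N_s)$ with $\Phi^N(s,x_1,\dots,x_N):=F\big(s,\frac1N\sum_i\delta_{x_i}\big)$. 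Standard properties of linear functional-derivatives (see \cite{AB,book_mfg}) give $\dd_{x_i}\Phi^N=\frac1N D_y\frac{dF}{d\nu}(\mu^N)(x_i)$ and $\dd^2_{x_i x_i}\Phi^N=\frac1N D_y^2\frac{dF}{d\nu}(\mu^N)(x_i)+\frac1{N^2}D_{y'}D_y\frac{d^2F}{d\nu^2}(\mu^N)(x_i,x_i)$; the cross terms $\dd^2_{x_i x_j}\Phi^N$ for $i\ne j$ are $O(1/N^2)$ and the continuity assumption on $D_y^2\frac{dF}{d\nu}$ ensures all these are continuous, so $\Phi^N(s,\cdot)$ is $C^{1,2}$ in the classical sense.

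\textbf{Step 2: Apply the classical jump It\^o formula and pass to the limit.} To $\Phi^N$ I would apply the classical It\^o formula for jump-diffusions (as in \cite{Oksendal-Sulem}, or \cite[Corollary 3.6]{guo2023ito}): since the driving noises of the different copies are independent, the stochastic integrals contribute only through the diagonal Hessian terms, and the jump term for the $i$-th coordinate produces $\int_E\big[\Phi^N(\cdots,x_i+\gamma^i_s(e),\cdots)-\Phi^N(\cdots,x_i,\cdots)-\dd_{x_i}\Phi^N\cdot\gamma^i_s(e)\big]\lambda(de)$ against $ds$, plus a martingale against $\mathring N$. Taking expectation kills the martingale parts; expressing everything through the derivatives of $F$, dividing by nothing (the $1/N$ is absorbed into the empirical average), and using that $\mu^N_s\to\lr(X_s)$ in $W_2$ a.s. by the law of large numbers together with the assumed continuity of $D_y\frac{dF}{d\nu}$, $D_y^2\frac{dF}{d\nu}$ and the boundedness/integrability of $b,\sigma,\gamma$ from $b\in\mr_\f^2$, $\sigma\in\mr_\f^2$, $\gamma\in\kr_{\f,\lambda}^2$, I would pass $N\to\infty$. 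The $O(1/N)$ and $O(1/N^2)$ remainder terms (the second-order-in-measure pieces and the off-diagonal Hessian) vanish in the limit, leaving exactly \eqref{Ito_0}. Alternatively, and perhaps more cleanly, one can differentiate in $s$ directly: writing $F(s+h,\lr(X_{s+h}))-F(s,\lr(X_s))$ as a telescoping sum of $[F(s+h,\cdot)-F(s,\cdot)]$ (giving $F_s\,h+o(h)$) plus $[F(s,\lr(X_{s+h}))-F(s,\lr(X_s))]=\int_0^1\e[D_y\frac{dF}{d\nu}(s,\lr(X_s^\lambda))(X^\lambda)\cdot(X_{s+h}-X_s)]\,d\lambda$ along the interpolation $X^\lambda=X_s+\lambda(X_{s+h}-X_s)$, and then expanding $X_{s+h}-X_s$ via \eqref{Ito:X}; the drift gives the first term, the quadratic variation of $\sigma$ gives the Tr term, the compensated jumps combine with the second-order Taylor expansion in the measure argument to give the bracketed jump integral, and all cross/higher-order terms are $o(h)$ by the continuity of the derivatives and the $L^2$-integrability of the coefficients.

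\textbf{Main obstacle.} The delicate point is the jump term: one must carefully keep track of the fact that a jump of size $\gamma_s(e)$ in $X$ induces a first-order \emph{finite} change in the measure argument (not an infinitesimal one), so the correct expression is the full increment $\frac{dF}{d\nu}(s,\lr(X_s))(X_{s-}+\gamma_s(e))-\frac{dF}{d\nu}(s,\lr(X_s))(X_{s-})$ of the linear functional-derivative evaluated at the pre-jump law, \emph{minus} the compensator correction $D_y\frac{dF}{d\nu}(s,\lr(X_s))(X_{s-})^\top\gamma_s(e)$; getting the ``pre-jump law'' versus ``current law'' bookkeeping right in the empirical-measure computation (a single copy jumps while the empirical law moves by $O(1/N)$, which is why no second-order-in-measure term survives) is the step requiring the most care, and is precisely where the argument in \cite[Corollary 3.6]{guo2023ito} is invoked. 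Since the paper only requests a sketch, I would carry out Steps 1--2 at the level of detail above and refer to \cite{agram2023stochastic,guo2023ito} for the remaining routine estimates, which is consistent with the statement preceding the lemma.
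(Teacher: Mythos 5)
Your proposal is correct in substance but your primary route is genuinely different from the paper's. The paper's proof is a two-step argument: first the chain-rule identity
\begin{align*}
    \frac{d}{ds}F(s,\lr(X_s))=\frac{\dd F}{\dd s}(s,\lr(X_s))+\left(\frac{d}{ds}\,\e\left[\frac{dF}{d\nu}(t,\mu)(X_s)\right]\right)\bigg|_{(t,\mu)=(s,\lr(X_s))},
\end{align*}
which freezes the measure argument and differentiates only through the insertion of $X_s$ into $\frac{dF}{d\nu}$ (citing \cite[Theorem 7.1]{BR}, after noting that $\lr(X_s)$ is continuous in $s$ even though the paths jump); and second a single application of the classical finite-dimensional jump It\^o formula to the $C^2$ function $y\mapsto\frac{dF}{d\nu}(t,\mu)(y)$, followed by taking expectations. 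Your ``alternatively, and perhaps more cleanly'' differencing argument is essentially this route (your interpolation $X^\lambda$ plus expansion of $X_{s+h}-X_s$ collapses to the frozen-measure computation), except that the Tr term and the bracketed jump integral arise from the second-order expansion of $\frac{dF}{d\nu}(t,\mu)(\cdot)$ in its \emph{space} argument $y$, not ``in the measure argument'' as you write --- under the stated hypotheses there is no second-order measure derivative to expand. Your empirical-projection route (Steps 1--2) is the standard alternative used in \cite{guo2023ito} and buys a proof that does not presuppose the chain rule \eqref{Ito_1}, at the cost of a propagation-of-chaos limit and uniform-integrability bookkeeping.

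The one concrete gap is in Step 1: to apply the classical It\^o formula to $\Phi^N(s,x_1,\dots,x_N)=F\bigl(s,\frac1N\sum_i\delta_{x_i}\bigr)$ you need $\Phi^N$ to be $C^2$ in $(x_1,\dots,x_N)$, and your own formula for $\dd^2_{x_ix_j}\Phi^N$ invokes $D_{y'}D_y\frac{d^2F}{d\nu^2}$, i.e.\ a \emph{second-order} linear functional-derivative of $F$. The lemma only assumes first-order functional differentiability with $D_y^2\frac{dF}{d\nu}$ continuous, so these objects need not exist; the fact that the offending terms are $O(1/N^2)$ does not help, since they must exist before they can vanish in the limit. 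This can be repaired by first mollifying $F$ in the measure variable and passing to the limit at the end (the paper uses exactly such a mollification device in the proof of Proposition~\ref{thm:value_regu_mu'}), but as written your primary route proves the formula under strictly stronger hypotheses than the lemma states, whereas the paper's frozen-measure argument (and your alternative route, once the Taylor expansion is correctly attributed to the $y$-variable) works under the stated ones.
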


We now establish the temporal regularity of $V$ based on the  It\^o's formula \eqref{Ito_0} and the continuity of $\Theta^{t,\xi}_s$ in $s$ in Lemma~\ref{lem:Theta_regu_s}. 

\begin{theorem}\label{thm:V_t}
    Under Assumptions (A1'), (A2'), (A3') and (A4), and the validity of Condition \eqref{thm:suff:condition}-(i,ii), the value functional $V$ is $C^1$ in $t$ with the temporal derivative
    \begin{align}
        \frac{\dd V}{\dd t}(t,\mu)=\ & -\e \Bigg\{\frac{1}{2}\text{Tr}\left[\left(\sigma\sigma^\top\right) \left(t,\xi,\mu,u^{t,\xi}_t\right) D_y^2\frac{dV}{d\nu}(t,\mu)(\xi)\right] \notag \\
        &\qquad +\left(D_y\frac{dV}{d\nu}(t,\mu)(\xi)\right)^\top b\left(t,\xi,\mu,u^{t,\xi}_t\right)  +f\left(t,\xi,\mu,u^{t,\xi}_t\right)\notag \\
        &\qquad +\int_E \bigg[\frac{dV}{d\nu}(t,\mu)\left(\xi+\gamma\left(t,\xi,\mu,e\right)\right)-\frac{dV}{d\nu}(t,\mu)(\xi) \notag \\
        &\quad\qquad\qquad -\left(D_y\frac{dV}{d\nu}(t,\mu)(\xi)\right)^\top \gamma\left(t,\xi,\mu,e\right) \bigg] \lambda(de) \Bigg\}, \label{thm:V_t_0}
    \end{align}
    where $\xi\in\lr_{\f_t}^2$ satisfying $\lr(\xi)=\mu$. 
\end{theorem}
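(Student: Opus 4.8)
The plan is to differentiate the representation \eqref{value_equation} of $V$ along the trajectory of the optimal forward process, using the dynamic programming structure. Fix $\mu\in\pr_2(\brn)$ and pick $\xi\in\lr_{\f_t}^2$ with $\lr(\xi)=\mu$. For $t'>t$, the flow property of the optimal system of FBSDEs with jumps \eqref{FBSDE:t,xi} gives the dynamic programming identity
\begin{align*}
    V(t,\mu)= \e\left[\int_t^{t'} f\left(s,Y^{t,\xi}_s,\lr\left(Y^{t,\xi}_s\right),u^{t,\xi}_s\right)ds\right]+V\left(t',\lr\left(Y^{t,\xi}_{t'}\right)\right),
\end{align*}
which follows because $\Theta^{t,\xi}$ restricted to $[t',T]$ solves \eqref{FBSDE:t,xi} with initial time $t'$ and initial condition $Y^{t,\xi}_{t'}$, and because the sufficient maximum principle Theorem~\ref{thm:suff} identifies the solution of the FBSDEs with jumps with the optimizer. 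Rearranging, dividing by $t'-t$, and letting $t'\downarrow t$, the first term contributes $-\e\left[f(t,\xi,\mu,u^{t,\xi}_t)\right]$ (using continuity of $s\mapsto(Y^{t,\xi}_s,u^{t,\xi}_s)$ in $L^2$ from Lemma~\ref{lem:Theta_regu_s} together with the growth bounds of $f$), while the second term requires computing $\frac{d}{ds}\big|_{s=t^+}V(s,\lr(Y^{t,\xi}_s))$.

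The core computation is to apply the It\^o formula for measure-dependent functionals, Lemma~\ref{lem:Ito}, to the composition $s\mapsto V(s,\lr(Y^{t,\xi}_s))$, where $Y^{t,\xi}$ has the jump-diffusion form \eqref{FBSDE:t,xi} with drift $b(s,\theta^{t,\xi}_s)$ (note $H_p(s,Y^{t,\xi}_{s-},\lr(Y^{t,\xi}_{s-}),P^{t,\xi}_{s-})=b(s,Y^{t,\xi}_{s-},\lr(Y^{t,\xi}_{s-}),u^{t,\xi}_s)$ by Proposition~\ref{prop:L} and the feedback form \eqref{u_feedback}), diffusion $\sigma(s,\theta^{t,\xi}_s)$, and jump coefficient $\gamma(s,Y^{t,\xi}_{s-},\lr(Y^{t,\xi}_{s-}),e)$ (control-independent under (A3')). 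To legitimately invoke Lemma~\ref{lem:Ito} we must verify $V$ is $C^1$ in $t$ and has a continuous derivative $D_y^2\frac{dV}{d\nu}$; the latter is supplied by Theorem~\ref{thm:value_regu_mu}, and the former is precisely what we are proving, so we run the argument as a bootstrap: we first establish that the map $s\mapsto V(s,\lr(Y^{t,\xi}_s))$ is absolutely continuous with the claimed right derivative at $s=t$ (using the already-known spatial regularity of $V$ and the temporal H\"older continuity of $\Theta^{t,\xi}$), and then read off from the dynamic programming identity above that the temporal partial derivative $\frac{\dd V}{\dd t}(t,\mu)$ exists and equals the stated expression; continuity of $\frac{\dd V}{\dd t}$ in $(t,\mu)$ then follows from the continuity of all ingredients in \eqref{thm:V_t_0} (continuity of $(t,\mu)\mapsto\Theta^{t,\xi}$, of $D_y\frac{dV}{d\nu}$, $D_y^2\frac{dV}{d\nu}$, and of the coefficients). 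Evaluating the It\^o formula at $s=t$ produces exactly the bracketed terms in \eqref{thm:V_t_0}: the Tr term, the $D_y\frac{dV}{d\nu}\cdot b$ term, and the integral compensated-jump term; combining with the $-\e[f]$ contribution and the overall minus sign yields the claim.

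The main obstacle is the bootstrap / regularity bookkeeping: one cannot directly apply Lemma~\ref{lem:Ito} because it presupposes $F$ is $C^1$ in time, which is the conclusion. The clean way around this is to avoid differentiating $V(s,\cdot)$ in its time slot altogether until the very end: write the dynamic programming identity with the \emph{same} time argument $t$ throughout on the right, i.e. compare $V(t,\lr(Y^{t,\xi}_{t'}))$ (the functional $V(t,\cdot)$, which by Theorem~\ref{thm:value_regu_mu} has the required spatial regularity) against $V(t',\lr(Y^{t,\xi}_{t'}))$, and handle the discrepancy $V(t',\lr(Y^{t,\xi}_{t'}))-V(t,\lr(Y^{t,\xi}_{t'}))$ separately. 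Apply the spatial It\^o formula \eqref{Ito_0} to $s\mapsto V(t,\lr(Y^{t,\xi}_s))$ on $[t,t']$ (here $F(\cdot)=V(t,\cdot)$ has no time dependence, so $F_s\equiv 0$ and Lemma~\ref{lem:Ito} applies with its hypotheses genuinely met), divide by $t'-t$, let $t'\downarrow t$; the resulting limit of $(t'-t)^{-1}[V(t,\lr(Y^{t,\xi}_{t'}))-V(t,\mu)]$ is the bracketed expression in \eqref{thm:V_t_0} with all time arguments equal to $t$, by continuity of the integrand at $s=t$ (Lemma~\ref{lem:Theta_regu_s} and the continuity from Theorem~\ref{thm:value_regu_mu}). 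Then the dynamic programming identity forces $(t'-t)^{-1}[V(t',\lr(Y^{t,\xi}_{t'}))-V(t,\mu)]$ to converge, and since $Y^{t,\xi}_{t'}\to\xi$ in $L^2$ so that $\lr(Y^{t,\xi}_{t'})\to\mu$, the difference quotient $(t'-t)^{-1}[V(t',\nu)-V(t,\nu)]$ along $\nu=\lr(Y^{t,\xi}_{t'})$ has a limit; a short uniform-continuity argument (in $\mu$, locally, for the spatial derivative bounds from Theorem~\ref{thm:value_regu_mu}) upgrades this to existence of $\frac{\dd V}{\dd t}(t,\mu)$ and yields the formula, with $C^1$-continuity in $t$ coming for free from the explicit continuity of the right-hand side of \eqref{thm:V_t_0}.
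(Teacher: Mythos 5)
Your proposal is correct and follows essentially the same route as the paper: the dynamic programming identity $V(t,\mu)=\e[\int_t^{t+\epsilon}f(s,\theta^{t,\xi}_s)ds]+V(t+\epsilon,\lr(Y^{t,\xi}_{t+\epsilon}))$, followed by the measure-flow It\^o formula of Lemma~\ref{lem:Ito} applied with the time slot of $V$ frozen (the paper freezes it at $t+\epsilon$ in \eqref{thm:V_t_1}, you freeze it at $t$; the two are equivalent), together with Theorem~\ref{thm:value_regu_mu} and Lemma~\ref{lem:Theta_regu_s} to pass to the limit. Your explicit handling of the apparent circularity in invoking Lemma~\ref{lem:Ito} (which nominally presupposes $C^1$ regularity in $t$) is in fact more careful than the paper's own presentation, which silently relies on the same fixed-time-slot device.
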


\begin{proof}
For any $\mu\in\pr_2(\brn)$, we choose $\xi\in\lr_{\f_t}^2$ such that $\lr(\xi)=\mu$ which is independent of the Brownian motion and the jump. By the usual dynamic programming principle for McKean–Vlasov control problem with jump (see \cite[Section 4.1]{guo2023ito} for instance), for any $\epsilon\in[0,T-t]$,
\begin{equation*}
	V(t,\mu)=\e\left[\int_t^{t+\epsilon}f\left(s,\theta^{t,\xi}_s\right)ds\right]+V\left(t+\epsilon,\lr\left(Y^{t,\xi}_{t+\epsilon}\right)\right),
\end{equation*}
so we have
\begin{align}
		\frac{1}{\epsilon}\left[V(t+\epsilon,\mu)-V(t,\mu)\right] =\ & \frac{1}{\epsilon} \e\left[V(t+\epsilon,\mu)-V\left(t+\epsilon,\lr\left(Y^{t,\xi}_{t+\epsilon}\right)\right)\right] -\frac{1}{\epsilon} \e\left[\int_t^{t+\epsilon}f\left(s,\theta^{t,\mu}_s \right)ds\right]. \label{thm:V_t_1}
\end{align}	
From Theorem~\ref{thm:value_regu_mu}, Lemma~\ref{lem:Theta_regu_s} and the It\^o's formula in Lemma~\ref{lem:Ito}, we know that
\begin{align}
    &\lim_{\epsilon\to 0} \frac{1}{\epsilon} \e\left[V(t+\epsilon,\mu)-V\left(t+\epsilon,\lr\left(Y^{t,\xi}_{t+\epsilon}\right)\right)\right] \notag \\
    =\ & -\e \Bigg\{\left(D_y\frac{dV}{d\nu}(t,\mu)(\xi)\right)^\top b\left(t,\xi,\mu,u^{t,\xi}_t\right)  +\frac{1}{2}\text{Tr}\left[\left(\sigma\sigma^\top\right) \left(t,\xi,\mu,u^{t,\xi}_t\right) D_y^2\frac{dV}{d\nu}(t,\mu)(\xi)\right]\notag \\
    &\qquad +\int_E \left[\frac{dV}{d\nu}(t,\mu)\left(\xi+\gamma\left(r,\xi,\mu,e\right)\right)-\frac{dV}{d\nu}(t,\mu)(\xi)-\left(D_y\frac{dV}{d\nu}(t,\mu)(\xi)\right)^\top \gamma\left(t,\xi,\mu,e\right) \right] \lambda(de) \Bigg\}. \label{thm:V_t_2}
\end{align}
Again from Lemma~\ref{lem:Theta_regu_s}, we also have 
\begin{align*}
    \lim_{\epsilon\to 0} \frac{1}{\epsilon} \e\left[\int_t^{t+\epsilon}f\left(s,\theta^{t,\mu}_s \right)ds\right]=\e\left[f\left(s,\xi,\mu,u^{t,\xi}_t\right)\right].
\end{align*}
From \eqref{thm:V_t_1}, \eqref{thm:V_t_2} and the last equation, we obtain \eqref{thm:V_t_0}.
\end{proof}

\section{HJB integro-partial differential equation}\label{sec:HJB}

We define the map $\hr:[0,T]\times\brn\times\pr_2(\brn)\times\brd\times\brnn\times L_\lambda^2(E)\to\br$ as
\begin{align}
    \hr(t,x,\mu,v,p,\Gamma,k(\cdot)):= \ & \frac{1}{2}\text{Tr}\left[\left(\sigma\sigma^\top\right) \left(t,x,\mu\right) \Gamma\right] +p^\top \left[b\left(t,x,\mu,v\right)-\int_E \gamma(t,x,\mu,e)\lambda(de)\right]\notag \\
    & +f\left(t,x,\mu,v\right) +\int_E \left[k\left(x+\gamma\left(t,x,\mu,e\right)\right)-k(x) \right] \lambda(de). \label{def:hr}
\end{align}
Then, the derivative of $V$ in $t$ in Equation \eqref{thm:V_t_0} also writes:
\begin{align}
    \frac{\dd V}{\dd t}(t,\mu)=\ & -\e \left[ \hr \left(t,\xi,\mu,u^{t,\xi}_t,D_y\frac{dV}{d\nu}(t,\mu)(\xi),D_y^2\frac{dV}{d\nu}(t,\mu)(\xi),\frac{dV}{d\nu}(t,\mu)(\cdot)\right) \right]. \label{thm:V_t_0'}
\end{align}
Equation \eqref{thm:V_t_0'} inspires us to write down the following HJB integro-partial differential equation for MFTC problem with jump-diffusion: for $(t,\mu)\in[0,T]\times\pr_2(\brn)$, 
\begin{equation}\label{HJB:inf}
\left\{
    \begin{aligned}
        &\frac{\dd V}{\dd t} (t,\mu)+ \int_\brn \inf_{v\in\brd} \hr \left(t,x,\mu,v,D_y\frac{dV}{d\nu}(t,\mu)(x),D_y^2\frac{dV}{d\nu}(t,\mu)(x),\frac{dV}{d\nu}(t,\mu)(\cdot)\right) \mu(dx)=0, \\
        &V(T,\mu)= \int_\brn g(x,\mu)\mu(dx).
    \end{aligned}
\right.
\end{equation}
The formulation of our HJB equation \eqref{HJB:inf} is standard in the literature; see \cite[Equation (4.6)]{guo2023ito} and \cite[Section 4]{agram2023stochastic} for similar formulation. The aim of this section is to show that $V$ is the unique classical solution of Equation \eqref{HJB:inf}. We define the function $\mathbf{H}$ as follows: for any $(t,x,\mu)\in[0,T]\times\brn\times\pr_2(\brn)$ and $v\in\brd$,
\begin{align}\label{def:bfH}
    \mathbf{H}(t,x,\mu,v):=\hr\left(t,x,\mu,v,D_y\frac{dV}{d\nu}(t,\mu)(x),D_y^2\frac{dV}{d\nu}(t,\mu)(x),\frac{dV}{d\nu}(t,\mu)(\cdot)\right).
\end{align}
The function $\mathbf{H}$ is well-defined due to the regularity of the function $V$ given in Theorem~\ref{thm:value_regu_mu}. Note in \eqref{def:bfH} that the function $H$ depends on the derivatives $D_y\frac{dV}{d\nu}$ and $D_y^2\frac{dV}{d\nu}$, and to give the characterization of these two derivatives, 
we consider the system \eqref{FBSDE:MFTC_y} in Subsection~\ref{subsec:D_y} corresponding to initial $(t,y,\mu)$, which also reads 
\begin{equation}\label{FBSDE:d_nu}
\left\{
\begin{aligned}
    Y^{t,y,\mu}_s=\ & y+\int_t^s H_p\left(r,Y^{t,y,\mu}_{r-},\lr\left(Y^{t,\xi}_{r-}\right),P^{t,y,\mu}_{r-}\right) dr \\
    &+\sum_{j=1}^n\int_t^s H_{q^j}\left(r,Y^{t,y,\mu}_{r-},\lr\left(Y^{t,\xi}_{r-}\right),Q^{t,y,\mu,j}_r\right)dB^j_r\\
    &+\int_t^s \int_E \gamma\left(r,Y^{t,y,\mu}_{r-},\lr\left(Y^{t,\xi}_{r-}\right),e\right) \mathring{N}(de,dr),\\
    P^{t,y,\mu}_s=\ & g_x\left(Y^{t,y,\mu}_T,\lr\left(Y^{t,\xi}_T\right)\right)+\he\left[D_y\frac{dg}{d\nu}\left(\widehat{Y^{t,\xi}_T},\lr\left(Y^{t,\xi}_T\right)\right)\left(Y^{t,y,\mu}_T\right)\right]\\
    &+\int_s^T \bigg\{ H_x\left(r,Y^{t,y,\mu}_{r-},\lr\left(Y^{t,\xi}_{r-}\right),P^{t,y,\mu}_{r-},Q^{t,y,\mu}_r,R^{t,y,\mu}_r\right)\\
    &\quad\qquad +\he\bigg[ D_y\frac{dH}{d\nu}\left(s,\widehat{Y^{t,\xi}_{r-}},\lr\left(Y^{t,\xi}_{r-}\right),\widehat{P^{t,\xi}_{r-}},\widehat{Q^{t,\xi}_r},\widehat{R^{t,\xi}_r}\right)\left(Y^{t,y,\mu}_{r-}\right) \bigg]\bigg\}dr\\
    &-\int_s^T Q^{t,y,\mu}_r dB_r-\int_s^T\int_E R^{t,y,\mu}_r(e) \mathring{N}(de,dr), \quad s\in[t,T].
\end{aligned}
\right.
\end{equation}
Under Assumptions (A3') and (A4) and Condition \eqref{thm:suff:condition}, we define the process $u^{t,y,\mu}$ as 
\begin{equation}\label{opti_condi}
\begin{aligned}
    &b_{v} \left(s,Y^{t,y,\mu}_{s-},\lr\left(Y^{t,\xi}_{s-}\right),u^{t,y,\mu}_s\right)^\top P^{t,y,\mu}_{s-}+f_{v} \left(s,Y^{t,y,\mu}_{s-},\lr\left(Y^{t,\xi}_{s-}\right),u^{t,y,\mu}_s\right)=0.
\end{aligned}
\end{equation}
We denote by the solution $\Theta^{t,y,\mu}:=\left(Y^{t,y,\mu},P^{t,y,\mu},Q^{t,y,\mu},R^{t,y,\mu},u^{t,y,\mu}\right) \in \mathbb{S}$. From Theorem~\ref{thm:D^2dV}, we know that $\Theta^{t,y,\mu}$ is G\^ateaux differentiable in the initial $y\in\brn$, and we denote by $D_y \Theta^{t,y,\mu}:=\left(D_y Y^{t,y,\mu},D_y P^{t,y,\mu},D_y Q^{t,y,\mu},D_y R^{t,y,\mu},D_y u^{t,y,\mu}\right)\in\mathbb{S}$ the G\^ateaux derivative of $\Theta^{t,y,\mu}$ with respect to $y$, which is the unique solution of FBSDEs with jumps \eqref{FBSDE:D_yTheta}-\eqref{thm:D^2dV_1} corresponding to the initial time $t$. 

We now give the boundedness estimate of the linear functional-derivatives $D_y\frac{dV}{d\nu}$ and $D_y^2\frac{dV}{d\nu}$, and also provide the characterization of the processes $Q^{t,y,\mu}$ and also $R^{t,y,\mu}$ with the linear functional-derivative of the value function $V$. 
The similar characterization was also given in \cite{MR1176785} for quasilinear BSDE; in our article, the characterization of $Q^{t,y,\mu}$ and $R^{t,y,\mu}$ by the value function $V$ do not appear in the HJB equation \eqref{HJB:inf}, and we give it as a property, since it can facilitate any possible  numerical studies in future works. 
 
\begin{proposition}\label{thm:value_regu_mu'}
    Under Assumptions (A1'), (A2'), (A3) and (A4), and the validity of Condition \eqref{thm:suff:condition}-(i,ii), the linear functional-derivatives of $V$ satisfy the relations
    \begin{align}\label{relation:DV}
        &D_y\frac{dV}{d\nu}(t,\mu)(y)=P^{t,y,\mu}_t,\quad D_y^2\frac{dV}{d\nu}(t,\mu)(y)=D_y P^{t,y,\mu}_t,
    \end{align}
    with the derivatives satisfying 
    \begin{align}
        &\left|D_y\frac{dV}{d\nu}(t,\mu)(y)\right|\le  C \left[1+|y|+|\mu|_2 \right], \label{boundedness:DdV}\\
        &\left|D_y^2\frac{dV}{d\nu}(t,\mu)(y)\right|\le C. \label{thm:D^2dV_02}
    \end{align}
    and the derivative $D_y^2 \frac{dV}{d\nu}(t,\mu)(y)$ is continuous. Moreover, under Assumption (A3'), we have
    \begin{equation}\label{lem:Q&V_00}
    \begin{aligned}
        &Q^{t,y,\mu,j}_t=D_y^2\frac{dV}{d\nu}\left(t,\mu\right)(y)\sigma^j\left(t,y,\mu\right), \\
        &R^{t,y,\mu}_t(e)= D_y\frac{dV}{d\nu}\left(t,\mu\right)\left(\xi+\gamma\left(t,y,\mu,e\right)\right)-D_y\frac{dV}{d\nu}\left(t,\mu\right)\left(y\right).
    \end{aligned}
    \end{equation}
\end{proposition}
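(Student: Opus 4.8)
The plan is to read off \eqref{relation:DV} from the ``flow property'' of the FBSDE \eqref{FBSDE:d_nu} together with the identification $D_\xi V(t,\lr(\xi))=P^{t,\xi}_t$ of Theorem~\ref{thm:value_regu_mu}, and then to extract \eqref{lem:Q&V_00} from a decoupling-field representation of $P^{t,y,\mu}$ via It\^o's formula.

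\emph{Step 1: the first relation in \eqref{relation:DV}.} Fix $\mu\in\pr_2(\brn)$ and $\xi\in\lr_{\f_t}^2$ with $\lr(\xi)=\mu$, independent of $(B,N)$ on $[t,T]$. Reading \eqref{FBSDE:d_nu}--\eqref{opti_condi} with the $\f_t$-measurable datum $\xi$ in place of $y$ turns it into exactly \eqref{FBSDE:t,xi}, so uniqueness gives $\Theta^{t,\xi,\mu}_\cdot=\Theta^{t,\xi}_\cdot$; in particular $P^{t,\xi,\mu}_t=P^{t,\xi}_t$. By \eqref{DV_P} and \eqref{relation:DV-dV}, $P^{t,\xi}_t=D_\xi V(t,\mu)=D_y\frac{dV}{d\nu}(t,\mu)(\xi)$, whence $D_y\frac{dV}{d\nu}(t,\mu)(y)=P^{t,y,\mu}_t$ for $\mu$-a.e.\ $y$. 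To remove the exceptional set I would fix an arbitrary $(y_0,\mu)$, apply this identity to $\mu_\epsilon:=(1-\epsilon)\mu+\epsilon\delta_{y_0}$ (which charges $y_0$), and let $\epsilon\to0$: the left side tends to $D_y\frac{dV}{d\nu}(t,\mu)(y_0)$ by the continuity of $(\mu,y)\mapsto D_y\frac{dV}{d\nu}(t,\mu)(y)$ from Theorem~\ref{thm:value_regu_mu}, and the right side to $P^{t,y_0,\mu}_t$ by \eqref{add-4}.

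\emph{Step 2: the second relation in \eqref{relation:DV} and the bounds.} Since $D_y^2\frac{dV}{d\nu}$ exists and is continuous (Theorem~\ref{thm:value_regu_mu}), $y\mapsto D_y\frac{dV}{d\nu}(t,\mu)(y)$ is $C^1$ with derivative $D_y^2\frac{dV}{d\nu}(t,\mu)(y)$; on the other hand $y\mapsto P^{t,y,\mu}_t$ is G\^ateaux-differentiable in $y$ with derivative $D_yP^{t,y,\mu}_t$ by Theorem~\ref{thm:D^2dV}, so Step~1 forces $D_y^2\frac{dV}{d\nu}(t,\mu)(y)=D_yP^{t,y,\mu}_t$, which is continuous in $(y,\mu)$ by the same theorem. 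Because $\xi$ is independent of $(B,N)$ on $[t,T]$, the quantities $P^{t,y,\mu}_t$ and $D_yP^{t,y,\mu}_t$ are deterministic, hence $|P^{t,y,\mu}_t|\le\|\Theta^{t,y,\mu}\|_{\mathbb{S}}\le C(1+|y|+|\mu|_2)$ by \eqref{add-4} and $|D_yP^{t,y,\mu}_t|\le\|D_y\Theta^{t,y,\mu}\|_{\mathbb{S}}\le C$ by \eqref{add-5}, which are \eqref{boundedness:DdV} and \eqref{thm:D^2dV_02}.

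\emph{Step 3: the characterization of $Q$ and $R$.} Put $m_s:=\lr(Y^{t,\xi}_s)$, a deterministic flow. By the flow property of \eqref{FBSDE:d_nu} and the time-consistency of the optimal control (so that the measure flow of $\Theta^{t,\xi}$ restarted at time $s$ is again $(m_r)_{r\ge s}$), together with Step~1 applied at initial time $s$ with the random point $Y^{t,y,\mu}_s$, one obtains the decoupling-field identity
\begin{equation*}
P^{t,y,\mu}_s=D_y\frac{dV}{d\nu}(s,m_s)\bigl(Y^{t,y,\mu}_s\bigr),\qquad s\in[t,T],\ \mathbb{P}\text{-a.s.}
\end{equation*}
Under (A3') the coefficient $\sigma$ is control-free, so $Y^{t,y,\mu}$ is an It\^o jump-diffusion in its own state. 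Applying It\^o's formula (Lemma~\ref{lem:Ito} for the $(s,m_s)$-arguments and the standard jump-diffusion It\^o formula in the state argument) to the right-hand side and matching the Brownian and the compensated-jump integrands with the martingale part $Q^{t,y,\mu}_s\,dB_s+\int_E R^{t,y,\mu}_s(e)\,\mathring{N}(de,ds)$ of the backward equation for $P^{t,y,\mu}$ in \eqref{FBSDE:d_nu}, we get
\begin{gather*}
Q^{t,y,\mu,j}_s=D_y^2\frac{dV}{d\nu}(s,m_s)\bigl(Y^{t,y,\mu}_{s-}\bigr)\,\sigma^j\bigl(s,Y^{t,y,\mu}_{s-},m_{s-}\bigr),\\
R^{t,y,\mu}_s(e)=D_y\frac{dV}{d\nu}(s,m_s)\bigl(Y^{t,y,\mu}_{s-}+\gamma(s,Y^{t,y,\mu}_{s-},m_{s-},e)\bigr)-D_y\frac{dV}{d\nu}(s,m_s)\bigl(Y^{t,y,\mu}_{s-}\bigr);
\end{gather*}
evaluating at $s=t$, where $Y^{t,y,\mu}_t=y$ and $m_t=\mu$, gives \eqref{lem:Q&V_00}.

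\emph{Main obstacle.} The delicate point is Step~3: $D_y\frac{dV}{d\nu}(s,m_s)(\cdot)$ is only known to be $C^1$ (not $C^2$) in the spatial variable, so the It\^o expansion of $D_y\frac{dV}{d\nu}(s,m_s)(Y^{t,y,\mu}_s)$ is not immediate. I expect to justify the Brownian integrand by mollifying $D_y\frac{dV}{d\nu}$ in the space (and, if needed, time) variables, applying It\^o to the smoothed functions, and passing to the limit: $P^{t,y,\mu}_s$ being already a semimartingale through its own BSDE, its continuous-martingale part is stable under this limit and equals $\int D_y^2\frac{dV}{d\nu}(r,m_r)(Y^{t,y,\mu}_{r-})\sigma\,dB_r$ by the uniform bound \eqref{thm:D^2dV_02} and the continuity of $D_y^2\frac{dV}{d\nu}$, while for the compensated-jump part mere continuity of $D_y\frac{dV}{d\nu}$ already suffices; alternatively one first proves \eqref{lem:Q&V_00} under the temporary hypothesis that $(b,\sigma,\gamma,f,g)$ are $C^3$, where $V$ is smooth enough for a direct application of It\^o, and then removes it by approximation using the stability estimates \eqref{add-4}--\eqref{add-5}. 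A secondary point is the rigorous justification of the decoupling-field identity for every $s$, which rests on the flow property of \eqref{FBSDE:d_nu} for random, independent initial data and on the time-consistency of the optimal measure flow.
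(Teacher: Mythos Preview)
Your proposal is correct and follows essentially the same route as the paper. The paper derives \eqref{relation:DV} exactly as you do (flow identity $\Theta^{t,y,\mu}|_{y=\xi}=\Theta^{t,\xi}$ plus $P^{t,\xi}_t=D_\xi V$), reads the bounds \eqref{boundedness:DdV}--\eqref{thm:D^2dV_02} from \eqref{add-4}--\eqref{add-5}, and obtains \eqref{lem:Q&V_00} by writing $P$ as the decoupling field $D_y\frac{dV}{d\nu}(s,m_s)(\cdot)$ evaluated along the state, applying It\^o, and identifying the martingale integrands by uniqueness of the BSDE decomposition; the regularity shortfall you flag is handled by mollifying $V$ and observing that the third-order terms $D_y^3\frac{dV}{d\nu}$ and $D_{y'}D_y^2\frac{d^2V}{d\nu^2}$ produced by It\^o cancel against those coming from $D_y\frac{d(\partial V/\partial t)}{d\nu}$, so they never survive to the limit.

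The one difference worth noting is that in Step~3 the paper works with $P^{t,\xi}_s$ rather than $P^{t,y,\mu}_s$: it uses the decoupling identity $P^{t,\xi}_s=D_y\frac{dV}{d\nu}\bigl(s,\lr(Y^{t,\xi}_s)\bigr)\bigl(Y^{t,\xi}_s\bigr)$, which follows immediately from the flow property of \eqref{FBSDE:t,xi} together with \eqref{DV_P}, and only at the very end transfers the resulting formulas for $Q^{t,\xi}_t,\,R^{t,\xi}_t$ to the $y$-version via Subsection~\ref{subsec:D_y}. This sidesteps the ``secondary point'' you raise, since no separate flow argument for \eqref{FBSDE:d_nu} with a deterministic initial $y\neq\xi$ is needed. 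Your variant is legitimate but requires the extra flow identity $P^{t,y,\mu}_s=P^{s,Y^{t,y,\mu}_s,m_s}_s$ that you correctly single out as needing justification.
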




The proof of Proposition is given in Appendix~\ref{thm:value_regu_mu'}. We now give the main result of this section, which shows that $V$ is the unique classical solution of the HJB integro-partial differential equation \eqref{HJB:inf}. 

\begin{theorem}\label{thm:main:HJB}
    Under Assumptions (A1'), (A2'), (A3') and (A4), and the validity of Condition \eqref{thm:suff:condition}-(i,ii), the value function $V$ is the unique classical solution of the HJB integro-partial differential equation \eqref{HJB:inf} (satisfying Conditions \eqref{boundedness:DdV} and \eqref{thm:D^2dV_02}).  
\end{theorem}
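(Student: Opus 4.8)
My plan is to treat existence (that $V$ is a classical solution of \eqref{HJB:inf} with the stated regularity) and uniqueness (within the class singled out by \eqref{boundedness:DdV}--\eqref{thm:D^2dV_02}) separately. For existence, most of the work is already available: Theorem~\ref{thm:value_regu_mu} and Proposition~\ref{thm:value_regu_mu'} give that $\mu\mapsto V(t,\mu)$ is twice linearly functional-differentiable with continuous $\tfrac{dV}{d\nu}$, $D_y\tfrac{dV}{d\nu}$, $D_y^2\tfrac{dV}{d\nu}$ and with the bounds \eqref{boundedness:DdV}, \eqref{thm:D^2dV_02}; Theorem~\ref{thm:V_t} gives the $C^1$-regularity in $t$ and the representation \eqref{thm:V_t_0'} of $\tfrac{\dd V}{\dd t}$; and the terminal condition $V(T,\mu)=\int_{\brn}g(x,\mu)\,\mu(dx)$ is immediate from \eqref{value_equation} at $t=T$. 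So the whole existence part reduces to showing that the control $u^{t,\xi}_t$ appearing in \eqref{thm:V_t_0'} attains the pointwise infimum of $\hr$:
\begin{align*}
  & \hr\!\left(t,\xi,\mu,u^{t,\xi}_t,D_y\tfrac{dV}{d\nu}(t,\mu)(\xi),D_y^2\tfrac{dV}{d\nu}(t,\mu)(\xi),\tfrac{dV}{d\nu}(t,\mu)(\cdot)\right) \\
  & \qquad =\inf_{v\in\brd}\hr\!\left(t,\xi,\mu,v,D_y\tfrac{dV}{d\nu}(t,\mu)(\xi),D_y^2\tfrac{dV}{d\nu}(t,\mu)(\xi),\tfrac{dV}{d\nu}(t,\mu)(\cdot)\right),
\end{align*}
$\mathbb{P}$-almost surely; taking expectations and using $\lr(\xi)=\mu$ (the right-hand integrand being a deterministic function of $x$ for fixed $t,\mu$) replaces the right-hand side by $\int_{\brn}\inf_v\hr(t,x,\mu,v,\dots)\,\mu(dx)$, which together with \eqref{thm:V_t_0'} is precisely \eqref{HJB:inf}.

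\textbf{The key identification.} Under (A3') (so $l=0$, $b=B$, $f=f^0$) the coefficients $\sigma$ and $\gamma$ are control-free, hence the only $v$-dependence of $\hr$ is through $p^{\top}B(t,x,\mu,v)+f^{0}(t,x,\mu,v)$; by Proposition~\ref{prop:L}, valid under Condition~\eqref{thm:suff:condition}-(i), this map is strictly convex with unique minimizer $\varphi^{0}(t,x,\mu,p)$, so $\inf_{v}\hr(t,x,\mu,v,p,\Gamma,k)=\hr\big(t,x,\mu,\varphi^{0}(t,x,\mu,p),p,\Gamma,k\big)$. Next I set $p=D_y\tfrac{dV}{d\nu}(t,\mu)(y)$: by relation \eqref{relation:DV} of Proposition~\ref{thm:value_regu_mu'} one has $p=P^{t,y,\mu}_t$, and evaluating the first-order condition \eqref{opti_condi} at the initial time $s=t$, where $Y^{t,y,\mu}_t=y$, gives $B_{v^0}(t,y,\mu,u^{t,y,\mu}_t)^{\top}P^{t,y,\mu}_t+f^{0}_{v^0}(t,y,\mu,u^{t,y,\mu}_t)=0$, that is $u^{t,y,\mu}_t=\varphi^{0}\big(t,y,\mu,D_y\tfrac{dV}{d\nu}(t,\mu)(y)\big)$. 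Since $\Theta^{t,y,\mu}\big|_{y=\xi}=\Theta^{t,\xi}$ by uniqueness for FBSDEs~\eqref{FBSDE:MFTC} (cf.\ the remark after \eqref{cone:P^y_t'}), this yields $u^{t,\xi}_t=\varphi^{0}\big(t,\xi,\mu,D_y\tfrac{dV}{d\nu}(t,\mu)(\xi)\big)$ --- exactly the minimizer. This closes the displayed identity, so $V$ solves \eqref{HJB:inf} and satisfies \eqref{boundedness:DdV}--\eqref{thm:D^2dV_02}.

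\textbf{Uniqueness.} Let $W$ be any classical solution of \eqref{HJB:inf} obeying \eqref{boundedness:DdV}--\eqref{thm:D^2dV_02}. Comparing definition~\eqref{def:hr} with the It\^o formula of Lemma~\ref{lem:Ito}, one checks directly that, with $p=D_y\tfrac{dW}{d\nu}(s,\nu)(x)$, $\Gamma=D_y^2\tfrac{dW}{d\nu}(s,\nu)(x)$, $k=\tfrac{dW}{d\nu}(s,\nu)(\cdot)$, the quantity $\hr(s,x,\nu,v,p,\Gamma,k)$ equals the value at $(s,\nu)$ of the second-order integro-differential generator of the controlled state SDE applied to $W$, plus $f(s,x,\nu,v)$. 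Hence, for any $(t,\mu)$, any $\xi$ with $\lr(\xi)=\mu$ and any $v\in\mr^2_\f(t,T)$ with state $X=X^{t,\xi,v}$, applying Lemma~\ref{lem:Ito} to $s\mapsto W(s,\lr(X_s))$, inserting the HJB equation for $W$ and the pointwise bound $\hr(\dots,v_s,\dots)\ge\inf_{v'}\hr(\dots,v',\dots)$, one obtains $\tfrac{d}{ds}W(s,\lr(X_s))\ge-\e\big[f(s,X_s,\lr(X_s),v_s)\big]$; integrating on $[t,T]$ and using $W(T,\lr(X_T))=\e[g(X_T,\lr(X_T))]$ gives $W(t,\mu)\le J_{t,\xi}(v)$, so $W(t,\mu)\le V(t,\mu)$. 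For the reverse inequality I introduce the feedback $\psi(s,x,\nu):=\varphi^{0}\big(s,x,\nu,D_y\tfrac{dW}{d\nu}(s,\nu)(x)\big)$: by continuity of $D_y\tfrac{dW}{d\nu}$, the bound $\big|D_y^2\tfrac{dW}{d\nu}\big|\le C$ (which makes $\psi$ Lipschitz in $x$), the implicit-function-theorem regularity of $\varphi^{0}$ coming from Proposition~\ref{prop:L} and Condition~\eqref{thm:suff:condition}-(i), and the cone-type growth $|\psi(s,x,\nu)|\le C(1+|x|+|\nu|_2)$, one solves the closed-loop McKean--Vlasov SDE with drift $b(s,\cdot,\cdot,\psi)$, diffusion $\sigma$, jump coefficient $\gamma$ and initial value $\xi$ --- by freezing the flow of marginals, solving the resulting space-Lipschitz SDE, and closing the loop by a Schauder fixed-point argument --- to obtain $X^W\in\sr_\f^2(t,T)$ and the admissible control $\bar v_s:=\psi(s,X^W_{s-},\lr(X^W_{s-}))$. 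Along $X^W$ the control $\bar v_s$ attains $\inf_{v'}\hr$ by Proposition~\ref{prop:L}, so the It\^o identity above becomes an equality and $W(t,\mu)=J_{t,\xi}(\bar v)\ge V(t,\mu)$; together with the previous bound this gives $W\equiv V$. The step I expect to be the main obstacle is precisely the solvability of this closed-loop McKean--Vlasov SDE, whose feedback is only continuous --- not Lipschitz --- in the measure argument, so that a contraction is unavailable and one must argue by compactness (tightness of the candidate flows of marginals plus Arzel\`a--Ascoli in the Wasserstein topology); by contrast, the chain of identifications in the second paragraph is routine once Proposition~\ref{prop:L}, Proposition~\ref{thm:value_regu_mu'} and uniqueness for \eqref{FBSDE:MFTC} are in hand.
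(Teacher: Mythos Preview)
Your proposal is correct and follows essentially the same route as the paper: existence by showing that $u^{t,\xi}_t$ attains the pointwise infimum of $\hr$ (via Proposition~\ref{prop:L}, the relation $D_y\tfrac{dV}{d\nu}(t,\mu)(y)=P^{t,y,\mu}_t$ from Proposition~\ref{thm:value_regu_mu'}, and the first-order condition \eqref{opti_condi} at $s=t$), and uniqueness by the standard verification argument (It\^o's formula of Lemma~\ref{lem:Ito} applied to a second classical solution along an arbitrary controlled state, followed by equality along the feedback minimizer). You are in fact more scrupulous than the paper on one point: for the reverse inequality in uniqueness the paper simply writes down the closed-loop McKean--Vlasov SDE with drift $b(\cdot,\cdot,\cdot,\widehat{u}(\cdot,\cdot,\cdot))$ and declares the resulting $\widehat u$ admissible, without discussing solvability, whereas you correctly isolate the regularity of the feedback in the measure argument as the only nontrivial step.
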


\begin{proof}
In view of Equation \eqref{thm:V_t_0'}, to prove the existence result, we only need to show that for any $(t,x,\mu)\in[0,T]\times\brn\times\pr_2(\brn)$, we have $u^{t,x,\mu}_t=\argmin_{v\in\brd} \mathbf{H}\left(t,x,\mu,v\right)$. Indeed, from \eqref{def:hr}, \eqref{def:bfH} and Assumption (A3), we can compute that
\begin{equation*}\label{H_v^0}
\begin{aligned}
    \mathbf{H}_{v}(t,x,\mu,v)=\ & \left(b_v\left(t,x,\mu,v\right)\right)^\top D_y\frac{dV}{d\nu}(t,\mu)(x)+f_v \left(t,x,\mu,v\right),
\end{aligned}
\end{equation*}
then, from \eqref{opti_condi} and \eqref{relation:DV}, we know that
\begin{align*}
    \mathbf{H}_{v}\left(t,x,\mu,u^{t,x,\mu}_t\right)=\ & \left(b_{v}\left(t,x,\mu,u^{t,x,\mu}_t\right)\right)^\top P^{t,x,\mu}_t+f_{v} \left(t,x,\mu,u^{t,x,\mu}_t\right)=0.
\end{align*}
By following a similar approach as in Proposition~\ref{prop:L}, we know that for any $(t,x,\mu)$, there is a unique $\hv$ satisfying $\mathbf{H}_{v}\left(t,x,\mu,\hv\right)=0$, which together with the convexity assumption on $f$ imply that $u^{t,x,\mu}_t$ is the unique minimizer of the map $v\mapsto\mathbf{H}\left(t,x,\mu,v\right)=0$, which completes the proof for the existence. 

We next prove the uniqueness result. Suppose that $U$ is another classical solution of the HJB integro-partial differential equation \eqref{HJB:inf}. We come back to the Problem $\left(\mathbf{P}^{t,\xi}\right)$ (for some $\xi\in\lr_{\f_t}^2$ with $\lr(\xi)=\mu$). For any admissible control $v$ for Problem $\left(\mathbf{P}^{t,\xi}\right)$, we denote by $X^{t,\xi,v}$ the corresponding controlled state process. Since $U$ also satisfies Conditions \eqref{boundedness:DdV} and \eqref{thm:D^2dV_02}, by applying the mean field version of It\^o's formula in Lemma~\ref{lem:Ito} on $U\left(s,\lr\left(X^{t,\xi,v}_s\right)\right)$ and using the definition of the functional $\hr$ in \eqref{def:hr},
\begin{align*}
    &U\left(T,\lr\left(X^{t,\xi,v}_T\right)\right)-U(t,\mu))\\
    =\ & \int_t^T \e\Bigg\{\frac{\dd U}{\dd s}\left(s,\lr\left(X^{t,\xi,v}_s\right)\right)+ \left(D_y\frac{dU}{d\nu}\left(s,\lr\left(X^{t,\xi,v}_s\right)\right)\left(X^{t,\xi,v}_s\right)\right)^\top b\left(s,X^{t,\xi,v}_s,\lr\left(X^{t,\xi,v}_s\right),v_s\right) \\
    &\quad\qquad +\frac{1}{2}\text{Tr}\left[\left(\sigma\sigma^\top\right)\left(s,X^{t,\xi,v}_s,\lr\left(X^{t,\xi,v}_s\right),v_s\right) D_y^2\frac{dU}{d\nu}\left(s,\lr\left(X^{t,\xi,v}_s\right)\right)\left(X^{t,\xi,v}_s\right)\right] \notag \\
    &\quad\qquad +\int_E \bigg[\frac{dU}{d\nu}\left(s,\lr\left(X^{t,\xi,v}_s\right)\right)\left(\left(X^{t,\xi,v}_{s-}\right)+\gamma\left(s,X^{t,\xi,v}_s,\lr\left(X^{t,\xi,v}_s\right),e\right)\right) \\
    &\qquad\qquad\qquad -\frac{dU}{d\nu}\left(s,\lr\left(X^{t,\xi,v}_s\right)\right)\left(X^{t,\xi,v}_{s-}\right) \\
    &\qquad\qquad\qquad -\left(D_y\frac{dU}{d\nu}\left(s,\lr\left(X^{t,\xi,v}_s\right)\right)\left(X^{t,\xi,v}_{s-}\right)\right)^\top \gamma\left(s,X^{t,\xi,v}_s,\lr\left(X^{t,\xi,v}_s\right),e\right) \bigg] \lambda(de)\Bigg\} ds \notag\\
    =\ & \int_t^T \e\Bigg[\frac{\dd U}{\dd s}\left(s,\lr\left(X^{t,\xi,v}_s\right)\right)-f\left(s,X^{t,\xi,v}_s,\lr\left(X^{t,\xi,v}_s\right),v_s\right) \\
    &\quad\qquad +\hr\bigg(s,X^{t,\xi,v}_s,\lr\left(X^{t,\xi,v}_s\right),v_s,D_y\frac{dU}{d\nu}\left(s,\lr\left(X^{t,\xi,v}_s\right)\right)\left(X^{t,\xi,v}_s\right), \\
    &\qquad\qquad\qquad D_y^2\frac{dU}{d\nu}\left(s,\lr\left(X^{t,\xi,v}_s\right)\right)\left(X^{t,\xi,v}_s\right),\frac{dU}{d\nu}\left(s,\lr\left(X^{t,\xi,v}_s\right)\right)(\cdot)\bigg)\Bigg] ds.
\end{align*}
Since $U$ satisfies the HJB equation \eqref{HJB:inf}, we know that for $s\in[t,T]$,
\begin{align*}
    \frac{\dd U}{\dd s}\left(s,\lr\left(X^{t,\xi,v}_s\right)\right)=\ & - \e\bigg[ \inf_{v\in\brd} \hr \bigg(s,X^{t,\xi,v}_s,\lr\left(X^{t,\xi,v}_s\right),v,D_y\frac{dU}{d\nu}\left(s,\lr\left(X^{t,\xi,v}_s\right)\right)\left(X^{t,\xi,v}_s\right),\\
    &\qquad\qquad\qquad D_y^2\frac{dU}{d\nu}\left(s,\lr\left(X^{t,\xi,v}_s\right)\right)\left(X^{t,\xi,v}_s\right),\frac{dU}{d\nu}\left(s,\lr\left(X^{t,\xi,v}_s\right)\right)(\cdot)\bigg) \bigg], \\
    U\left(T,\lr\left(X^{t,\xi,v}_T\right)\right)=\ & \e\left[g\left(X^{t,\xi,v}_T,\lr\left(X^{t,\xi,v}_T\right)\right)\right].
\end{align*}
Therefore, we can deduce that
\begin{align}
    &J_{t,\xi}(v) - U(t,\mu) \notag\\
    =\ & \int_t^T \e\Bigg[\hr\bigg(s,X^{t,\xi,v}_s,\lr\left(X^{t,\xi,v}_s\right),v_s,D_y\frac{dU}{d\nu}\left(s,\lr\left(X^{t,\xi,v}_s\right)\right)\left(X^{t,\xi,v}_s\right),  \notag \\
    &\quad\qquad\qquad D_y^2\frac{dU}{d\nu}\left(s,\lr\left(X^{t,\xi,v}_s\right)\right)\left(X^{t,\xi,v}_s\right),\frac{dU}{d\nu}\left(s,\lr\left(X^{t,\xi,v}_s\right)\right)(\cdot)\bigg)  \notag \\
    &\quad\qquad - \inf_{v\in\brd} \hr \bigg(s,X^{t,\xi,v}_s,\lr\left(X^{t,\xi,v}_s\right),v,D_y\frac{dU}{d\nu}\left(s,\lr\left(X^{t,\xi,v}_s\right)\right)\left(X^{t,\xi,v}_s\right), \notag \\
    &\qquad\qquad\qquad\qquad D_y^2\frac{dU}{d\nu}\left(s,\lr\left(X^{t,\xi,v}_s\right)\right)\left(X^{t,\xi,v}_s\right),\frac{dU}{d\nu}\left(s,\lr\left(X^{t,\xi,v}_s\right)\right)(\cdot)\bigg) \Bigg] ds \label{thm:main:HJB_3} \\
    \geq\ & 0. \notag
\end{align}
From the arbitrariness of the control $v$, we then know that
\begin{align}\label{thm:main:HJB_4}
    U(t,\mu)\le \inf_{v\in\mr_\f^2(t,T)}J_{t,\xi}(v).
\end{align}
Similar as in \eqref{def:bfH}, we define the function
\begin{align*}
    \mathbf{I}(t,x,\mu,v):=\hr \left(t,x,\mu,v,D_y\frac{dU}{d\nu}(t,\mu)(x),D_y^2\frac{dU}{d\nu}(t,\mu)(x),\frac{dU}{d\nu}(t,\mu)(\cdot)\right).
\end{align*}
By a similar approach as in Proposition~\ref{prop:L}, we can also know that for any $(t,x,\mu)$, there is a unique minimizer of $\mathbf{I}(t,x,\mu,v)$, denoted by $\widehat{u}(t,x,\mu)$. Now we choose $\widehat{u}_s:=\widehat{u}\left(s,X_s,\lr(X_s) \right)$ as this feedback control for Problem $\left(\mathbf{P}^{t,\xi}\right)$, i.e., the corresponding state process is 
\begin{align*}
    X_s=\xi & +\int_t^s b\left(r,X_r,\lr(X_r),\widehat{u}(r,X_r,\lr(X_r))\right) dr +\int_t^s \sigma\left(r,X_r,\lr(X_r)\right) dB_r\\
    &+\int_t^s\int_E \gamma\left(r,X_{r-},\lr(X_{r-}),e\right) \mathring{N}(de,dr),
\end{align*}
then, in \eqref{thm:main:HJB_3}, we know that 
\begin{align*}
    J_{t,\xi}\left(\hv\right) - U(t,\mu) =0.
\end{align*}
The last equation and \eqref{thm:main:HJB_4} imply that
\begin{align*}
    U(t,\mu)= \inf_{v\in\mr_\f^2(t,T)}J_{t,\xi}(v)=V(t,\mu),
\end{align*}
which completes the proof.
\end{proof}

\section{Summary and future work on mean field games}

So far, we studied the MFTC problems \eqref{problem} driven by jump-diffusions. We first gave the corresponding sufficient and necessary maximum principle in Section \eqref{sec:maxi}, and we also gave the well-posedness for the system of FBSDEs with jumps \eqref{FBSDE:MFTC} arsing from the maximum principle in Section~\ref{sec:FBSDE}. The study for the G\^ateaux derivatives with respect to the initial condition for the solution of the FBSDEs are also given in Section~\ref{sec:FBSDE}, which are used to study in Section~\ref{sec:value} of the regularity for the value function $V$. Finally, we show that $V$ is the unique classical solution for the HJB integro-partial differential equation \eqref{HJB:inf} in Section~\ref{sec:HJB}.

Our current solution scheme for MFTC problem \eqref{problem} can also be applied for the study on the MFGs driven by jump-diffusions, which is the following fixed point problem: for initial $(t,\mu)\in [0,T]\times\pr_2(\brn)$ and $\xi\in\lr_{\f_t}^2$ with $\lr(\xi)=\mu$,  given a distribution flow $m=\{m_s\in\pr_2(\brn):\ t\le s\le T\}$ with $m_t=\mu$,  consider the following control problem which depends on $m$
\begin{equation}\label{problem_mfg:intro}
    \left\{
    \begin{aligned}
        &v^m\in\argmin_{v\in \mr_\f^2(t,T)}J(v|m):=\e\left[\int_t^T f\left(s,X_s^{m,v},m_s,v_s\right) ds + g\left(X_T^{m,v},m_T\right)\right],\\
        &\text{such that } X_s^{m,v}=\xi +\int_t^s b\left(r,X_r^{m,v},m_r,v_r\right)dr + \int_t^s \sigma\left(r,X_r^{m,v},m_r,v_r\right)dB_r \\
        &\qquad\qquad\qquad\qquad +\int_t^s \int_E \gamma\left(r,X_{r-}^{m,v},m_{r-},v_r,e\right) \mathring{N}(de,dr),\quad s\in[t,T],
    \end{aligned}
    \right.
\end{equation}
if the following consistency condition 
\begin{align}\label{consistence_condition}
    \lr\left(X^{m,v^m}_s\right)=m_s,\quad s\in[t,T]
\end{align}
holds for some distribution flow $m^{t,\mu}$, then $m^{t,\mu}$ is an equilibria for the MFG with the initial $(t,\mu)$. The major difference between MFG and MFTC problem is that, in a MFTC problem, the state process depends in the law of the current state; while in a MFG, the fixed point distribution flow $m$ is the law of the state corresponding to the solution of the control problem \eqref{problem_mfg:intro} with the parameter $m$. Therefore, the related system of FBSDEs with jumps for MFG \eqref{problem_mfg:intro}-\eqref{consistence_condition} arsing from the maximum principle is different from \eqref{FBSDE:MFTC}, which is
\begin{equation}\label{FBSDE:MFG}
    \left\{
        \begin{aligned}
            &Y_s=\xi+\int_t^s b\left(s,\theta_s\right) ds +\int_t^s \sigma \left(s,\theta_s\right)dB_s+\int_t^s \int_E \gamma\left(s,\theta_{s-},e\right) \mathring{N}(de,ds),\\
            &P_t= g_x(Y_T,\lr(Y_T))+\int_t^T L_x\left(s,\theta_s,P_s,Q_s,R_s\right)ds -\int_t^T Q_s dB_s-\int_t^T\int_E R_s(e) \mathring{N}(de,ds), \\
            &L_v\left(s,\theta_{s-}, P_{s-}, Q_s, R_s\right)=0, \quad s\in[t,T],
        \end{aligned}
    \right.
\end{equation}
where $\theta_s:=\left( Y_s,\lr(Y_s),u_s\right)$, and the Lagrangian function $L$ is defined in \eqref{Lagrangian}, same as that for MFTC problem. Different from MFTC problem \eqref{problem}, for the well-posedness of \eqref{FBSDE:MFG} and the solvability of the MFG \eqref{problem_mfg:intro}-\eqref{consistence_condition}, we do not require the differentiability of the coefficients with respect to the distribution variable, instead, we need some monotonicity conditions, such as the commonly used small mean-field effect condition or $\beta$-monotonicity in \cite{AB11,AB13,ABmfg1}; and for a discussion on the relation between the  $\beta$-monotonicity and various monotonicity conditions including displacement monotonicity and Lions monotonicity, we refer to \cite{AB12}. We denote by $\theta^{t,\xi}_s=\left( Y^{t,\xi}_s,\lr\left(Y^{t,\xi}_s\right),u^{t,\xi}_s\right)$ the solution of MFG \eqref{problem_mfg:intro}-\eqref{consistence_condition}, then, given the equilibrium distribution flow $m^{t,\mu}_s:=\lr\left(Y^{t,\xi}_s\right)$ and any initial state $x\in\brn$, we also consider the following stochastic optimal control problem: 
\begin{equation}\label{problem_mfg:intro'}
    \left\{
    \begin{aligned}
        &u^{t,x,\mu}\in\argmin_{v\in \mr_\f^2(t,T)}J_{t,x,\mu}(v):=\e\left[\int_t^T f\left(s,X_s^{v},m^{t,\mu}_s,v_s\right) ds + g\left(X_T^{v},m^{t,\mu}_T\right)\right],\\
        &\text{such that } X_s^{v}=x +\int_t^s b\left(r,X_r^{v},m^{t,\mu}_r,v_r\right)dr + \int_t^s \sigma\left(r,X_r^{v},m^{t,\mu}_r,v_r\right)dB_r \\
        &\quad\qquad\qquad\qquad +\int_t^s \int_E \gamma\left(r,X_{r-}^{v},m^{t,\mu}_{r-},v_r,e\right) \mathring{N}(de,dr),\quad s\in[t,T],
    \end{aligned}
    \right.
\end{equation}
and define the value function as
\begin{align}\label{value:mfg}
    V(t,x,\mu):= \min_{v\in \mr_\f^2(t,T)}J_{t,x,\mu}(v)=\ & J_{t,x,\mu}\left(u^{t,x,\mu}\right)\notag\\
    =\ & \e\left[\int_t^T f\left(s,Y_s^{t,x,\mu},m^{t,\mu}_s,u_s^{t,x,\mu}\right) ds + g\left(Y_T^{t,x,\mu},m^{t,\mu}_T\right)\right].
\end{align}
where $Y^{t,x,\mu}$ is the state process corresponding to the optimal control $u^{t,x,\mu}$; also see \cite{BR} for the purpose of studying two mean field SDEs with respective initials $\xi$ and $x$. Following the similar approach as for MFTC problem \eqref{problem}, by studying of the regularity for the processes $\left(Y^{t,\xi}, u^{t,\xi}\right)$ and $\left(Y^{t,x,\mu}, u^{t,x,\mu}\right)$ (and also the related adjoint processes), the value function $V$ defined in \eqref{value:mfg} is expected to be smooth enough under appropriate assumptions similar as in \cite{AB13} on the coefficients. Then, when $\sigma$ and $\gamma$ are independent of control argument, by applying the mean field version of It\^o's formula in Lemma~\ref{lem:Ito} for measure-dependent functionals and SDE with jump diffusion on $V\left(s,Y^{t,x,\mu},m^{t,\mu}_s\right)$, the value function $V$ defined in \eqref{value:mfg} is expected to be shown as a classical solution of the following integro-partial differential equation:
\begin{equation}\label{master}
    \left\{
    \begin{aligned}
        &\frac{\dd V}{\dd t} (t,x,\mu)= -\frac{1}{2}\text{Tr}\left[\left(\sigma\sigma^\top\right) \left(t,x,\mu\right) D_x^2 V(t,x,\mu)\right]\\
        &\quad\qquad -b\left(t,x,\mu,u^{t,x,\mu}_t\right)^\top D_x V(t,x,\mu)-f\left(t,x,\mu,u^{t,x,\mu}_t\right)\\
        &\quad\qquad -\int_E \left[V(t,x+\gamma(t,x,\mu,e),\mu)-V(t,x,\mu)-D_x V(t,x,\mu)^\top \gamma\left(t,x,\mu,e\right) \right] \lambda(de)\\
        &\quad\qquad -\e \Bigg\{\frac{1}{2}\text{Tr}\left[\left(\sigma\sigma^\top\right) \left(t,\xi,\mu\right) D_y^2\frac{dV}{d\nu}(t,x,\mu)(\xi)\right] + b\left(t,\xi,\mu,u^{t,\xi}_t\right)^\top D_y\frac{dV}{d\nu}(t,x,\mu)(\xi)   \\
        &\quad\qquad \qquad +\int_E \bigg[\frac{dV}{d\nu}(t,x,\mu)\left(\xi+\gamma\left(t,\xi,\mu,e\right)\right)-\frac{dV}{d\nu}(t,x,\mu)(\xi)\\
        &\qquad\qquad\qquad\qquad  -\left(D_y\frac{dV}{d\nu}(t,x,\mu)(\xi)\right)^\top \gamma\left(t,\xi,\mu,e\right) \bigg] \lambda(de) \Bigg\}, \quad t\in[0,T),\\
        &V(T,x,\mu)= g(x,\mu),\quad (x,\mu)\in\brn\times\pr_2(\brn).
    \end{aligned}
    \right.
\end{equation}
Equation \eqref{master} is the master equation for MFG with jump-diffusion, which is also a decoupling field for the HJB-FP system for MFG; we refer to \cite{AB11,AB13} for the study for the classical solution of the master equation for MFG without diffusion only. The integral terms in the master equation \eqref{master} make it difficult to be solved by the classical analytical method, while our probabilistic approach can provide a positive definite answer. We expect that, when the coefficient functions are $C^1$ in both spatial and control variables and are continuous in the distribution argument, MFG \eqref{problem_mfg:intro} and Problem \eqref{problem_mfg:intro'} can be warranted with a unique solution; when the coefficient functions are $C^2$ in both spatial and control variables and are also $C^1$ in the distribution argument, the value functional in \eqref{value:mfg} can be shown to be $C^2$ in spatial variable and $C^1$ in distribution variable; furthermore, when the coefficient functions are $C^3$ in both spatial and control variables and $C^2$ in the distribution argument, the master integro-partial differential equation \eqref{master} can be shown to have a unique classical solution. 

\section*{Acknowledgement}

Alain Bensoussan is supported by the National Science Foundation under grant NSF-DMS-2204795. Ziyu Huang acknowledges the financial supports as a postdoctoral fellow from Department of Statistics of The Chinese University of Hong Kong. Shanjian Tang is supported by the National Natural Science Foundation of China under grant nos. 11631004 and  12031009. Phillip Yam acknowledges the financial supports from HKGRF-14301321 with the project title ``General Theory for Infinite Dimensional Stochastic Control: Mean field and Some Classical Problems'', and HKGRF-14300123 with the project title ``Well-posedness of Some Poisson-driven Mean Field Learning Models and their Applications''. The work described in this article was also supported by a grant from the Germany/Hong Kong Joint Research Scheme sponsored by the Research Grants Council of Hong Kong and the German Academic Exchange Service of Germany (Reference No. G-CUHK411/23). He also thanks The University of Texas at Dallas for the kind invitation to be a Visiting Professor in Naveen Jindal School of Management.


\newpage

\normalsize 
\appendix
\appendixpage
\addappheadtotoc

In all the following proofs, we mean by $C(\alpha_1,\dots, \alpha_k)$  a constant depending only on parameters $(\alpha_1,\dots, \alpha_k)$, which may be different in different lines.

\section{Proof of statements in Section~\ref{sec:maxi}}\label{sec:pf:maxi}

\subsection{Proof of Lemma~\ref{lem:control-state}}\label{pf:lem:control-state}

For the existence and uniqueness of the $\f_t$-progressively-measurable c\`adl\`ag solution of SDE \eqref{SDE^v}, we refer to \cite[Theorem 1.19]{Oksendal-Sulem}; here, we only prove \eqref{lem:control-state-1}. From the It\^o's formula for $|X^v_t|^2$ (see \cite[Theorem 1.16]{Oksendal-Sulem} for instance), we know that
\begin{align*}
    \left|X^v_t\right|^2=\ & |\xi|^2+\int_0^t \bigg[ 2\left(X^v_s\right)^\top b\left(s,X_s^v,\lr\left(X_s^v\right),v_s\right)+\sum_{j=1}^n \left|\sigma^j\left(s,X_s^v,\lr\left(X_s^v\right),v_s\right)\right|^2\\
    &\quad\qquad\qquad +\int_E \left|\gamma\left(s,X_{s-}^v,\lr\left(X_{s-}^v\right),v_s,e\right)\right|^2\lambda(de)\bigg]ds \\
    &+2 \int_0^t \left(X^v_s\right)^\top\sigma \left(s,X_s^v,\lr\left(X_s^v\right),v_s\right)dB_s\\
    &+\int_E \left[2\left(X^v_{s-}\right)^\top \gamma\left(s,X_{s-}^v,\lr\left(X_{s-}^v\right),v_s,e\right)+\left|\gamma\left(s,X_{s-}^v,\lr\left(X_{s-}^v\right),v_s,e\right)\right|^2 \right]\mathring{N}(de,ds).
\end{align*}
Therefore, from Assumption (A1) and the Cauchy's inequality, we know that 
\begin{align*}
    \e\left[\sup_{0\le t\le T}\left|X^v_t\right|^2\right]\le \e\left[|\xi|^2+C(L)\int_0^T  \left(1+\left|X_t^v\right|^2+\left|\lr\left(X_t^v\right)\right|_2^2+|v_t|^2\right)dt\right].
\end{align*}
By applying Gr\"onwall's inequality and noting that $\left|\lr\left(X_t^v\right)\right|_2^2= \e\left[\left|X_t^v\right|^2\right]$, we obtain \eqref{lem:control-state-1}. 

\subsection{Proof of Lemma~\ref{lem:rhoX}}\label{pf:lem:rhoX}

For the well-posedness of SDE \eqref{SDE:deltaX}, we also refer to \cite[Theorem 1.19]{Oksendal-Sulem}; and the proof of Estimate \eqref{deltaX:boundedness} is similar to that of \eqref{lem:control-state-1} under Assumption (A1), which is also omitted. Here, we only prove \eqref{Delta_delta_distance}. From SDE \eqref{SDE^v}, we know that the process $\frac{\de X^\epsilon}{\epsilon}$ satisfies the following SDE: for $t\in[0,T]$,
\begin{align*}
    \frac{\de X^\epsilon_t}{\epsilon}=\ &\int_0^t \int_0^1 \bigg\{ b_x\left(s,\theta^{\epsilon,h}_s\right) \frac{\de X^\epsilon_s}{\epsilon}+b_v\left(s,\theta^{\epsilon,h}_s\right) \de v_s + \he\bigg[\left(D_y\frac{db}{d\nu}\left(s,\theta^{\epsilon,h}_s\right)\left(\widehat{X_s^{\epsilon,h}}\right)\right) \frac{\widehat{\de X_s^\epsilon}}{\epsilon}\bigg] \bigg\} dh ds \\
    &+\int_0^t \int_0^1 \bigg\{ \sigma_x\left(s,\theta^{\epsilon,h}_s\right) \frac{\de X_s^\epsilon}{\epsilon} +\sigma_v\left(s,\theta^{\epsilon,h}_s\right) \de v_s \\
    &\quad\qquad\qquad + \he\bigg[\left(D_y\frac{d\sigma}{d\nu}\left(s,\theta^{\epsilon,h}_s\right)\left(\widehat{X_s^{\epsilon,h}}\right)\right) \frac{\widehat{\de X_s^\epsilon}}{\epsilon}\bigg] \bigg\} dhdB_s\\
    &+\int_0^t\int_E \int_0^1 \bigg\{ \gamma_x\left(s,X_{s-}^{\epsilon,h},\lr\left(X_{s-}^{\epsilon,h}\right),v^{\epsilon,h}_s,e\right) \frac{\de X_{s-}^\epsilon}{\epsilon} +\gamma_v\left(s,X_{s-}^{\epsilon,h},\lr\left(X_{s-}^{\epsilon,h}\right),v^{\epsilon,h}_s,e\right) \de v_s \\
    &\qquad\qquad\qquad + \he\bigg[\left(D_y\frac{d\gamma}{d\nu}\left(s,X_{s-}^{\epsilon,h},\lr\left(X_{s-}^{\epsilon,h}\right),v^{\epsilon,h}_s,e\right)\left(\widehat{X_{s-}^{\epsilon,h}}\right)\right) \frac{\widehat{\de X_{s-}^\epsilon}}{\epsilon}\bigg] \bigg\} dh \mathring{N}(de,ds),
\end{align*}
where 
\begin{align*}
    \theta^{\epsilon,h}_t:=\left(X_t^{\epsilon,h},\lr\left(X_t^{\epsilon,h}\right),v^{\epsilon,h}_t\right),\quad X^{\epsilon,h}_t:=Y_t+h\epsilon\de X_t^\epsilon,\quad v^{\epsilon,h}_t:=u_t+h\epsilon\de v_t.
\end{align*}
Following a similar approach as the proof of \eqref{lem:control-state-1}, from Assumption (A1), we have
\begin{align}\label{DeX:boundedness}
    &\e\left[\sup_{0\le t\le T}\left|\frac{\de X^\epsilon_t}{\epsilon}\right|^2\right]\le C(L,T)\e\left[\int_0^T |\de v_t|^2 dt \right].
\end{align}
We denote by $\chi_t^\epsilon:= \frac{\de X_t^\epsilon}{\epsilon}-\delta X_t$ for $\epsilon\in(0,1)$ and $t\in[0,T]$. Then, from SDE \eqref{SDE:deltaX}, we know that the process $\chi^\epsilon$ satisfies the following SDE:
\begin{align}
    \chi^\epsilon_t=\ &\int_0^t \bigg\{b_x\left(s,\theta_s\right) \chi^\epsilon_s+\he\bigg[\left(D_y\frac{db}{d\nu}\left(s,\theta_s\right)\left(\hys\right)\right) \widehat{\chi^\epsilon_s}\bigg]+\alpha^{b,\epsilon}_s \bigg\}ds \notag \\
    &+\int_0^t \bigg\{\sigma_x\left(s,\theta_s\right) \chi^\epsilon_s+\he\bigg[\left(D_y\frac{d\sigma}{d\nu}\left(s,\theta_s\right)\left(\hys\right)\right) \widehat{\chi^\epsilon_s}\bigg]+\alpha^{\sigma,\epsilon}_s \bigg\}dB_s \notag \\
    &+\int_0^t\int_E \bigg\{\gamma_x\left(s,Y_{s-},\lr\left(Y_{s-}\right),u_s,e\right) \chi^\epsilon_{s-}+\alpha^{\gamma,\epsilon}(s,s-,e) \notag \\
    &\quad\qquad\qquad +\he\bigg[\left(D_y\frac{d\gamma}{d\nu}\left(s,Y_{s-},\lr\left(Y_{s-}\right),u_s,e\right)\left(\widehat{Y_{s-}}\right)\right) \widehat{\chi^\epsilon_{s-}}\bigg] \bigg\} \mathring{N}(de,ds), \label{SDE:rhoX}
\end{align}
where
\begin{align*}
    &\alpha^{b,\epsilon}_s:= \int_0^1 \bigg\{ \left[b_x\left(s,\theta^{\epsilon,h}_s\right)-b_x\left(s,\theta_s\right)\right] \frac{\de X^\epsilon_s}{\epsilon} +\left[b_v\left(s,\theta^{\epsilon,h}_s\right)-b_v\left(s,\theta_s\right) \right] \de v_s \\
    &\quad\qquad\qquad + \he\bigg[\left(D_y\frac{db}{d\nu}\left(s,\theta^{\epsilon,h}_s\right)\left(\widehat{X_s^{\epsilon,h}}\right)- D_y\frac{db}{d\nu}\left(s,\theta_s\right)\left(\hys\right)\right) \frac{\widehat{\de X_s^\epsilon}}{\epsilon}\bigg] \bigg\} dh;\\
    &\alpha^{\sigma,\epsilon}_s:= \int_0^1 \bigg\{ \left[\sigma_x\left(s,\theta^{\epsilon,h}_s\right)-\sigma_x\left(s,\theta_s\right)\right] \frac{\de X^\epsilon_s}{\epsilon}+\left[\sigma_v\left(s,\theta^{\epsilon,h}_s\right)-\sigma_v\left(s,\theta_s\right) \right] \de v_s \\
    &\quad\qquad\qquad + \he\bigg[\left(D_y\frac{d\sigma}{d\nu}\left(s,\theta^{\epsilon,h}_s\right)\left(\widehat{X_s^{\epsilon,h}}\right)- D_y\frac{d\sigma}{d\nu}\left(s,\theta_s\right)\left(\hys\right)\right) \frac{\widehat{\de X_s^\epsilon}}{\epsilon}\bigg] \bigg\} dh;\\
    &\alpha^{\gamma,\epsilon}(s,s-,e):= \int_0^1 \bigg\{ \left[\gamma_x\left(s,X_{s-}^{\epsilon,h},\lr\left(X_{s-}^{\epsilon,h}\right),v^{\epsilon,h}_s,e\right)-\gamma_x\left(s,Y_{s-},\lr\left(Y_{s-}\right),u_s,e\right)\right] \frac{\de X^\epsilon_{s-}}{\epsilon}\\
    &\quad\qquad\qquad\qquad\qquad +\left[\gamma_v\left(s,X_{s-}^{\epsilon,h},\lr\left(X_{s-}^{\epsilon,h}\right),v^{\epsilon,h}_s,e\right)-\gamma_v\left(s,Y_{s-},\lr\left(Y_{s-}\right),u_s,e\right)\right] \de v_s \\
    &\quad\qquad\qquad\qquad\qquad + \he\bigg[\bigg(D_y\frac{d\gamma}{d\nu}\left(s,X_{s-}^{\epsilon,h},\lr\left(X_{s-}^{\epsilon,h}\right),v^{\epsilon,h}_s,e\right)\left(\widehat{X_{s-}^{\epsilon,h}}\right)\\
    &\qquad\qquad\qquad\qquad\qquad\qquad - D_y\frac{d\gamma}{d\nu}\left(s,Y_{s-},\lr\left(Y_{s-}\right),u_s,e\right)\left({\widehat{Y_{s-}}}\right)\bigg) \frac{\widehat{\de X_{s-}^\epsilon}}{\epsilon}\bigg] \bigg\} dh.
\end{align*}
Applying a similar approach as the proof of \eqref{lem:control-state-1} for SDE \eqref{SDE:rhoX}, from Assumption (A1), by Gr\"onwall inequality, we have
\begin{align*}
    &\e\left[\sup_{0\le t\le T}\left|\chi^\epsilon_t\right|^2\right]\le C(L,T)\e\left[\int_0^T \left(\left|\alpha^{b,\epsilon}_t\right|^2+\left|\alpha^{\sigma,\epsilon}_t\right|^2 +\int_E \left|\alpha^{\gamma,\epsilon}(t,t-,e) \right|^2 \lambda(de) \right) dt \right].
\end{align*}
Then, from the boundedness estimate \eqref{DeX:boundedness} and the continuity of the derivatives $b_x$, $b_v$, $D_y\frac{db}{d\nu}$, $\sigma_x$, $\sigma_v$, $D_y\frac{d\sigma}{d\nu}$, $\gamma_x$, $\gamma_v$ and $D_y\frac{d\gamma}{d\nu}$ in Assumption (A1), we obtain \eqref{Delta_delta_distance}.

\subsection{Proof of Lemma~\ref{lem:adjoint}}\label{pf:lem:adjoint}

For the existence and uniqueness of the solution of BSDE \eqref{adjoint}, we refer to \cite[Theorem 4.5]{Oksendal-Sulem}; here, we only prove \eqref{adjoint:boundedness}. By taking conditional expectation and using the fact that $t\mapsto\int_t^T Q_s dB_s$ and $t\mapsto\int_t^T\int_E R_s(e) \mathring{N}(de,ds)$ are martingales, we know that
\begin{align*}
    P_t=\e\Bigg[&g_x(Y_T,\lr(Y_T))+\he\left[D_y\frac{dg}{d\nu}\left(\hyT,\lr(Y_T)\right)(Y_T)\right]\\
    &+\int_t^T \bigg\{ b_x\left(s,\theta_s\right)^\top P_s+\sum_{j=1}^n \sigma^j_x \left(s,\theta_s\right)^\top Q^j_s+\int_E \gamma_x\left(s,\theta_s,e\right)^\top R_s(e)\lambda(de)+f_x\left(s,\theta_s\right)\\
    &\qquad\qquad +\he\bigg[\left(D_y\frac{db}{d\nu}\left(s,\widehat{\theta_s}\right)\left(Y_s\right)\right)^\top \hps+\sum_{j=1}^n \left(D_y\frac{d\sigma^j}{d\nu}\left(s,\widehat{\theta_s}\right)\left(Y_s\right)\right)^\top \hqsj \\
    &\qquad\qquad\qquad +\int_E \left(D_y\frac{d\gamma}{d\nu}\left(s,\widehat{\theta_s},e\right)\left(Y_s\right)\right)^\top \hrs(e) \lambda(de)+ D_y\frac{df}{d\nu}\left(s,\widehat{\theta_s}\right)\left(Y_s\right) \bigg]\bigg\}ds \ \Bigg|\  \f_t\Bigg].
\end{align*}
Therefore, by using the Burkholder-Davis-Gundy inequality (see \cite{dellacherie2011probabilities}), Assumptions (A1) and (A2), we have
\begin{align}\label{lem:adjoint_1}
    \e\bigg[\sup_{0\le t\le T}|P_t|^2\bigg]\le C(L,T)\e\left[1+|Y_T|^2+\int_0^T \left(|Y_t|^2+|u_t|^2+|P_t|^2+|Q_t|^2 +\int_E |R_t(e)|^2\lambda(de)\right) dt\right].
\end{align}
By applying It\^o's formula \cite[Theorem 1.16]{Oksendal-Sulem} for $|P_t|^2$ and taking expectation, we have
\begin{align*}
    &\e\left[|P_T|^2-|P_0|^2\right]\\
    =\ &\e\bigg[\int_0^T -2P_t^\top\bigg\{ b_x\left(t,\theta_t\right)^\top P_t+\sum_{j=1}^n \sigma^j_x \left(t,\theta_t\right)^\top Q^j_t+\int_E \gamma_x\left(t,\theta_t,e\right)^\top R_t(e)\lambda(de)+f_x\left(t,\theta_t\right)\\
    &\qquad\qquad\qquad +\he\bigg[\left(D_y\frac{db}{d\nu}\left(t,\widehat{\theta_t}\right)\left(Y_t\right)\right)^\top \widehat{P_t}+\sum_{j=1}^n \left(D_y\frac{d\sigma^j}{d\nu}\left(t,\widehat{\theta_t}\right)\left(Y_t\right)\right)^\top \widehat{Q^j_t} \\
    &\qquad\qquad\qquad +\int_E \left(D_y\frac{d\gamma}{d\nu}\left(t,\widehat{\theta_t},e\right)\left(Y_t\right)\right)^\top \widehat{R_t}(e) \lambda(de)+ D_y\frac{df}{d\nu}\left(t,\widehat{\theta_t}\right)\left(Y_t\right) \bigg]\bigg\} dt \\
    &\quad +\int_0^T \bigg(|Q_t|^2 +\int_E |R_t(e)|^2 \lambda(de) \bigg) dt\bigg].
\end{align*}
Therefore, from Assumptions (A1) and (A2), and using the standard Young's inequality, we have
\begin{align*}
    & \e\bigg[\int_0^T \bigg(|Q_t|^2 +\int_E |R_t(e)|^2 \lambda(de) \bigg) dt\bigg]\\
    \le\ & C(L)\bigg\{1+\|Y_T\|_2^2+\int_0^T \left[\|Y_t\|_2^2+\|u_t\|_2^2+\|P_t\|_2^2+ \|P_t\|_2\cdot\left(\|Q_t\|_2+\|R_t(\cdot)\|_{L^2(\Omega;L_\lambda^2)}\right) \right] dt\bigg\}\\
    \le\ & C(L)\bigg[1+\|Y_T\|_2^2+\int_0^T \left(\|Y_t\|_2^2+\|u_t\|_2^2+2\|P_t\|_2^2+ \frac{1}{2}\|Q_t\|_2+\frac{1}{2}\|R_t(\cdot)\|_{L^2(\Omega;L_\lambda^2)}\right) dt\bigg],
\end{align*}
from which we deduce that
\begin{align}\label{lem:adjoint_2}
    & \e\bigg[\int_0^T \bigg(|Q_t|^2 +\int_E |R_t(e)|^2 \lambda(de) \bigg) dt\bigg]\le C(L)\bigg[1+\|Y_T\|_2^2+\int_0^T \left(\|Y_t\|_2^2+\|u_t\|_2^2+\|P_t\|_2^2\right) dt\bigg].
\end{align}
Substituting \eqref{lem:adjoint_2} into \eqref{lem:adjoint_1}, we have
\begin{align*}
    \e\bigg[\sup_{0\le t\le T}|P_t|^2\bigg]\le C(L,T)\e\left[1+|Y_T|^2+\int_0^T \left(|Y_t|^2+|u_t|^2+|P_t|^2\right) dt\right],
\end{align*}
then, by using Gr\"onwall's inequality, we have
\begin{align}\label{lem:adjoint_3}
    \e\bigg[\sup_{0\le t\le T}|P_t|^2\bigg]\le C(L,T)\e\left[1+|Y_T|^2+\int_0^T \left(|Y_t|^2+|u_t|^2\right) dt\right].
\end{align}
Substituting \eqref{lem:adjoint_3} back into \eqref{lem:adjoint_2}, we deduce that 
\begin{align}\label{lem:adjoint_4}
    & \e\bigg[\int_0^T \bigg(|Q_t|^2 +\int_E |R_t(e)|^2 \lambda(de) \bigg) dt\bigg]\le C(L,T)\e\left[1+|Y_T|^2+\int_0^T \left(|Y_t|^2+|u_t|^2\right) dt\right].
\end{align}
Finally, by substituting Estimates \eqref{lem:control-state-1} into \eqref{lem:adjoint_3} and \eqref{lem:adjoint_4}, we obtain \eqref{adjoint:boundedness}.

\subsection{Proof of Lemma~\ref{lem:dJ}}\label{pf:lem:dJ}

From Assumption (A2) and Lemma~\ref{lem:rhoX}, we can compute that 
\begin{align}
    &\frac{d}{d\epsilon}J(u+\epsilon\de v)\bigg|_{\epsilon=0} \notag \\
    =\ & \frac{d}{d\epsilon}\bigg\{\e\bigg[\int_0^T f\left(t,X^\epsilon_t,\lr(X^\epsilon_t),v^\epsilon_t\right)dt+g\left(X^\epsilon_T,\lr(X^\epsilon_T)\right) \bigg]\bigg\}\bigg|_{\epsilon=0} \notag \\
    =\ & \e\bigg\{\int_0^T \bigg[ f_x(t,\theta_t)^\top \delta X_t+f_v(t,\theta_t)^\top \de v_t+\he\bigg[\left(D_y \frac{df}{d\nu}(t,\theta_t)\left(\widehat{Y_t}\right)\right)^\top \widehat{\delta X_t} \bigg] \bigg] dt \notag \\
    &\qquad + g_x(Y_T,\lr(Y_T))^\top \delta X_T+\he\bigg[\left(D_y \frac{dg}{d\nu}\left(Y_T,\lr(Y_T)\right)\left(\widehat{Y_T}\right)\right)^\top \widehat{\delta X_T} \bigg] \bigg\}. \label{lem:dJ_1}
\end{align}
By applying It\^o's formula \cite[Theorem 1.16]{Oksendal-Sulem} for $P_t^\top \delta X_t$ and taking expectation, we have
\begin{align}
    &\e\left[P_T^\top \delta X_T\right] \notag \\
    =\ & \e \int_0^T \bigg\{ P_s^\top \bigg\{b_x\left(s,\theta_s\right) \delta X_s+b_v\left(s,\theta_s\right) \de v_s +\he\bigg[\left(D_y\frac{db}{d\nu}\left(s,\theta_s\right)\left(\hys\right)\right) \widehat{\delta X_s}\bigg] \bigg\} \notag \\
    &\quad\qquad -\bigg\{ b_x\left(s,\theta_s\right)^\top P_s+\sum_{j=1}^n \sigma^j_x \left(s,\theta_s\right)^\top Q^j_s+\int_E \gamma_x\left(s,\theta_s,e\right)^\top R_s(e)\lambda(de)+f_x\left(s,\theta_s\right) \notag \\
    &\quad\qquad\qquad +\he\bigg[\left(D_y\frac{db}{d\nu}\left(s,\widehat{\theta_s}\right)\left(Y_s\right)\right)^\top \hps+\sum_{j=1}^n \left(D_y\frac{d\sigma^j}{d\nu}\left(s,\widehat{\theta_s}\right)\left(Y_s\right)\right)^\top \hqsj \notag \\
    &\quad\qquad\qquad\qquad +\int_E \left(D_y\frac{d\gamma}{d\nu}\left(s,\widehat{\theta_s},e\right)\left(Y_s\right)\right)^\top \hrs(e) \lambda(de)+ D_y\frac{df}{d\nu}\left(s,\widehat{\theta_s}\right)\left(Y_s\right) \bigg]\bigg\}^\top \delta X_s \notag \\
    &\quad\qquad+\sum_{j=1}^n \left(Q_s^j\right)^\top \bigg\{\sigma^j_x\left(s,\theta_s\right) \delta X_s+\sigma^j_v\left(s,\theta_s\right) \de v_s +\he\bigg[\left(D_y\frac{d\sigma^j}{d\nu}\left(s,\theta_s\right)\left(\hys\right)\right) \widehat{\delta X_s}\bigg] \bigg\} \notag \\
    &\quad\qquad+\int_E \left(R_s(e)\right)^\top \bigg\{\gamma_x\left(s,Y_{s-},\lr\left(Y_{s-}\right),u_s,e\right) \delta X_{s-}+\gamma_v\left(s,Y_{s-},\lr\left(Y_{s-}\right),u_s,e\right) \de v_s \notag \\
    &\qquad\qquad\qquad\qquad\qquad +\he\bigg[\left(D_y\frac{d\gamma}{d\nu}\left(s,Y_{s-},\lr\left(Y_{s-}\right),u_s,e\right)\left(\widehat{Y_{s-}}\right)\right) \widehat{\delta X_{s-}}\bigg] \bigg\}\lambda(de) \bigg\}ds \notag \\
    =\ & \e\int_0^T \bigg[P_s^\top b_v\left(s,\theta_s\right) +\sum_{j=1}^n \left(Q_s^j\right)^\top \sigma^j_v\left(s,\theta_s\right) +\int_E \left(R_s(e)\right)^\top \gamma_v\left(s,\theta_s,e\right) \lambda(de)\bigg] \de v_s \notag \\
    &\quad\qquad - f_x\left(s,\theta_s\right)^\top \delta X_s -\he\bigg[ \left(D_y\frac{df}{d\nu}\left(s,\theta_s\right)\left(\hys\right)\right)^\top \widehat{\delta X_s} \bigg] ds. \label{lem:dJ_2}
\end{align}
Here, we use the fact that $Y,\delta X\in\sr^2_\f(0,T)$ and therefore
\begin{align}
    &\e\int_0^T \int_E \left(R_s(e)\right)^\top \gamma_x\left(s,Y_{s-},\lr\left(Y_{s-}\right),u_s,e\right) \lambda(de) \delta X_{s-} ds \notag \\
    =\ & \e\int_0^T \int_E \left(R_s(e)\right)^\top \gamma_x\left(s,\theta_s,e\right) \lambda(de) \delta X_{s-} ds, \notag \\
    &\e\int_0^T \int_E \left(R_s(e)\right)^\top \gamma_v\left(s,Y_{s-},\lr\left(Y_{s-}\right),u_s,e\right) \lambda(de) \de v_s ds \notag \\
    =\ & \e\int_0^T \int_E \left(R_s(e)\right)^\top \gamma_v\left(s,\theta_s,e\right) \lambda(de) \de v_s ds, \notag \\
    &\e\int_0^T \int_E \left(R_s(e)\right)^\top \he\bigg[\left(D_y\frac{d\gamma}{d\nu}\left(s,Y_{s-},\lr\left(Y_{s-}\right),u_s,e\right)\left(\widehat{Y_{s-}}\right)\right) \widehat{\delta X_{s-}}\bigg] \lambda(de) ds \notag \\
    =\ & \e\int_0^T \int_E \left(R_s(e)\right)^\top \he\bigg[\left(D_y\frac{d\gamma}{d\nu}\left(s,\theta_s,e\right)\left(\hys\right)\right) \widehat{\delta X_{s}}\bigg] \lambda(de) ds; \label{fact:llrc}
\end{align}
and also use the following equalities due to the Fubini's theorem:
\begin{align}
    &\e\bigg\{ P_s^\top \he\bigg[\left(D_y\frac{db}{d\nu}\left(s,\theta_s\right)\left(\hys\right)\right) \widehat{\delta X_s}\bigg] \bigg\}=\e\bigg\{\he\bigg[\hps^\top \left(D_y\frac{db}{d\nu}\left(s,\widehat{\theta_s}\right)\left(Y_s\right)\right) \bigg] \delta X_t\bigg\}, \notag \\
    &\e\bigg\{\left(Q_s^j\right)^\top \he\bigg[\left(D_y\frac{d\sigma^j}{d\nu}\left(s,\theta_s\right)\left(\hys\right)\right) \widehat{\delta X_s}\bigg] \bigg\}=\e\bigg\{\he\bigg[\hqsj^\top\left(D_y\frac{d\sigma^j}{d\nu}\left(s,\widehat{\theta_s}\right)\left(Y_s\right)\right)\bigg] \delta X_t\bigg\}, \notag \\
    &\e \int_E \left(R_s(e)\right)^\top \he\bigg[\left(D_y\frac{d\gamma}{d\nu}\left(s,\theta_s,e\right)\left(\hys\right)\right) \widehat{\delta X_{s}}\bigg] \lambda(de) \notag\\
    =\ & \e\bigg\{\he\bigg[\int_E \left(\hrs(e)\right)^\top \left(D_y\frac{d\gamma}{d\nu}\left(s,\widehat{\theta_s},e\right)\left(Y_s\right)\right) \lambda(de) \bigg] \delta X_s\bigg\}, \notag \\
    &\e\he\bigg[ \left(D_y\frac{df}{d\nu}\left(s,\widehat{\theta_s}\right)\left(Y_s\right)\right)^\top \delta X_s \bigg]=\e\he\bigg[ \left(D_y\frac{df}{d\nu}\left(s,\theta_s\right)\left(\hys\right)\right)^\top \widehat{\delta X_s} \bigg]. \label{fact:Fubini}
\end{align}
From the terminal condition for $P_T$ and again the Fubini's theorem, we have 
\begin{align*}
    \e\left[P_T^\top \delta X_T\right]=\ & \e\bigg\{g_x(Y_T,\lr(Y_T))^\top \delta X_T+\he\bigg[\left(D_y\frac{dg}{d\nu}\left(\hyT,\lr(Y_T)\right)(Y_T)\right)^\top \delta X_T\bigg] \bigg\}\\
    =\ & \e\bigg\{g_x(Y_T,\lr(Y_T))^\top \delta X_T+\he\bigg[\left(D_y\frac{dg}{d\nu}\left(Y_T,\lr(Y_T)\right)\left(\hyT\right)\right)^\top \widehat{\delta X_T}\bigg] \bigg\}
\end{align*}
From the last equality and \eqref{lem:dJ_2}, we have
\begin{align}
    &\e\bigg\{g_x(Y_T,\lr(Y_T))^\top \delta X_T+\he\bigg[\left(D_y\frac{dg}{d\nu}\left(Y_T,\lr(Y_T)\right)\left(\hyT\right)\right)^\top \widehat{\delta X_T}\bigg] \notag \\
    &\qquad + \int_0^T \bigg[f_x\left(s,\theta_s\right)^\top \delta X_s +\he\bigg[ \left(D_y\frac{df}{d\nu}\left(s,\theta_s\right)\left(\hys\right)\right)^\top \widehat{\delta X_s} \bigg] \bigg] ds \bigg\} \notag \\
    =\ &\e\int_0^T \bigg[P_s^\top b_v\left(s,\theta_s\right) +\sum_{j=1}^n \left(Q_s^j\right)^\top \sigma^j_v\left(s,\theta_s\right) +\int_E \left(R_s(e)\right)^\top \gamma_v\left(s,\theta_s,e\right) \lambda(de)\bigg] \de v_s ds. \label{lem:dJ_3}
\end{align}
Combining \eqref{lem:dJ_3} and \eqref{lem:dJ_1}, we obtain \eqref{lem:dJ_0}.

\subsection{Proof of Theorem~\ref{thm:suff}}\label{pf:thm:suff}

For any control $v\in\mr_\f^2(0,T)$, we denote by $X^v$ the associated controlled state process. Then, the process $X^v_t-Y_t$ satisfies the following SDE:
\begin{align*}
    X_t^v-Y_t=\ &\int_0^t \int_0^1 \bigg\{ b_x\left(s,\theta^{h}_s\right) \left(X_s^v-Y_s\right)+b_v\left(s,\theta^{h}_s\right) \left(v_s-u_s\right) \\
    &\qquad\qquad + \he\bigg[\left(D_y\frac{db}{d\nu}\left(s,\theta^{h}_s\right)\left(\widehat{X_s^{h}}\right)\right) \left(\widehat{X_s^v}-\widehat{Y_s}\right) \bigg] \bigg\} dh ds \\
    &+\int_0^t \int_0^1 \bigg\{ \sigma_x\left(s,\theta^{h}_s\right) \left(X_s^v-Y_s\right)+\sigma_v\left(s,\theta^{h}_s\right) \left(v_s-u_s\right) \\
    &\qquad\qquad + \he\bigg[\left(D_y\frac{d\sigma}{d\nu}\left(s,\theta^{h}_s\right)\left(\widehat{X_s^{h}}\right)\right) \left(\widehat{X_s^v}-\widehat{Y_s}\right) \bigg] \bigg\} dhdB_s\\
    &+\int_0^t\int_E \int_0^1 \bigg\{ \gamma_x\left(s,\theta_{s-}^{h},e\right) \left(v_{s-}-u_{s-}\right)+\gamma_v\left(s,\theta_{s-}^{h},e\right) \left(v_s-u_s\right) \\
    &\qquad\qquad\qquad + \he\bigg[\left(D_y\frac{d\gamma}{d\nu}\left(s,\theta_{s-}^{h},e\right)\left(\widehat{X_{s-}^{h}}\right)\right) \left(\widehat{X_{s-}^v}-\widehat{Y_{s-}}\right) \bigg] \bigg\} dh \mathring{N}(de,ds),
\end{align*}
where 
\begin{align*}
    \theta^{h}_s:=\left(X_s^{h},\lr\left(X_s^{h}\right),v^{h}_s\right),\quad X^{h}_s:=Y_s+h\left(X_s^v-Y_s\right),\quad v^{h}_s:=u_s+h\left(v_s-u_s\right).
\end{align*}
Then, by applying It\^o's formula \cite[Theorem 1.16]{Oksendal-Sulem} for $P_t^\top \left(X^v_s-Y_s\right)$ and taking expectation, and using the Fubini's theorem (similar as in \eqref{fact:Fubini}) and the fact that $Y,X^v\in\sr^2_\f(0,T)$ (similar as in \eqref{fact:llrc}), we have
\begin{align*}
    &\e\left[P_T^\top \left(X^v_T-Y_T\right)\right]\\
    =\ & \e \int_0^T \Bigg\{ P_s^\top \int_0^1 \bigg\{ \left[b_x\left(s,\theta^{h}_s\right)-b_x\left(s,\theta_s\right)\right] \left(X_s^v-Y_s\right)+b_v\left(s,\theta^{h}_s\right) \left(v_s-u_s\right) \\
    &\qquad\qquad\qquad + \he\bigg[\left(D_y\frac{db}{d\nu}\left(s,\theta^{h}_s\right)\left(\widehat{X_s^{h}}\right)-D_y\frac{db}{d\nu}\left(s,\theta_s\right)\left(\hys\right)\right) \left(\widehat{X_s^v}-\widehat{Y_s}\right) \bigg] \bigg\} dh \notag \\
    &\qquad+\sum_{j=1}^n \left(Q_s^j\right)^\top \int_0^1 \bigg\{ \left[\sigma^j_x\left(s,\theta^{h}_s\right)-\sigma^j_x \left(s,\theta_s\right)\right] \left(X_s^v-Y_s\right)+\sigma^j_v\left(s,\theta^{h}_s\right) \left(v_s-u_s\right) \\
    &\qquad\qquad\qquad\qquad + \he\bigg[\left(D_y\frac{d\sigma^j}{d\nu}\left(s,\theta^{h}_s\right)\left(\widehat{X_s^{h}}\right)-D_y\frac{d\sigma^j}{d\nu}\left(s,\theta_s\right)\left(\hys\right)\right) \left(\widehat{X_s^v}-\widehat{Y_s}\right) \bigg] \bigg\} dh \notag \\
    &\qquad+\int_E \left(R_s(e)\right)^\top \int_0^1 \bigg\{ \left[\gamma_x\left(s,\theta_s^{h},e\right)- \gamma_x\left(s,\theta_s,e\right) \right] \left(X^v_s-Y_s\right) \\
    &\qquad\qquad\qquad\qquad + \he\bigg[\bigg(D_y\frac{d\gamma}{d\nu}\left(s,\theta_s^{h},e\right)\left(\widehat{X_s^{h}}\right) -D_y\frac{d\gamma}{d\nu}\left(s,\theta_s,e\right)\left(\hys\right)\bigg) \left(\widehat{X_s^v}-\widehat{Y_s}\right) \bigg] \bigg\} dh \lambda(de) \\
    &\qquad -f_x\left(s,\theta_s\right)^\top \left(X^v_s-Y_s\right)-\he\bigg[\left(D_y\frac{df}{d\nu}\left(s,\theta_s\right)\left(\hys\right)\right)^\top\left(\widehat{X_s^v}-\widehat{Y_s}\right) \bigg] \Bigg\}ds.
\end{align*}
Substituting the optimality conditions \eqref{optimal_condition_split} into the last equality, noting the fact that for the terms corresponding to $\gamma$ and $\sigma^j$ for $1\le j\le n$ with $d_j=0$ are all cancelled out due to their linear condition in Assumption (A3)-(ii,iii), we have
\begin{align}
    &\e\left[P_T^\top \left(X^v_T-Y_T\right)\right] \notag \\
    =\ & \e \int_0^T \Bigg\{ P_s^\top \int_0^1 \bigg\{ \left[B_x\left(s,X^{h}_s,\lr\left(X^{h}_s\right),v^{h,0}_s\right)-B_x\left(s,Y_s,\lr(Y_s),u^0_s\right)\right] \left(X_s^v-Y_s\right) \notag \\
    &\quad\qquad\qquad\qquad +\left[B_{v^0}\left(s,X^{h}_s,\lr\left(X^{h}_s\right),v^{h,0}_s\right)-B_{v^0}\left(s,Y_s,\lr(Y_s),u^0_s\right)\right] \left(v^0_s-u^0_s\right) \notag\\
    &\quad\qquad\qquad\qquad + \he\bigg[\bigg(D_y\frac{dB}{d\nu}\left(s,X^{h}_s,\lr\left(X^{h}_s\right),v^{h,0}_s\right)\left(\widehat{X_s^{h}}\right) \notag\\
    &\qquad\qquad\qquad\qquad\qquad -D_y\frac{dB}{d\nu}\left(s,Y_s,\lr(Y_s),u_s\right)\left(\hys\right)\bigg) \left(\widehat{X_s^v}-\widehat{Y_s}\right) \bigg] \bigg\} dh \notag \\
    &\qquad+\sum_{1\le j\le n,\ d_j>0} \left(Q_s^j\right)^\top \int_0^1 \bigg\{ \left[A^j_x\left(s,X^{h}_s,\lr\left(X^{h}_s\right),v^{h,j}_s\right)-A^j_x \left(s,Y_s,\lr(Y_s),u^j_s\right)\right] \left(X_s^v-Y_s\right) \notag\\
    &\quad\qquad\qquad\qquad +\left[A^j_{v^j}\left(s,X^{h}_s,\lr\left(X^{h}_s\right),v^{h,j}_s\right)-A^j_{v^j}\left(s,Y_s,\lr\left(Y_s\right),u^j_s\right) \right] \left(v^j_s-u^j_s\right) \notag\\
    &\quad\qquad\qquad\qquad + \he\bigg[\bigg(D_y\frac{dA^j}{d\nu}\left(s,X^{h}_s,\lr\left(X^{h}_s\right),v^{h,j}_s\right)\left(\widehat{X_s^{h}}\right) \notag\\
    &\qquad\qquad\qquad\qquad\qquad -D_y\frac{dA^j}{d\nu}\left(s,Y_s,\lr(Y_s),u^j_s\right)\left(\hys\right)\bigg) \left(\widehat{X_s^v}-\widehat{Y_s}\right) \bigg] \bigg\} dh \notag \\
    &\qquad -f_x\left(s,\theta_s\right)^\top \left(X^v_s-Y_s\right)-\he\bigg[\left(D_y\frac{df}{d\nu}\left(s,\theta_s\right)\left(\hys\right)\right)^\top\left(\widehat{X_s^v}-\widehat{Y_s}\right) \bigg] \notag\\
    &\qquad- f^0_{v^0}\left(s,Y_s,\lr(Y_s),u^0_s\right)^\top \left(v^0_s-u^0_s\right)-\sum_{1\le j\le n,\ d_j>0} f^j_{v^j}\left(s,Y_s,\lr(Y_s),u^j_s\right)^\top \left(v^j_s-u^j_s\right) \Bigg\}ds. \label{thm:suff_1}
\end{align}
From Assumptions (A3)-(iv) and (A4), we know that 
\begin{align}
    &\e\int_0^T \bigg\{ f_x\left(s,\theta_s\right)^\top \left(X^v_s-Y_s\right)+\he\bigg[\left(D_y\frac{df}{d\nu}\left(s,\theta_s\right)\left(\hys\right)\right)^\top\left(\widehat{X_s^v}-\widehat{Y_s}\right) \bigg] \notag\\
    &\qquad\quad + f^0_{v^0}\left(s,Y_s,\lr(Y_s),u^0_s\right)^\top \left(v^0_s-u^0_s\right)+\sum_{1\le j\le n,\ d_j>0} f^j_{v^j}\left(s,Y_s,\lr(Y_s),u^j_s\right)^\top \left(v^j_s-u^j_s\right)\bigg\} ds \notag\\
    =\ & \e\int_0^T \bigg\{ f_x\left(s,\theta_s\right)^\top \left(X^v_s-Y_s\right)+f_v\left(s,\theta_s\right)^\top \left(v_s-u_s\right) + \he\bigg[\left(D_y\frac{df}{d\nu}\left(s,\theta_s\right)\left(\hys\right)\right)^\top\left(\widehat{X_s^v}-\widehat{Y_s}\right) \bigg]\bigg\} ds \notag \\
    \le\ & \e\int_0^T \left[f\left(s,X^v_s,\lr(X^v_s),v_s\right)-f(s,\theta_s)- \lambda_v|v_s-u_s|^2-\lambda_x\left|X^v_s-Y_s\right|^2-\lambda_m\left\|X^v_s-Y_s\right\|^2_2\right] ds. \label{thm:suff_2}
\end{align}
Similarly, from the terminal condition for $P_T$ and the Fubini's theorem, and Assumption (A4), we can compute that 
\begin{align}
    &\e\left[P_T^\top \left(X^v_T-Y_T\right)\right] \notag\\
    =\ & \e \left\{g_x(Y_T,\lr(Y_T))^\top \left(X^v_T-Y_T\right)+\he\left[D_y\frac{dg}{d\nu}\left(\hyT,\lr(Y_T)\right)(Y_T) \right]^\top \left(X^v_T-Y_T\right)\right\} \notag\\
    =\ & \e \left\{g_x(Y_T,\lr(Y_T))^\top \left(X^v_T-Y_T\right)+\he\left[\left(D_y\frac{dg}{d\nu}\left(Y_T,\lr(Y_T)\right)\left(\hyT\right)\right)^\top \left(\widehat{X^v_T}-\hyT\right) \right]\right\} \notag\\
    \le\ & \e \left[g\left(X^v_T,\lr(X^v_T)\right)-g(Y_T,\lr(Y_T)) \right]. \label{thm:suff_3}
\end{align}
Combining \eqref{thm:suff_1}, \eqref{thm:suff_2} and \eqref{thm:suff_3}, we have 
\begin{align}
    &J(v)-J(u) \notag \\
    =\ & \e\left[\int_0^T \left(f\left(s,X^v_s,\lr(X^v_s),v_s\right)-f(s,\theta_s)\right) ds + g\left(X^v_T,\lr(X^v_T)\right)-g(Y_T,\lr(Y_T)) \right]\notag \\
    \geq\ & \e\int_0^T \bigg\{ f_x\left(s,\theta_s\right)^\top \left(X^v_s-Y_s\right)+\he\bigg[\left(D_y\frac{df}{d\nu}\left(s,\theta_s\right)\left(\hys\right)\right)^\top\left(\widehat{X_s^v}-\widehat{Y_s}\right) \bigg] \notag\\
    &\qquad\quad + f^0_{v^0}\left(s,Y_s,\lr(Y_s),u^0_s\right)^\top \left(v^0_s-u^0_s\right)+\sum_{1\le j\le n,\ d_j>0} f^j_{v^j}\left(s,Y_s,\lr(Y_s),u^j_s\right)^\top \left(v^j_s-u^j_s\right) \notag \\
    &\qquad\quad + \lambda_v|v_s-u_s|^2+\lambda_x\left|X^v_s-Y_s\right|^2+\lambda_m\left\|X^v_s-Y_s\right\|^2_2\bigg\} ds + \e\left[P_T^\top \left(X^v_T-Y_T\right)\right] \notag \\
    =\ & \e \int_0^T \Bigg\{ P_s^\top \int_0^1 \bigg\{ \left[B_x\left(s,X^{h}_s,\lr\left(X^{h}_s\right),v^{h,0}_s\right)-B_x\left(s,Y_s,\lr(Y_s),u^0_s\right)\right] \left(X_s^v-Y_s\right) \notag \\
    &\quad\qquad\qquad\qquad +\left[B_{v^0}\left(s,X^{h}_s,\lr\left(X^{h}_s\right),v^{h,0}_s\right)-B_{v^0}\left(s,Y_s,\lr(Y_s),u^0_s\right)\right] \left(v^0_s-u^0_s\right) \notag\\
    &\quad\qquad\qquad\qquad + \he\bigg[\bigg(D_y\frac{dB}{d\nu}\left(s,X^{h}_s,\lr\left(X^{h}_s\right),v^{h,0}_s\right)\left(\widehat{X_s^{h}}\right) \notag\\
    &\qquad\qquad\qquad\qquad\qquad -D_y\frac{dB}{d\nu}\left(s,Y_s,\lr(Y_s),u_s\right)\left(\hys\right)\bigg) \left(\widehat{X_s^v}-\widehat{Y_s}\right) \bigg] \bigg\} dh \notag \\
    &\qquad+\sum_{1\le j\le n,\ d_j>0} \left(Q_s^j\right)^\top \int_0^1 \bigg\{ \left[A^j_x\left(s,X^{h}_s,\lr\left(X^{h}_s\right),v^{h,j}_s\right)-A^j_x \left(s,Y_s,\lr(Y_s),u^j_s\right)\right] \left(X_s^v-Y_s\right) \notag\\
    &\quad\qquad\qquad\qquad +\left[A^j_{v^j}\left(s,X^{h}_s,\lr\left(X^{h}_s\right),v^{h,j}_s\right)-A^j_{v^j}\left(s,Y_s,\lr\left(Y_s\right),u^j_s\right) \right] \left(v^j_s-u^j_s\right) \notag\\
    &\quad\qquad\qquad\qquad + \he\bigg[\bigg(D_y\frac{dA^j}{d\nu}\left(s,X^{h}_s,\lr\left(X^{h}_s\right),v^{h,j}_s\right)\left(\widehat{X_s^{h}}\right) \notag \\
    &\qquad\qquad\qquad\qquad\qquad -D_y\frac{dA^j}{d\nu}\left(s,Y_s,\lr(Y_s),u^j_s\right)\left(\hys\right)\bigg) \left(\widehat{X_s^v}-\widehat{Y_s}\right) \bigg] \bigg\} dh \notag\\
    &\qquad + \lambda_v|v_s-u_s|^2+\lambda_x\left|X^v_s-Y_s\right|^2+\lambda_m\left\|X^v_s-Y_s\right\|^2_2 \Bigg\}ds, \label{thm:suff_4}
\end{align}
where the first inequality uses \eqref{thm:suff_2} and \eqref{thm:suff_3}, and the last equality uses \eqref{thm:suff_1}. We now give the boundedness for the second term of the right hand side of \eqref{thm:suff_4} (and that for the first term can be obtained similarly). From Condition \eqref{generic:condition:A} in Assumption (A3)-(ii) and the cone property \eqref{cone:PQ:without_v}, we know that for $1\le j\le n$ with $d_j>0$, 
\begin{align*}
    &\e\int_0^T \bigg|\left[A^j_x\left(s,X^{h}_s,\lr\left(X^{h}_s\right),v^{h,j}_s\right)-A^j_x \left(s,Y_s,\lr(Y_s),u^j_s\right)\right] \left(X_s^v-Y_s\right) \notag\\    
    &\quad\qquad +\left[A^j_{v^j}\left(s,X^{h}_s,\lr\left(X^{h}_s\right),v^{h,j}_s\right)-A^j_{v^j}\left(s,Y_s,\lr\left(Y_s\right),u^j_s\right) \right] \left(v^j_s-u^j_s\right) \notag\\
    &\quad\qquad + \he\bigg[\bigg(D_y\frac{dA^j}{d\nu}\left(s,X^{h}_s,\lr\left(X^{h}_s\right),v^{h,j}_s\right)\left(\widehat{X_s^{h}}\right)\\
    &\qquad\qquad\qquad -D_y\frac{dA^j}{d\nu}\left(s,Y_s,\lr(Y_s),u^j_s\right)\left(\hys\right)\bigg) \left(\widehat{X_s^v}-\widehat{Y_s}\right) \bigg] \bigg| \cdot \left|Q_s^j\right| ds\\
    \le\ &  \e\int_0^T \bigg[\frac{L_0 h\left|X^v_s-Y_s\right|+L_0 h\left\|X^v_s-Y_s\right\|_2+L_1 h \left|v^j_s-u^j_s\right|}{1+|Y_s|+|\lr(Y_s)|_1+ \left|u^j_s\right|}\cdot \left|X^v_s-Y_s\right|\\
    & \quad\qquad +\frac{L_0 h\left|X^v_s-Y_s\right|+2L_0 h\left\|X^v_s-Y_s\right\|_2+L_1 h \left|v^j_s-u^j_s\right|}{1+|Y_s|+|\lr(Y_s)|_1 + \left|u^j_s\right|}\cdot \left\|X^v_s-Y_s\right\|_2\\
    & \quad\qquad +\frac{L_1 h\left|X^v_s-Y_s\right|+L_1 h\left\|X^v_s-Y_s\right\|_2+L_2 h \left|v^j_s-u^j_s\right|}{1+|Y_s|+ |\lr(Y_s)|_1 + \left|u^j_s\right|}\cdot \left|v^j_s-u^j_s\right|]\bigg]\\
    & \qquad \cdot \frac{L^2}{\lambda_0}\left[1+\left|Y_s\right|+ \left|\lr\left(Y_s\right)\right|_1+ \left|u^j_s\right| \right] ds \\
    =\ &  \frac{L^2h}{\lambda_0}\int_0^T \bigg[5L_0 \left\|X^v_s-Y_s\right\|_2^2+ L_2 \left\|v^j_s-u^j_s\right\|_2^2 + 4L_1 \left\|X^v_s-Y_s\right\|_2\cdot \left\|v^j_s-u^j_s\right\|_2\bigg]ds.
\end{align*}
Substituting the last inequality (and the similar estimate for the $B$ part) back into \eqref{thm:suff_4} and using the Condition \eqref{thm:suff:condition}-(i,ii), we have
\begin{align*}
    &J(v)-J(u) \notag \\
    \geq\ & \e \int_0^T \bigg\{\lambda_v\left|v_s-u_s\right|^2+ \lambda_x\left|X^v_s-Y_s\right|^2+\lambda_m\left\|X^v_s-Y_s\right\|^2_2 - \frac{5L^2L_0}{2\lambda_0} \left\|X^v_s-Y_s\right\|_2^2 \notag \\
    &\quad\qquad - \frac{2L^2L_1}{\lambda_0} \left\|X^v_s-Y_s\right\|_2\cdot \left\|v^0_s-u^0_s\right\|_2-  \frac{L^2L_2}{2\lambda_0} \left\|v^0_s-u^0_s\right\|_2^2 \notag \\
    &\quad\qquad -\sum_{1\le j\le n,\ d_j>0} \bigg[  \frac{5L^2L_0}{2\lambda_0} \left\|X^v_s-Y_s\right\|_2^2 + \frac{L^2L_2}{2\lambda_0} \left|v^j_s-u^j_s\right|^2 \\
    &\qquad\qquad\qquad\qquad\qquad +  \frac{2L^2L_1}{\lambda_0} \left\|X^v_s-Y_s\right\|_2 \cdot \left|v^j_s-u^j_s\right| \bigg]\bigg\}ds \notag \\
    \geq\ & \int_0^T \bigg[ \left(\lambda_x+\lambda_m-  \frac{5(l+1)L^2L_0}{2\lambda_0} \right)\left\|X^v_s-Y_s\right\|_2^2 + \left(\lambda_v-\frac{L^2L_2}{2\lambda_0} \right) \left\|v_s-u_s\right\|_2^2 \notag \\
    &\quad\qquad - \frac{2\sqrt{l+1}L^2L_1}{\lambda_0} \left\|X^v_s-Y_s\right\|_2\cdot \left\|v_s-u_s\right\|_2 \bigg] ds \notag \\
    =\ & \int_0^T \Bigg[\Bigg(\sqrt{ \lambda_x+\lambda_m- \frac{5(l+1)L^2L_0}{2\lambda_0}} \left\|X^v_s-Y_s\right\|_2\\
    &\qquad\qquad - \frac{ \sqrt{l+1}L^2L_1}{\sqrt{ \lambda_0\left[\lambda_x\lambda_0+\lambda_m\lambda_0- \frac{5(l+1)L^2L_0}{2}\right]}}\left\|v_s-u_s\right\|_2\Bigg)^2 \notag \\
    &\quad\qquad + \left( \lambda_v- \frac{L^2L_2}{2\lambda_0}-\frac{2(l+1)L64L_1^2}{\lambda_0\left[2\lambda_x\lambda_0+2\lambda_m\lambda_0- 5(l+1)L^2L_0\right]}\right) \left\|v_s-u_s\right\|_2^2 \Bigg] ds \notag \\
    \geq\ & \left( \lambda_v- \frac{L^2L_2}{2\lambda_0}-\frac{2(l+1)L^4 L_1^2}{\lambda_0\left[2\lambda_x\lambda_0+2\lambda_m\lambda_0- 5(l+1)L^2L_0\right]} \right) \int_0^T \left\|v_s-u_s\right\|_2^2 dt,
\end{align*}
from which we obtain \eqref{thm:suff_0}.

\section{Proof of Theorem~\ref{thm:Gateaux}}\label{pf:thm:Gateaux}

The proof of the well-posedness of FBSDEs with jumps \eqref{FBSDE:Jaco} is similar to that for the well-posedness of FBSDEs with jumps \eqref{FBSDE:MFTC} (and is also similar to the proofs for the Jacobian flows and Hessian flows in our previous work \cite{AB11,AB10,AB12,ABmfg1,AB5}), which is omitted. Here, we only give the proof of the estimate \eqref{thm:Gateaux_boundedness}; and \eqref{thm:Gateaux_appro} and the continuity of $\dr_\eta\Theta$ can be proven in a similar way. 

By applying It\^o's formula for $\left(\dr_\eta P_t\right)^\top \dr_\eta Y_t$ and taking expectation, and using the Fubini's theorem (similar as in \eqref{fact:Fubini}) and the fact that $Y,\dr_\eta Y\in\sr^2_\f(0,T)$ (similar as in \eqref{fact:llrc}), and also using the third equation of FBSDEs \eqref{FBSDE:Jaco}, we have
\begin{align}
    &\e\left[\left(\dr_\eta P_T\right)^\top \dr_\eta Y_T-\left(\dr_\eta P_0\right)^\top \eta\right] \notag \\
    =\ & \e\int_0^T\Bigg\{ \left(\dr_\eta P_s\right)^\top  b_v\left(s,\theta_s\right)\dr_\eta u_s + \sum_{j=1}^n \left(\dr_\eta Q^j_s\right)^\top \sigma^j_v\left(s,\theta_s\right)\dr_\eta u_s \notag \\
    &\quad\qquad - \bigg\{ \left[Db_{x}\left(s,\theta_s\right)\left(\dr_\eta Y_s,\dr_\eta Y_s,\dr_\eta u_s\right)\right]^\top P_s + \sum_{j=1}^n \bigg[D\sigma^j_{x} \left(s,\theta_s\right) \left(\dr_\eta Y_s,\dr_\eta Y_s,\dr_\eta u_s\right)\bigg]^\top Q^j_s \notag \\
    &\quad\qquad\qquad +\he\bigg\{\bigg[D\left(D_y\frac{db_x}{d\nu}\right)\left(s,\widehat{\theta_s}\right)(Y_s)\left(\widehat{\dr_\eta Y_s},\dr_\eta Y_s,\widehat{\dr_\eta u_s}\right)\bigg]^\top \hps  \notag \\
    &\quad\qquad\qquad\qquad +\sum_{j=1}^n \bigg[D\bigg(D_y\frac{d\sigma^j_x}{d\nu}\bigg)\left(s,\widehat{\theta_s}\right)\left(Y_s\right)\left(\widehat{\dr_\eta Y_s},\dr_\eta Y_s,\widehat{\dr_\eta u_s}\right)\bigg]^\top \hqsj \bigg\}  \notag \\
    &\quad\qquad\qquad +Df_{x}\left(s,\theta_s\right)\left(\dr_\eta Y_s,\dr_\eta Y_s,\dr_\eta u_s\right) \notag\\
    &\quad\qquad\qquad  + \he\bigg[D\left(D_y\frac{df}{d\nu}\right)\left(s,\widehat{\theta_s}\right)\left(Y_s\right)\left(\widehat{\dr_\eta Y_s},\dr_\eta Y_s,\widehat{\dr_\eta u_s}\right)\bigg]\bigg\}^\top \dr_\eta Y_s \Bigg\}ds \notag \\
    =\ & -\e\int_0^T\Bigg\{  P_s^\top \Big\{ \left[Db_{v}\left(s,\theta_s\right)\left(\dr_\eta Y_s,\dr_\eta Y_s,\dr_\eta u_s\right)\right] \dr_\eta u_s + \left[Db_{x}\left(s,\theta_s\right)\left(\dr_\eta Y_s,\dr_\eta Y_s,\dr_\eta u_s\right)\right] \dr_\eta Y_s \Big\}  \notag \\
    &\qquad\qquad  + \sum_{j=1}^n \left(Q^j_s\right)^\top \bigg\{\bigg[D\sigma^j_{v} \left(s,\theta_s\right) \left(\dr_\eta Y_s,\dr_\eta Y_s,\dr_\eta u_s\right)\bigg] \dr_\eta u_s \notag \\
    &\qquad\qquad\qquad\qquad\qquad + \bigg[D\sigma^j_{x} \left(s,\theta_s\right) \left(\dr_\eta Y_s,\dr_\eta Y_s,\dr_\eta u_s\right)\bigg] \dr_\eta Y_s \bigg\} \notag \\
    &\qquad\qquad + \he\bigg\{P_s^\top\bigg[   D\left(D_y\frac{db_x}{d\nu}\right)\left(s,\theta_s\right)\left(\hys\right)\left(\dr_\eta Y_s,\widehat{\dr_\eta Y_s},\dr_\eta u_s\right) \bigg]\widehat{\dr_\eta Y_s}  \notag \\
    &\qquad\qquad\qquad +\sum_{j=1}^n \left(Q^j_s\right)^\top\bigg[D\bigg(D_y\frac{d\sigma^j_x}{d\nu}\bigg)\left(s,\theta_s\right)\left(\hys\right)\left(\dr_\eta Y_s,\widehat{\dr_\eta Y_s},\dr_\eta u_s\right)\bigg] \widehat{\dr_\eta Y_s} \bigg\}  \notag \\
    &\qquad\qquad  + \de f\left(s,\theta_s\right)\left(\dr_\eta Y_s,\dr_\eta Y_s,\dr_\eta u_s\right) \Bigg\}ds; \label{thm:Gateaux_1}
\end{align}
here, we use the notations defined in \eqref{def:b_1}, \eqref{def:b_2} and \eqref{def:convex_f}. Similarly, by using the Fubini's theorem and notation in \eqref{def:g_1}, we have
\begin{align}
    &\e\left[\left(\dr_\eta P_T\right)^\top \dr_\eta Y_T \right]=\e\left[\left(\dr_\eta Y_T\right)^\top \dr_\eta P_T \right] \notag \\
    =\ & \e\bigg\{\left(\dr_\eta Y_T\right)^\top Dg_{x}(Y_T,\lr(Y_T)) \left( \dr_\eta Y_T,\dr_\eta Y_T\right)\notag\\
    &\quad +\he\bigg[\left(\dr_\eta Y_T\right)^\top D\left(D_y\frac{dg}{d\nu}\right)\left(\hyT,\lr(Y_T)\right)(Y_T) \left(\widehat{\dr_\eta Y_T},\dr_\eta Y_T\right)\bigg]\bigg\} \notag \\
    =\ & \e\bigg\{\left(\dr_\eta Y_T\right)^\top Dg_{x}(Y_T,\lr(Y_T)) \left( \dr_\eta Y_T,\dr_\eta Y_T\right) \notag\\
    &\quad +\he\bigg[\left(\widehat{\dr_\eta Y_T}\right)^\top D\left(D_y\frac{dg}{d\nu}\right)\left(Y_T,\lr(Y_T)\right)\left(\hyT\right) \left(\dr_\eta Y_T,\widehat{\dr_\eta Y_T}\right)\bigg]\bigg\} \notag \\
    =\ & \e\left[\de g (Y_T,\lr(Y_T)) \left( \dr_\eta Y_T,\dr_\eta Y_T\right)\right]. \label{thm:Gateaux_2}
\end{align}
From \eqref{thm:Gateaux_1}, \eqref{thm:Gateaux_2} and the convexity conditions \eqref{convex_f} and \eqref{convex_g}, we have
\begin{align}
    & \int_0^T 2\lambda_v\left\|\dr_\eta u_s\right\|_2^2+2(\lambda_x+\lambda_m)\left\|\dr_\eta Y_s\right\|_2^2 ds \notag \\
    \le\ & \e\Bigg\{\int_0^T |P_s|\cdot \bigg\{ \left|Db_{v}\left(s,\theta_s\right)\left(\dr_\eta Y_s,\dr_\eta Y_s,\dr_\eta u_s\right)\right|\cdot \left|\dr_\eta u^0_s \right| \notag \\
    &\qquad\qquad\qquad + \left|Db_{x}\left(s,\theta_s\right)\left(\dr_\eta Y_s,\dr_\eta Y_s,\dr_\eta u_s\right)\right|\cdot \left|\dr_\eta Y_s\right| \notag \\
    &\qquad\qquad\qquad +\he\bigg[   D\left(D_y\frac{db_x}{d\nu}\right)\left(s,\theta_s\right)\left(\hys\right)\left(\dr_\eta Y_s,\widehat{\dr_\eta Y_s},\dr_\eta u_s\right) \widehat{\dr_\eta Y_s} \bigg] \bigg\}  \notag  \notag \\
    &\quad\qquad  + \sum_{j,\ d_j>0} \left|Q^j_s\right|\cdot \bigg\{\bigg|D\sigma^j_{v} \left(s,\theta_s\right) \left(\dr_\eta Y_s,\dr_\eta Y_s,\dr_\eta u_s\right)\bigg|\cdot \left|\dr_\eta u^j_s\right| \notag \\
    &\qquad\qquad\qquad\qquad\qquad + \bigg|D\sigma^j_{x} \left(s,\theta_s\right) \left(\dr_\eta Y_s,\dr_\eta Y_s,\dr_\eta u_s\right)\bigg|\cdot \left|\dr_\eta Y_s\right| \notag \\
    &\qquad\qquad\qquad\qquad\qquad +\he\bigg[D\bigg(D_y\frac{d\sigma^j_x}{d\nu}\bigg)\left(s,\theta_s\right)\left(\hys\right)\left(\dr_\eta Y_s,\widehat{\dr_\eta Y_s},\dr_\eta u_s\right) \widehat{\dr_\eta Y_s} \bigg]\bigg\} ds \notag\\
    &\quad\qquad +  \left(\dr_\eta P_0\right)^\top \eta \Bigg\}. \label{thm:Gateaux_3}
\end{align}
From the cone property \eqref{cone:PQ:without_v} and the condition \eqref{generic:condition:b'}, we know that 
\begin{align*}
    &\e\int_0^T |P_s|\cdot \bigg\{ \left|Db_{v}\left(s,\theta_s\right)\left(\dr_\eta Y_s,\dr_\eta Y_s,\dr_\eta u_s\right)\right|\cdot \left|\dr_\eta u^0_s \right| + \left|Db_{x}\left(s,\theta_s\right)\left(\dr_\eta Y_s,\dr_\eta Y_s,\dr_\eta u_s\right)\right|\cdot \left|\dr_\eta Y_s\right| \\
    &\qquad\qquad\qquad +\he\bigg[   D\left(D_y\frac{db_x}{d\nu}\right)\left(s,\theta_s\right)\left(\hys\right)\left(\dr_\eta Y_s,\widehat{\dr_\eta Y_s},\dr_\eta u_s\right) \widehat{\dr_\eta Y_s} \bigg] \bigg\} ds \\
    \le\ &  \frac{L^2}{\lambda_0}\Big[ 5 L_0 \|\dr_\eta Y_s\|_2^2 + 4L_1 \|\dr_\eta Y_s\|_2 \cdot \|\dr_\eta u^0_s\|_2 +L_2 \|\dr_\eta u^0_s\|_2^2\Big];
\end{align*}
and similarly, from the cone property \eqref{cone:PQ:without_v} and the condition \eqref{generic:condition:A'}, we know that for $1\le j\le n$ with $d_j>0$, 
\begin{align*}
    &\left|Q^j_s\right|\cdot \bigg\{ \left|D\sigma^j_{v}\left(s,\theta_s\right)\left(\dr_\eta Y_s,\dr_\eta Y_s,\dr_\eta u_s\right)\right|\cdot \left|\dr_\eta u^j_s \right| + \left|D\sigma^j_{x}\left(s,\theta_s\right)\left(\dr_\eta Y_s,\dr_\eta Y_s,\dr_\eta u_s\right)\right|\cdot \left|\dr_\eta Y_s\right| \\
    &\quad\qquad +\he\bigg[   D\left(D_y\frac{d\sigma^j_x}{d\nu}\right)\left(s,\theta_s\right)\left(\hys\right)\left(\dr_\eta Y_s,\widehat{\dr_\eta Y_s},\dr_\eta u_s\right) \widehat{\dr_\eta Y_s} \bigg] \bigg\}\\
    \le\ & \frac{L^2}{\lambda_0} \left[ 5 L_0 \|\dr_\eta Y_s\|_2^2 + 4L_1 \|\dr_\eta Y_s\|_2 \cdot \|\dr_\eta u^j_s\|_2 +L_2 \|\dr_\eta u^j_s\|_2^2\right].
\end{align*}
Substituting the last two inequalities into \eqref{thm:Gateaux_3}, we have
\begin{align*}
    & \int_0^T 2\lambda_v\left\|\dr_\eta u_s\right\|_2^2+2(\lambda_x+\lambda_m)\left\|\dr_\eta Y_s\right\|_2^2 ds \notag \\
    \le\ & \int_0^T  \frac{L^2}{\lambda_0} \left[ 5(l+1)L_0 \|\dr_\eta Y_s\|_2^2 + 4\sqrt{l+1}L_1 \|\dr_\eta Y_s\|_2\cdot \left\|\dr_\eta u_s\right\|_2+L_2 \left\|\dr_\eta u_s\right\|_2^2 \right] ds +  \left\|\dr_\eta P_0\right\|_2 \cdot \|\eta\|_2,
\end{align*}
which implies 
\begin{align*}
    & \int_0^T \left(2\lambda_v- \frac{L^2L_2}{\lambda_0} - \frac{4(l+1)L^4 L_1^2}{\lambda_0\left[2\lambda_x\lambda_0+2\lambda_m\lambda_0-5(l+1)L^2L_0\right]}\right)\left\|\dr_\eta u_s\right\|_2^2\\
    &\quad +\Bigg(\sqrt{2\lambda_x+2\lambda_m- \frac{5(l+1)L^2L_0}{\lambda_0} }\left\|\dr_\eta Y_s\right\|_2 \\
    &\qquad\qquad - \frac{ 2\sqrt{l+1}L^2L_1}{\sqrt{\lambda_0\left[2\lambda_x\lambda_0+2\lambda_m\lambda_0- 5(l+1)L^2L_0\right] }}\left\|\dr_\eta u_s\right\|_2\bigg)^2 ds \notag \\
    \le\ & \left\|\dr_\eta P_0\right\|_2 \cdot \|\eta\|_2.
\end{align*}
From Condition \eqref{thm:suff:condition}-(ii), we then know that for any $\epsilon>0$,
\begin{align}
    & \int_0^T \left\|\dr_\eta u_s\right\|_2^2 ds \le C_0 \left\|\dr_\eta P_0\right\|_2 \cdot \|\eta\|_2\le\epsilon \left\|\dr_\eta P_0\right\|_2^2 +\frac{C_0}{4\epsilon}\|\eta\|_2^2, \label{thm:Gateaux_4}
\end{align}
where $C_0$ is a constant depending only on $(l,L,L_0,L_1,L_2,\lambda_0,\lambda_x,\lambda_m,\lambda_v)$. Applying a similar approach on the SDE in \eqref{FBSDE:Jaco} as the proof of \eqref{lem:control-state-1}, from the estimates \eqref{generic:condition:b'} and \eqref{generic:condition:A'}, we have
\begin{align}
    &\e\bigg[\sup_{0\le s\le T} \left|\dr_\eta Y_s\right|^2\bigg] \le C(L,T) \e\bigg[|\eta|^2+\int_0^T \left|\dr_\eta u_s\right|^2 ds\bigg]. \label{thm:Gateaux_5}
\end{align}
Then, by applying a similar approach to the BSDE with jump in \eqref{FBSDE:Jaco} as the proof of \eqref{adjoint:boundedness}, and using Assumption (A2), the cone property \eqref{cone:PQ:without_v}, and the estimates \eqref{generic:condition:b'} and \eqref{generic:condition:A'}, we have 
\begin{align}
        &\e\bigg[\sup_{0\le t\le T} \left|\dr_\eta P_s\right|^2+\int_0^T \left(\left|\dr_\eta Q_s\right|^2+\int_E \left|\dr_\eta R_s(e)\right|^2 \lambda(de)\right) ds \bigg] \notag \\
        \le\ & C(L,T,\lambda_0) \e\bigg[\left|\dr_\eta Y_T\right|^2+\int_0^T \left(\left|\dr_\eta Y_s\right|^2+\left|\dr_\eta u_s\right|^2 \right) ds\bigg] \notag \\
        \le\ & C(L,T,\lambda_0) \left(\|\eta\|_2^2+\e\int_0^T\left|\dr_\eta u_s\right|^2 ds\right). \label{thm:Gateaux_6}
\end{align}
Substituting \eqref{thm:Gateaux_6} into \eqref{thm:Gateaux_4}, we know that
\begin{align*}
    &\left[1- \epsilon C(L,T,\lambda_0)\right] \int_0^T \left\|\dr_\eta u_s\right\|_2^2 ds \le \left[\frac{C_0}{4\epsilon}+\epsilon C(L,T,\lambda_0)\right]\|\eta\|_2^2, 
\end{align*}
and therefore, by choosing $\epsilon=\frac{1}{2C(L,T,\lambda_0)}$, we have
\begin{align}
    & \int_0^T \left\|\dr_\eta u_s\right\|_2^2 ds \le C \|\eta\|_2^2, \label{thm:Gateaux_7}
\end{align}
for some $C>0$ depending only on $(l,L,L_0,L_1,L_2,\lambda_0,\lambda_x,\lambda_m,\lambda_v,T)$. Combining \eqref{thm:Gateaux_5}, \eqref{thm:Gateaux_6} and \eqref{thm:Gateaux_7}, we obtain \eqref{thm:Gateaux_boundedness}. 

\section{Proof of statements in Section~\ref{sec:value}}\label{sec:pf:value}

\subsection{Proof of Theorem~\ref{thm:value_regu_mu}}\label{pf:thm:value_regu_mu}

The growth estimate \eqref{V:growth} is a direct consequence of \eqref{value_equation}. We now prove \eqref{DV_P}. For $\xi,\xi'\in \lr^2_{\f_t}$ both independent of the Brownian motion $B$ and the Poisson process $N$, we have
\begin{equation}\label{thm:value_regu_mu_1}
	  J_{t,\xi'}\left(u^{t,\xi'}\right)-J_{t,\xi}\left(u^{t,\xi'}\right)\le V\left(t,\xi'\right)-V(t,\xi)\le J_{t,\xi'}\left(u^{t,\xi}\right)-J_{t,\xi}\left(u^{t,\xi}\right).
\end{equation}	
For the upper bound, from Assumption (A2), we have
\begin{align}
	&J_{t,\xi'}\left(u^{t,\xi}\right)-J_{t,\xi}\left(u^{t,\xi}\right) \notag \\
	=\ &\e\bigg\{\int_t^T\left[f\left(s,X^{t,\xi',u^{t,\xi}}_s,\lr\left(X^{t,\xi',u^{t,\xi}}_s\right),u^{t,\xi}_s\right)-f\left(s,Y^{t,\xi}_s,\lr\left(Y^{t,\xi}_s\right),u^{t,\xi}_s\right)\right] ds \notag \\
    &\quad + g\left(X^{t,\xi',u^{t,\xi}}_T,\lr\left(X^{t,\xi',u^{t,\xi}}_T\right)\right)-g\left(Y^{t,\xi}_T,\lr\left(Y^{t,\xi}_T\right)\right) \bigg\} \notag \\
	\le\ & \e\bigg\{\int_t^T  f_x\left(s,\theta^{t,\xi}_s\right)^\top \left(X^{t,\xi',u^{t,\xi}}_s-Y^{t,\xi}_s\right) +\he\bigg[\left(D_y\frac{df}{d\nu}\left(s,\theta_s^{t,\xi}\right)\left(\widehat{Y_s^{t,\xi}}\right)\right)^\top \left(\widehat{X^{t,\xi',u^{t,\xi}}_s}-\widehat{Y^{t,\xi}_s}\right) \bigg] ds \notag \\
	&\quad + g_x\left(Y^{t,\xi}_T,\lr\left(Y^{t,\xi}_T\right)\right)^\top\left(X^{t,\xi',u^{t,\xi}}_T-Y^{t,\xi}_T\right) \notag \\
    &\quad +\he\bigg[\left(D_y\frac{dg}{d\nu}\left(Y^{t,\xi}_T,\lr\left(Y^{t,\xi}_T\right)\right)\left(\widehat{Y_T^{t,\xi}}\right)\right)^\top \left(\widehat{X^{t,\xi',u^{t,\xi}}_T}-\widehat{Y^{t,\xi}_T}\right)\bigg]\bigg\} \notag \\
	&+C(L,T)\sup_{t\le s\le T}\left\|X^{t,\xi',u^{t,\xi}}_s-Y^{t,\xi}_s\right\|^2_2, \label{thm:value_regu_mu_2}
\end{align}
where $\theta^{t,\xi}_s=\left(Y_s^{t,\xi},\lr\left(Y_s^{t,\xi}\right),u_s^{t,\xi}\right)$. 
By applying It\^o's lemma on $\left(P^{t,\xi}_s\right)^\top \left(X^{t,\xi',u^{t,\xi}}_s-Y^{t,\xi}_s\right)$ and taking expectation, and using Fubini's theorem (similar as in \eqref{fact:Fubini}) and the fact that $X^{t,\xi',u^{t,\xi}}_s,Y^{t,\xi}\in\sr_\f^2(t,s) $ (similar as in \eqref{fact:llrc}), we know that 
\begin{align*}
    &\e\left[\left(P^{t,\xi}_T\right)^\top \left(X^{t,\xi',u^{t,\xi}}_T-Y^{t,\xi}_T\right)-\left(P^{t,\xi}_t\right)^\top \left(\xi'-\xi\right)\right]\\
    =\ & \e \int_t^T \left(P^{t,\xi}_s\right)^\top \bigg\{B\left(s,X^{t,\xi',u^{t,\xi}}_s,\lr\left(X^{t,\xi',u^{t,\xi}}_s\right),u^{t,\xi,0}_s\right) -B\left(s,Y^{t,\xi}_s,\lr\left(Y^{t,\xi}_s\right),u^{t,\xi,0}_s\right) \\
    &\quad\qquad\qquad\qquad -B_x \left(s,Y^{t,\xi}_s,\lr\left(Y^{t,\xi}_s\right),u^{t,\xi,0}_s\right) \left(X^{t,\xi',u^{t,\xi}}_s-Y^{t,\xi}_s\right)  \\
    &\quad\qquad\qquad\qquad - \he\bigg[\left(D_y\frac{dB}{d\nu}\left(s,Y^{t,\xi}_s,\lr\left(Y^{t,\xi}_s\right),u^{t,\xi,0}_s\right)\left(\widehat{Y_s^{t,\xi}}\right)\right) \left(\widehat{X^{t,\xi',u^{t,\xi}}_s}-\widehat{Y^{t,\xi}_s}\right) \bigg]\bigg\} \\
    &\qquad + \sum_{j,\ d_j>0} \left(Q^{t,\xi,j}_s\right)^\top \bigg\{A^j\left(s,X^{t,\xi',u^{t,\xi}}_s,\lr\left(X^{t,\xi',u^{t,\xi}}_s\right),u^{t,\xi,j}_s\right)-A^j\left(s,Y^{t,\xi}_s,\lr\left(Y^{t,\xi}_s\right),u^{t,\xi,j}_s\right)\\
    &\quad\qquad\qquad\qquad\qquad - A^j_x \left(s,Y^{t,\xi}_s,\lr\left(Y^{t,\xi}_s\right),u^{t,\xi,j}_s\right) \left(X^{t,\xi',u^{t,\xi}}_s-Y^{t,\xi}_s\right) \\
    &\quad\qquad\qquad\qquad\qquad - \he\bigg[\left(D_y\frac{dA^j}{d\nu}\left(s,Y^{t,\xi}_s,\lr\left(Y^{t,\xi}_s\right),u^{t,\xi,j}_s\right)\left(\widehat{Y_s^{t,\xi}}\right)\right) \left(\widehat{X^{t,\xi',u^{t,\xi}}_s}-\widehat{Y^{t,\xi}_s}\right) \bigg] \bigg\} \\
    &\qquad -f_x\left(s,\theta^{t,\xi}_s\right)^\top \left(X^{t,\xi',u^{t,\xi}}_s-Y^{t,\xi}_s\right) -\he\bigg[ \left(D_y\frac{df}{d\nu}\left(s,\theta^{t,\xi}_s\right)\left(\widehat{Y_s^{t,\xi}}\right)\right)^\top \left(\widehat{X^{t,\xi',u^{t,\xi}}_s}-\widehat{Y^{t,\xi}_s} \right) \bigg] ds;
\end{align*}
here, similar approach is also used in \eqref{thm:suff_1} and \eqref{thm:Gateaux_1}.  Similarly, we have
\begin{align*}
    &\e\left[\left(P^{t,\xi}_T\right)^\top \left(X^{t,\xi',u^{t,\xi}}_T-Y^{t,\xi}_T\right)\right]\\
    =\ & \e\bigg\{ g_x\left(Y_T^{t,\xi},\lr\left(Y_T^{t,\xi}\right)\right)^\top \left(X^{t,\xi',u^{t,\xi}}_T-Y^{t,\xi}_T\right)\\
    &\quad +\he\left[D_y\frac{dg}{d\nu}\left(\widehat{Y_T^{t,\xi}},\lr\left(Y^{t,\xi}_T\right)\right)\left(Y^{t,\xi}_T\right)\right]^\top \left(X^{t,\xi',u^{t,\xi}}_T-Y^{t,\xi}_T\right) \bigg\}\\
    =\ & \e\bigg\{g_x\left(Y^{t,\xi}_T,\lr\left(Y^{t,\xi}_T\right)\right)^\top\left(X^{t,\xi',u^{t,\xi}}_T-Y^{t,\xi}_T\right) \\
    &\quad +\he\bigg[\left(D_y\frac{dg}{d\nu}\left(Y^{t,\xi}_T,\lr\left(Y^{t,\xi}_T\right)\right)\left(\widehat{Y_T^{t,\xi}}\right)\right)^\top \left(\widehat{X^{t,\xi',u^{t,\xi}}_T}-\widehat{Y^{t,\xi}_T}\right)\bigg]\bigg\}.
\end{align*}
Substituting the last two equations into \eqref{thm:value_regu_mu_2}, we have
\begin{align}
	&J_{t,\xi'}\left(u^{t,\xi}\right)-J_{t,\xi}\left(u^{t,\xi}\right) \notag \\
	\le\ & \e\Bigg\{\int_t^T  \left(P^{t,\xi}_s\right)^\top \Bigg\{\left[\int_0^1 B_x\left|^{\left(s,X^{h}_s,\lr\left(X^{h}_s\right),u^{t,\xi,0}_s\right)}_{\left(s,Y^{t,\xi}_s,\lr\left(Y^{t,\xi}_s\right),u^{t,\xi,0}_s\right)}\right. dh  \right] \left(X^{t,\xi',u^{t,\xi}}_s-Y^{t,\xi}_s\right) \notag \\
    &\quad\qquad\qquad\qquad + \he\Bigg[\Bigg(\int_0^1 D_y\frac{dB}{d\nu}\left|^{\left(s,X^{h}_s,\lr\left(X^{h}_s\right),u^{t,\xi,0}_s\right)\left(\widehat{X_s^{h}}\right)}_{\left(s,Y^{t,\xi}_s,\lr\left(Y^{t,\xi}_s\right),u^{t,\xi,0}_s\right)\left(\widehat{Y_s^{t,\xi}}\right)} \right. dh \Bigg) \left(\widehat{X^{t,\xi',u^{t,\xi}}_s}-\widehat{Y^{t,\xi}_s}\right) \Bigg]\Bigg\} \notag \\
    &\qquad + \sum_{j,\ d_j>0} \left(Q^{t,\xi,j}_s\right)^\top \Bigg\{\left[\int_0^1 A^j_x \left|^{\left(s,X^{h}_s,\lr\left(X^{h}_s\right),u^{t,\xi,j}_s\right)}_{\left(s,Y^{t,\xi}_s,\lr\left(Y^{t,\xi}_s\right),u^{t,\xi,j}_s\right)} \right. dh\right] \left(X^{t,\xi',u^{t,\xi}}_s-Y^{t,\xi}_s\right) \notag \\
    &\qquad\qquad\qquad\qquad + \he\Bigg[\int_0^1 \Bigg(D_y\frac{dA^j}{d\nu}\left|^{\left(s,X^{h}_s,\lr\left(X^{h}_s\right),u^{t,\xi,j}_s\right)\left(\widehat{X_s^{h}}\right)}_{\left(s,Y^{t,\xi}_s,\lr\left(Y^{t,\xi}_s\right),u^{t,\xi,j}_s\right)\left(\widehat{Y_s^{t,\xi}}\right)} \right. dh \Bigg) \left(\widehat{X^{t,\xi',u^{t,\xi}}_s}-\widehat{Y^{t,\xi}_s}\right) \Bigg] \Bigg\} ds \notag \\
    &\qquad + \left(P^{t,\xi}_t\right)^\top \left(\xi'-\xi\right)\Bigg\} +C(L,T)\sup_{t\le s\le T}\left\|X^{t,\xi',u^{t,\xi}}_s-Y^{t,\xi}_s\right\|^2_2. \label{thm:value_regu_mu_3}
\end{align}
where $X_s^{h}:=Y_s^{t,\xi}+h\left(X_s^{t,\xi',u^{t,\xi}}-Y_s^{t,\xi}\right)$ for $s\in[t,T]$ and $h\in[0,1]$. From the cone property \eqref{cone:PQ:without_v} and Assumption (A3)-(i), we know that 
\begin{align*}
    &\e \int_t^T  \left(P^{t,\xi}_s\right)^\top \Bigg\{\left[\int_0^1 B_x\left|^{\left(s,X^{h}_s,\lr\left(X^{h}_s\right),u^{t,\xi,0}_s\right)}_{\left(s,Y^{t,\xi}_s,\lr\left(Y^{t,\xi}_s\right),u^{t,\xi,0}_s\right)}\right. dh  \right] \left(X^{t,\xi',u^{t,\xi}}_s-Y^{t,\xi}_s\right) \notag \\
    &\quad\qquad\qquad\qquad + \he\Bigg[\Bigg(\int_0^1 D_y\frac{dB}{d\nu}\left|^{\left(s,X^{h}_s,\lr\left(X^{h}_s\right),u^{t,\xi,0}_s\right)\left(\widehat{X_s^{h}}\right)}_{\left(s,Y^{t,\xi}_s,\lr\left(Y^{t,\xi}_s\right),u^{t,\xi,0}_s\right)\left(\widehat{Y_s^{t,\xi}}\right)} \right. dh \Bigg) \left(\widehat{X^{t,\xi',u^{t,\xi}}_s}-\widehat{Y^{t,\xi}_s}\right) \Bigg]\Bigg\} ds\\
    \le\ & \frac{5L^2L_0}{2\lambda_0} \int_t^T \left\|X^{t,\xi',u^{t,\xi}}_s-Y^{t,\xi}_s\right\|^2_2 ds,
\end{align*}
and similarly, for $1\le j\le n$ with $d_j>0$, 
\begin{align*}
    &\e \int_t^T \left(Q^{t,\xi,j}_s\right)^\top \Bigg\{\left[\int_0^1 A^j_x \left|^{\left(s,X^{h}_s,\lr\left(X^{h}_s\right),u^{t,\xi,j}_s\right)}_{\left(s,Y^{t,\xi}_s,\lr\left(Y^{t,\xi}_s\right),u^{t,\xi,j}_s\right)} \right. dh\right] \left(X^{t,\xi',u^{t,\xi}}_s-Y^{t,\xi}_s\right) \notag \\
    &\qquad\qquad\qquad\qquad + \he\Bigg[\int_0^1 \Bigg(D_y\frac{dA^j}{d\nu}\left|^{\left(s,X^{h}_s,\lr\left(X^{h}_s\right),u^{t,\xi,j}_s\right)\left(\widehat{X_s^{h}}\right)}_{\left(s,Y^{t,\xi}_s,\lr\left(Y^{t,\xi}_s\right),u^{t,\xi,j}_s\right)\left(\widehat{Y_s^{t,\xi}}\right)} \right. dh \Bigg) \left(\widehat{X^{t,\xi',u^{t,\xi}}_s}-\widehat{Y^{t,\xi}_s}\right) \Bigg] \Bigg\} ds\\
    \le\ & \frac{5L^2L_0}{2\lambda_0}\int_t^T \left\|X^{t,\xi',u^{t,\xi}}_s-Y^{t,\xi}_s\right\|^2_2 ds.
\end{align*}
Substituting the last two estimates back into \eqref{thm:value_regu_mu_3}, we know that 
\begin{align*}
	&J_{t,\xi'}\left(u^{t,\xi}\right)-J_{t,\xi}\left(u^{t,\xi}\right) \notag \\
	\le\ & \e\left[\left(P^{t,\xi}_t\right)^\top \left(\xi'-\xi\right)\right] +C(l,L,\lambda_0,T)\sup_{t\le s\le T}\left\|X^{t,\xi',u^{t,\xi}}_s-Y^{t,\xi}_s\right\|^2_2. 
\end{align*}
From Assumption (A1) and the Gr\"onwall's inequality, similar as the proof of Lemma~\ref{lem:control-state}, we have the following estimate
\begin{equation*}
	\begin{split}
		\sup_{t\le s\le T}\left\|X^{t,\xi',u^{t,\xi}}_s-Y^{t,\xi}_s\right\|_2 \le C(L,T)\left\|\xi'-\xi\right\|_2,
	\end{split}
\end{equation*}
therefore,
\begin{align}
	&J_{t,\xi'}\left(u^{t,\xi}\right)-J_{t,\xi}\left(u^{t,\xi}\right)\le \e\left[\left(P^{t,\xi}_t\right)^\top \left(\xi'-\xi\right)\right] +C(l,L,\lambda_0,T)\left\|\xi'-\xi\right\|_2^2. \label{thm:value_regu_mu_4}
\end{align}
In a similar way, we can also obtain the lower bound
\begin{equation}\label{thm:value_regu_mu_5}
	\begin{split}
		&J_{t,\xi'}\left(u^{t,\xi'}\right)-J_{t,\xi}\left(u^{t,\xi'}\right)\geq \e\left[\left(P^{t,\xi'}_t\right)^\top \left(\xi'-\xi\right)\right] -C(l,L,\lambda_0,T)\left\|\xi'-\xi\right\|_2^2,
	\end{split}
\end{equation}
which is also similar as in our previous work \cite{AB10}. From the estimate \eqref{lem:kappa_0}, we know that 
\begin{align}\label{thm:value_regu_mu_6}
    \left\|P^{t,\xi'}_t-P^{t,\xi}_t \right\|\le C\left\|\xi'-\xi\right\|_2.
\end{align}
Here, $C$ is a constant depending only on $(l,L,L_0,L_1,L_2,\lambda_0,\lambda_x,\lambda_m,\lambda_v,T)$. From \eqref{thm:value_regu_mu_4}-\eqref{thm:value_regu_mu_6} and \eqref{thm:value_regu_mu_1}, we conclude that 
\begin{align*}
    &\left|V\left(t,\lr(\xi')\right)-V(t,\lr(\xi))-\e\left[\left(P^{t,\xi}_t\right)^\top \left(\xi'-\xi\right)\right]\right|\le C \left\|\xi'-\xi\right\|_2^2,
\end{align*}
from which we know that $D_\xi  V(t,\lr(\xi))=P^{t,\xi}_t$. Then, the estimate \eqref{DV:growth} is a direct consequence of Estimate \eqref{lem:kappa_0}, and from Theorem~\ref{thm:Gateaux}, we obtain the relation for the second-order derivative in \eqref{DV_P} and also the estimate \eqref{DDV:growth}. Besides, from the twice G\^ateaux differentiability of $V$ in $\xi$ and Estimates \eqref{V:growth}-\eqref{DDV:growth}, following similar arguments as in \cite[Theorem 7.1]{AB10}, we can show that $V$ is twice linearly functional-differentiable; and from standard relations between the G\^ateaux derivatives and linear functional-derivatives as in \eqref{lem01_1} (also see \cite{AB11,AB10,ABmfg1,AB5}), we obtain \eqref{relation:DV-dV}.

\subsection{Proof of Lemma~\ref{lem:Theta_regu_s}}\label{pf:lem:Theta_regu_s}

From the SDE for $Y^{t,\xi}$ in \eqref{FBSDE:t,xi}, we know that for any $t\le s\le s'\le T$, similar as in Lemma~\ref{lem:control-state}, by Cauchy-Schwarz inequality, we have 
\begin{align}
    \left\|Y^{t,\xi}_{s'}-Y^{t,\xi}_s\right\|_2^2=\ & \e \left[\left|\int_s^{s'} b\left(r,\theta^{t,\xi}_r\right) dr +\int_s^{s'} \sigma \left(r,\theta^{t,\xi}_r\right)dB_r+\int_s^{s'} \int_E \gamma\left(r,\theta^{t,\xi}_{r-},e\right) \mathring{N}(de,dr)\right|^2 \right] \notag \\
    \le\ & C(L,T) \int_s^{s'} \left(1+\left\|Y_r^{t,\xi}\right\|_2^2+ \left\|u_r^{t,\xi}\right\|_2^2 \right) dr. \label{lem:Theta_regu_s_1}
\end{align}
From \eqref{cone:u} and Assumption (A3'), we know that 
\begin{align}\label{lem:Theta_regu_s_2}
    \left|u^{t,\xi}_s\right|\le\ & C(l,L,\lambda_v)\left[1+\left|Y^{t,\xi}_{s-}\right|+\left|\lr\left(Y^{t,\xi}_{s-}\right)\right|_1+\left|P^{t,\xi}_{s-}\right|\right].
\end{align}
From \eqref{lem:Theta_regu_s_1}, \eqref{lem:Theta_regu_s_2} and Estimate \eqref{lem:kappa_0}, we have
\begin{align}
    \left\|Y^{t,\xi}_{s'}-Y^{t,\xi}_s\right\|_2^2 \le\ & C(l,L,\lambda_v,T) \int_s^{s'} \left(1+\left\|Y_r^{t,\xi}\right\|_2^2+ \left\|u_r^{t,\xi}\right\|_2^2 +\left\|P_r^{t,\xi}\right\|_2^2\right) dr \notag\\
    \le\ & C \left(1+\|\xi\|_2^2\right)|s'-s|. \label{lem:Theta_regu_s_3}
\end{align}
Here $C$ is a constant depending only on $(l,L,L_0,L_1,L_2,\lambda_0,\lambda_x,\lambda_m,\lambda_v,T)$. For the process $P^{t,\xi}$ in \eqref{FBSDE:t,xi}, we note that
\begin{align*}
	&P^{t,\xi}_{s'}-P^{t,\xi}_s \notag \\
    =\ & P^{t,\xi}_{s'}-\e\left[P^{t,\xi}_{s'}\big|\f_s\right] -\int_s^{s'}\e\Bigg\{\bigg\{ b_x\left(r,\theta^{t,\xi}_r\right)^\top P^{t,\xi}_r+\sum_{j=1}^n \sigma^j_x \left(r,\theta^{t,\xi}_r\right)^\top Q^{t,\xi,j}_r\\
    &\qquad +\int_E \gamma_x\left(r,\theta^{t,\xi}_r,e\right)^\top R^{t,\xi}_r(e)\lambda(de)+f_x\left(r,\theta^{t,\xi}_r\right)\\
    &\qquad +\he\bigg[\left(D_y\frac{db}{d\nu}\left(r,\widehat{\theta^{t,\xi}_r}\right)\left(Y_r^{t,\xi}\right)\right)^\top \widehat{P^{t,\xi}_r}+\sum_{j=1}^n \left(D_y\frac{d\sigma^j}{d\nu}\left(r,\widehat{\theta_r^{t,\xi}}\right)\left(Y^{t,\xi}_r\right)\right)^\top \widehat{Q_r^{t,\xi,j}} \\
    &\qquad\qquad +\int_E \left(D_y\frac{d\gamma}{d\nu}\left(r,\widehat{\theta_r^{t,\xi}},e\right)\left(Y_r^{t,\xi}\right)\right)^\top \widehat{R_r^{t,\xi}}(e) \lambda(de)+ D_y\frac{df}{d\nu}\left(r,\widehat{\theta^{t,\xi}_r}\right)\left(Y_r^{t,\xi}\right) \bigg]\bigg\}\Bigg|\f_s\Bigg\}dr. 
\end{align*}  
From Cauchy-Schwarz inequality and the second estimate in \eqref{lem:kappa_0}, we deduce that
\begin{align}
	&\e\left[ \left|P^{t,\xi}_{s'}-P^{t,\xi}_s\right|^2 \right] \notag \\
	\le\ &  |s'-s|\cdot C(L) \int_s^{s'} \left(\left\|Y^{t,\xi}_r\right\|_2^2+\left\|u^{t,\xi}_r\right\|_2^2+\left\|P^{t,\xi}_r\right\|_2^2+\left\|Q^{t,\xi}_r\right\|_2^2+\left\|R^{t,\xi}_r(\cdot)\right\|_{L^2(\Omega;L_\lambda^2)} \right) dr \notag \\
	\le\ & C\left(1+\|\xi\|_2^2\right) |s'-s|.\label{lem:Theta_regu_s_3.5}
\end{align}
We next aim to establish the continuity of $u^{t,\xi}_s$ in $s$. From the optimality condition and Assumption (A3'), we know that
\begin{align}
    0=\ & \Bigg[b_v\left|^{\left(s',Y^{t,\xi}_{s'-},\lr\left(Y^{t,\xi}_{s'-}\right),u^{t,\xi}_{s'}\right)}_{\left(s,Y^{t,\xi}_{s-},\lr\left(Y^{t,\xi}_{s-}\right),u^{t,\xi}_s\right)} \right. \Bigg]^\top P^{t,\xi}_{s'-}+b_{v}\left(s,Y^{t,\xi}_{s-},\lr\left(Y^{t,\xi}_{s-}\right),u^{t,\xi}_s\right)^\top \left(P^{t,\xi}_{s'-}-P^{t,\xi}_{s-}\right) \notag \\
    &+f_{v}\left|^{\left(s',Y^{t,\xi}_{s'-},\lr\left(Y^{t,\xi}_{s'-}\right),u^{t,\xi}_{s'}\right)}_{\left(s,Y^{t,\xi}_{s-},\lr\left(Y^{t,\xi}_{s-}\right),u^{t,\xi}_s\right)} \right. , \label{lem:Theta_regu_s_4}
\end{align}
and therefore, 
\begin{align*}
    &-\left(f_{v}\left|^{\left(s,Y^{t,\xi}_{s-},\lr\left(Y^{t,\xi}_{s-}\right),u^{t,\xi}_{s'}\right)}_{\left(s,Y^{t,\xi}_{s-},\lr\left(Y^{t,\xi}_{s-}\right),u^{t,\xi}_s\right)} \right.\right)^\top \left(u^{t,\xi}_{s'}-u^{t,\xi}_s\right)\\
    =\ & \left(u^{t,\xi}_{s'}-u^{t,\xi}_s\right)^\top \Bigg[b_{v}\left|^{\left(s',Y^{t,\xi}_{s'-},\lr\left(Y^{t,\xi}_{s'-}\right),u^{t,\xi}_{s'}\right)}_{\left(s,Y^{t,\xi}_{s-},\lr\left(Y^{t,\xi}_{s-}\right),u^{t,\xi}_s\right)} \right. \Bigg]^\top P^{t,\xi}_{s'-}\\
    &+\left(u^{t,\xi}_{s'}-u^{t,\xi}_s\right)^\top b_{v}\left(s,Y^{t,\xi}_{s-},\lr\left(Y^{t,\xi}_{s-}\right),u^{t,\xi}_s\right)^\top \left(P^{t,\xi}_{s'-}-P^{t,\xi}_{s-}\right)\\
    &+\left(f_{v}\left|^{\left(s',Y^{t,\xi}_{s'-},\lr\left(Y^{t,\xi}_{s'-}\right),u^{t,\xi}_{s'}\right)}_{\left(s,Y^{t,\xi}_{s-},\lr\left(Y^{t,\xi}_{s-}\right),u^{t,\xi}_{s'}\right)} \right.\right)^\top \left(u^{t,\xi}_{s'}-u^{t,\xi}_s\right).
\end{align*}
Then, from the convexity of $f$ in (A4), the cone property \eqref{cone:PQ:without_v}, Assumption (A3'), we know that
\begin{align*}
    &-2\lambda_v \left|u^{t,\xi}_{s'}-u^{t,\xi}_s\right|^2\\
    \geq\ & \left|u^{t,\xi}_{s'}-u^{t,\xi}_s\right|\cdot \frac{L^2}{\lambda_0} \left(L|s'-s|^{\frac{1}{2}}+L_0\left|Y^{t,\xi}_{s'-}-Y^{t,\xi}_{s-}\right|+L_0\left\|Y^{t,\xi}_{s'-}-Y^{t,\xi}_{s-}\right\|_2+L_2 \left|u^{t,\xi}_{s'}-u^{t,\xi}_s\right| \right)\\
    &+\left|u^{t,\xi}_{s'}-u^{t,\xi}_s\right|\cdot L \left|P^{t,\xi}_{s'-}-P^{t,\xi}_{s-}\right|\\
    &+L\left(|s'-s|^{\frac{1}{2}}+\left|Y^{t,\xi}_{s'-}-Y^{t,\xi}_{s-}\right|+\left\|Y^{t,\xi}_{s'-}-Y^{t,\xi}_{s-}\right\|_2\right) \cdot \left|u^{t,\xi}_{s'}-u^{t,\xi}_s\right|,
\end{align*}
and therefore, from Condition \eqref{thm:suff:condition}-(i), 
\begin{align}
    &\left|u^{t,\xi}_{s'}-u^{t,\xi}_s\right| \notag\\
    \le\ & \left(2\lambda_v- \frac{L^2L_2}{\lambda_0}\right)^{-1} L\left(1+ \frac{L^2}{\lambda_0}\right) \left(|s'-s|^{\frac{1}{2}}+\left|Y^{t,\xi}_{s'-}-Y^{t,\xi}_{s-}\right|+\left\|Y^{t,\xi}_{s'-}-Y^{t,\xi}_{s-}\right\|_2\right) \notag \\
    &+\left(2\lambda_v- \frac{L^2L_2}{\lambda_0}\right)^{-1} L \left|P^{t,\xi}_{s'-}-P^{t,\xi}_{s-}\right|. \label{lem:Theta_regu_s_5}
\end{align}
Then, from \eqref{lem:Theta_regu_s_3} and \eqref{lem:Theta_regu_s_3.5}, we know that for a.e. $s,s'\in[t,T]$, 
\begin{align*}
    \left\|u^{t,\xi}_{s'}-u^{t,\xi}_s\right\|^2_2\le\ & C\left(1+\|\xi\|_2^2\right)|s'-s|,
\end{align*}
so we complete the proof. 

\subsection{Proof of Lemma~\ref{lem:Ito}}\label{pf:lem:Ito}

Note that the process $X$ defined in \eqref{Ito:X} has jumps induced by the Poisson random measure, whereas $\lr(X_s)$ is continuous in $s$; see \cite{agram2023stochastic} for instance. From the functional differentiability of $F$, we first note that
\begin{align}\label{Ito_1}
    &\frac{d}{ds}F(s,\lr(X_s))=\frac{\dd F}{\dd s}(s,\lr(X_s))+\left(\frac{d}{ds} \e\left[ \frac{dF}{d\nu}(t,\mu) (X_s) \right]\right)\bigg|_{(t,\mu)=(s,\lr(X_s))};
\end{align}
we also refer to \cite[Theorem 7.1]{BR} for similar results. Then, by It\^o's formula for jump diffusion (see \cite[Theorem 1.16]{Oksendal-Sulem} for instance), we have
\begin{align*}
    &\left(\frac{d}{ds} \e\left[ \frac{dF}{d\nu}(t,\mu) (X_s) \right]\right)\bigg|_{(t,\mu)=(s,\lr(X_s))}\\
    =\ & \e\bigg\{ \left(D_y\frac{dF}{d\nu}(s,\lr(X_s))(X_s)\right)^\top b_s+ \frac{1}{2}\text{Tr}\left[\left(\sigma_s\sigma_s^\top\right) D_y^2\frac{dF}{d\nu}(s,\lr(X_s))(X_s)\right]\\
    &\quad+\int_E \bigg[\frac{dF}{d\nu}(s,\lr(X_s))(X_{s-}+\gamma_s(e))- \frac{dF}{d\nu}(s,\lr(X_s))(X_{s-})\\
    &\qquad\qquad - \left(D_y \frac{dF}{d\nu}(s,\lr(X_s))(X_{s-})\right)^\top \gamma_s(e) \bigg] \lambda(de) \bigg\}.
\end{align*}
Combining \eqref{Ito_1} and the last equality, we obtain \eqref{Ito_0}. 

\section{Proof of Proposition~\ref{thm:value_regu_mu'}}\label{pf:thm:value_regu_mu'}

From Theorem~\ref{thm:value_regu_mu} and the study for the processes $\Theta^{t,y,\mu}$ and $D_y \Theta^{t,y,\mu}$ in Subsection~\ref{subsec:D_y}, we obtain \eqref{relation:DV}; we also refer to \cite{AB10,AB5} for a detailed discussion on the linearly functional-derivative of $V$ and the processes $\Theta^{t,y,\mu}$. With \eqref{relation:DV}, Estimate \eqref{boundedness:DdV} is a consequence of \eqref{add-4}, while Estimate \eqref{thm:D^2dV_02} is a consequence of \eqref{add-5}. Now it remains to prove \eqref{lem:Q&V_00}. For the sake of notational convenience, we only prove the case when $n=d=1$, and denote by $a(t,x,m):=\frac{1}{2}|\sigma(s,x,m)|^2$. From Lemma~\ref{thm:value_regu_mu} and the flow property, we know that
\begin{align}\label{lem:Q&V_0.5}
    P^{t,\xi}_s=P^{s,Y^{t,\xi}_s}_s=D_y\frac{dV}{d\nu}\left(s,\lr\left(Y_s^{t,\xi}\right)\right)\left(Y_s^{t,\xi}\right). 
\end{align}
We can use mollifier for $V$ making the resulting convolution smooth enough; and we shall see later (in \eqref{lem:Q&V_2}) that the higher order derivatives only appear in intermediate steps, and they shall eventually cancel out each other, therefore, without loss of generality, we suppose that the derivatives 
\begin{align*}
    \left(D_y^3\frac{dV}{d\nu}(t,\mu)(y),D_{y'}D_y^2\frac{d^2V}{d\nu^2}(t,\mu)(y,y')\right)
\end{align*}
exist and are continuous for any $(t,\mu,y)\in[0,T]\times\pr_2(\brn)\times\brn$. From the mean field version of It\^o's formula in Lemma~\ref{lem:Ito}, we can compute that 
\begin{align}
    dP^{t,\xi}_s=\ & D_y\frac{d(\frac{\dd V}{\dd t})}{d\nu}\left(s,\lr\left(Y_s^{t,\xi}\right)\right)\left(Y_s^{t,\xi}\right)ds + D_y^2\frac{dV}{d\nu}\left(s,\lr\left(Y_s^{t,\xi}\right)\right)\left(Y_s^{t,\xi}\right)[\bm{b}(s)ds+\bm{\sigma}(s)dB_s] \notag \\
    &+D_y^3\frac{dV}{d\nu}\left(s,\lr\left(Y_s^{t,\xi}\right)\right)\left(Y_s^{t,\xi}\right) \bm{a}(s) ds  \notag \\
    &+\int_E \bigg[D_y\frac{dV}{d\nu}\left(s,\lr\left(Y_s^{t,\xi}\right)\right)\left(Y_{s-}^{t,\xi}+\bm{\gamma}(s,e)\right)-D_y\frac{dV}{d\nu}\left(s,\lr\left(Y_s^{t,\xi}\right)\right)\left(Y_{s-}^{t,\xi}\right) \notag \\
    &\quad\qquad -D_y^2 \frac{dV}{d\nu}\left(s,\lr\left(Y_s^{t,\xi}\right)\right)\left(Y_{s-}^{t,\xi}\right)\bm{\gamma}(s,e)\lambda(de)\bigg]ds \notag \\
    &+\int_E \bigg[D_y\frac{dV}{d\nu}\left(s,\lr\left(Y_s^{t,\xi}\right)\right)\left(Y_{s-}^{t,\xi}+\bm{\gamma}(s,e)\right)-D_y\frac{dV}{d\nu}\left(s,\lr\left(Y_s^{t,\xi}\right)\right)\left(Y_{s-}^{t,\xi}\right)\bigg] \mathring{N}(de,ds) \notag \\
    &+\he\Bigg\{D_{y'}D_y\frac{d^2V}{d\nu^2}\left(s,\lr\left(Y_s^{t,\xi}\right)\right)\left(Y_s^{t,\xi},\widehat{Y_s^{t,\xi}}\right)\widehat{\bm{b}(s)} + D_{y'}^2 D_y\frac{d^2V}{d\nu^2}\left(s,\lr\left(Y_s^{t,\xi}\right)\right)\left(Y_s^{t,\xi},\widehat{Y_s^{t,\xi}}\right)\widehat{\bm{a}(s)} \notag \\
    &\qquad + \int_E \bigg[ D_y\frac{d^2V}{d\nu^2}\left(s,\lr\left(Y_s^{t,\xi}\right)\right)\left(Y_s^{t,\xi},\widehat{Y_{s-}^{t,\xi}}+\widehat{\bm{\gamma}(s,e)}\right)- D_y\frac{d^2V}{d\nu^2}\left(s,\lr\left(Y_s^{t,\xi}\right)\right)\left(Y_s^{t,\xi},\widehat{Y_{s-}^{t,\xi}}\right) \notag \\
    &\quad\qquad\qquad - D_{y'}D_y\frac{d^2V}{d\nu^2}\left(s,\lr\left(Y_s^{t,\xi}\right)\right)\left(Y_s^{t,\xi},\widehat{Y_{s-}^{t,\xi}}\right)\widehat{\bm{\gamma}(s,e)}\bigg]\lambda(de)\Bigg\}ds, \label{lem:Q&V_1}
\end{align}
where we simply denote by 
\begin{align*}
    &\bm{b}(s):=b\left(s,\theta_s^{t,\xi}\right),\quad \bm{\gamma}(s,e):=\gamma\left(s,Y_s^{t,\xi},\lr\left(Y_s^{t,\xi}\right),e\right),\\
    &\bm{\sigma}(s):=\sigma\left(s,Y_s^{t,\xi},\lr\left(Y_s^{t,\xi}\right)\right),\quad \bm{a}(s):=a\left(s,Y_s^{t,\xi},\lr\left(Y_s^{t,\xi}\right)\right).
\end{align*}
From Theorem~\ref{thm:V_t} and the definition of $u^{t,y,\mu}_t$, we know that \eqref{thm:V_t_0} also writes
\begin{align*}
    \frac{\dd V}{\dd t}(t,\mu)=\ & -\int_\brn \Bigg\{\frac{1}{2}\text{Tr}\left[\left(\sigma\sigma^\top\right) \left(t,y,\mu\right) D_y^2\frac{dV}{d\nu}(t,\mu)(y)\right] \notag \\
    &\quad\qquad +\left(D_y\frac{dV}{d\nu}(t,\mu)(y)\right)^\top b\left(t,y,\mu,u^{t,y,\mu}_t\right)  +f\left(t,y,\mu,u^{t,y,\mu}_t\right)\notag \\
    &\quad\qquad +\int_E \bigg[\frac{dV}{d\nu}(t,\mu)\left(y+\gamma\left(t,y,\mu,e\right)\right)-\frac{dV}{d\nu}(t,\mu)(y)\\
    &\qquad\qquad\qquad -\left(D_y\frac{dV}{d\nu}(t,\mu)(y)\right)^\top \gamma\left(t,y,\mu,e\right) \bigg] \lambda(de) \Bigg\} \mu(dy),
\end{align*}
therefore, we can compute that
\begin{align*}
    &D_y \frac{d (\frac{\dd V}{\dd t})}{d\nu}(t,\mu)(\xi)\\
    =\ & - a\left(t,\xi,\mu\right) D^3_y\frac{dV}{d\nu}(t,\mu)(\xi)-a_x\left(t,\xi,\mu\right) D^2_y\frac{dV}{d\nu}(t,\mu)(\xi) \notag \\
    &- b\left(t,\xi,\mu,u^{t,\xi}_t\right) D^2_y\frac{dV}{d\nu}(t,\mu)(\xi)-b_x\left(t,\xi,\mu,u^{t,\xi}_t\right) D_y\frac{dV}{d\nu}(t,\mu)(\xi)-f_x\left(t,\xi,\mu,u^{t,\xi}_t\right)\notag \\
    &- \int_E \bigg[D_y\frac{dV}{d\nu}(t,\mu)\left(\xi+\gamma\left(t,\xi,\mu,e\right)\right)\left[1+\gamma_x(t,\xi,\mu,e)\right]-D_y\frac{dV}{d\nu}(t,\mu)(\xi)\\
    &\quad\qquad - D^2_y\frac{dV}{d\nu}(t,\mu)(\xi) \gamma\left(t,\xi,\mu,e\right)- D_y\frac{dV}{d\nu}(t,\mu)(\xi) \gamma_x\left(t,\xi,\mu,e\right) \bigg] \lambda(de) \\
    &-\left[b_v\left(t,\xi,\mu,u^{t,\xi}_t\right) D_y\frac{dV}{d\nu}(t,\mu)(\xi)+f_v\left(t,\xi,\mu,u^{t,\xi}_t\right)\right]D_y u_t^{t,\xi}\\
    &-\he \Bigg\{a \left(t,\widehat{\xi},\mu\right) D_{y'}D_y^2\frac{d^2V}{d\nu^2}(t,\mu)\left(\widehat{\xi},\xi\right) +  D_y\frac{da}{d\nu}\left(t,\widehat{\xi},\mu\right)(\xi) D_y^2\frac{dV}{d\nu}(t,\mu)\left(\widehat{\xi}\right) \notag \\
    &\qquad + b\left(t,\widehat{\xi},\mu,u^{t,\widehat{\xi}}_t\right) D_{y'}D_y\frac{dV}{d\nu}(t,\mu)\left(\widehat{\xi},\xi\right) + D_y\frac{db}{d\nu}\left(t,\widehat{\xi},\mu,u^{t,\widehat{\xi}}_t\right)(\xi) D_y\frac{dV}{d\nu}(t,\mu)\left(\widehat{\xi}\right) \\
    &\qquad +D_y\frac{df}{d\nu}\left(t,\widehat{\xi},\mu,u^{t,\widehat{\xi}}_t\right)(\xi)\notag \\
    &\qquad +\int_E \bigg[D_{y'}\frac{d^2V}{d\nu^2}(t,\mu)\left(\widehat{\xi}+\gamma\left(t,\widehat{\xi},\mu,e\right),\xi\right) \notag \\
    &\quad\qquad\qquad +D_y\frac{dV}{d\nu}(t,\mu)\left(\widehat{\xi}+\gamma\left(t,\widehat{\xi},\mu,e\right)\right)D_y\frac{d\gamma}{d\nu}\left(t,\widehat{\xi},\mu,e\right)(\xi) \\
    &\quad\qquad\qquad -D_{y'}\frac{d^2V}{d\nu^2}(t,\mu)\left(\widehat{\xi},\xi\right)- \gamma\left(t,\widehat{\xi},\mu,e\right) D_{y'}D_y\frac{d^2V}{d\nu^2}(t,\mu)\left(\widehat{\xi},\xi\right) \\
    &\quad\qquad\qquad - D_y\frac{d\gamma}{d\nu}\left(t,\widehat{\xi},\mu,e\right)(\xi) D_y\frac{dV}{d\nu}(t,\mu)\left(\widehat{\xi}\right)  \bigg] \lambda(de) \\
    &\qquad + \left[b_v \left(t,\widehat{\xi},\mu,u^{t,\widehat{\xi}}_t\right) D_y\frac{dV}{d\nu}(t,\mu)\left(\widehat{\xi}\right)+f_v \left(t,\widehat{\xi},\mu,u^{t,\widehat{\xi}}_t\right)\right]D_\xi  u^{t,\widehat{\xi}}_t \Bigg\},
\end{align*}
Therefore, from the flow properties
\begin{align*}
    u^{s,Y^{t,\xi}_s}_s=u^{t,\xi}_s,\quad D_{Y^{t,\xi}_s}\   u^{s,\widehat{Y^{t,\xi}_s}}_s=D_\xi  \ u^{t,\widehat{\xi}}_s,
\end{align*}
we know that
\begin{align}
    &D_y\frac{d (\frac{\dd V}{\dd t})}{d\nu}\left(s,\lr\left(Y_s^{t,\xi}\right)\right)\left(Y_s^{t,\xi}\right) \notag \\
    =\ & - \bm{a}(s) D^3_y\frac{dV}{d\nu}\left(s,\lr\left(Y_s^{t,\xi}\right)\right)\left(Y_s^{t,\xi}\right)-\sigma_x\left(s,Y_s^{t,\xi},\lr\left(Y_s^{t,\xi}\right)\right)\bm{\sigma}(s) D^2_y\frac{dV}{d\nu}\left(s,\lr\left(Y_s^{t,\xi}\right)\right)\left(Y_s^{t,\xi}\right) \notag \\
    &- \bm{b}(s) D^2_y\frac{dV}{d\nu}\left(s,\lr\left(Y_s^{t,\xi}\right)\right)\left(Y_s^{t,\xi}\right) -b_x\left(s,\theta_s^{t,\xi}\right) D_y\frac{dV}{d\nu}\left(s,\lr\left(Y_s^{t,\xi}\right)\right)\left(Y_s^{t,\xi}\right)-f_x\left(s,\theta_s^{t,\xi}\right)\notag \\
    &- \int_E \bigg[D_y\frac{dV}{d\nu}\left(s,\lr\left(Y_s^{t,\xi}\right)\right)\left(Y_s^{t,\xi}+\bm{\gamma}(s,e)\right) \notag \\
    &\quad\qquad +D_y\frac{dV}{d\nu}\left(s,\lr\left(Y_s^{t,\xi}\right)\right)\left(Y_s^{t,\xi}+\bm{\gamma}(s,e)\right)\gamma_x\left(s,Y_s^{t,\xi},\lr\left(Y_s^{t,\xi}\right),e\right) \notag \\
    &\quad\qquad -D_y\frac{dV}{d\nu}\left(s,\lr\left(Y_s^{t,\xi}\right)\right)\left(Y_s^{t,\xi}\right) - D^2_y\frac{dV}{d\nu}\left(s,\lr\left(Y_s^{t,\xi}\right)\right)\left(Y_s^{t,\xi}\right) \bm{\gamma}(s,e) \notag \\
    &\quad\qquad - D_y\frac{dV}{d\nu}\left(s,\lr\left(Y_s^{t,\xi}\right)\right)\left(Y_s^{t,\xi}\right) \gamma_x\left(s,Y_s^{t,\xi},\lr\left(Y_s^{t,\xi}\right),e\right) \bigg] \lambda(de) \notag \\
    &-\bigg[b_v\left(s,\theta_s^{t,\xi}\right) D_y\frac{dV}{d\nu}\left(s,\lr\left(Y_s^{t,\xi}\right)\right)\left(Y_s^{t,\xi}\right)+f_v\left(s,\theta_s^{t,\xi}\right)\bigg]D_y u_s^{t,\xi} \notag \\
    &-\he \Bigg\{D_{y'}D^2_y\frac{d^2V}{d\nu^2}\left(s,\lr\left(Y_s^{t,\xi}\right)\right)\left(\widehat{Y^{t,\xi}_s},Y^{t,\xi}_s\right)\widehat{\bm{a}(s)} \notag \\
    &\qquad +D_y \frac{d\sigma}{d\nu}\left(s,\widehat{Y^{t,\xi}_s},\lr\left(Y_s^{t,\xi}\right)\right)\left(Y^{t,\xi}_s\right) \widehat{\bm{\sigma}(s)}D^2_y\frac{dV}{d\nu}\left(s,\lr\left(Y_s^{t,\xi}\right)\right)\left(\widehat{Y^{t,\xi}_s}\right) \notag \\
    &\qquad + D_{y'}D_y\frac{d^2V}{d\nu^2}\left(s,\lr\left(Y_s^{t,\xi}\right)\right)\left(\widehat{Y^{t,\xi}_s},Y^{t,\xi}_s\right) \widehat{\bm{b}(s)} \notag \\
    &\qquad + D_y\frac{db}{d\nu}\left(s,\widehat{\theta^{t,\xi}_s}\right)\left(Y^{t,\xi}_s\right) D_y\frac{dV}{d\nu}\left(s,\lr\left(Y_s^{t,\xi}\right)\right)\left( \widehat{Y^{t,\xi}_s} \right) +D_y\frac{df}{d\nu}\left(s,\widehat{\theta^{t,\xi}_s}\right)\left(Y^{t,\xi}_s\right) \notag \\
    &\qquad +\int_E \bigg[D_{y'}\frac{d^2V}{d\nu^2}\left(s,\lr\left(Y_s^{t,\xi}\right)\right)\left(\widehat{Y^{t,\xi}_s}+\widehat{\bm{\gamma}(s,e)},Y_s^{t,\xi}\right) \notag \\
    &\quad\qquad\qquad + D_{y}\frac{dV}{d\nu}\left(s,\lr\left(Y_s^{t,\xi}\right)\right)\left(\widehat{Y^{t,\xi}_s}+\widehat{\bm{\gamma}(s,e)}\right)D_y\frac{d\gamma}{d\nu}\left(s,\widehat{Y^{t,\xi}_s},\lr\left(Y_s^{t,\xi}\right),e\right)\left(Y_s^{t,\xi}\right) \notag \\
    &\quad\qquad\qquad -D_{y'}\frac{d^2V}{d\nu^2}\left(s,\lr\left(Y_s^{t,\xi}\right)\right)\left(\widehat{Y_s^{t,\xi}},Y_s^{t,\xi}\right) \notag \\
    &\quad\qquad\qquad - D_{y'}D_y\frac{d^2V}{d\nu^2}\left(s,\lr\left(Y_s^{t,\xi}\right)\right)\left(\widehat{Y_s^{t,\xi}},Y_s^{t,\xi}\right) \widehat{\bm{\gamma}(s,e)} \notag \\
    &\quad\qquad\qquad - D_y\frac{dV}{d\nu}\left(s,\lr\left(Y_s^{t,\xi}\right)\right)\left(\widehat{Y_s^{t,\xi}}\right) D_y\frac{d\gamma}{d\nu}\left(s,\widehat{Y_s^{t,\xi}},\lr\left(Y_s^{t,\xi}\right),e\right)\left(Y_s^{t,\xi}\right) \bigg] \lambda(de) \notag\\
    &\qquad + \bigg[b_v \left(s,\widehat{\theta^{t,\xi}_s}\right) D_y\frac{dV}{d\nu}\left(s,\lr\left(Y_s^{t,\xi}\right)\right)\left(\widehat{Y_s^{t,\xi}}\right)+f_v \left(s,\widehat{\theta^{t,\xi}_s}\right)\bigg]D_\xi  u^{t,\widehat{\xi}}_s \Bigg\}. \label{lem:Q&V_1.5}
\end{align}
Substituting the last equation into \eqref{lem:Q&V_1} and using \eqref{lem:Q&V_0.5}, we know that for a.e. $s\in[t,T]$,
\begin{align}
    dP^{t,\xi}_s=\ & \mathbf{Q}^{t,\xi}_s dB_s +\int_E \mathbf{R}^{t,\xi}_s(e)\mathring{N}(de,ds) \notag \\
    &-\bigg\{ L_x\left(s,Y^{t,\xi}_s,\lr\left(Y^{t,\xi}_s\right),u^{t,\xi}_s,P^{t,\xi}_s,\mathbf{Q}^{t,\xi}_s,\mathbf{R}^{t,\xi}_s\right) \notag \\
    &\qquad +L_v\left(s,Y^{t,\xi}_s,\lr\left(Y^{t,\xi}_s\right),u^{t,\xi}_s,P^{t,\xi}_s,\mathbf{Q}^{t,\xi}_s\right)D_y u_s^{t,\xi} \notag \\
    &\qquad +\he\bigg[D_y\frac{dL}{d\nu}\left(s,\widehat{Y^{t,\xi}_s},\lr\left(Y^{t,\xi}_s\right),\widehat{u^{t,\xi}_s},\widehat{P^{t,\xi}_s},\widehat{\mathbf{Q}^{t,\xi}_s}\right)\left(Y^{t,\xi}_s\right) \notag \\
    &\qquad\qquad + L_v\left(s,\widehat{Y^{t,\xi}_s},\lr\left(Y^{t,\xi}_s\right),\widehat{u^{t,\xi}_s},\widehat{P^{t,\xi}_s},\widehat{\mathbf{Q}^{t,\xi}_s}\right)D_\xi  u^{t,\widehat{\xi}}_s  \bigg] \bigg\}ds, \label{lem:Q&V_2}
\end{align}
where
\begin{equation}\label{lem:Q&V_3}
\begin{aligned}
    &\mathbf{Q}^{t,\xi}_s:=D^2_y\frac{dV}{d\nu}\left(s,\lr\left(Y_s^{t,\xi}\right)\right)\left(Y_s^{t,\xi}\right)\bm{\sigma}(s),\\
    &\mathbf{R}^{t,\xi}_s(e):=D_y\frac{dV}{d\nu}\left(s,\lr\left(Y_s^{t,\xi}\right)\right)\left(Y_{s-}^{t,\xi}+\bm{\gamma}(s,e)\right)-D_y\frac{dV}{d\nu}\left(s,\lr\left(Y_s^{t,\xi}\right)\right)\left(Y_{s-}^{t,\xi}\right).
\end{aligned}
\end{equation}
Therefore, from the uniqueness of the solution of the BSDE for $\left(P^{t,\xi},Q^{t,\xi},R^{t,\xi}\right)$, we know that
\begin{align}\label{lem:Q&V_4}
    Q^{t,\xi}_s=\mathbf{Q}^{t,\xi}_s,\quad R^{t,\xi}_s=\mathbf{R}^{t,\xi}_s.
\end{align}
Then, from \eqref{lem:Q&V_3} and \eqref{lem:Q&V_4} with $s=t$, we know that 
\begin{equation}\label{lem:Q&V_0}
    \begin{aligned}
        &Q^{t,\xi,j}_t=D_y^2\frac{dV}{d\nu}\left(t,\lr(\xi)\right)\left(\xi\right)\sigma^j\left(t,\xi,\lr(\xi)\right),\\
        &R^{t,\xi}_t(e)= D_y\frac{dV}{d\nu}\left(t,\lr\left(\xi\right)\right)\left(\xi+\gamma\left(t,\xi,\lr(\xi),e\right)\right)-D_y\frac{dV}{d\nu}\left(t,\lr\left(\xi\right)\right)\left(\xi\right).
    \end{aligned}
\end{equation}
Then, from \eqref{lem:Q&V_0} and the study of the processes $\Theta^{t,y,\mu}$ in Subsection~\ref{subsec:D_y}, we obtain \eqref{lem:Q&V_00}. 

\footnotesize


\end{document}